  \newcommand{\cM}{\mathcal{M}}
  \newcommand{\cO}{\mathcal{O}}
  \newcommand{\CC}{\mathbf{C}}
  \newcommand{\ZZ}{\mathbf{Z}}
  \newcommand{\NN}{\mathbf{N}}
  \renewcommand{\emptyset}{\varnothing}
  \newcommand{\ov}{\overline}
  \newcommand{\op}{\operatorname}
  \renewcommand{\bigwedge}{\mbox{\large $\wedge$}}
  \DeclareMathOperator{\Hom}{Hom}
  \DeclareMathOperator{\Spec}{Spec}
  \newcommand{\cat}{\mathrm} 
  \renewcommand{\coprod}{\bigsqcup} 
  \newcommand{\CCt}{\CC(\!(t)\!)}
  \newcommand{\CCs}{\CC[\![t]\!]}
  \newcommand{\bLambda}{\Lambda} 
  \newcommand{\oM}{\ov{\cM}}
  \newcommand{\eps}{\varepsilon}
  \newcommand{\epsast}{\eps^\circledast}
  \newcommand{\gp}{\mathrm{gp}}
  \newcommand{\MIC}{\cat{MIC}}
  \newcommand{\xto}{\xrightarrow}
  \newcommand{\ra}{\longrightarrow}
  \newcommand{\isomto}{\xlongrightarrow{\,\smash{\raisebox{-0.65ex}{\ensuremath{\displaystyle\sim}}}\,}}
  \theoremstyle{plain}
  \newtheorem{lemma}{Lemma}[section]
  \newtheorem{cor}[lemma]{Corollary}
  \newtheorem{prop}[lemma]{Proposition}
  \newtheorem{thm}[lemma]{Theorem}
  \newtheorem{conj}[lemma]{Conjecture}
  \theoremstyle{definition}
  \newtheorem{defin}[lemma]{Definition}
  \newtheorem{example}[lemma]{Example}
  \newtheorem{remark}[lemma]{Remark}
  \newtheorem{remarks}[lemma]{Remarks}
  \newtheorem{constr}[lemma]{Construction}
  \numberwithin{equation}{section}
  \newcommand{\stacks}[1]{\cite[\href{https://stacks.math.columbia.edu/tag/#1}{Tag~#1}]{StacksProject}}
  \title{Regular logarithmic connections}
  \date{\today}
  \author{Piotr Achinger}
  \address{Institute of Mathematics, Polish Academy of Sciences
    \newline\indent ul. Śniadeckich 8, 00-656 Warsaw, Poland
  }
  \email{pachinger@impan.pl}
\begin{document}

\begin{abstract}
  We introduce the notion of a regular integrable connection on a smooth log scheme over $\CC$ and construct an equivalence between the category of such connections and the category of integrable connections on its analytification, compatible with de Rham cohomology. This extends the work of Deligne (when the log structure is trivial), and combined with the work of Ogus yields a topological description of the category of regular connections in terms of certain constructible sheaves on the Kato--Nakayama space. The key ingredients are the notion of a canonical extension in this context and the existence of good compactifications of log schemes obtained recently by Włodarczyk.
\end{abstract}

\maketitle

{\hypersetup{linkcolor=black} \setlength{\parskip}{0pt}
  \tableofcontents
}
\newpage

\section{Introduction}
\label{s:intro}

The notion of a regular connection was introduced by Deligne \cite{DeligneLNM163} in order to describe the category of complex representations of the topological fundamental group of a smooth scheme $X$ over $\CC$ in an algebraic way. Denote by $\MIC(X/\CC)$ the category of coherent $\cO_X$-modules $E$ endowed with an integrable connection $\nabla\colon E\to E\otimes \Omega^1_{X/\CC}$ (such an $E$ is locally free). We say that $E$ is \emph{regular} if its (multi-valued) holomorphic horizontal sections have moderate growth at infinity. Deligne gave several useful characterizations of regular connections, relating them to the classical notion of Fuchsian singularities of differential equations, and proved that the functor sending $E$ to its analytification $E_{\rm an}$ induces an equivalence of categories
\[ 
  \MIC_{\rm reg}(X/\CC)\isomto \MIC(X_{\rm an}/\CC) \eqno \text{\emph{(Existence Theorem \cite[II 5.9]{DeligneLNM163})}}
\]
between the category of regular connections on $X$ and the category of integrable connections on the analytification $X_{\rm an}$, which in turn is equivalent to the category $\cat{LocSys}_\CC(X_{\rm an})$ of complex local systems on $X_{\rm an}$ via the functor sending $E$ to its sheaf $E^\nabla$ of horizontal sections. The correspondence is compatible with cohomology in the sense that
\[ 
  H^*_{\rm dR}(X, E) \, \simeq \,  H^*_{\rm dR}(X_{\rm an}, E_{\rm an})\, \simeq\, H^*(X_{\rm an}, E_{\rm an}^\nabla) \eqno (\emph{Comparison Theorem \cite[II 6.2]{DeligneLNM163}})
\] 
This equivalence is often called the \emph{Riemann--Hilbert correspondence}, later extended to an equivalence between regular holonomic $D$-modules on $X$ and perverse sheaves on $X_{\rm an}$ by Kashiwara \cite{Kashiwara} and \mbox{Mebkhout} \cite{Mebkhout}. The most important examples of regular connections are those coming from geometry, i.e.\ Gauss--Manin connections or Picard--Fuchs equations. Deligne proved that for a morphism $f\colon Y\to X$ which locally on $X$ admits a relative simple normal crossings compactification and a regular connection $E$ on $Y$, the higher direct images $R^n f_* (E\otimes \Omega^\bullet_{Y/X})$ with the Gauss--Manin connection are regular (\emph{Regularity Theorem} \cite[II 7.9]{DeligneLNM163}). 

A pivotal role in the proofs of the above results is played by the notion of a logarithmic connection. An snc pair $(X, D)$ consists of a smooth scheme $X$ over $\CC$ and a simple normal crossings divisor $D\subseteq X$. An \emph{(integrable) logarithmic connection} on $(X, D)$ is a coherent $\cO_X$-module $E$ together with an integrable connection $E\to E\otimes \Omega_{X/\CC}(\log D)$ where $\Omega_{X/\CC}(\log D)$ is the sheaf of differentials with logarithmic poles along $D$. Such an $E$ might not be locally free in general. As it turns out, the notion of regularity can be phrased in terms of logarithmic connections: for $X$ smooth over $\CC$, an object $E$ of $\MIC(X/\CC)$ is regular if and only if for every open $U\subseteq X$ and every compactification $U\hookrightarrow\ov{U}$ such that $(\ov U, D=\ov{U}\setminus U)$ is an snc pair, the restriction $E|_U$ extends to a logarithmic connection on $(\ov{U}, D)$. Such extensions are not unique, but it is possible to enumerate them, and a~key idea in Deligne's work (\cite[Chap.\ II, Proposition~5.4]{DeligneLNM163}, inspired by Manin \cite{Manin}) is that one can single out a locally free \emph{canonical extension} $\ov E$, the unique one in which the eigenvalues of the residue maps along the components of $D$ (roughly, chosen logarithms of the local monodromy around $D$) belong to the set $\{0\leq {\rm Re}(z)<1\}$. 

Logarithmic connections fit very well into the framework of logarithmic geometry \cite{Kato}. An snc pair $(\underline{X}, D)$ corresponds to the log scheme $X=(\underline{X}, \cM_X)$ where $\cM_X$ is the sheaf of regular functions on $X$ invertible away from $D$; $\underline{X}$ is the ``underlying scheme'' of $X$. Every log scheme $X$  has a sheaf of differentials $\Omega^1_{X/\CC}$, and it is possible to define the category $\MIC(X/\CC)$ of coherent $\cO_X$-modules endowed with a logarithmic connection. For an snc pair $(\underline{X}, D)$, the corresponding log scheme $X$ is smooth in the sense of log geometry, we have $\Omega^1_{X/\CC}\simeq \Omega^1_{\underline{X}}(\log D)$, and integrable connections on $X$ are precisely the integrable logarithmic connections on $(\underline{X}, D)$ as defined in the previous paragraph.

The goal of this article is to extend Deligne's results to smooth logarithmic schemes over $\CC$. Since logarithmic connections are intimately tied to the notion of regularity, it is natural to treat them on the same footing as connections without poles. Given the interest in analytic logarithmic connections \cite{KatoNakayama,IllusieKatoNakayama,OgusRH}, it is a bit surprising that such a theory is has not been developed earlier. That said, my main reason for doing so is the forthcoming work \cite{AchingerRigidRH}, in which I aim to give analogous results for ``complex rigid-analytic spaces,'' with applications to Hodge theory in mind. A typical space of interest in this context is the special fiber of a semistable formal scheme over $\CCs$, endowed with the induced log structure. Since such log schemes do not arise from snc pairs $(\underline{X}, D)$, they are not captured by the classical theory. 

Before going into the details of log geometry, let us explain how our definition of a regular logarithmic connection works for an snc pair $(\underline{X}, D)$. For an integrable logarithmic connection $(E, \nabla)$ on $(\underline{X}, D)$ to be regular, it is insufficient to require that its restriction to $X\setminus D$ be regular (in Deligne's sense), as $E$ might be supported on $D$. To impose conditions along $D$ as well, we proceed as follows. For every stratum $Z$ of the stratification of $X$ induced by the components of $D$, the connection $\nabla$ induces a map $\nabla|_Z\colon E|_Z\to E|_Z\otimes\Omega^1_{X/\CC}(\log D)|_Z$. A local choice of equations of $D$ along $Z$ induces an isomorphism 
\[
  \Omega^1_{X/\CC}(\log D)|_Z\simeq \Omega^1_{Z/\CC}\oplus \cO_Z^{\op{codim} Z},
\]
which allows us to deduce from $\nabla|_Z$ an integrable connection $E|_Z\to E|_Z\otimes\Omega^1_{Z/\CC}$. While this connection depends on the choices made, its being regular does not, and we will say that $E$ is \emph{regular} if these connections are regular, for all $Z$. 

We now turn to stating the general results. In order to gain some extra flexibility, we deal with ``idealized smooth'' log schemes over $\CC$, which look locally like the vanishing set of a monomial ideal in a toric variety (Definition~\ref{def:idealized-smooth}). This class contains log schemes coming from snc as well as special fibers of semistable formal schemes. In \S\ref{ss:splittings}, for such a log scheme $X$, we construct a morphism
\[
  \pi\colon X^\#\ra X
\]
from a smooth scheme $X^\#$ (the disjoint union of certain torsors under tori over the log strata of $X$), and a canonical ``splitting'' $\eps_{\rm univ}$ of the log structure pulled back from $X$. The splitting $\eps_{\rm univ}$ behaves as if it were a section of the map from $X^\#$ to its underlying scheme $\underline{X}^\#$, even though no such section exists, and in particular it allows one to turn log connections into classical connections by means of a ``pull-back'' functor
\[ 
  \epsast_{\rm univ}\colon \MIC(X^\#/\CC) \ra \MIC(\underline{X}^\#/\CC).
\]

\begin{defin}[See Definition~\ref{def:regular}]
  An object $E$ of $\MIC(X/\CC)$ is \emph{regular} if the object $\epsast_{\rm univ}(\pi^* E)$ of $\MIC(\underline{X}^\#/\CC)$ is regular (in Deligne's sense). We denote by $\MIC_{\rm reg}(X/\CC)$ the full subcategory of $\MIC(X/\CC)$ consisting of regular objects. 
\end{defin}

Our notion of regularity relies on Deligne's, which makes many of the proofs easier. We give some familiar-looking characterizations of regularity including a ``cut by curves'' criterion. However, in setting up the theory, some extra difficulties have to be overcome: for example, as the functor $\epsast_{\rm univ}(\pi^* (-))$ is not exact and logarithmic connections might not be locally free, it is not obvious that subobjects of regular objects are regular. It is also not completely trivial to show that every connection on a proper $X$ is regular, as $X^\#$ will typically not be proper. 

The main results of this paper can be summarized as follows.

\begin{thm} \label{thm:main}
  Let $X$ be an idealized smooth log scheme over $\CC$. 
  \begin{enumerate}[(a)]
    \item \emph{(Existence Theorem~\ref{thm:existence})} The analytification functor $E\mapsto E_{\rm an}$ induces an equivalence of categories
    \[ 
      \MIC_{\rm reg}(X/\CC) \isomto \MIC(X_{\rm an}/\CC)
    \]
    \item \emph{(Comparison Theorem~\ref{thm:comparison})} Let $E$ be an object of $\MIC_{\rm reg}(X/\CC)$. Then 
    \[
      H^*_{\rm dR}(X, E)\simeq H^*_{\rm dR}(X_{\rm an}, E_{\rm an}).
    \]
    \item \emph{(Regularity Theorem~\ref{thm:regularity})} Let $f\colon Y\to X$ be a proper, smooth, and exact morphism and let $E$ be an object of $\MIC_{\rm reg}(Y/\CC)$. Assume that Conjecture~\ref{conj:log-ss-reduction} (a form of semistable reduction) holds. Then $R^n f_*(E\otimes\Omega^\bullet_{Y/X})$ are regular  for all $n\geq 0$.
  \end{enumerate}
\end{thm}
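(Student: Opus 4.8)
The plan is to deduce all three parts from Deligne's theorems on smooth $\CC$-schemes by transporting every question along the map $\pi\colon X^\#\to X$ and the universal splitting $\eps_{\rm univ}$: since $\epsast_{\rm univ}(\pi^*(-))$ lands in $\MIC(\underline{X}^\#/\CC)$ and preserves regularity by construction, anything we want to know about an object $E$ over $X$ we first pull back to the smooth scheme $\underline{X}^\#$, where Deligne's results apply, and then descend. The two external inputs are Włodarczyk's good compactifications of idealized smooth log schemes and, for the topological reformulation, Ogus's description of log connections via the Kato--Nakayama space.

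For part (b) I would first use Włodarczyk to choose a proper idealized smooth compactification $X\hookrightarrow\ov X$ and produce, for a regular $E$, a \emph{canonical extension} $\ov E$ on $\ov X$ (residues along the log strata with eigenvalues in $\{0\le{\rm Re}(z)<1\}$), then prove that both the algebraic and the analytic log de Rham cohomology of $E$ agree with those of $\ov E$ --- this is the log analogue of Deligne's ``cohomology of the canonical extension'' statement, and it is exactly here that regularity enters, through $\eps_{\rm univ}$ and Deligne applied on $\underline{X}^\#$ and $\underline{\ov X}^\#$. Once we are on the proper $\ov X$, the complex $\ov E\otimes\Omega^\bullet_{\ov X/\CC}$ is a bounded complex of coherent $\cO_{\ov X}$-modules on a proper scheme, so ordinary GAGA gives $H^*_{\rm dR}(\ov X,\ov E)\simeq H^*_{\rm dR}(\ov X_{\rm an},\ov E_{\rm an})$, and the two canonical-extension identifications finish (b). Full faithfulness in (a) is then formal: $\Hom(E,E')$ is the degree-zero de Rham cohomology of the internal Hom, which is again regular (the one subtle point being the case where $E,E'$ are not locally free but supported on log strata, which one handles by induction on dimension), so it agrees with the analytic side by (b).

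The real content of (a) is essential surjectivity: given $F$ in $\MIC(X_{\rm an}/\CC)$, build a regular $E$ over $X$ with $E_{\rm an}\cong F$. I would argue by dévissage along the log strata: if $F$ is supported on a stratum $Z_{\rm an}$ it comes from a connection on the smaller idealized smooth log scheme $Z$ and we induct; the locally free case reduces, by restricting to the open smooth locus $U\subseteq X$, to Deligne's Existence Theorem, which gives a regular connection on $U$ whose canonical extension $\ov E$ over a good compactification of $X$ is locally free with the prescribed residues. One must then check $(\ov E|_X)_{\rm an}\cong F$, i.e.\ that the canonical extension reproduces the monodromy of $F$ around the log locus as well as around the boundary; this is a local computation on each log stratum, performed through $\eps_{\rm univ}$, matching chosen logarithms of local monodromy --- lifted from the Kato--Nakayama space --- with residue eigenvalues, exactly as in Deligne's snc computation but now with no ``divisor'' in the classical sense, and uniqueness of the canonical extension makes $E$ independent of the compactification. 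I expect this matching step to be the main obstacle: reconciling the monodromy datum living on $X_{\rm log}$ with the residue filtration of a genuine log connection is where the good-compactification and Kato--Nakayama ingredients do the essential work, and where the absence of a normal crossings divisor forces one to work stratum by stratum rather than component by component.

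For part (c), granting Conjecture~\ref{conj:log-ss-reduction}, after a base change and a log blow-up I may assume $f$ is as close as possible to a product of standard semistable families, so that over each log stratum $Z$ of $X$ the induced family admits, locally, relative snc compactifications. The Gauss--Manin connection on $R^nf_*(E\otimes\Omega^\bullet_{Y/X})$ is then tested for regularity stratum by stratum: restricting to $Z$ and splitting off the log directions via $\eps_{\rm univ}$, the claim reduces --- by a base-change theorem for log de Rham cohomology along $Z$, which is where properness, smoothness and exactness of $f$ are used --- to the regularity of the classical Gauss--Manin connection of the induced family over $Z$, which is Deligne's Regularity Theorem. The remaining obstacle here is bookkeeping: one must check that forming $R^nf_*$ commutes, compatibly with the Gauss--Manin connection, with the operation ``restrict to $Z$ and apply $\epsast_{\rm univ}$'', i.e.\ that the relevant base change is clean, which should follow from exactness and log smoothness of $f$ together with the flatness guaranteed by semistable reduction.
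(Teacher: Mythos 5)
Your high-level architecture is the paper's: regularity via $\epsast_{\rm univ}(\pi^*(-))$ on $\underline{X}^\#$, W{\l}odarczyk's good compactifications, canonical extensions, Ogus' Kato--Nakayama description, and a cut-by-curves plus semistable-reduction argument for (c). But several load-bearing steps would fail as written. The most serious is essential surjectivity in (a). You propose to restrict $F$ to the open locus $U\subseteq X$ where the log structure is trivial, apply Deligne's Existence Theorem there, and take the Deligne canonical extension over a compactification. For a genuinely idealized $X$ (nonempty ideal $K$ --- e.g.\ a log point $\Spec(P\to\CC)$) the locus $U$ is empty, so there is nothing to restrict to; and even in the snc case the Deligne canonical extension forces residue eigenvalues in $\{0\le{\rm Re}(z)<1\}$ along \emph{all} boundary components, including the log divisor inside $X$, whereas $F$ has arbitrary exponents there, so $(\ov E|_X)_{\rm an}$ will not recover $F$. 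The point of the paper's Theorem~\ref{thm:canonical-ext} is that it normalizes exponents only along the residual face at infinity (the free $\NN^r$-factor of the good embedding) and leaves the log structure of $X$ untouched; its proof is not a reduction to Deligne but goes through Ogus' equivalence with $L(X_{\rm an})$ and graded modules over monoid algebras (Lemma~\ref{lem:canonical-equivariant}). Your d\'evissage premise (``$F$ supported on a stratum comes from $Z$'') also ignores infinitesimal thickenings, and a general $F$ is not visibly an iterated extension of locally free objects and stratum-supported ones. The paper instead deduces essential surjectivity from the square relating $\MIC_{\rm reg}(\ov X/\CC)$, $\MIC(\ov X_{\rm an}/\CC)$ and $\MIC(X_{\rm an}/\CC)$, using Corollary~\ref{cor:regular-on-proper} plus GAGA on the proper $\ov X$ together with the canonical extension.

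There is also a circularity in your ordering: you derive full faithfulness in (a) ``formally'' from (b), but your proof of (b) needs an \emph{algebraic} extension $\ov E$ of the regular $E$ over $\ov X$, and in the paper that extendability (Corollary~\ref{cor:log-extendable}) is itself a byproduct of the Existence Theorem, full faithfulness included (one algebraizes the analytic canonical extension by GAGA and then must identify its restriction with $E$). The paper breaks the circle by proving full faithfulness independently: $\Hom(E,F)=H^0_{\rm dR}(X,\underline{\Hom}(E,F))$, and the degree-zero comparison is obtained from Proposition~\ref{prop:analytic-sections} (an analytic section of a coherent sheaf is algebraic if its restrictions to all thickenings of the log strata are) together with the comparison in the locally-constant-log-structure case. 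Finally, in (c) your claim that $R^nf_*(E\otimes\Omega^\bullet_{Y/X})$ commutes with restriction to a stratum ``by exactness and log smoothness'' is not justified: base change fails in general for proper smooth maps of log schemes (Remark~\ref{rmk:base-change} gives a log blowup counterexample), and the paper has to shrink to a dense open where the Hodge sheaves are locally free, using that regularity is detected after a log dominant map (Corollary~\ref{cor:regular-basic}(b)); moreover restriction to a closed stratum is not exact (the stratification is not flat), which forces the Artin--Rees/Mittag--Leffler d\'evissage through the thickenings $Z^{(k)}$ in Step 1 of the paper's proof. The ``bookkeeping'' you defer is precisely where the work lies.
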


In \cite{OgusRH}, Ogus described the category $\MIC(X_{\rm an}/\CC)$ of analytic log integrable connections in terms of topological data, extending the earlier work \cite{IllusieKatoNakayama,KatoNakayama} in the case of (quasi-)unipotent local monodromy.  More precisely, he constructed an equivalence between the category $\MIC(X/\CC)$ of integrable connections on an idealized smooth log complex analytic space $X$ and the category $L(X)$ of certain constructible sheaves on the Kato--Nakayama space $X_{\rm log}$, compatible with cohomology (see Definition~\ref{def:ogus-Lcoh} and Theorem~\ref{thm:ogus-RH}). Combined with Ogus' results, Theorem~\ref{thm:main} shows that we have an equivalence of categories
\[ 
  \MIC_{\rm reg}(X/\CC) \simeq L(X_{\rm an})
\] 
compatible with cohomology. In \S\ref{ss:locsys}, we use it to describe the category $\cat{LocSys}_\CC(X_{\rm log})$ of complex local systems on $X_{\rm log}$ in terms of regular connections on $X$, assuming that stalks of the sheaf $\oM_X$ are free.

As in \cite{DeligneLNM163}, the proof of the Existence Theorem relies on the ability to extend an analytic connection to a compactification. In other words, we need to construct ``canonical extensions'' in the logarithmic context. Suitable compactifications of log schemes have only recently been constructed by Włodarczyk \cite{Wlodarczyk}. We call an open immersion $j\colon X\hookrightarrow \ov{X}$ of log schemes a \emph{good compactification} (Definition~\ref{def:good-embedding}) if $\ov{X}$ is proper and if \'etale locally the map $j$ looks like the inclusion
\[ 
  \Spec(\CC[P\times \ZZ^r]/(K)) \hookrightarrow \Spec(\CC[P\times\NN^r]/(K))
\]
for some $r\geq 0$, a monoid $P$, and an ideal $K\subseteq P$. Then locally, every idealized smooth log scheme over $\CC$ admits a good compactification (Theorem~\ref{thm:good-comp-exists}). The above special form of the embedding (the fact that the ``extra monoids at infinity'' are free) enables us to define the exponents at infinity (analogues of the \emph{negatives} of the eigenvalues of the residue maps) of an object $\ov{E}$ of $\MIC(\ov{X}/\CC)$. We then show (Theorem~\ref{thm:canonical-ext}) that every object of $\MIC(X_{\rm an}/\CC)$ admits a unique extension to an object of $\MIC(\ov{X}/\CC)$ whose exponents at infinity lie in $\{-1< {\rm Re}(z)\leq 0\}$. The proof relies on Ogus' equivalence between $\MIC(X_{\rm an}/\CC)$ and $L(X_{\rm an})$, which allows us to reduce the question to graded modules over monoid algebras.

The definition of a regular logarithmic connection extends naturally to holonomic logarithmic $D$-modules in the sense of Koppensteiner--Talpo \cite{KoppensteinerTalpo,Koppensteiner}, see Remark~\ref{rmk:D-mod-reg-holo}. This direction may deserve further study.

\medskip

\noindent
{\bf $\heartsuit$ Acknowledgments.} The author thanks Adrian Langer, Katharina H\"ubner, Arthur Ogus, Mattia Talpo, Michael Temkin, and Alex Youcis for useful comments and discussions. This work is a part of the project KAPIBARA supported by the funding from the European Research Council (ERC) under the European Union's Horizon 2020 research and innovation programme (grant agreement No 802787).

\section{Preliminaries}
\label{s:prelim}

\subsection{Idealized smooth log schemes}
\label{ss:idealized-smooth}

We recall some terminology regarding (commutative) monoids and log schemes. For a monoid $P$ (written additively), we write $P\to P^\gp$ for its initial homomorphism into a group, and $P^\times\subseteq P$ for the largest subgroup of $P$. We say that $P$ is \emph{integral} if $P\to P^\gp$ is injective. We say that $P$ is \emph{sharp} if $P^\times = 0$ and write $\ov{P}=P/P^\times$ (quotient of the group action of $P^\times$). We say that $P$ is \emph{saturated} if it is integral and for every $x\in P^\gp$ and $n\geq 1$ such that $nx\in {\rm im}(P\to P^\gp)$ we have $x\in {\rm im}(P\to P^\gp)$. A monoid $P$ is is \emph{fs} (fine and saturated) if it is finitely generated and saturated. We note that if $P$ is fs then $\ov{P}^\gp$ is a free abelian group. An \emph{ideal} of a monoid $P$ is a subset $K\subseteq P$ such that $P+K=K$; it is \emph{prime} if $x+y\in K$ implies $x\in K$ or $y\in K$. For an ideal $K$ of $P$ we write $\sqrt{K}$ for the ideal of elements $x\in P$ such that $nx\in K$ for some $n\geq 1$. A \emph{face} of $P$ is a submonoid $F\subseteq P$ such that $x+y\in F$ implies $x,y\in F$ (equivalently, $P\setminus F$ is a prime ideal). For a face $F$ of $P$, we write $P_F = P + F^\gp\subseteq P$ for the localization of $P$ at $F$, and $P/F$ for the quotient $P_F/F^\gp$.

For a log scheme $X$ we write $\underline{X}$ for its underlying scheme and $\alpha\colon \cM_X\to \cO_X$ for its log structure, where $\cM_X$ is written additively. A morphism of log schemes $f\colon X\to Y$ is \emph{strict} if the map of log structures $f^*\cM_Y\to\cM_X$ is an isomorphism. For a ring $R$, a monoid $P$, and a homomorphism $P\to R$ (with addition on $P$ and multiplication on $R$), we write $\Spec(P\to R)$ for the scheme $\Spec(R)$ endowed with the log structure associated to the map $P\to R$. We set $\mathbf{A}_P = \Spec(P\to \ZZ[P])$; later on, when we work over $\CC$, we shall write $\mathbf{A}_P$ for $\Spec(P\to \CC[P])$. A log scheme $X$ is \emph{fs} if \'etale locally on $\underline{X}$ there exists an fs monoid $P$ and a strict morphism $X\to \mathbf{A}_P$. A morphism of log schemes $Y\to X$ is \emph{strict \'etale} if it is both strict and \'etale, or equivalently if it is strict and the underlying morphism of schemes $\underline{Y}\to\underline{X}$ is \'etale. 

\begin{defin} \label{def:idealized-smooth} 
  \begin{enumerate}[(a)]
    \item Let $P$ be an fs monoid and let $K\subseteq P$ be an ideal. We set
    \[ 
      \mathbf{A}_{P,K} = \Spec(P\to \CC[P]/(K))
    \] 
    for the scheme cut out by $K$ in $\mathbf{A}_P = \Spec(P\to \CC[P])$ endowed with the induced log structure.
    \item Let $X$ be an fs log scheme over $\CC$. We say that $X$ is \emph{idealized smooth} if strict \'etale locally on $X$ there exist an fs monoid $P$, an ideal $K\subseteq P$, and a strict \'etale morphism $X\to \mathbf{A}_{P,K}$.
  \end{enumerate}
\end{defin}

We make the analogous definition for fs log complex analytic spaces, using ``locally'' instead of ``\'etale locally'. Then if $X$ is an idealized smooth log scheme over $\CC$, then its analytification $X_{\rm an}$ is an idealized smooth log complex analytic space.

The following lemma often allows one to reduce to the case when the ideal $K$ is trivial.

\begin{lemma} \label{lem:local-embedding}
  Let $X$ be an idealized smooth log scheme over $\CC$. Then strict \'etale locally on $X$ there exist an fs monoid $P$, an ideal $K\subseteq P$ and a cartesian square
  \[
    \xymatrix{
      X \ar[d] \ar[r] & Y \ar[d] \\
      \mathbf{A}_{P,K} \ar[r] & \mathbf{A}_P
    }
  \]
  where $\mathbf{A}_{P,K}\to \mathbf{A}_P$ is the natural strict closed immersion and where the vertical maps are strict \'etale. In particular, $Y$ is smooth over $\CC$ and $X$ is cut out by a coherent ideal in $\cM_Y$.
\end{lemma}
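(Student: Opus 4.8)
The plan is to start from the local model provided by Definition~\ref{def:idealized-smooth}: strict étale locally, $X$ admits a strict étale morphism $X \to \mathbf{A}_{P,K}$ for some fs monoid $P$ and ideal $K \subseteq P$. The natural candidate for $Y$ is the fiber product $X \times_{\mathbf{A}_{P,K}} \mathbf{A}_P$, formed in the category of log schemes. Since $\mathbf{A}_{P,K} \to \mathbf{A}_P$ is a strict closed immersion (the log structures on both sides come from $P$, and only the underlying schemes differ, $\Spec \CC[P]/(K) \hookrightarrow \Spec \CC[P]$), its base change along the strict étale map $X \to \mathbf{A}_{P,K}$ is again a strict closed immersion $X \hookrightarrow Y$, and the projection $Y \to \mathbf{A}_P$ is strict étale because strict étale morphisms are stable under base change. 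The resulting square is cartesian by construction.

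Next I would check the two assertions in the last sentence. That $Y$ is smooth over $\CC$: the log scheme $\mathbf{A}_P$ is log smooth over $\CC$, and its underlying scheme $\Spec \CC[P]$ is the affine toric variety associated to $P$, which is normal (as $P$ is saturated) but possibly singular. Here one must be slightly careful — ``smooth over $\CC$'' in the displayed conclusion should be read as ``log smooth'', or else one invokes that a strict étale morphism to a log smooth $\CC$-scheme whose underlying scheme is in fact smooth yields a smooth underlying scheme; in any case the relevant point is that $\underline{Y} \to \underline{\mathbf{A}_P}$ is étale, so $Y$ inherits from $\mathbf{A}_P$ whatever regularity we want to transport (and $\mathbf{A}_P$ has trivial idealized structure, i.e.\ empty ideal). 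That $X$ is cut out by a coherent ideal in $\cM_Y$: the ideal $K \subseteq P$ generates an ideal $(K)$ in $\CC[P]$, and under the chart $P \to \cM_{\mathbf{A}_P}$ the closed subscheme $\mathbf{A}_{P,K}$ is cut out by the image of $K$; pulling this back along $Y \to \mathbf{A}_P$ (which is strict, so $\cM_Y$ is the pullback of $\cM_{\mathbf{A}_P}$) exhibits $X$ inside $Y$ as the vanishing locus of the coherent ideal of $\cM_Y$ generated by the image of $K$.

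The main obstacle I anticipate is purely bookkeeping about what ``idealized'' structures are carried along and whether the fiber product should be taken as idealized log schemes or ordinary log schemes — one wants $Y$ to have the \emph{trivial} (empty) idealized structure so that ``$Y$ smooth over $\CC$'' holds in the naive sense, while $X$ retains its idealized structure induced by $K$. Concretely: $\mathbf{A}_{P,K}$ carries the idealized log structure with idealizing ideal the image of $\sqrt{K}$, whereas $\mathbf{A}_P$ is a genuine (non-idealized) log scheme; the fiber product in the category of fs log schemes (forgetting idealized structures) gives the desired $Y$, and one then checks that the original idealized structure on $X$ agrees with the one pulled back from $\mathbf{A}_{P,K}$. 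Once this compatibility is pinned down, everything else is a formal consequence of stability of strict closed immersions and of strict étale morphisms under base change, together with the normality of affine toric varieties.
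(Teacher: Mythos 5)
There is a genuine gap, and it is the central one: your candidate $Y = X \times_{\mathbf{A}_{P,K}} \mathbf{A}_P$ does not exist. A fiber product over $\mathbf{A}_{P,K}$ requires two morphisms \emph{into} $\mathbf{A}_{P,K}$, but $\mathbf{A}_{P,K}\to\mathbf{A}_P$ points the other way, and there is no map $\mathbf{A}_P\to\mathbf{A}_{P,K}$ to base change along. On rings you would need $\ov{B}\otimes_{\ov{A}}A$ with $A=\CC[P]$ and $\ov{A}=A/(K)$, which is meaningless since $A$ is not an $\ov{A}$-algebra. Reading the square correctly, it is $X$ that is the fiber product, $X\simeq \mathbf{A}_{P,K}\times_{\mathbf{A}_P}Y$; the object to be \emph{constructed} is $Y$, and it cannot be obtained from $X$ by any base change. (If you instead base change the closed immersion $\mathbf{A}_{P,K}\hookrightarrow\mathbf{A}_P$ along the composite $X\to\mathbf{A}_P$, you recover $X$ itself, since that composite already factors through the closed subscheme.)

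What the lemma actually requires is a \emph{lifting}: given the strict \'etale morphism $X\to\mathbf{A}_{P,K}$, i.e.\ an \'etale $\ov{A}$-algebra $\ov{B}$, one must produce an \'etale $A$-algebra $B'$ with $B'/KB'\simeq\ov{B}$, and then set $Y=\Spec(B')$ with the log structure pulled back from $\mathbf{A}_P$. Since the closed immersion $\mathbf{A}_{P,K}\hookrightarrow\mathbf{A}_P$ is not nilpotent (e.g.\ for $P=\NN$, $K=\NN_{\geq 1}$ the ideal is $(t)\subseteq\CC[t]$), this is not an instance of the topological invariance of the \'etale site; the paper obtains it from Elkik's approximation theorem, or alternatively by lifting a standard \'etale presentation $(\ov{A}[x]/(f))_g$ to $(A[x]/(\tilde f))_{\tilde g}$ and shrinking so the lift stays \'etale. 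This lifting step is the entire content of the lemma and is absent from your proposal. Your remarks about the idealized structures, about $K$ generating a coherent sheaf of ideals in $\cM_Y$, and about ``smooth'' meaning log smooth (since $\underline{\mathbf{A}_P}$ can be singular) are reasonable, but they are downstream of a $Y$ you have not constructed.
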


\begin{proof}
We may assume that $X$ is affine and that it admits a strict \'etale map $X\to \mathbf{A}_{P,K}$ for $P$ and $K$ as in the statement. Write $A=\CC[P]$, $I=K\cdot A$, $\ov{A} = A/I$, and $X=\Spec(\ov{B})$. Thus $\ov{B}$ is an \'etale $\ov{A}$-algebra. By Artin approximation \cite[Th\'eor\`eme 3]{Elkik} (which in the case of \'etale morphisms can be proved directly lifting the presentation in \stacks{03PC}(3)), there exists an \'etale $A$-algebra $A'$ with $A'/IA' \simeq \ov{A}$ and an \'etale $A'$-algebra $B'$ such that $\ov{B} \simeq B'/IB'$ over $A$. Set $Y = \Spec(B')$ and endow it with the log structure induced by the \'etale map $Y\to \Spec(A) = \mathbf{A}_P$. 
\end{proof}

\subsection{Compactifications of log schemes}
\label{ss:compactifications}

\begin{defin}[Good embedding] \label{def:good-embedding}
  Let $X$ be an idealized smooth log scheme over $\CC$ or an idealized smooth log complex analytic space.
  \begin{enumerate}[(a)]
    \item A \emph{good embedding} is an open immersion $j\colon X\hookrightarrow \ov{X}$ which is locally isomorphic to the inclusion
    \[ 
      \mathbf{A}_{P, K}\times \mathbf{G}_m^r \hookrightarrow \mathbf{A}_{P, K} \times \mathbf{A}^r 
    \]
    for some $r\geq 0$, some fs monoid $P$, and some ideal $K\subseteq P$. 

    \item For a good embedding, we denote by $\mathcal{F}$ the kernel of $\ov{\cM}_{\ov{X}}\to j_* \ov{\cM}_X$. It is a sheaf of faces of $\ov{\cM}_{\ov{X}}$ whose stalks are free monoids, which we call the \emph{residual face} of the good embedding $X\hookrightarrow \ov{X}$.

    \item A \emph{good compactification} of $X$ is a good embedding $j\colon X\hookrightarrow \ov{X}$ where $X$ is proper.
  \end{enumerate}
\end{defin}

Note that if $j\colon X\to \ov{X}$ is a good compactification, then $\cM_{\ov X} = j_* \cM_X$. Indeed, assuming $K=\emptyset$, the above description shows that the log structure is everywhere nontrivial outside $X$, i.e.\ we have $\ov{X}{}^\circ = X^\circ$ where $X^\circ$ denotes the largest open subset on which the log structure is trivial. Therefore
\[ 
  \cM_{\ov X} = \bar u_* \cO_{\ov{X}{}^\circ}^\times = \bar u_* \cO_{X^\circ}^\times = j_* u_* \cO_{X^\circ}^\times = j_* \cM_X,
\]
where $u\colon X^\circ\to X$ and $\bar u\colon \ov{X}{}^\circ \to \ov{X}$ are the inclusions.  

\begin{thm} \label{thm:good-comp-exists}
  Let $X$ be an idealized smooth log scheme over $\CC$. Then there exists a strict \'etale cover $\{X_i\to X\}_{i\in I}$ and good compactifications $X_i\hookrightarrow \ov{X}_i$.
\end{thm}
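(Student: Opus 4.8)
The plan is to reduce the statement to a purely combinatorial question about fs monoids by using the local structure of idealized smooth log schemes, and then invoke Włodarczyk's compactification results. First I would work strict étale locally, so by Definition~\ref{def:idealized-smooth} we may assume $X$ admits a strict étale morphism $X\to \mathbf{A}_{P,K}$ for some fs monoid $P$ and ideal $K\subseteq P$. By Lemma~\ref{lem:local-embedding} we may moreover find a cartesian square realizing $X$ as a strict closed subscheme (cut out by a coherent ideal in $\cM_Y$) of a smooth log scheme $Y$ which is strict étale over $\mathbf{A}_P$; since the formation of good embeddings is compatible with strict closed immersions cut out by ideals of $\cM$ (the residual face and the local model $\mathbf{A}_{P,K}\times\mathbf{G}_m^r\hookrightarrow\mathbf{A}_{P,K}\times\mathbf{A}^r$ are visibly stable under pulling back along $\mathbf{A}_{P,K}\hookrightarrow\mathbf{A}_P$), it suffices to construct good compactifications strict étale locally on the log-smooth scheme $Y$. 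Thus we have reduced to the case $K=\emptyset$, i.e.\ to log schemes that are (log) smooth over $\CC$.

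Next I would reduce to a toric problem. Strict étale locally, a smooth log scheme $Y$ over $\CC$ admits a strict étale morphism $Y\to \mathbf{A}_P$ with $P$ fs; shrinking $Y$, we may assume this morphism factors as a composition of a strict étale morphism onto an open subscheme of $\mathbf{A}_P$ (using that strict étale morphisms to $\mathbf{A}_P$ are, étale-locally on the source, compositions with open immersions into toric charts, cf.\ the standard structure theory). The key point is then: a good compactification of the open subscheme $\mathbf{A}_P\setminus V$ (for $V$ a closed subset supported on torus orbits) pulls back, along any strict étale morphism, to a good compactification, because the local model in Definition~\ref{def:good-embedding}(a) is itself defined strict-étale-locally. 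So it is enough to produce, for the toric variety $\mathbf{A}_P$ with its natural log structure, a good compactification: a proper toric variety $\ov{\mathbf{A}}{}_P$ together with an open toric embedding $\mathbf{A}_P\hookrightarrow\ov{\mathbf{A}}{}_P$ which, in a neighborhood of every point, looks like $\mathbf{A}_{P'}\times\mathbf{G}_m^r\hookrightarrow\mathbf{A}_{P'}\times\mathbf{A}^r$. In fan-theoretic terms, letting $\sigma\subseteq\ov{P}{}^\gp_\RR\oplus(P^\times\otimes\RR)$ be the cone of $P$, this asks for a complete fan $\Sigma$ refining the trivial fan on the cone $\sigma$ such that each added maximal cone, together with its faces lying over $\sigma$, has the product shape $\tau\times(\text{simplicial cone})$ — i.e.\ a smooth-at-infinity toric compactification relative to $P$. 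This is exactly the type of combinatorial subdivision that Włodarczyk \cite{Wlodarczyk} constructs (functorial resolution/compactification of fans compatible with a given face structure); I would cite the relevant theorem there, checking that the output fan satisfies the local product condition at infinity.

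The main obstacle I expect is the last step: verifying that Włodarczyk's compactification genuinely produces an embedding of the form required by Definition~\ref{def:good-embedding}(a), namely that the "extra monoids at infinity" are \emph{free}, i.e.\ that near every boundary point the pair $(\ov{X},X)$ is strict-étale-locally isomorphic to $\mathbf{A}_{P,K}\times(\mathbf{G}_m^r\hookrightarrow\mathbf{A}^r)$ and not merely to some toric pair with a divisorial open complement. This requires the compactifying fan to be simplicial (indeed smooth) \emph{in the directions transverse to} the original cone $\sigma$ while being allowed to keep $\sigma$ itself unsubdivided; one must check Włodarczyk's construction can be run relative to the subfan given by the faces of $\sigma$, leaving them untouched, and that the star of each new ray is a product of $\sigma$-type cones with standard simplicial cones. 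Once this compatibility is in hand, the residual face $\mathcal{F}$ of Definition~\ref{def:good-embedding}(b) is automatically a sheaf of free monoids (the $\NN^r$ coordinate), and patching the local good compactifications over the strict étale cover $\{X_i\to X\}$ — which need not glue to a global compactification, but that is not claimed — completes the proof.
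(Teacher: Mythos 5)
Your reduction to the case $K=\emptyset$ is in the spirit of the paper's proof (the paper takes the scheme-theoretic closure of $X$ in a good compactification $\ov{Y}$ of $Y$ and checks on the local model that the closure of $\mathbf{A}_{P,K}\times\mathbf{G}_m^r$ in $\mathbf{A}_P\times\mathbf{A}^r$ is $\mathbf{A}_{P,K}\times\mathbf{A}^r$); your phrasing via ``pulling back along $\mathbf{A}_{P,K}\hookrightarrow\mathbf{A}_P$'' is imprecise since $\ov{Y}$ carries no map to $\mathbf{A}_P$, but this is repairable. The genuine gap is in the smooth case. You reduce to producing a good toric compactification of $\mathbf{A}_P$ (a fan-completion problem) and claim that ``a good compactification of $\mathbf{A}_P\setminus V$ pulls back, along any strict \'etale morphism, to a good compactification.'' This step fails: a compactification does not pull back along a non-proper morphism. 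If $Y\to\mathbf{A}_P$ is strict \'etale, the fiber product of $Y$ with a compactification $\ov{\mathbf{A}}_P$ over $\mathbf{A}_P$ is just $Y$ again, and an \'etale map does not extend (in any automatic way) to a map from a compactification of $Y$ to $\ov{\mathbf{A}}_P$. Nor can you assume $Y$ is an open subscheme of $\mathbf{A}_P$: an \'etale morphism factors through its open image, but the factor $Y\to\mathrm{im}(Y)$ is still a nontrivial \'etale map, and the underlying scheme of $Y$ is an essentially arbitrary smooth variety, not a toric one.

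This is exactly why the purely combinatorial reduction cannot work and why the paper invokes W\l{}odarczyk's theorem about compactifying \emph{toroidal embeddings} rather than fans: the hard content is to compactify the arbitrary scheme $\underline{Y}$ itself (e.g.\ starting from some Nagata-type compactification) so that the added boundary $E$ has simple normal crossings with the toroidal divisor $D$, strict-\'etale-locally giving the model $(\mathbf{A}_P\times\mathbf{A}^r,\ \partial\mathbf{A}_P\times\mathbf{A}^r+\mathbf{A}_P\times\partial\mathbf{A}^r)$. In the paper this is done by checking that, after localizing so that $(X,D)$ is a strict toroidal embedding with a unique closed stratum, the embedding is extendable in W\l{}odarczyk's sense (\cite[Example~2.2.7]{Wlodarczyk}) and then applying \cite[Theorem~2.2.14]{Wlodarczyk}. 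Your plan, as written, only compactifies the local toric model and never produces a compactification of $Y$, so the main step is missing; to fix it you would have to replace the fan-subdivision argument by (or re-derive) a compactification statement at the level of toroidal embeddings.
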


\begin{proof}
Assume first that $X$ is smooth. We phrase the proof in the language of toroidal embeddings used in \cite{Wlodarczyk}: let $D\subseteq X$ be the divisor where the log structure of $X$ is nontrivial. The question being \'etale local, we may assume that $X$ is affine, that the toroidal embedding $(X, D)$ is strict (i.e., the log structure admits a chart Zariski locally), and that $D$ has a unique closed stratum. By \cite[Example~2.2.7]{Wlodarczyk}, such a toroidal embedding is extendable (\cite[Definition~2.2.2]{Wlodarczyk}). By \cite[Theorem~2.2.14]{Wlodarczyk}, there exists a compactification $j\colon X\to \ov{X}$ such that $(\ov{X}, \ov{D})$ is again a strict toroidal embedding, where $\ov{D}$ denotes the closure of $D$ in $\ov{X}$, and such that the divisor $E = \ov{X}\setminus X$ has \emph{simple normal crossings} with $\ov{D}$ (\cite[Definitions~2.1.13 and 2.1.15]{Wlodarczyk}). The latter means precisely that $(\ov{X}, \ov{D}+E)$ is a strict toroidal embedding which locally has the form 
\[
  (\mathbf{A}_P\times \mathbf{A}^r, (\partial \mathbf{A}_P) \times \mathbf{A}^r + \mathbf{A}_P \times (\partial \mathbf{A}^r)),
\]
where $\partial \mathbf{A}_P$ (resp.\ $\partial \mathbf{A}^r$) denotes the complement of $\mathbf{A}_{P^{\rm gp}}$ (resp.\ $\mathbf{G}_m^r$). This means that the log scheme corresponding to the toroidal embedding $(\ov{X}, \ov{D}+E)$ is a good compactification of $X$.

In the general case, by Lemma~\ref{lem:local-embedding} we may assume that $X$ admits an exact closed immersion $X\hookrightarrow Y$ where $Y$ is smooth and $X$ is cut out by a coherent ideal in $\cM_Y$. Localizing further, we may assume that $Y$ admits a good compactification $Y\hookrightarrow \ov{Y}$. Let $\ov{X}$ be the scheme-theoretic closure of $X$ in $\ov{Y}$, endowed with the log structure induced from $\ov{Y}$. Then $X\hookrightarrow \ov{X}$ is a good compactification as well. To see this, it is enough to observe that the scheme-theoretic closure of $\mathbf{A}_{P,K}\times \mathbf{G}_m^r$ in $\mathbf{A}_P\times \mathbf{A}^r$ equals $\mathbf{A}_{P,K}\times \mathbf{A}^r$.
\end{proof}

\subsection{Splittings of log structures} 
\label{ss:splittings}

The basic invariant of an fs log structure $\cM_X$ on a scheme $X$ is the constructible sheaf of monoids $\oM_X = \cM_X/\cO_X^\times$, giving rise to the \emph{log stratification} of $X$ on whose strata it is locally constant. In this section, we deal with the situation when $\oM_X$ is locally constant, in which case the log structure on $X$ is ``locally split.'' 

The main takeaway point for the rest of the paper is that for an idealized smooth log scheme $X$ over $\CC$, there exists a strict morphism $X^\#\to X$ where $X^\#$ admits a splitting of the log structure, universal for all strict maps $Y\to X$ with $Y$ reduced. Moreover, the underlying scheme of $X^\#$ is smooth over $\CC$.

We treat the case of fs log schemes, but the assertions apply equally well to fs log complex analytic spaces, formal schemes, algebraic spaces \&c., with the meaning of the word \emph{locally} adjusted accordingly.

\begin{defin} \label{def:locconst-hollow-split}
  Let $X$ be an fs log scheme.
  \begin{enumerate}[(a)]
    \item We say that $X$ has \emph{locally constant log structure} if the sheaf $\oM_X$ is locally constant. 
    \item We say that $X$ is \emph{hollow} \cite[\S 2, Definition~4]{OgusHK} if $\alpha(\cM_{X,x}\setminus \cO_{X,x}^\times)=0$ for every geometric point $x$.
    \item A \emph{splitting} of the log structure $\cM_X$ is a homomorphism of sheaves of monoids $\eps\colon \oM_X\to\cM_X$ such that the composition $\oM_X\to\cM_X\to\oM_X$ is the identity.
  \end{enumerate}
\end{defin}

For an integral monoid $P$, splittings of the projection $P\to \ov{P}=P/P^*$ are in bijection with splittings of the associated map of groups $P^\gp\to \ov{P}{}^\gp$. Therefore, if $X$ is an fs log scheme which locally admits a splitting and such that $\oM_X$ is locally constant (as we shall shortly see, the two assumptions turn out to be equivalent), then splittings on $X$ correspond to splittings of the locally split exact sequence of sheaves
\[ 
  \xymatrix{ 1\ar[r] & \cO_X^\times \ar[r] & \cM^{\rm gp}_X \ar[r] & \oM^\gp_X\ar[r] & 0}
\]
and therefore form a torsor under $T_X = \underline{\Hom}(\oM^\gp_X, \cO_X^\times)$. The sheaf $T_X$ is the sheaf of sections of the torus over $X$ with character sheaf $\oM_X^{\rm gp}$ (see \cite[Expos\'e X, \S 5--7]{SGA3}).

\begin{lemma} \label{lem:splitting-basics}
  Let $X$ be an fs log scheme.
  \begin{enumerate}[(a)]
    \item Let $\eps$ be a splitting on $X$, and let $m$ be a local section of $\oM_X$. Then $\alpha(\eps(m))$ is locally on $X$ either a unit or nilpotent.\footnote{In the case of formal schemes, replace \emph{nilpotent} with \emph{topologically nilpotent}. Indeed, in an adic ring the intersection of all open prime ideals is the set of topologically nilpotent elements.}
    \item $X$ has locally constant log structure if and only if $\cM_X$ locally admits a splitting.
    \item If $X$ is hollow, then $X$ has locally constant log structure. Conversely, if $X$ is reduced and has locally constant log structure, then $X$ is hollow.
    \item Let $P$ be an fs monoid and let $K\subseteq P$ be an ideal. Then $\mathbf{A}_{P,K}$ has locally constant log structure if and only if $\sqrt{K} = P\setminus P^\times$, i.e.\ if every element $x$ of $P$ is either invertible or satisfies $nx \in K$ for some $n\geq 1$. It is hollow if and only if $K = P\setminus P^\times$, and in this case we may write $P = Q\oplus M$ with $M$ a group and $Q$ a monoid such that $\mathbf{A}_{P,K} \simeq \Spec(Q\to \CC[M])$ where the map sends $Q^\times$ to $1$ and $Q\setminus Q^\times$ to $0$.
  \end{enumerate}
\end{lemma}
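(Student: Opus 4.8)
The plan is to prove the four statements more or less in order, with (a) feeding into the others. For part (a), given a splitting $\eps$ and a local section $m$ of $\oM_X$, I would work at a geometric point $x$ and use that $\oM_{X,x}$ is a finitely generated sharp monoid: some multiple $nm_x$ (for $n\geq 1$) either lies in $\oM_{X,x}^\times = 0$, forcing $m_x=0$ and $\alpha(\eps(m))$ a unit near $x$, or $nm_x\neq 0$ for all $n$. In a neighborhood, the question of whether $\alpha(\eps(m))(x)$ vanishes is open-and-closed on the log stratum, so after shrinking we may assume $\alpha(\eps(m))$ is either nowhere zero --- hence a unit, since $\alpha(\eps(m))\cdot\alpha(\eps(-m')) $ considerations force it, or more simply since $\eps(m)+\eps(m'')=\eps(m+m'')$ for a complement --- or vanishes at every point of that stratum; in the latter case I would argue $\alpha(\eps(m))$ is nilpotent by reducing to the universal case $\mathbf{A}_P$, where $\alpha(\eps(m))$ is a monomial $t^m$ which is nilpotent in $\CC[P]/(K)$ precisely when $m\in\sqrt K$, and observing that locally constant $\oM_X$ forces every non-unit class into $\sqrt K$ (this is exactly the computation feeding part (d)).

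For part (b): if $\oM_X$ is locally constant, then Zariski-locally (or strict-étale-locally) $\oM_X$ is the constant sheaf $\ov P$ for an fs sharp monoid; the obstruction to splitting the exact sequence $1\to\cO_X^\times\to\cM_X^\gp\to\oM_X^\gp\to 0$ lies in $H^1(X,\underline{\Hom}(\oM_X^\gp,\cO_X^\times))$, and since $\oM_X^\gp$ is a finite free abelian group locally, this $H^1$ vanishes Zariski-locally (it is a sum of copies of $H^1(\cO_X^\times)=\mathrm{Pic}$, which dies locally). Hence a splitting exists locally. Conversely, if $\cM_X$ locally admits a splitting $\eps$, then locally $\cM_X^\gp\simeq\cO_X^\times\oplus\oM_X^\gp$ as sheaves, so $\oM_X^\gp$ is a quotient of a constant-by-$\cO_X^\times$ sheaf by $\cO_X^\times$ and is therefore locally constant, whence so is $\oM_X$ (a submonoid of a locally constant sheaf of groups generated locally by finitely many sections).

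For part (c): if $X$ is hollow, then at each geometric point $\alpha$ kills $\cM_{X,x}\setminus\cO_{X,x}^\times$; I would show $\oM_X$ is locally constant by exhibiting, Zariski-locally near $x$, a chart $P\to\cO_X$ with $P\xrightarrow{\sim}\oM_{X,x}$ such that all non-units of $P$ map to $0$ --- this uses that hollowness propagates to a neighborhood and lets one lift a chart from the stalk without the image of non-units ``spreading'' the stratification. For the converse, suppose $X$ is reduced with locally constant $\oM_X$; by (b) choose a splitting $\eps$, and by (a) each $\alpha(\eps(m))$ is locally a unit or nilpotent, but $X$ reduced forces nilpotent $=0$, so $\alpha$ kills every non-unit section of $\cM_X$, i.e.\ $X$ is hollow.

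For part (d): over $\mathbf{A}_{P,K}$ we have $\oM_{\mathbf{A}_{P,K}}$ stratified by faces of $P$, and it is locally constant iff the generic stratum (the open torus $\Spec\CC[P^\gp]/(K)$) is not met, equivalently iff $t^m$ is nilpotent in $\CC[P]/(K)$ for every non-unit $m$ --- which is precisely $P\setminus P^\times\subseteq\sqrt K$, and the reverse inclusion $\sqrt K\subseteq P\setminus P^\times$ is automatic since a unit cannot lie in a proper ideal. The hollow case is the sharper statement that $t^m=0$ (not just nilpotent) for every non-unit, i.e.\ $P\setminus P^\times\subseteq K$, hence $K=P\setminus P^\times$. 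In that case $P\setminus P^\times$ being an ideal means $P^\times$ is a face, so $P=P^\times\oplus Q$ for the sharp quotient $Q=\ov P$ (a finitely generated abelian group is a direct summand since $\ov P{}^\gp$ is free); writing $M=P^\times$, the algebra $\CC[P]/(K)=\CC[M]$ with the structure map sending $Q\setminus\{0\}$ to $0$ and $M$ to units, giving the stated splitting. I expect the main obstacle to be part (a) in the non-reduced case: pinning down ``nilpotent'' rather than merely ``not a unit'' requires descending to the local toric model and invoking the monomial-ideal computation, and one has to be careful that the local constancy of $\oM_X$ is genuinely used there (on a general $\mathbf{A}_{P,K}$ with $\oM$ not locally constant, $\alpha(\eps(m))$ for a local splitting $\eps$ need not be nilpotent or a unit). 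Everything else is essentially bookkeeping with charts and the vanishing of $\mathrm{Pic}$ locally.
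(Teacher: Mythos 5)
There is a genuine gap, concentrated in part (a) and in the converse direction of part (b); the rest (the forward direction of (b) via the cohomological obstruction in $H^1(X,\underline{\Hom}(\oM_X^\gp,\cO_X^\times))$, part (c), and part (d), whose proof the paper omits) is fine in outline. For (a), your nilpotence argument is circular: you invoke ``locally constant $\oM_X$ forces every non-unit class into $\sqrt K$,'' but local constancy is not a hypothesis of (a) --- it is precisely what part (b) \emph{deduces from} (a). Moreover, the ``reduction to the universal case $\mathbf{A}_P$'' is not available: a general fs log scheme only admits a strict chart $X\to\mathbf{A}_P$ with arbitrary underlying scheme, there is no ideal $K$ in sight, and whether the pullback of $t^m$ is nilpotent is a property of $\cO_X$, not of a monoid algebra. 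Finally, ``vanishes at every point of the log stratum'' does not give nilpotence; nilpotence means vanishing at every point of a neighborhood in $X$, including generizations lying in more open strata, and controlling those generizations is exactly what the splitting is for. The paper's argument is a short local-ring computation that you are missing: reduce to $X=\Spec(R)$ strictly henselian with closed point $x$, put $s=\eps(m)$ and $f=\alpha(s)$; if $f\notin\mathfrak{p}$ for some prime $\mathfrak{p}$ (point $y$), then $s$ becomes a unit at $y$, so $\ov s\mapsto 0$ in $\oM_{X,y}$, so by naturality of $\eps$ the section $\eps(\ov s)=us$ maps to $1$ in $\cM_{X,y}$; hence $uf-1$ dies in $\cO_{X,y}$, is therefore a non-unit of the local ring $R$, and so $uf$ (hence $f$) is a unit. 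Thus $f$ is either a unit or lies in every prime, i.e.\ is nilpotent.

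The converse of (b) has the same missing ingredient. The assertion that a splitting makes ``$\oM_X^\gp$ a quotient of a constant-by-$\cO_X^\times$ sheaf, hence locally constant'' is unsupported: the decomposition $\cM_X^\gp\simeq\cO_X^\times\oplus\oM_X^\gp$ holds for \emph{any} sheaf $\oM_X^\gp$ once a splitting exists and carries no information about local constancy; nowhere does your argument use the structure map $\alpha$. That it must be used is shown by $X=\mathbf{A}^1$ with its standard log structure: there $\cM_X$ admits no splitting near the origin, and the only obstruction is the one detected by $\alpha$ (a splitting would force $t$ to be a unit or nilpotent). The correct route, which is the paper's, is to extract from the proof of (a) that the cospecialization maps $\oM_{X,x}\to\oM_{X,y}$ kill no nonzero section (a class dying at a generization $y$ would force $\alpha(\eps(m))$ to be a unit at $x$), hence are bijective, and then to cite the standard fact that a constructible sheaf of monoids with bijective cospecialization maps is locally constant. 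I recommend you rebuild (a) along the local-ring argument above and then derive the converse of (b) from it; your treatments of the remaining parts can stand with minor polishing.
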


\begin{proof}
(a) We may assume $X=\Spec(R)$ for a strictly henselian local ring $R$, with closed point $x$. Let $s\in \cM_{X,x}$ and let $f=\alpha(s) \in \cO_{X,x}=R$. Since nilpotent elements are the intersection of all prime ideals, we need to show that if $f\notin \mathfrak{p}$ for some prime ideal $\mathfrak{p}\subseteq R$, then $f$ is a unit. Let $y\in X$ be the point corresponding to such a $\mathfrak{p}$. We look at the commutative square
\[ 
  \xymatrix{
    \oM_{X, x} \ar[d]_\eps \ar[r] & \oM_{X,y} \ar[d]^\eps  \\
    \cM_{X,x} \ar[r] & \cM_{X,y}.
  }
\]
Let $\ov{s}\in \oM_{X,x}$ be the image of $s$. We have $\eps(\ov{s}) = us$ for some $u\in \cO^\times_{X,x}=R^\times$. As $\alpha(s)=f\notin \mathfrak{p}$, i.e.\ the image of $\alpha(s)$ in $\cO_{X,y}$ is a unit, the image of $s$ in $\cM_{X,y}$ is in $\cO^\times_{X,y}$, so $\ov{s}$ maps to $0$ in $\oM_{X,y}$. Therefore $\eps(\ov{s})=us$ maps to $1$ in $\cM_{X,y}$, and $uf$ maps to $1$ in $\cO_{X,y}$. But then $uf-1$ maps to zero in $\cO_{X,y}$, and hence is non-invertible in $\cO_{X,x}$, so $f$ is invertible.

(b) Suppose that $X$ is locally split. By the proof of (a), we see that if $x$ specializes to $y$, then $\oM_{X,x}\to \oM_{X,y}$ is bijective. By \cite[\S 2, Lemma~5]{OgusHK}, $\oM_X$ is locally constant. Conversely, suppose that $\oM_X$ is locally constant, and we want to produce a splitting locally on $X$. To this end, we pick a point $x$ and we may assume that $\oM_X$ is constant with value $P$ and $X$ admits a neat chart \cite[Chapter II, Definition 2.3.1]{OgusBook} at $x$, i.e.\ a chart $P\to\Gamma(X, \cM_X)$. This chart defines a splitting $\oM_{X}\to \cM_X$.

(c) Follows from (a) and (b).

(d) Omitted.
\end{proof}

Note that splittings of the log structure can be pulled back along strict morphisms. That is, if $f\colon Y\to X$ is a strict morphism of fs log schemes and $\eps$ is a splitting of $\cM_X$, we have a natural map  
\[ 
  f^*(\eps) \colon \oM_Y = f^* \oM_X \ra (\text{pullback under $f$ of $\cM_X$ as a sheaf}) \ra f^*\cM_X = \cM_Y
\]
which is a splitting of $\cM_Y = f^*\cM_X$. This construction gives rise to the following functors $\cat{Sch}_{\underline{X}}^{\rm op} \to \cat{Sets}$: 
\begin{align*}
  \widehat{X}^\#(Y\xto{f} X) &= \{ \text{splittings of $f^*\cM_X$} \}, \\
  \widehat{X}^\flat(Y\xto{f} X) &= 
    \begin{cases}
      \{\ast\} & \text{if $f^*\cM_X$ is locally constant} \\
      \emptyset & \text{otherwise}
    \end{cases} \\
  X^\#(Y\xto{f} X) 
    &= \begin{cases}
      \{ \text{splittings of $f^*\cM_X$} \} & \text{if $(Y, f^*\cM_X)$ is hollow} \\
      \emptyset & \text{otherwise}
    \end{cases} \\ 
  X^\flat(Y\xto{f} X) &= 
    \begin{cases}
      \{\ast\} & \text{if $(Y, f^*\cM_X)$ is hollow} \\
      \emptyset & \text{otherwise}
    \end{cases} 
\end{align*}
We have natural transformations forming a cartesian square of functors
\[ 
  \xymatrix{
    X^\# \ar[r] \ar[d] & \widehat{X}^\# \ar[d] \\
    X^\flat \ar[r] & \widehat{X}^\flat.
  }
\]
We shall shortly recognize $X^\flat$ as the ``log stratification'' of $X$. The space $X^\#$, forming a torus torsor over $X^\flat$, will play a key role in this paper. The spaces $\widehat{X}^\#$ and $\widehat{X}^\flat$ will not be used in what follows.

\begin{remark} \label{rmk:Xsplit-functorial}
Let $X$ be a scheme and let $\mathcal{N}\to \cM$ be a morphism of log structures on $X$. We get a commutative diagram of sheaves of abelian groups
\[ 
  \xymatrix{
    1\ar[r] & \cO_X^\times \ar@{=}[d] \ar[r] & \mathcal{N}^\gp \ar[r] \ar[d] & \ov{\mathcal{N}}^\gp\ar[r]\ar[d] & 0 \\
    1\ar[r] & \cO_X^\times \ar[r] & \cM^\gp \ar[r] & \oM^\gp \ar[r] & 0. 
  }
\]
Therefore the right square is a pullback, and consequently for every splitting $\eps$ of $\cM$ there exists a unique splitting $\eps'$ of $\mathcal{N}$ for which the square
\[ 
  \xymatrix{
    \ov{\mathcal{N}} \ar[r]^{\eps'} \ar[d] & \mathcal{N} \ar[d] \\
    \oM \ar[r]_\eps & \cM 
  }
\]
commutes. If $f\colon Y\to X$ is a map of log schemes and $\eps_X$ and $\eps_Y$ are splittings on $X$ and $Y$ respectively, we shall say that $f$ is \emph{compatible} with $\eps_X$ and $\eps_Y$ if the splitting of $f^*\cM_X$ induced by $\eps_X$ by pullback is equal to the splitting of $f^*\cM_X$ induced by $\eps_Y$ as above.

This discussion implies that the four functors we have introduced are functorial in the sense that a map $Y\to X$ of fs log schemes induces maps $Y^\#\to X^\#$, $Y^\flat\to X^\flat$ etc. Moreover, the map $Y^\#\to X^\#$ is compatible with the universal splittings.
\end{remark}

\begin{prop} \label{prop:splitting-functors}
  Let $X$ be an fs log scheme. 
  \begin{enumerate}[(a)]
    \item The functors $\widehat{X}^\#$ and $\widehat{X}^\flat$ are representable by formal schemes affine over $X$, and the map $\widehat{X}^\#\to \widehat{X}^\flat$ is smooth and affine.
    \item The functors $X^\#$ and $X^\flat$ are representable by schemes of finite presentation and affine over $X$. The map $X^\#\to X^\flat$ is smooth and affine (a torsor under a torus over $X^\flat$) and the map $X^\flat\to X$ is a universally bijective monomorphism (the disjoint union of a locally finite family of finitely presented locally closed subschemes of $X$).
  \end{enumerate}
\end{prop}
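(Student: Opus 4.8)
The plan is to reduce everything to the local model $X=\mathbf{A}_P$ and to compute the four functors there explicitly. All four functors depend only on the log structure $\cM_X$, and all of the properties to be proved — affineness over $X$, being a monomorphism, smoothness, and the concrete description of $X^\flat$ — may be checked strict \'etale (indeed fppf) locally on $\underline{X}$. Since $X$ is fs, strict \'etale locally on $\underline{X}$ there is a strict morphism $X\to\mathbf{A}_P$ with $P$ an fs monoid; writing $h\colon\underline{X}\to\underline{\mathbf{A}}_P$ for the underlying map of schemes, we have $\cM_X=h^*\cM_{\mathbf{A}_P}$, so for $g\colon T\to\underline{X}$ a splitting of $g^*\cM_X$ is the same thing as a splitting of $(h\circ g)^*\cM_{\mathbf{A}_P}$, and $g^*\cM_X$ is hollow (resp.\ has locally constant $\oM$) if and only if $(h\circ g)^*\cM_{\mathbf{A}_P}$ is. Unwinding the definitions, this gives cartesian squares identifying $X^\#=\underline{X}\times_{\underline{\mathbf{A}}_P}\mathbf{A}_P^\#$, and likewise for $X^\flat$, $\widehat{X}^\#$, $\widehat{X}^\flat$. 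Hence it suffices to prove the statement for $\mathbf{A}_P$; the general case then follows by fppf descent for affine morphisms, together with the fact that being a monomorphism, a locally closed immersion, or a smooth morphism is local on the target.

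The heart of the matter is the identification of $\mathbf{A}_P^\flat$ with $\coprod_F O_F$, the disjoint union over the (finitely many) faces $F$ of $P$ of the torus orbits $O_F\hookrightarrow\mathbf{A}_P$. Here $O_F\simeq\Spec\CC[F^\gp]$ is the open subscheme of the closed subscheme $\Spec\bigl(\CC[P]/(e^p : p\notin F)\bigr)\simeq\Spec\CC[F]$ of $\mathbf{A}_P$ where the $e^p$, $p\in F$, are invertible; in particular it is a finitely presented locally closed subscheme, affine over $\mathbf{A}_P$. For $g\colon T\to\mathbf{A}_P$ corresponding to $\psi\colon P\to\Gamma(T,\cO_T)$, the chart $P\to\cM_{\mathbf{A}_P}$ shows that $g^*\cM_{\mathbf{A}_P}$ is hollow precisely when, Zariski locally on $T$, each $\psi(p)$ is either invertible or zero; the set $F$ of those $p$ with $\psi(p)$ invertible is then a face, $g$ factors through $O_F$, and conversely the log structure pulled back to $O_F$ is hollow. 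The orbits $O_F$ are pairwise disjoint and cover $\mathbf{A}_P$ set-theoretically, so $\coprod_F O_F\to\mathbf{A}_P$ is a monomorphism; it is finitely presented and affine because there are finitely many faces; and it is universally bijective because every homomorphism from $P$ to a field has a well-defined support face. Base change along $h$ yields all the assertions about $X^\flat$.

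By construction $\mathbf{A}_P^\#$ lies over $\mathbf{A}_P^\flat=\coprod_F O_F$ with fibre over $O_F$ the set of splittings of $\cM_{\mathbf{A}_P}|_{O_F}$. Since $\oM|_{O_F}$ is locally constant (with stalks $P/F$), the discussion recalled before the statement identifies $\mathbf{A}_P^\#|_{O_F}$ with the torsor, under the torus $T_{O_F}=\underline{\Hom}(\oM^\gp|_{O_F},\cO^\times)$ over $O_F$, of splittings of $1\to\cO^\times\to\cM^\gp\to\oM^\gp\to0$. A torsor under a torus is Zariski-locally trivial, hence representable by a scheme smooth and affine over the base (affineness and smoothness being fppf-local on the target). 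Taking the disjoint union over $F$ shows that $\mathbf{A}_P^\#$ is representable, affine over $\mathbf{A}_P$, and that $\mathbf{A}_P^\#\to\mathbf{A}_P^\flat$ is a torsor under the torus $\coprod_F T_{O_F}$, in particular smooth and affine; base change along $h$ gives part (b). Part (a) is proved the same way with ``hollow'' replaced by ``$\oM$ locally constant'': $g^*\oM_{\mathbf{A}_P}$ is locally constant exactly when, Zariski locally on $T$, there is a face $F$ with $\psi(p)$ invertible for $p\in F$ and $\psi(p)$ nilpotent for $p\notin F$, i.e.\ when $g$ factors locally through the formal completion $\widehat{(\mathbf{A}_P)}_{/O_F}$ of $\mathbf{A}_P$ along $O_F$ (formed inside the open subscheme where the $e^p$, $p\in F$, are invertible, in which $O_F$ is closed). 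Thus $\widehat{\mathbf{A}_P}^\flat=\coprod_F\widehat{(\mathbf{A}_P)}_{/O_F}$ is a formal scheme affine over $\mathbf{A}_P$, and $\widehat{\mathbf{A}_P}^\#\to\widehat{\mathbf{A}_P}^\flat$ is once more the torsor of splittings, smooth and affine.

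I expect the main obstacle to be the second and fourth paragraphs above: translating the conditions ``hollow'' and ``$\oM$ locally constant'' on $g^*\cM_{\mathbf{A}_P}$ precisely into the stated factorization properties of $g$, which requires some care in passing between pointwise (stalk-wise) conditions and conditions holding on a Zariski — respectively formal — neighborhood, and in fixing the scheme structures so that $\mathbf{A}_P^\flat$ really is the disjoint union of finitely presented locally closed subschemes. The remaining ingredients — representability of torus torsors and fppf descent along $h$ — are routine.
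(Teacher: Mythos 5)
Your proposal is correct and follows essentially the same route as the paper: reduce by strict base change and descent to $\mathbf{A}_P$, identify $X^\flat$ (resp.\ $\widehat{X}^\flat$) with the disjoint union over faces $F$ of the orbits $O_F$ (resp.\ the formal completions along them) by translating ``hollow'' (resp.\ ``$\oM$ locally constant'') into ``each $\psi(p)$ is locally a unit or zero'' (resp.\ ``a unit or nilpotent''), and then realize $X^\#\to X^\flat$ as the torsor of splittings under the torus $\underline{\Hom}(\oM^\gp,\mathbf{G}_m)$. The only cosmetic difference is that the paper works with $\ZZ[P]$ rather than $\CC[P]$ in this generality, and gives the explicit (non-canonical) trivialization $X^\#\simeq\coprod_F X_F\times\mathbf{A}_{(P/F)^{\rm gp}}$ where you instead invoke Zariski-local triviality of torsors under split tori.
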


\begin{example} \label{ex:A1split}
Let $X = \mathbf{A}^1 = \Spec(\NN\xto{1\mapsto t} \CC[t])$. Then
\begin{align*}
  \widehat{X}^\flat & \quad\simeq\quad \Spec(\CC[t,t^{-1}]) \quad\sqcup \quad\op{Spf}(\CC\llbracket t\rrbracket), \\
  \widehat{X}^\# & \quad\simeq\quad \Spec(\CC[t,t^{-1}]) \quad\sqcup\quad \op{Spf}(\CC\llbracket t\rrbracket\langle y, y^{-1}\rangle), \\
  X^\flat & \quad\simeq\quad \Spec(\CC[t, t^{-1}]) \quad\sqcup\quad \Spec(\CC), \\
  X^\# & \quad\simeq\quad \Spec(\CC[t, t^{-1}]) \quad\sqcup\quad \Spec(\CC[y, y^{-1}]). 
\end{align*}
Here, $\Spec(A)$ is treated as the formal spectrum of $A$ equipped with the discrete topology, and $\CC\llbracket t\rrbracket\langle y, y^{-1}\rangle$ is the $t$-adic completion of $\CC[t, y, y^{-1}]$.
\end{example}

\begin{example} \label{ex:APsplit}
Let $X = \mathbf{A}_{P}$ for an fs monoid $P$. For a face $F$ of $P$, we denote by $X_F\hookrightarrow X$ the strict locally closed immersion corresponding to the map
\begin{equation} \label{eqn:P-Fgp-map}
  \CC[P] \ra \CC[F^{\rm gp}], \qquad
  p \mapsto \begin{cases}
    p & \text{if $p\in F$}, \\
    0 & \text{otherwise}.
  \end{cases}
\end{equation}
Thus $\underline{X}_F \simeq \mathbf{A}_{F^\gp}$. As we shall prove shortly, we have
\begin{align}
  X^\# &\simeq \coprod_F X_F\times \mathbf{A}_{(P/F)^{\rm gp}}, \label{eqn:Xsplit-formula}  \\
  X^\flat &\simeq \coprod_F X_F, \label{eqn:Xstrat-formula}
\end{align}
the coproducts taken over all faces $F\subseteq P$. The map $X^\#\to X^\flat$ is the disjoint union of the projections $X_F\times \mathbf{A}_{(P/F)^{\rm gp}}\to X_F$.
Note that as $P/F$ is sharp, $(P/F)^\gp$ is a free abelian group, so the short exact sequence $0\to F^\gp\to P^\gp\to (P/F)^\gp\to 0$ splits. Therefore $\underline{X}^\#$ is non-canonically isomorphic to the disjoint union of copies of the torus $X^* = \mathbf{A}_{P^\gp}$ indexed by faces of the monoid $P$. However, the log structures on the components depend on $F$.
\end{example}

\begin{remark}
The scheme $X^\#$ is a toy model of the Kato--Nakayama space $X_{\rm log}$ of $X$. Its class in the Grothendieck ring of varieties $K_0(\cat{Var}_\CC)$ seems to be the ``correct'' class associated to the log scheme~$X$.
\end{remark}

\begin{proof}[Proof of Proposition~\ref{prop:splitting-functors}]
We first deal with the case $X = \mathbf{A}_P = \Spec(P\to\ZZ[P])$ and reduce to this case afterwards. We will use the notation of Example~\ref{ex:APsplit}.

We handle $\widehat{X}^\flat$ first. For a face $F$ of $P$, let $P_F = P - F \subseteq P^\gp$ be the localization of $P$ at $F$, and let $K_F = P_F \setminus F^\gp$, which generates the kernel of \eqref{eqn:P-Fgp-map}. Let $\mathfrak{X}_{F}$ denote the formal spectrum of the $K_F$-adic completion of $\ZZ[P_F]$. We claim that $\widehat{X}^\flat$ is represented by $\coprod_F \mathfrak{X}_{F}$. Since on the image of $X_F\to X$ the sheaf $\oM_X$ is constant with value $P/F$, the log structure on each $\mathfrak{X}_{F}$ is locally constant. Let $Y\to X$ be a map such that the induced log structure on $Y$ is locally constant. In particular, locally on $Y$, the pull-back of every $p\in P$ is either nilpotent or a unit (Lemma~\ref{lem:splitting-basics}(a)). Since $P$ is finitely generated, working locally on $Y$, we may assume that every $p$ is either nilpotent or a unit or $Y$. Let $F\subseteq P$ be the set of $p\in P$ such that $p$ is a unit on $Y$. Clearly, $F$ is a face of $P$, the map $P\to \Gamma(Y, \cO_Y)$ factors through $P_F$ and sends a power of $K_F$ to zero. Therefore $Y\to X$ factors uniquely through $\mathfrak{X}_{F}$.

The same proof shows that if $Y\to\mathbf{A}_P$ is such that the induced log structure on $Y$ is hollow, then the nilpotent functions above have to vanish, forcing the map to factor through $\Spec(\ZZ[P_F]/(K_F)) = X_F$. Therefore $X^\flat$ is represented by the disjoint union of $X_F$ taken over all faces $F$ of $P$, and we proved \eqref{eqn:Xstrat-formula}. 

Turning to $X^\#$ and $\widehat{X}^\#$, we need to show that if $X$ has locally constant log structure to begin with, then the functor $\widehat{X}^\#$ is representable by a smooth scheme over $X$. But, the discussion preceding Lemma~\ref{lem:splitting-basics} shows that it is a torsor under the torus $\underline{\Hom}(\oM^\gp_X, \mathbf{G}_m)$ and hence is representable (e.g.\ by \cite[Theorem~4.3(a)]{MilneEtale}). A more direct analysis shows that in the case $X = \Spec(P\to \ZZ[F^\gp])$ as in Example~\ref{ex:APsplit}, we have $\Gamma(X, \cM_X) \simeq (P/F)\times F^\gp$. This gives the isomorphism \eqref{eqn:Xsplit-formula}.

Finally, we deal with the general case. Suppose first that $X'\to X$ is a strict \'etale surjection and that the statement holds for $X'$. Since the formation of the four functors $\widehat{X}^\#$, $\widehat{X}^\flat$, $X^\#$, $X^\flat$ commutes with strict base change, we deduce that they are representable by (formal) algebraic spaces over $X$ with the required properties, and it remains to show that they are in fact (formal) schemes. However, $X^\flat\to X$ is affine, and hence $X^\flat$ is a scheme by \'etale descent for quasi-coherent sheaves. Similarly, $\widehat{X}^\flat$ is a formal scheme. Finally, $X^\#\to X^\flat$ and $\widehat{X}^\#\to \widehat{X}^\flat$ are affine as well, so $X^\#$ is a scheme and $\widehat{X}^\#$ is a formal scheme. Therefore the statement holds for $X$.  

By the above, we may reduce to the case when $X$ admits a strict morphism $X\to \mathbf{A}_{P}$ for some monoid $P$, and since the statement holds for $\mathbf{A}_P$ and the four functors commute with base change, the statement holds for $X$ as well.
\end{proof}

Although $X^\#$ and $X^\flat$ were defined (or described) as schemes over $X$, we endow them with the log structures pulled back from $X$. We denote by $\eps_{\rm univ}$ the canonical splitting of the log structure on $X^\#$. 

\begin{example} \label{ex:AP-univ-splitting}
Let $X = \Spec(P\to \CC)$ be the log point associated to the sharp fs monoid $P$. Then $X^\# = \Spec(P\to \CC[P^\gp])$ where $P\setminus P^\times$ maps to zero. This chart corresponds to an ``obvious'' splitting $\eps$, and $\Gamma(X^\#, \cM_{X^\#})\simeq P\times P^\gp$ with $\eps$ mapping $p$ to $(p, 0)$. This is not the same as the universal splitting $\eps_{\rm univ}$, which maps $p$ to $(p, p)$. 
\end{example}

Using the local picture (Example~\ref{ex:APsplit}) we deduce the following.

\begin{cor} \label{cor:Xsplit-smooth}
  Let $X$ be an idealized smooth log scheme over $\CC$. Then, the underlying schemes of $X^\#$ and $X^\flat$ are smooth schemes over $\CC$. 
\end{cor}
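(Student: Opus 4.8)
The plan is to reduce everything, by strict \'etale localization, to the explicit local model $\mathbf{A}_{P,K}$ and then read off smoothness directly from Example~\ref{ex:APsplit}. Smoothness over $\CC$ may be checked strict \'etale locally on $X$, and the formation of $X^\#$ and $X^\flat$ commutes with strict base change. Hence, given a strict \'etale cover $\{X_i\to X\}$ with each $X_i$ admitting a strict \'etale map $X_i\to\mathbf{A}_{P_i,K_i}$ (Definition~\ref{def:idealized-smooth}), the base-changed maps $\{\underline{X_i}^\#\to\underline{X}^\#\}$ and $\{\underline{X_i}^\flat\to\underline{X}^\flat\}$ are \'etale covers, and moreover $\underline{X_i}^\#$ is \'etale over $\underline{\mathbf{A}_{P_i,K_i}}{}^\#$ (and similarly for $\flat$). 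So it suffices to prove the statement for $X=\mathbf{A}_{P,K}$.

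For $X=\mathbf{A}_{P,K}$, I would exploit the natural strict closed immersion $\mathbf{A}_{P,K}\hookrightarrow\mathbf{A}_P=:Y$. Since it is strict, its underlying closed immersion $\iota\colon\underline{X}\hookrightarrow\underline{Y}$ satisfies $\iota^*\cM_Y\simeq\cM_X$, and the functor-of-points descriptions of $X^\#$, $X^\flat$ then give $\underline{X}^\#\simeq\underline{X}\times_{\underline{Y}}\underline{Y}^\#$ and $\underline{X}^\flat\simeq\underline{X}\times_{\underline{Y}}\underline{Y}^\flat$. Example~\ref{ex:APsplit} identifies $\underline{Y}^\flat\simeq\coprod_F\mathbf{A}_{F^\gp}$ and $\underline{Y}^\#\simeq\coprod_F\mathbf{A}_{F^\gp}\times\mathbf{A}_{(P/F)^\gp}$, the coproducts running over faces $F$ of $P$, with $\underline{Y_F}\simeq\mathbf{A}_{F^\gp}\hookrightarrow\underline{Y}$ the strict locally closed subscheme recalled there. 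A short computation with that description shows that the pullback of $\underline{Y_F}$ along $\iota$ is empty if $K\cap F\neq\emptyset$ (a monomial of $K$ lying in $F$ becomes a unit while it is also zero in $\CC[P]/(K)$) and is all of $\underline{Y_F}$ if $K\cap F=\emptyset$ (every monomial of $K$ maps to $0$ in $\CC[F^\gp]$). Therefore
\[
  \underline{X}^\flat\simeq\coprod_{F:\,K\cap F=\emptyset}\mathbf{A}_{F^\gp},
  \qquad
  \underline{X}^\#\simeq\coprod_{F:\,K\cap F=\emptyset}\mathbf{A}_{F^\gp}\times\mathbf{A}_{(P/F)^\gp},
\]
which are disjoint unions of tori (more precisely, diagonalizable group schemes over $\CC$), hence smooth over $\CC$.

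I do not expect a genuine obstacle here. The two points needing a little care are the identification $\underline{X}^\#\simeq\underline{X}\times_{\underline{Y}}\underline{Y}^\#$ (just the compatibility of these functors with strict base change) and the elementary bookkeeping of which faces $F$ of $P$ meet $K$. If one prefers to avoid introducing $\mathbf{A}_P$, an alternative is to describe $\underline{\mathbf{A}_{P,K}}{}^\flat$ directly, as in the proof of Proposition~\ref{prop:splitting-functors}, as the disjoint union over faces $F$ with $K\cap F=\emptyset$ of the strict locally closed subschemes $X_F\cap\mathbf{A}_{P,K}\simeq\mathbf{A}_{F^\gp}$, and then to deduce smoothness of $\underline{\mathbf{A}_{P,K}}{}^\#$ over $\CC$ from Proposition~\ref{prop:splitting-functors}(b): the map $\underline{X}^\#\to\underline{X}^\flat$ is a torsor under a torus, and a torus torsor over a smooth $\CC$-scheme is smooth over $\CC$.
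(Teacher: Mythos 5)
Your proof is correct and follows essentially the same route as the paper: the paper's (very terse) proof is precisely "reduce by strict \'etale localization and strict base change to the local model and read off smoothness from the explicit description in Example~\ref{ex:APsplit}," which is what you carry out, including the correct bookkeeping that only the strata $X_F$ with $F\cap K=\emptyset$ survive in $\mathbf{A}_{P,K}$. Your alternative ending via Proposition~\ref{prop:splitting-functors}(b) (torus torsor over the smooth $X^\flat$) is also fine and consistent with the paper's framework.
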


The following definition will be used in \S\ref{ss:regularity}.

\begin{defin} \label{def:log-dominant}
  A quasi-compact morphism of fs log schemes $f\colon Y\to X$ is \emph{log dominant} if the following conditions are satisfied:
  \begin{enumerate}
    \item $f$ is log injective, i.e.\ the maps $\oM_{X,f(\ov y)}\to \oM_{Y,\ov y}$ are injective for all geometric points $\ov{y}\to Y$,
    \item the restriction of $f$ to every log stratum of $X$ is dominant, or equivalently, if the map $Y^\flat\to X^\flat$ is dominant.
  \end{enumerate}
\end{defin}

We note that for a log dominant morphism $Y\to X$, the induced morphism $Y^\#\to X^\#$ is a dominant morphism of schemes.

\subsection{Algebraic and analytic sections}
\label{ss:GAGA}

Let $X$ be a scheme locally of finite type over $\CC$, let $X_{\rm an}$ be the associated complex analytic space, and let $u\colon X_{\rm an}\to X$ be the canonical map of locally ringed spaces. For a quasi-coherent $\cO_X$-module $\mathcal{F}$, we write $\mathcal{F}_{\rm an}$ its pull-back $u^* \mathcal{F}$ to $X_{\rm an}$. The functor $\mathcal{F}\mapsto \mathcal{F}_{\rm an}$ is exact, and $\mathcal{F}_{\rm an}$ is a coherent analytic sheaf if $\mathcal{F}$ is coherent. We say that an analytic section $s\in \Gamma(X_{\rm an}, \mathcal{F}_{\rm an})$ is \emph{algebraic} (see \cite[II.2]{DeligneLNM163}) if it lies in the image of the (injective) map
\[ 
  u^* \colon \Gamma(X, \mathcal{F}) \ra \Gamma(X_{\rm an}, \mathcal{F}_{\rm an}).
\]
The following result establishes a useful criterion for algebraicity of analytic sections of coherent sheaves.

\begin{prop} \label{prop:analytic-sections}
  Let $X$ be a scheme locally of finite type over $\CC$ and let $\mathcal{F}$ be a coherent $\cO_X$-module. Let $\{Z_i\}_{i\in I}$ be a locally finite family of locally closed subschemes of $X$ such that the union of $|Z_i|$ equals $|X|$. For $n\geq 0$, we denote by $Z_i{}^{(n)}$ the $n$-th order thickening of $Z_i$ in $X$. Let $s\in \Gamma(X_{\rm an}, \mathcal{F}_{\rm an})$ be an analytic section of $\mathcal{F}$. If for every $i\in I$ and every $n\geq 0$, the restriction of $s$ to $Z_i{}^{(n)}$ is algebraic, then so is $s$.
\end{prop}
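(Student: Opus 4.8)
The plan is to reduce to the case where $X$ is affine and then exploit the fact that algebraicity of a section of a coherent sheaf over an affine base can be detected by a finiteness/density argument. First I would note that both the algebraicity hypothesis and the desired conclusion are local on $X$: a section $s \in \Gamma(X_{\mathrm{an}}, \mathcal{F}_{\mathrm{an}})$ is algebraic if and only if its restriction to each member of an open cover of $X$ is algebraic (using that $u^*\colon \Gamma(\mathcal F)\to\Gamma(\mathcal F_{\mathrm{an}})$ is injective and sheafifies, so algebraicity glues), and the family $\{Z_i\}$ restricts to a family of the same kind on each open. Passing to a member of a locally finite affine cover, I may therefore assume $X = \Spec A$ is affine of finite type over $\CC$, $\mathcal F = \widetilde M$ for a finite $A$-module $M$, and that the family is finite: $X = |Z_1|\cup\cdots\cup|Z_k|$ with each $Z_j$ locally closed.

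Next I would shrink the stratification into a chain. By Noetherian induction on closed subsets: choose $Z_1$ to be an open dense subscheme of $X_{\mathrm{red}}$ contained in (the smooth locus intersected with) some $Z_i$ — more precisely, replace the given family by a finite family $Z_1,\ldots,Z_m$ of reduced locally closed subschemes such that $\bar Z_1 \supsetneq \bar Z_2 \supsetneq \cdots$, each $Z_j$ is open dense in $\bar Z_j$, and $\bar Z_{j+1} \supseteq \bar Z_j \setminus Z_j$, with every $\bar Z_j$ contained in some $|Z_i|$ from the original family (so that algebraicity of $s|_{Z_j^{(n)}}$ for all $n$ still holds, since $Z_j^{(n)}$ is a closed subscheme of $Z_i^{(N)}$ for $N$ large and restriction of an algebraic section is algebraic). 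I then prove the statement by descending induction along this chain, the base case being $X$ itself replaced by a single stratum and its thickenings filling out $X$. So the essential case becomes: $X = \Spec A$ affine, $Z \subseteq X$ a closed subscheme with $|Z| \neq |X|$ but $Z^{(n)}$ exhausting $X$ set-theoretically, $U = X \setminus |Z|$, and I know (i) $s|_{Z^{(n)}}$ is algebraic for all $n$, (ii) $s|_{U}$ is algebraic (by the inductive hypothesis applied to the open subscheme $U$ with the induced stratification); I must deduce $s$ is algebraic on $X$.

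For this key case I would argue as follows. Let $I \subseteq A$ be the ideal of $Z$, so $|Z| = V(I)$ and $U = X \setminus V(I)$. By (ii) there is $t \in \Gamma(U, \mathcal F) = M[1/f_j]$-data glued over the standard opens $D(f_j)$ covering $U$ (the $f_j$ generating $I$), and by general nonsense $t$ is the restriction to $U$ of a section of a coherent sheaf on $X$ after clearing denominators; concretely, $f_j^{N} t$ extends over $D(f_j)$ and these glue, so there is $m \in M$ and an integer $N$ with (the algebraic section) $m$ agreeing with $f^N s$ on $X_{\mathrm{an}}$ off $V(I)$, where I write $f = \sum f_j$ — here I use that an analytic section of a coherent sheaf on an affine scheme that is algebraic on a dense open and that also extends (analytically) across the complement is determined by Artin–Rees/Krull-type faithful flatness. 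The cleanest route: the analytic local ring $\cO_{X_{\mathrm{an}}, x}$ is faithfully flat over $\cO_{X,x}$, and its $I$-adic completion coincides with that of $\cO_{X,x}$; hypothesis (i) says the image of $s$ in $\widehat{\mathcal F_x} = \widehat M_{\hat x}$ lies in the algebraic part, i.e. $s \in M \otimes_A \widehat{A}_I$ inside $\mathcal F_{\mathrm{an},x}$; combined with (ii), i.e. $s \in M[1/I]$ off $V(I)$, and with Krull intersection $\bigcap_n I^n \widehat M = 0$, one gets $s \in M \otimes_A \cO_{X,x} = \mathcal F_x$ for every $x$, hence $s$ is algebraic.

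The main obstacle I anticipate is making the last step precise: the honest statement that the analytic section $s$, known to be algebraic on the open $U$ and to be $I$-adically approximable by algebraic sections (from the thickenings), is globally algebraic, requires a comparison between the analytic structure sheaf, the algebraic one, and their common $I$-adic completion — essentially a faithfully-flat-descent-through-completion argument (this is exactly the mechanism behind Deligne's own treatment in \cite[II.2]{DeligneLNM163} of algebraicity via restriction to strata, and behind Artin's comparison theorems). One must be careful that $Z$ is not reduced in general and that $\mathcal F$ may have embedded components along $Z$, so the reduction to a chain must be done with scheme-theoretic (not just topological) exhaustion; the hypothesis "$Z_i^{(n)}$ for all $n$" is precisely what is needed to see all such embedded behavior. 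Once the completions are matched up, the conclusion is formal from Krull's intersection theorem and faithful flatness of $\cO_{X_{\mathrm{an}},x}$ over $\cO_{X,x}$.
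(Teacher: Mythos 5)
Your overall strategy---stratify the space rather than filter the sheaf, and reduce to a two-stratum situation ``open dense $U$ plus closed complement $Z$''---is genuinely different from the paper's proof (which filters $\mathcal{F}$ by coherent subsheaves with graded pieces $\cO_{Y_i}$ and inducts on the length of that filtration), and the two-stratum statement you isolate is in fact true. But your argument for it rests on a false claim. The $I$-adic completion of $\cO_{X_{\rm an},x}$ does \emph{not} coincide with that of $\cO_{X,x}$ for non-$\mathfrak{m}_x$-primary $I$: for $X=\mathbf{A}^2$, $I=(y)$, at the origin one gets $\CC\{x\}[\![y]\!]$ versus $\varprojlim_n \CC[x,y]_{(x,y)}/(y^n)$, and $\exp(x)$ lies in the former but not the latter. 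So your ``cleanest route'' collapses. Worse, even a corrected stalkwise completion argument cannot work from hypothesis (i) alone: for $X=\mathbf{A}^1$, $Z=\{0\}$, the section $s=\exp(x)$ satisfies (i) trivially (every section of the Artinian quotient $\cO/(x^{n+1})$ is algebraic) yet is not algebraic. Everything therefore hinges on how (ii) enters, and your write-up supplies no mechanism for combining (i) and (ii)---``one gets $s\in\mathcal{F}_x$'' is asserted, not proved. The auxiliary ``clearing denominators'' step is also wrong as stated: the local extensions of $f_j^N t$ over the $D(f_j)$ do not glue (they differ on overlaps by the discrepancy between $f_j^N$ and $f_k^N$), and even after extending $f^Nt$ globally you must divide by $f^N$, which reintroduces exactly the ambiguity along $Z$ (torsion and embedded components) that you are trying to control.

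The missing idea is to separate the $I$-torsion of $\mathcal{F}$ from the rest, which is where both of your hypotheses actually get used. Set $\mathcal{F}'=\Gamma_Z\mathcal{F}$ (coherent, killed by $I^{n_0}$ for some $n_0$) and $\mathcal{F}''=\mathcal{F}/\mathcal{F}'$. All associated points of $\mathcal{F}''$ lie in $U$, so Corollary~\ref{cor:associated-pts} applied to $U\hookrightarrow X$ together with (ii) shows the image $s''$ of $s$ is algebraic; lift an algebraic representative to $m\in\Gamma(X,\mathcal{F})$ using that $X$ is affine. Then $s-m_{\rm an}$ is a section of $\mathcal{F}'_{\rm an}$, and by Artin--Rees $\mathcal{F}'\hookrightarrow\mathcal{F}/I^{n}\mathcal{F}$ for $n\gg 0$, so (i) together with Lemma~\ref{lemma:gaga-inj} makes $s-m_{\rm an}$ algebraic. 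This is the same Artin--Rees mechanism as the paper's Lemma~\ref{lemma:artin-rees}, just organized around a stratification of $X$ instead of a filtration of $\mathcal{F}$. Finally, note that in your reduction the hypothesis ``(i) for the thickenings of the closed complement $Z=\ov{Z}_2$'' is not directly given---you only know algebraicity on thickenings of the individual original strata---so the two-stratum case must be fed by a Noetherian induction applying the full proposition to each scheme $Z^{(n)}$ with its induced stratification; this is routine but has to be built into the induction.
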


In other words, if we set
\[
  \pi\colon X' = \coprod_{i\in I} \coprod_{n\geq 0} Z_i^{(n)}\ra X,
\]
the proposition asserts that the following square is cartesian.
\[ 
  \xymatrix{
    \Gamma(X, \mathcal{F}) \ar[r]^-{u^*} \ar[d]_{\pi^*} & \Gamma(X_{\rm an}, \mathcal{F}_{\rm an}) \ar[d]^{\pi_{\rm an}^*} \\
    \Gamma(X', \pi^*\mathcal{F}) \ar[r]_-{u^*} & \Gamma(X'_{\rm an}, (\pi^* \mathcal{F})_{\rm an}). 
  }
\]

\begin{remark} \label{rmk:xexpy}
As the example $X=\Spec(\CC[x,y]/(y^2))$, $Z=V(y)$, $\mathcal{F}=\cO_X$, $s=y\exp(x)$ shows, it is not enough to consider the restrictions to $Z_i$ and not to their thickenings. The issue is that $\coprod Z_i\to X$ is not flat (see Corollary~\ref{cor:associated-pts} below). Of course neither is $X'\to X$, but effectively $X'$ approximates of the formal scheme $\mathfrak{X}'$ which is the disjoint union of the formal completions $\mathfrak{Z}_i$ of $X$ along $Z_i$, and $\mathfrak{X}'\to X$ is flat. Since we do not wish to make sense of $\mathfrak{X}'_{\rm an}$ (a formal complex analytic space), we use the scheme $X'$ instead of the formal scheme $\mathfrak{X}'$. The flatness of $\mathfrak{X}'\to X$ is reflected in the use of the Artin--Rees lemma in the proof below.
\end{remark}

\begin{lemma} \label{lemma:gaga-inj}
  Let $X$ be a scheme locally of finite type over $\CC$ and let $\mathcal{F}'\hookrightarrow \mathcal{F}$ be an injection of quasi-coherent $\cO_X$-modules. A section $s\in \Gamma(X_{\rm an}, \mathcal{F}'_{\rm an})$ is algebraic if and only if its image in $\Gamma(X_{\rm an}, \mathcal{F}_{\rm an})$ is.
\end{lemma}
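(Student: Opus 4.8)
The plan is to deduce the lemma from two facts already available: the injectivity of $u^*\colon \Gamma(X,\mathcal{G})\to\Gamma(X_{\rm an},\mathcal{G}_{\rm an})$ for every quasi-coherent $\mathcal{G}$ (recalled just before the statement), and the exactness of the analytification functor $\mathcal{G}\mapsto\mathcal{G}_{\rm an}$, which holds because $u\colon X_{\rm an}\to X$ is a flat morphism of locally ringed spaces. The ``only if'' direction is immediate: if $s=u^*t$ with $t\in\Gamma(X,\mathcal{F}')$, then the image of $s$ in $\Gamma(X_{\rm an},\mathcal{F}_{\rm an})$ is $u^*$ of the image of $t$ in $\Gamma(X,\mathcal{F})$, hence algebraic.

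For the ``if'' direction, set $\mathcal{Q}=\mathcal{F}/\mathcal{F}'$, a quasi-coherent $\cO_X$-module, and consider the exact sequence $0\to\mathcal{F}'\to\mathcal{F}\to\mathcal{Q}\to 0$. Applying $(-)_{\rm an}$ and then global sections gives a commutative diagram with exact rows
\[
  \xymatrix{
    0 \ar[r] & \Gamma(X,\mathcal{F}') \ar[r] \ar[d]_{u^*} & \Gamma(X,\mathcal{F}) \ar[r] \ar[d]_{u^*} & \Gamma(X,\mathcal{Q}) \ar[d]^{u^*} \\
    0 \ar[r] & \Gamma(X_{\rm an},\mathcal{F}'_{\rm an}) \ar[r] & \Gamma(X_{\rm an},\mathcal{F}_{\rm an}) \ar[r] & \Gamma(X_{\rm an},\mathcal{Q}_{\rm an})
  }
\]
in which all three vertical maps are injective. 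Suppose the image $\bar s$ of $s$ in $\Gamma(X_{\rm an},\mathcal{F}_{\rm an})$ is algebraic, say $\bar s=u^*t$ with $t\in\Gamma(X,\mathcal{F})$. Since $\bar s$ lies in $\Gamma(X_{\rm an},\mathcal{F}'_{\rm an})$, its image in $\Gamma(X_{\rm an},\mathcal{Q}_{\rm an})$ vanishes; by commutativity this image equals $u^*$ of the image $q\in\Gamma(X,\mathcal{Q})$ of $t$, so injectivity of $u^*$ forces $q=0$. Hence $t$ is the image of some $t'\in\Gamma(X,\mathcal{F}')$. Finally $u^*t'$ and $s$ both map to $\bar s$ in $\Gamma(X_{\rm an},\mathcal{F}_{\rm an})$, and since $\Gamma(X_{\rm an},\mathcal{F}'_{\rm an})\to\Gamma(X_{\rm an},\mathcal{F}_{\rm an})$ is injective, $s=u^*t'$ is algebraic.

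There is no genuinely hard step here; the whole content is a diagram chase. The only points requiring a word of justification are the exactness of $(-)_{\rm an}$ (flatness of $u$) and the injectivity of $u^*$ on global sections of a quasi-coherent sheaf, both of which are standard and the latter of which is precisely the fact invoked in the definition of ``algebraic'' preceding Proposition~\ref{prop:analytic-sections}.
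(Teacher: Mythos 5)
Your proof is correct and is essentially the paper's argument: the paper forms the same diagram of global sections of $0\to\mathcal{F}'\to\mathcal{F}\to\mathcal{F}/\mathcal{F}'\to 0$ and its analytification, observes the rows are exact and the right vertical map is injective, and concludes by the same diagram chase (phrased as the left square being cartesian). You have simply written out that chase explicitly, with the same inputs (exactness of $(-)_{\rm an}$ and injectivity of $u^*$ on global sections of quasi-coherent sheaves).
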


\begin{proof}
The assertion to be proved is equivalent to the left square in the diagram below being cartesian.
\[ 
  \xymatrix{
    0\ar[r] & \Gamma(X, \mathcal{F}') \ar[r] \ar@{^{(}->}[d]_{u^*} & \Gamma(X, \mathcal{F}) \ar[r] \ar@{^{(}->}[d]_{u^*} & \Gamma(X, \mathcal{F}/\mathcal{F}') \ar@{^{(}->}[d]_{u^*} \\ 
    0\ar[r] & \Gamma(X_{\rm an}, \mathcal{F}_{\rm an}') \ar[r] & \Gamma(X_{\rm an}, \mathcal{F}_{\rm an}) \ar[r] & \Gamma(X_{\rm an}, \mathcal{F}_{\rm an}/\mathcal{F}_{\rm an}') 
  }
\]
The rows being exact and the right vertical map being injective, the claim follows by a simple diagram chase.
\end{proof}

\begin{cor} \label{cor:associated-pts}
  Let $X$ be a scheme locally of finite type over $\CC$ and let $\mathcal{F}$ be a coherent $\cO_X$-module. Let $f\colon X'\to X$ be a flat morphism of finite type whose image contains the associated points of $\mathcal{F}$. Then, a section $s\in \Gamma(X_{\rm an}, \mathcal{F}_{\rm an})$ is algebraic if and only if its image in $\Gamma(X'_{\rm an}, f_{\rm an}^* \mathcal{F}_{\rm an})$ is.
\end{cor}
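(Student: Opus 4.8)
The ``only if'' direction is immediate, since the pullback of an algebraic section is algebraic. For the converse, the plan is to replace $\mathcal{F}$ by the quasi-coherent sheaf $\mathcal{G}:=f_*f^*\mathcal{F}$: the hypothesis on associated points is exactly what makes the adjunction unit $\mathcal{F}\to\mathcal{G}$ injective, while $\Gamma(X,\mathcal{G})=\Gamma(X',f^*\mathcal{F})$, so by Lemma~\ref{lemma:gaga-inj} it suffices to show that the image $\bar s$ of $s$ in $\Gamma(X_{\rm an},\mathcal{G}_{\rm an})$ is algebraic. (In contrast to the proof of Proposition~\ref{prop:analytic-sections}, flatness lets us work with the genuine pushforward $f_*f^*\mathcal{F}$ rather than with a formal completion.)

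In detail, I would argue as follows. Algebraicity of a section of a coherent sheaf can be tested Zariski-locally on $X$ --- two algebraic preimages agree by injectivity of $u^*$, hence glue --- so we may assume $X=\Spec A$ is affine; then $X'$ is quasi-compact, and since replacing $X'$ by a finite disjoint union of affine opens covering it alters neither $f(X')$ nor the algebraicity of $f_{\rm an}^*s$, we may assume $f$ is affine. Then the projection formula gives $\mathcal{G}=\mathcal{F}\otimes_{\cO_X}f_*\cO_{X'}$ with $f_*\cO_{X'}$ a \emph{flat} $\cO_X$-algebra, and the unit $\mathcal{F}\to\mathcal{G}$ is injective: writing $\mathcal{F}=\widetilde M$ and $X'=\Spec B$, a local section $m$ of the kernel satisfies $m\otimes 1=0$ in $M\otimes_A B$, so by flatness $Am\otimes_A B=0$, i.e.\ $\op{Supp}(Am)\cap f(X')=\emptyset$; but if $m\neq 0$ then $\op{Supp}(Am)$ contains a point of $\op{Ass}(Am)\subseteq\op{Ass}(\mathcal{F})\subseteq f(X')$, a contradiction. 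By Lemma~\ref{lemma:gaga-inj}, $s$ is algebraic iff $\bar s$ lies in the image of $u^*\colon\Gamma(X,\mathcal{G})\to\Gamma(X_{\rm an},\mathcal{G}_{\rm an})$. Finally, the analytification base change morphism $\mathcal{G}_{\rm an}=u^*f_*f^*\mathcal{F}\to f_{{\rm an},*}f_{\rm an}^*\mathcal{F}_{\rm an}$ induces on global sections a map $\Gamma(X_{\rm an},\mathcal{G}_{\rm an})\to\Gamma(X'_{\rm an},f^*\mathcal{F}_{\rm an})$ which is compatible with $u^*$ out of $\Gamma(X,\mathcal{G})=\Gamma(X',f^*\mathcal{F})$ and carries $\bar s$ to $f_{\rm an}^*s$; since $f_{\rm an}^*s=u^*t'$ for some $t'\in\Gamma(X',f^*\mathcal{F})$ by hypothesis, injectivity of this map forces $\bar s=u^*t'$, which is algebraic.

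The crux --- and what I expect to be the main obstacle --- is thus the injectivity of the base change morphism $\mathcal{G}_{\rm an}\to f_{{\rm an},*}f_{\rm an}^*\mathcal{F}_{\rm an}$ for affine $f$. Since $f$ is flat, $f_*\cO_{X'}$ is a flat $\cO_X$-algebra; writing $\mathcal{G}_{\rm an}=\mathcal{F}_{\rm an}\otimes_{\cO_{X_{\rm an}}}(f_*\cO_{X'})_{\rm an}$ and computing $f_{{\rm an},*}f_{\rm an}^*\mathcal{F}_{\rm an}$ on Stein opens via Cartan's Theorem~B, the injectivity reduces to the faithful flatness of the $\cO_{X_{\rm an}}$-algebra map $(f_*\cO_{X'})_{\rm an}\to f_{{\rm an},*}\cO_{X'_{\rm an}}$ --- a relative GAGA-flatness statement, which I would establish by reduction to the model case $f=\mathbf{A}^n_X\to X$, where it amounts to the (classical) faithful flatness of the sheaf of relative polynomials $\cO_{X_{\rm an}}[\underline t]$ inside the sheaf of holomorphic functions on $X_{\rm an}\times\CC^n$. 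This GAGA-flatness input seems genuinely necessary for arbitrary flat $f$, since flat finite-type surjections need not admit sections even \'etale-locally; when $f$ is moreover smooth one can instead exploit that it has sections \'etale-locally on $X$ and reduce the descent step to the elementary \'etale-descent property of algebraic sections.
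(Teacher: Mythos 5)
Your argument is correct and follows essentially the same route as the paper: reduce via Lemma~\ref{lemma:gaga-inj} to the injection $\mathcal{F}\hookrightarrow f_*f^*\mathcal{F}$ (which is exactly where the hypothesis on associated points enters, and your verification of this via $\operatorname{Ass}$ and flatness is the intended one), then chase the image of $s$ through the comparison map $\alpha\colon (f_*f^*\mathcal{F})_{\rm an}\to f_{\rm an,*}(f^*\mathcal{F})_{\rm an}$. The only divergence is at the step you flag as the crux: the paper simply asserts that $\alpha$ is injective on global sections and gives no argument, whereas you propose to deduce it from a relative GAGA-flatness statement for $(f_*\cO_{X'})_{\rm an}\to f_{\rm an,*}\cO_{X'_{\rm an}}$; for the final diagram chase plain injectivity of $\alpha$ (rather than faithful flatness) suffices, and this is precisely the point the paper leaves unjustified as well, so your proposal is, if anything, more explicit about where the real content lies.
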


\begin{proof}
Under the given assumptions, the map $\mathcal{F}\hookrightarrow f_* f^* \mathcal{F}$ is injective. We have a map $\alpha\colon (f_* f^* \mathcal{F})_{\rm an}\to f_{\rm an, *}(f^* \mathcal{F})_{\rm an}$ obtained by adjunction from the analytification of the counit map $f^* f_* f^* \mathcal{F}\to f^*\mathcal{F}$. It features inside a commutative diagram
\[ 
  \xymatrix{
    \Gamma(X, \mathcal{F}) \ar[r] \ar[d] & \Gamma(X, f_* f^* \mathcal{F}) \ar@{=}[rr] \ar[d] & & \Gamma(X', f^*\mathcal{F}) \ar[d] \\
    \Gamma(X_{\rm an}, \mathcal{F}_{\rm an}) \ar[r] & \Gamma(X_{\rm an}, (f_* f^* \mathcal{F})_{\rm an}) \ar[r]_-\alpha & \Gamma(X_{\rm an}, f_{\rm an, *}(f^*\mathcal{F})_{\rm an}) \ar@{=}[r] & \Gamma(X_{\rm an}', (f^*\mathcal{F})_{\rm an} )
  }
\]
Since the map denoted $\alpha$ in the above diagram is injective, the image of $s$ in $\Gamma(X_{\rm an}, (f_* f^* \mathcal{F})_{\rm an})$ is algebraic. It remains to invoke Lemma~\ref{lemma:gaga-inj}.
\end{proof}

\begin{proof}[Proof of Proposition~\ref{prop:analytic-sections}]
The question is local, so we can assume that $X$ is affine and that there exists a finite filtration 
\[ 
  0 = \mathcal{F}_0 \subseteq \mathcal{F}_1 \subseteq \cdots \subseteq \mathcal{F}_m = \mathcal{F}
\]
of $\mathcal{F}$ by coherent subsheaves whose graded pieces are of the form
\[ 
  \mathcal{F}_i / \mathcal{F}_{i-1} \simeq \cO_{Y_i}
\]
for some integral closed subschemes $Y_i\subseteq X$ \cite[Chap.\ 7, Ex.\ 18]{AtiyahMacdonald}. We will proceed by induction on the length $m$ of this filtration.

Consider $m=1$, i.e.\ $\mathcal{F} \simeq \cO_Y$ for some closed integral subscheme $Y\subseteq X$. There exists an index $i$ such that $Z_i\cap Y$ is a dense open subset of $Y$. We conclude by applying Corollary~\ref{cor:associated-pts} to the open immersion $Z_i\cap Y\hookrightarrow Y$ and the sheaf $\cO_Y$.

For the induction step, it suffices to prove that if 
\[
  \xymatrix{0\ar[r] & \mathcal{F}' \ar[r] & \mathcal{F} \ar[r] & \mathcal{F}'' \ar[r] & 0}
\]
is a short exact sequence of coherent $\cO_X$-modules such that the assertion of the proposition holds for both $\mathcal{F}'$ and $\mathcal{F}''$, then it holds also for $\mathcal{F}$. Let $s''\in \Gamma(X_{\rm an},\mathcal{F}''_{\rm an})$ be the image of $s$. Then $\pi^*_{\rm an} s''$ is algebraic, and by the assumption on $\mathcal{F}''$ so is $s''$. Since $X$ is affine, the short exact sequence stays exact after taking global sections, and hence there exists an $s_0 \in \Gamma(X, \mathcal{F})$ whose image in $\Gamma(X, \mathcal{F}'')$ equals $s''$. 

Let $s' = s-s_0 \in \Gamma(X_{\rm an}, \mathcal{F}'_{\rm an})$. Suppose that $\pi^*_{\rm an} s'$ is algebraic, then by the assumption on $\mathcal{F}'$ so is $s'$. Therefore $s = s' + s_0$ is algebraic as well. It remains to show that $\pi^*_{\rm an} s'$ is algebraic. The issue is that $\pi^*\mathcal{F}'\to \pi^*\mathcal{F}$ might not be injective, see Remark~\ref{rmk:xexpy}. Working with one $Z_i$ at a time and replacing $X$ with a suitable affine open cover of an open containing $Z_i$, we may reduce to the situation of Lemma~\ref{lemma:artin-rees} below, whose application finishes the proof.
\end{proof}

\begin{lemma} \label{lemma:artin-rees}
  Let $X$ be an affine scheme of finite type over $\CC$ and let $Z\subseteq X$ is a closed subscheme cut out by an ideal $\mathcal{I}\subseteq \cO_X$. We denote by $Z^{(n)}$ the closed subscheme of $X$ cut out by $\mathcal{I}^{n+1}$. Let $\mathcal{N}\hookrightarrow \mathcal{M}$ be an injective map between coherent $\cO_X$-modules and let $s\in \Gamma(X_{\rm an}, \mathcal{N}_{\rm an})$ be an analytic section of $\mathcal{N}$. Suppose that for all $n\geq 0$, the image of $s$ in $\Gamma(Z^{(n)}, \mathcal{M}_{\rm an}|_{Z^{(n)}})$ is algebraic. Then the image of $s$ in $\Gamma(Z^{(n)}, \mathcal{N}_{\rm an}|_{Z^{(n)}})$ is algebraic for all $n\geq 0$. 
\end{lemma}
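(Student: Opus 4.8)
The statement is purely algebraic in nature — it has nothing to do with the analytic topology beyond the injectivity of $u^*$ — so the plan is to reduce the claim about algebraicity of $s|_{Z^{(n)}}$ to a statement about the image of a finitely generated module inside a completion, and then invoke the Artin--Rees lemma. Write $X=\Spec A$ with $A$ a finite type $\CC$-algebra, let $I\subseteq A$ be the ideal cutting out $Z$, let $N\hookrightarrow M$ be the finite $A$-modules giving $\mathcal{N}\hookrightarrow \mathcal{M}$, and let $\widehat{A}$, $\widehat{M}$, $\widehat{N}$ denote the $I$-adic completions. The hypothesis says that for each $n$ the image of $s$ in $\Gamma(Z^{(n)},\mathcal{M}_{\rm an}|_{Z^{(n)}}) = M/I^{n+1}M$ lies in the image of $M$ (it is algebraic, and $M/I^{n+1}M$ is a finite-dimensional $\CC$-vector space on which algebraic and analytic global sections agree — or more simply, $Z^{(n)}$ is a finite-type scheme supported on a Noetherian scheme, and a section of a coherent sheaf over it is automatically algebraic once one knows $M/I^{n+1}M$ carries only its given $A$-module structure). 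Passing to the limit, $s$ defines an element $\widehat{s}\in \widehat{M}$ which, by the hypothesis, lies in $\widehat{N}$ (the kernel of $\widehat{M}\to \widehat{M/N}$); what we must show is that the image of $\widehat{s}$ in each $N/I^{n+1}N$ is the image of an element of $N$, i.e. that $\widehat{s}$ lies in the image of $N\to \widehat{N}$ modulo $I^{n+1}\widehat{N}$ for every $n$.

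The key point is the comparison between $\widehat{N}$ and the ``naive'' completion $\varprojlim N/I^{n+1}N$, together with the fact that the subspace topology on $N$ induced from the $I$-adic topology on $M$ agrees with the $I$-adic topology on $N$ up to a bounded shift. Concretely: $\widehat{N}$ (the genuine $I$-adic completion of $N$) injects into $\widehat{M}$ because completion is exact on finite modules over a Noetherian ring, and by Artin--Rees there is an integer $c$ with $I^{n+c}M\cap N\subseteq I^n N$ for all $n$. So if $\widehat{s}\in \widehat{N}\subseteq\widehat{M}$ and we fix $n$, choose $m\in N$ with $m\equiv \widehat{s}$ in $M/I^{n+1}M$ (possible since $\widehat{s}$ comes from an algebraic section of $\mathcal{M}$ on $Z^{(n+1+c)}$, say, mapped into $N/I^{\bullet}N$ — here is where I use the hypothesis for the larger thickening). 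Then $m-\widehat{s}\in I^{n+1}M\cap N$, which by Artin--Rees sits inside $I^{n+1-c}N$ once $n\geq c$; shifting indices, one concludes $\widehat{s}\equiv m$ in $N/I^{n+1}N$. Thus the image of $s$ in $N/I^{n+1}N = \Gamma(Z^{(n)},\mathcal{N}_{\rm an}|_{Z^{(n)}})$ is algebraic for every $n$.

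I would organize the write-up as: (i) reduce to the affine, module-theoretic statement and observe that ``$s|_{Z^{(n)}}$ is algebraic'' literally means ``the element of $M/I^{n+1}M$ determined by $s$ lies in the image of $M$,'' which it does automatically for the coherent sheaf on the Noetherian scheme $Z^{(n)}$ — so the content is not about analyticity at all but about tracking $s$ as a compatible system, i.e. as an element $\widehat{s}$ of $\widehat{M}$ landing in $\widehat{N}$; (ii) invoke Artin--Rees (\stacks{00IN} or similar) to get the constant $c$ with $I^{n+c}M\cap N\subseteq I^nN$; (iii) for each fixed $n$, lift the image of $s$ in $M/I^{n+1+c}M$ to $m\in N$ using the hypothesis, and run the index chase above to see $\widehat{s}\equiv m$ mod $I^{n+1}N$.

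The main obstacle — and the only real content — is step (iii), the Artin--Rees index bookkeeping: the subtlety is precisely the non-flatness of $\coprod Z_i\to X$ flagged in Remark~\ref{rmk:xexpy}, which is why passing to $\mathcal{N}$ through $\mathcal{M}$ is necessary and why one cannot simply say ``$\mathcal{N}_{\rm an}|_{Z^{(n)}}\hookrightarrow \mathcal{M}_{\rm an}|_{Z^{(n)}}$'' (this map need not be injective). One has to complete, where the maps become injective, and then carefully descend back to each finite level while absorbing the Artin--Rees shift. Everything else — reduction to the affine case, identification of $\Gamma(Z^{(n)},\mathcal{F}_{\rm an}|_{Z^{(n)}})$ with the corresponding quotient module, the automatic algebraicity of sections over a fixed finite-type base — is routine.
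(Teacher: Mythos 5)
There is a genuine gap, and it sits exactly where the lemma has its content. You assert that $\Gamma(Z^{(n)}_{\rm an}, \mathcal{M}_{\rm an}|_{Z^{(n)}}) = M/\mathcal{I}^{n+1}M$, that this is a finite-dimensional $\CC$-vector space, and that sections of a coherent sheaf on a finite-type scheme are ``automatically algebraic.'' All three claims are false unless $Z$ is zero-dimensional: $Z$ is an arbitrary closed subscheme (in the application it is a positive-dimensional log stratum), so $\Gamma(Z^{(n)}_{\rm an}, \mathcal{M}_{\rm an}|_{Z^{(n)}})$ is a space of \emph{analytic} sections and is in general much larger than $M/\mathcal{I}^{n+1}M$ --- already for $Z = \mathbf{A}^1$ it contains $\exp(x)$. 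Indeed Remark~\ref{rmk:xexpy} is built on precisely this phenomenon ($Z = V(y)$ in $\Spec \CC[x,y]/(y^2)$, $s = y\exp(x)$). Because of this conflation, the statement you reduce the lemma to --- ``the image of $\widehat{s}$ in $N/\mathcal{I}^{n+1}N$ is the image of an element of $N$'' --- is a tautology (every element of a quotient of $N$ is), and your final sentence ``thus the image of $s$ in $N/\mathcal{I}^{n+1}N = \Gamma(Z^{(n)}, \mathcal{N}_{\rm an}|_{Z^{(n)}})$ is algebraic'' rests on the false identification.

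What actually has to be proved is that the analytic section $s|_{Z^{(n)}} \in \Gamma(Z^{(n)}_{\rm an}, \mathcal{N}_{\rm an}|_{Z^{(n)}})$ equals $u^*$ of some element of $N/\mathcal{I}^{n+1}N$, and your construction never compares the element $[\widehat{s}]_n$ you produce in the completion with $s|_{Z^{(n)}}$ itself. That comparison is not free: the difference a priori lives in the analytification of $\ker(\mathcal{N}|_{Z^{(n)}}\to\mathcal{M}|_{Z^{(n)}}) = ((\mathcal{I}^{n+1}\mathcal{M}\cap\mathcal{N})/\mathcal{I}^{n+1}\mathcal{N})^{\sim}$, which is nonzero in general --- this is the non-flatness you correctly flag, but you then apply Artin--Rees only formally inside $\widehat{N}$, where the resulting statement is empty. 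The missing step is to use Artin--Rees \emph{on the analytic side}: the restriction $s|_{Z^{(n)}}$ factors through the analytification of $\mathcal{N}/((\mathcal{I}^{n+k+1}\mathcal{M})\cap\mathcal{N}) = {\rm im}(\mathcal{N}\to\mathcal{M}|_{Z^{(n+k)}})$ (because $\mathcal{I}^{n+k+1}\mathcal{N}\subseteq(\mathcal{I}^{n+k+1}\mathcal{M})\cap\mathcal{N}\subseteq\mathcal{I}^{n+1}\mathcal{N}$), this image sheaf injects into $\mathcal{M}|_{Z^{(n+k)}}$ so that Lemma~\ref{lemma:gaga-inj} and the hypothesis make the corresponding section algebraic, and one then pushes it forward along the induced surjection onto $\mathcal{N}|_{Z^{(n)}}$. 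That is the paper's argument; your completion-based setup can be repaired to give it, but as written the decisive step is absent.
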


\begin{proof}
By the Artin--Rees lemma, there exists a $k\geq 0$ such that for all $n\geq k$ we have
\[ 
  (\mathcal{I}^{n+1} \mathcal{M}) \cap \mathcal{N} \subseteq \mathcal{I}^{n-k+1}\mathcal{N}. 
\]
This yields the commutative diagram with exact rows
\[ 
  \xymatrix{
    0\ar[r] & (\mathcal{I}^{n+k+1} \mathcal{M})\cap \mathcal{N} \ar[d] \ar[r] & \mathcal{N} \ar@{=}[d] \ar[r] & {\rm im}(\mathcal{N}\to \mathcal{M}|_{Z^{(n+k)}}) \ar[d] \ar[r] & 0 \\
    0\ar[r] & \mathcal{I}^{n+1} \mathcal{N} \ar[r] & \mathcal{N} \ar[r] & \mathcal{N}|_{Z^{(n)}} \ar[r] & 0
  }
\]
which implies that the image of $s$ in $\Gamma(Z^{(n)}, \mathcal{N}_{\rm an}|_{Z^{(n)}})$ is the image of an element of 
\[
  \Gamma(Z^{(n+k)}, {\rm im}(\mathcal{N}\to \mathcal{M}|_{Z^{(n+k)}}))
\]
and hence is algebraic by Lemma~\ref{lemma:gaga-inj} applied to the inclusion ${\rm im}(\mathcal{N}\to \mathcal{M}|_{Z^{(n+k)}}) \hookrightarrow \mathcal{M}|_{Z^{(n+k)}}$.
\end{proof}

\section{Logarithmic connections and canonical extensions}
\label{s:log-conn}

\subsection{Connections on log schemes with constant log structure}
\label{ss:log-conn-loc-const}

\begin{defin} \label{def:MIC}
  Let $X$ be an idealized smooth log scheme over $\CC$ or an idealized smooth log complex analytic space. An \emph{integrable connection} on $X$ is the data of a coherent $\cO_X$-module $E$ together with an integrable logarithmic connection $\nabla\colon E\to E\otimes \Omega^1_{X/\CC}$. We denote by $\MIC(X/\CC)$ the category of integrable connections and horizontal maps. 
\end{defin}

As in the case of smooth schemes $X$ (with no log structure), the category $\MIC(X/\CC)$ is a $\CC$-linear abelian category endowed with a symmetric monoidal tensor product $E\otimes F$ and internal Hom, denoted $\underline{\rm Hom}(E, F)$. Unlike the classical case, its objects are not locally free, it is not rigid as a tensor category, and the tensor product is not exact. 

The de Rham complex $\Omega^\bullet_{X/\CC}\otimes E$ and the de Rham cohomology $H^*_{\rm dR}(X, E) = H^*(X, E\otimes\Omega^\bullet_{X/\CC})$ are defined in the usual way. We have the useful formula 
\[
  \Hom(E, F) = H^0_{\rm dR}(X, \underline{\rm Hom}(E, F)). 
\]
A morphism $f\colon X'\to X$ induces a pull-back functor 
\[
  f^*\colon \MIC(X/\CC)\to \MIC(X'/\CC)
\]
which agrees with the module pullback on underlying sheaves, and for every object $E$ of $\MIC(X/\CC)$ a map $f^* \colon H^*_{\rm dR}(X, E)\to H^*_{\rm dR}(X', f^* E)$. For an idealized smooth log scheme $X$ over $\CC$ we have an analytification functor 
\[
  E\mapsto E_{\rm an}\quad\colon\quad \MIC(X/\CC)\to \MIC(X_{\rm an}/\CC), 
\]
and a natural morphism $H^*_{\rm dR}(X, E)\to H^*_{\rm dR}(X_{\rm an}, E_{\rm an})$.

\begin{lemma} \label{lem:hollow-smooth-basics}
  Let $X$ be a hollow idealized smooth log scheme over $\CC$ or log complex analytic space. Then, the following hold.
  \begin{enumerate}[(a)]
    \item The underlying scheme $\underline{X}$ is smooth.
    \item The map $\cM^\gp_X\to \Omega^1_{X/\underline{X}}$ sending a local section $m$ to the image of $d\log(m)$ (part of the universal log derivation on $X$ relative to $\underline{X}$) annihilates the subsheaf $\cO_X^\times$ and induces an isomorphism
    \[ 
      \oM^\gp_X\otimes_\ZZ \cO_X \isomto \Omega^1_{X/\underline{X}}.
    \]
    \item The following sequence is exact.
    \[ 
      \xymatrix{0\ar[r] & \Omega^1_{\underline{X}/\CC} \ar[r] & \Omega^1_{X/\CC} \ar[r] & \oM^\gp_X\otimes_\ZZ \cO_X \ar[r] & 0}
    \]
  \end{enumerate}
\end{lemma}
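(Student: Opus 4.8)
The plan is to reduce all three assertions to an explicit local model and then read them off Kato's presentation of the sheaf of logarithmic differentials. Since $X$ is hollow it has locally constant log structure by Lemma~\ref{lem:splitting-basics}(c), so combining idealized smoothness with Lemma~\ref{lem:splitting-basics}(d) we may localize (strict \'etale in the algebraic case, ordinarily in the analytic one) and assume that there is a strict \'etale, resp.\ open, morphism $X\to M_0$ with $M_0=\Spec(Q\to\CC[M])$, where $Q$ is a sharp fs monoid, $M$ is a group, and the chart sends $0$ to $1$ and $Q\setminus\{0\}$ to $0$; moreover $M$ is free, being a direct summand of the torsion-free group $P^\gp=Q^\gp\oplus M$. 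As the sheaves $\Omega^1_{X/\CC}$, $\Omega^1_{X/\underline X}$, $\oM^\gp_X$ and the map $m\mapsto d\log m$ are compatible with such localization, and smoothness of the underlying scheme is local, it suffices to prove (a)--(c) for $X=M_0$. (For (a) alone one could instead note that a hollow $X$ satisfies $X\simeq X^\flat$: the identity of $X$ is strict and $(X,\cM_X)$ is hollow, so it gives a section of the monomorphism $X^\flat\to X$ of Proposition~\ref{prop:splitting-functors}(b), which is therefore an isomorphism, and then invoke Corollary~\ref{cor:Xsplit-smooth}.)

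For $X=M_0$ the underlying scheme is $\Spec(\CC[M])$, a torus over $\CC$ since $M$ is free, which gives (a). The associated log structure is $\cM_{M_0}=\cO^\times\oplus Q$ (a pushout, as only $0\in Q$ maps into $\cO^\times$), so $\oM_{M_0}=Q$ and $\alpha(u,q)$ equals $u$ if $q=0$ and $0$ otherwise. I would then present $\Omega^1_{M_0/\CC}$ by the symbols $da$ ($a\in\cO$) and $d\log m$ ($m\in\cM_{M_0}$), subject to $d$ being a $\CC$-derivation, $d\log$ being additive, and $\alpha(m)\,d\log m=d\alpha(m)$. For $m$ a unit this last relation merely re-expresses $d\log m$ in terms of $d$, while for $m=(u,q)$ with $q\neq 0$ it reads $0=0$; using that $Q^\gp$ is the universal group receiving $Q$ and that $\Omega^1_{\CC[M]/\CC}\simeq\cO\otimes_\ZZ M$, one obtains
\[
  \Omega^1_{M_0/\CC}\;\simeq\;(\cO\otimes_\ZZ M)\oplus(\cO\otimes_\ZZ Q^\gp)\;=\;\cO\otimes_\ZZ P^\gp,
\]
with $d\log q$ corresponding to $1\otimes q$ in the second summand. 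Imposing in addition $da=0$, i.e.\ passing to $\Omega^1_{M_0/\underline{M_0}}$, kills the first summand together with the relation $d\log u=0$ for units, and leaves $\Omega^1_{M_0/\underline{M_0}}\simeq\cO\otimes_\ZZ Q^\gp$ via $\bar m\mapsto d\log m$; since $\oM^\gp_{M_0}=Q^\gp$ this is precisely (b). Finally, under these identifications the map $\Omega^1_{\underline{M_0}/\CC}\simeq\cO\otimes_\ZZ M\to\Omega^1_{M_0/\CC}\simeq\cO\otimes_\ZZ P^\gp$ is the inclusion of the first summand, hence injective with cokernel $\cO\otimes_\ZZ Q^\gp=\oM^\gp_{M_0}\otimes_\ZZ\cO$, which one checks is the canonical quotient map of (b); this gives (c).

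I do not expect a genuine obstacle here: the argument is bookkeeping with Kato's formula. The one feature that must be exploited is hollowness, which is exactly what makes the relation $\alpha(m)\,d\log m=d\alpha(m)$ vacuous on the non-invertible part of $\cM_X$ --- without it $\Omega^1_{X/\underline X}$ need not even be locally free. The remaining inputs are that $P$ is fs (so $M$ is free, whence $\underline X$ is smooth and all three sheaves in (c) are locally free) and the standard right-exact sequence $\Omega^1_{\underline X/\CC}\to\Omega^1_{X/\CC}\to\Omega^1_{X/\underline X}\to 0$, valid for any log scheme, which one could also invoke first to reduce (c) to the injectivity of its left-hand arrow before descending to $M_0$. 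The only point requiring a little care is the compatibility of $\Omega^1$ and of the $d\log$ map with strict \'etale (resp.\ open) localization, which legitimizes the reduction to the model $M_0$.
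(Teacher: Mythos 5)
Your proposal is correct and takes essentially the same route as the paper: reduce to the standard hollow local model $\Spec(Q\to\CC[M])$ of Lemma~\ref{lem:splitting-basics}(d) (equivalently $\mathbf{A}_{P,P\setminus P^\times}$, which is what the paper uses directly) and read all three assertions off Kato's presentation of the log differentials; the paper merely records the resulting split exact sequence $0\to\CC[P^\times]\otimes P^\times\to\CC[P^\times]\otimes P^\gp\to\CC[P^\times]\otimes\ov{P}^\gp\to 0$ without spelling out the computation you carry out. One small inaccuracy: for an fs monoid $P$ the group $P^\gp$ need \emph{not} be torsion-free (e.g.\ $P=\NN\oplus\ZZ/2$ is fs) --- only $\ov{P}^\gp$ is free --- so your $M=P^\times$ may have torsion; this is harmless, since $\Spec(\CC[M])$ is still smooth in characteristic zero, $\Omega^1_{\CC[M]/\CC}\simeq\CC[M]\otimes_\ZZ M$ still holds (torsion contributes zero to both sides), and the sequence in (c) still splits because it is $Q^\gp=\ov{P}^\gp$, not $M$, whose freeness is needed.
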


\begin{proof}
Since $X$ locally admits a strict \'etale map $X\to \mathbf{A}_{P,K}$ where $K=P\setminus P^\times$, we may assume $X=\mathbf{A}_{P,P\setminus P^\times}$. Then $\underline{X}=\Spec(\CC[P^\times])$ is smooth, and the short exact sequence in question takes the form
\[ 
  \xymatrix{0\ar[r] & \CC[P^\times]\otimes P^\times \ar[r] & \CC[P^\times]\otimes P^\gp\ar[r] & \CC[P^\times]\otimes\ov{P}^\gp\ar[r] & 0.} \qedhere
\]
\end{proof}

In this subsection, we shall explicate the category $\MIC(X/\CC)$ in the case when $X$ is hollow and idealized smooth. For the sake of intuition, we note that the Betti realization of such an $X$ is a torus bundle over $\underline{X}_{\rm an}$. More precisely, the map $\tau \colon X_{\rm log} \to \underline{X}_{\rm an}$ is a torsor under the family of real tori $\underline{\Hom}(\oM_X, \mathbf{S}^1)$. A~splitting $\eps$ of $\cM_X$ provides a section of this torsor (see Figure~\ref{fig:bundle}), and a pull-back functor $\epsast$ from local systems on $X_{\rm log}$ to local systems on $\underline{X}_{\rm an}$. We use the superscript $\circledast$ here to indicate that the functor is not induced by a map of log schemes $\underline{X}\to X$, though it behaves as if it was. The K\"unneth theorem allows one to identify local systems on $X_{\rm log}$ with local systems on $\underline{X}$ endowed with an action of $\underline{\Hom}(\oM_X, \ZZ(1))$. 

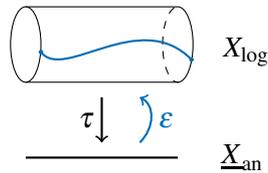
\begin{figure}[h!] \label{fig:bundle}
  \begin{tikzpicture}
    \begin{scope}[shift={(0,0)},xscale=-1]
      \begin{scope}
        \clip (-1, -1) rectangle (-2, 1);
      \draw (-1,0) ellipse (0.2 and 0.5);
      \end{scope}

      \begin{scope}
        \clip (-1, -1) rectangle (1, 1);
        \draw[dashed] (-1,0) ellipse (0.2 and 0.5);
      \end{scope}

      \draw (1,0) ellipse (0.2 and 0.5);
      \draw (-1, 0.5) -- (1, 0.5);
      \draw (-1, -0.5) -- (1, -0.5);

      \draw[NavyBlue,thick] (-1.18, -0.2) .. controls (-0.5, 0.5) and (0.5, -0.5) .. (0.8, -0.1); 

      \fill[NavyBlue] (-1.18, -0.2) circle (0.03);
      \fill[NavyBlue] (0.8, -0.1) circle (0.03);
    \end{scope}

    \begin{scope}[shift={(0, -1.5)}]
      \draw[thick] (-1, 0) -- (1, 0);
    \end{scope}

    \draw[->,thick] (0, -0.7) -- (0, -1.3);
    \draw[->,NavyBlue,thick] (0.5, -1.3) .. controls (0.7, -1.1) and (0.7,-0.8) .. (0.5, -0.7);

    \draw[NavyBlue] (.85, -1) node {$\eps$};
    \draw (-.2, -1) node {$\tau$};
    \draw (1.9, -0.1) node {$X_{\rm log}$};
    \draw (1.85, -1.5) node {$\underline{X}_{\rm an}$};
  \end{tikzpicture}
  \caption{The Kato--Nakayama space of a log scheme $X$ with a splitting $\eps$.}
\end{figure}

Proceeding by analogy, we shall now describe integrable connections on $X$ in terms of integrable connections on $\underline{X}$ depending on the choice of a splitting $\eps$. 

\begin{constr} \label{constr:epsast}
Let $X$ be a hollow idealized smooth log scheme over $k$ and let $\eps$ be a splitting on $X$. Although $\eps$ (or rather the corresponding retraction $\eps'\colon\cM_X\to\cO^\times_X = \cM_{\underline{X}}$) does not come from a morphism of log schemes $\underline{X}\to X$, it induces a morphism 
\[
  \epsast\colon \Omega^1_{X/\CC}\ra \Omega^1_{\underline{X}/\CC}
\]
as follows. Recall that giving a map $\Omega^1_{X/\CC}\to \mathcal{F}$ of $\cO_X$-modules is equivalent to giving a log derivation with values in $\mathcal{F}$, which by definition is a pair of maps $(\partial, \partial^{\rm log})$ where $\partial\colon \cO_X\to \mathcal{F}$ is a derivation and where $\partial^{\rm log}\colon \cM_X\to\mathcal{F}$ is a monoid homomorphism satisfying $\alpha(m)\partial^{\rm log}(m) = \partial(\alpha(m))$ for every local section $m$ of $\cM_X$. Let $d^{\rm log}_\eps\colon \cM_X\to \Omega^1_{\underline{X}/\CC}$ be the map sending a local section $m$ to $d\log(\eps'(m))$ and let $d\colon \cO_X\to \Omega^1_{\underline{X}/\CC}$ be the universal derivation on $\underline{X}$. We claim that the pair $(d, d_\eps^{\rm log})$ forms a log derivation on $X/\CC$. By definition, we need to verify the formula
\[
  \alpha(m)d_\eps(m) = d\alpha(m)
\]
for a local section $m$ of $\cM_X$. By Lemma~\ref{lem:splitting-basics}, $\alpha(m)$ is nilpotent if $m\notin \cO_X^\times$, so $\alpha(m)=0$ as $\underline{X}$ is reduced, and the formula is satisfied in this case. For $m\in\cO_X^\times$, the formula is obvious. We let $\epsast$ be the $\cO_X$-linear map corresponding to this log derivation by the universal property of $\Omega^1_{X/\CC}$. 
\end{constr}

Let $\mathcal{Q}_X = \Omega^1_{X/\underline{X}} = \oM_X^\gp\otimes_\ZZ \cO_X$ (Lemma~\ref{lem:hollow-smooth-basics}). The map $\epsast$ splits the short exact sequence introduced in Lemma~\ref{lem:hollow-smooth-basics}
\[
  \xymatrix{0\ar[r] & \Omega^1_{\underline{X}/\CC} \ar[r] & \Omega^1_{X/\CC} \ar[r]^q & \mathcal{Q}_X  \ar[r] & 0}
\]
Using the induced decomposition $\Omega^1_{X/\CC} \simeq \mathcal{Q}_X\oplus \Omega^1_{\underline{X}/\CC}$, we see that the data of a logarithmic connection $E\to E\otimes\Omega^1_{X/\CC}$ amounts to the data of a pair of maps $(\epsast(\nabla), \rho_\nabla)$ defined as
\[ 
  \epsast(\nabla) \colon E \xlongrightarrow{\nabla} E\otimes \Omega^1_{X/\CC} \xto{1\otimes \epsast} E\otimes \Omega^1_{\underline{X}/\CC},
  \qquad
  \rho_\nabla \colon E\xlongrightarrow{\nabla} E\otimes \Omega^1_{X/\CC} \xto{1\otimes q} E\otimes \mathcal{Q}_X 
\]
where $\epsast(\nabla)$ is a connection on $\underline{X}$ and where $\rho_\nabla$ is $\cO_X$-linear. 

\begin{lemma} \label{lem:int-Higgs-field}
  Let $\nabla\colon E\to E\otimes \Omega^1_{X/\CC}$ be a logarithmic connection on an $\cO_X$-module $E$. Then $\nabla$ is integrable if and only if the following conditions hold:
  \begin{enumerate}[i.]
    \item the connection $\epsast(\nabla)\colon E\to E\otimes\Omega^1_{\underline{X}/\CC}$ is integrable,
    \item the map $\rho_\nabla\colon E\to E\otimes \mathcal{Q}_X$ is a $\mathcal{Q}_X$-Higgs field,
    \item the map $\rho_\nabla\colon E\to E\otimes \mathcal{Q}_X$ is horizontal, where we endow $\mathcal{Q}_X = \oM^\gp_X\otimes_\ZZ \cO_X$ with the canonical connection satisfying $\nabla(m\otimes 1) = 0$, and $E\otimes\mathcal{Q}_X$ with the tensor product connection.
  \end{enumerate}
\end{lemma}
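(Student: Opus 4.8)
The plan is to compute the curvature $\nabla^{2}\colon E\to E\otimes\Omega^{2}_{X/\CC}$ explicitly and to recognize conditions (i)--(iii) as the vanishing of its three homogeneous pieces; recall that $\nabla$ is integrable if and only if $\nabla^{2}=0$. First I would record the relevant decompositions. Using $\epsast$ from Construction~\ref{constr:epsast}, write $\Omega^{1}_{X/\CC}=\Omega^{1}_{\underline X/\CC}\oplus\sigma(\mathcal{Q}_{X})$, where $\sigma\colon\mathcal{Q}_{X}\to\Omega^{1}_{X/\CC}$ is the $\cO_{X}$-linear section of $q$ sending $m\otimes1$ to $d\log\eps(m)$; a direct check gives $\epsast\circ\sigma=0$, so $\sigma$ identifies $\mathcal{Q}_{X}$ with $\ker(\epsast)$. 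Taking exterior powers, $\Omega^{2}_{X/\CC}\simeq\wedge^{2}\Omega^{1}_{\underline X/\CC}\oplus(\Omega^{1}_{\underline X/\CC}\otimes_{\cO_{X}}\mathcal{Q}_{X})\oplus\wedge^{2}\mathcal{Q}_{X}=:F^{2,0}\oplus F^{1,1}\oplus F^{0,2}$. Two facts about $d\colon\Omega^{1}_{X/\CC}\to\Omega^{2}_{X/\CC}$ will be used: its restriction to $\Omega^{1}_{\underline X/\CC}$ is the exterior derivative of the smooth scheme $\underline X$ (by functoriality of the de Rham complex along the structural morphism from $X$ to its underlying scheme with trivial log structure, together with Lemma~\ref{lem:hollow-smooth-basics}), hence lands in $F^{2,0}$; and since $d(d\log m)=0$ for any local section $m$ of $\cM_{X}$, for $\xi=\sum f_{i}(m_{i}\otimes1)$ one gets $d(\sigma(\xi))=\sum df_{i}\wedge d\log\eps(m_{i})\in F^{1,1}$. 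In particular $d$ has no component into $F^{0,2}$.

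Next I would decompose $\nabla=\nabla_{0}+\rho$ with $\nabla_{0}=\epsast(\nabla)$ and $\rho=\rho_{\nabla}$; since the Leibniz term $e\otimes df$ lies in the $\Omega^{1}_{\underline X/\CC}$-summand, $\nabla_{0}$ is a connection on $E$ over $\underline X$ and $\rho$ is $\cO_{X}$-linear. Extending $\nabla$ to $E\otimes\Omega^{\bullet}_{X/\CC}$ by the Leibniz rule $\nabla(e\otimes\omega)=\nabla(e)\wedge\omega+e\otimes d\omega$ and using the previous paragraph, this extension splits as $\nabla=A+B$, where $A(e\otimes\omega)=\nabla_{0}(e)\wedge\omega+e\otimes d\omega$ raises the first grading by one while $B(e\otimes\omega)=\rho(e)\wedge\omega$ raises the second. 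Therefore $\nabla^{2}=A^{2}+(AB+BA)+B^{2}$, with the three summands being $\cO_{X}$-linear maps $E\to E\otimes F^{2,0}$, $E\to E\otimes F^{1,1}$, $E\to E\otimes F^{0,2}$ respectively; since these land in complementary summands, $\nabla^{2}=0$ if and only if all three vanish.

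It then remains to identify the three terms. Evaluating on $E$, where $A$ restricts to $\nabla_{0}$ and $B$ to $\rho$: the term $A^{2}$ is the curvature of the connection $\epsast(\nabla)$, so $A^{2}=0$ is condition (i); the term $B^{2}$ is the composite $E\xrightarrow{\rho}E\otimes\mathcal{Q}_{X}\xrightarrow{\rho\,\wedge\,(-)}E\otimes\wedge^{2}\mathcal{Q}_{X}$, so $B^{2}=0$ is precisely the statement that the $\cO_{X}$-linear map $\rho$ is a $\mathcal{Q}_{X}$-Higgs field, i.e.\ condition (ii); and $AB+BA$, evaluated as $A(\rho(e))+B(\nabla_{0}(e))$ and using $d(\sigma(\xi))=\sum df_{i}\wedge d\log\eps(m_{i})$, equals up to an overall sign the covariant derivative of $\rho$ viewed as a section of $\underline{\rm Hom}(E,E\otimes\mathcal{Q}_{X})$, where $E$ carries $\epsast(\nabla)$, $\mathcal{Q}_{X}$ the canonical connection with $\nabla(m\otimes1)=0$, and $E\otimes\mathcal{Q}_{X}$ the tensor product connection; thus $AB+BA=0$ is condition (iii). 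Combining, $\nabla^{2}=0$ iff (i), (ii) and (iii) all hold.

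I do not expect a conceptual obstacle: the whole statement is packaged in the splitting of $\Omega^{1}_{X/\CC}$ provided by $\eps$ and in the closedness of the forms $d\log\eps(m)$. The only genuinely fiddly point is the sign bookkeeping in matching $AB+BA$ with the covariant derivative of $\rho$, together with the harmless verification that $d$ behaves as claimed on the two pieces of $\Omega^{1}_{X/\CC}$, which is immediate from $d\circ d\log=0$ and functoriality of the de Rham complex. One could alternatively sidestep the signs by checking the identity strict \'etale locally on the model $\mathbf{A}_{P,\,P\setminus P^{\times}}$, where $\Omega^{\bullet}_{X/\CC}\simeq\Omega^{\bullet}_{\underline X/\CC}\otimes_{\cO_{X}}\wedge^{\bullet}_{\cO_{X}}\mathcal{Q}_{X}$ as a DG algebra with differential $d_{\underline X}\otimes\mathrm{id}$.
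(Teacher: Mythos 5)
Your argument is correct and is essentially the paper's proof: both decompose $\Omega^2_{X/\CC}$ into the three summands determined by the splitting $\epsast$ and identify the corresponding homogeneous pieces of the curvature with conditions (i)--(iii); your $\nabla^2=A^2+(AB+BA)+B^2$ bookkeeping is just a repackaging of the paper's description of $E\to E\otimes\Omega^1_{X/\CC}\to E\otimes\Omega^2_{X/\CC}$ as the totalization of a square diagram. The only point left slightly informal in both treatments is the sign/shuffle matching of the mixed term with horizontality of $\rho_\nabla$, which you flag appropriately.
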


Before we give the (straightforward) proof, let us recall the notion of a Higgs field used in its statement. We will make use of the extra generality shortly.

\begin{defin} \label{def:Higgs-object}
  Let $(\mathcal{A}, \otimes)$ be a symmetric monoidal abelian category.
  \begin{enumerate}[(a)]
    \item For an object $F$ of $\mathcal{A}$ and $r\geq 0$, the \emph{$r$-th exterior power} $\bigwedge^r F$ is the image of the antisymmetrization map $\sum_{\sigma\in S_r} (-1)^{{\rm sign}(\sigma)}\sigma \colon F^{\otimes r}\to F^{\otimes r}$, see \cite[\S 7, p.\ 165]{Deligne_CategoriesTannakiennes}. For $i,j\geq 0$, we denote by $\wedge$ the map
    \[ 
      \wedge\colon \bigwedge^i F \otimes \bigwedge^j F \ra \bigwedge^{i+j} F.
    \]
    \item Let $F$ be an object of $\mathcal{A}$. An \emph{$F$-Higgs object} of $\mathcal{A}$ is a pair $(E, \theta)$ where $E$ is an object of $\mathcal{A}$ and $\theta\colon E\to E\otimes F$ is a morphism (called a \emph{Higgs field}) satisfying $\theta^1\circ\theta = 0$ where $\theta^1$ is the map
    \[ 
      E\otimes F\xto{-\theta\otimes 1} E\otimes F\otimes F \xto{1\otimes \wedge} E\otimes \bigwedge^2 F,
    \]
    A morphism of $F$-Higgs objects $(E',\theta')\to (E,\theta)$ is a morphism $f\colon E'\to E$ for which the square
    \[ 
      \xymatrix{
        E' \ar[r]^-{\theta'} \ar[d]_f &  E'\otimes F \ar[d]^{f\otimes 1} \\
        E\ar[r]_-\theta & E\otimes F
      }
    \]
    commutes. We denote by ${\rm HIG}_F(\mathcal{A})$ the category of $F$-Higgs objects of $\mathcal{A}$.\footnote{If $F$ is dualizable, then the data of an $F$-Higgs field on an object $E$ is equivalent to the structure of an $\op{Sym}(F^\vee)$-module on $E$. As pointed out by Langer \cite[\S 4, especially Remark~4.12]{LangerHiggsNormal}, in general it is better to work with the category of $\op{Sym}(L)$-modules for an object $L$ of $\mathcal{A}$ --- for example, unlike the category of $F$-Higgs objects in our sense, it is always an abelian category. Fortunately in our setting the object $F$ will always be dualizable.}
    \item Let $(E, \theta)$ be an $F$-Higgs object of $\mathcal{A}$. The \emph{Higgs complex} of $(E, \theta)$ is the complex
    \[ 
      E \ra E\otimes F \ra E\otimes \bigwedge^2 F \ra \cdots
    \]
    where the differential $\theta^i\colon E\otimes \bigwedge^i F \to E\otimes \bigwedge^{i+1} F$ is the composition
    \[ 
      E\otimes \bigwedge^i F \xto{(-1)^i\theta\otimes 1} E\otimes F\otimes \bigwedge^i F \xto{1\otimes \wedge} E\otimes \bigwedge^{i+1} F. 
    \]
  \end{enumerate}
\end{defin}

\begin{proof}[Proof of Lemma~\ref{lem:int-Higgs-field}]
Using the decomposition $\Omega^2_{X/\CC} \simeq \Omega^2_{\underline{X}/\CC} \oplus (\mathcal{Q}_X\otimes \Omega^1_{\underline{X}/\CC}) \oplus \bigwedge^2 \mathcal{Q}_X$ induced by $\epsast$, we can describe the sequence $E\to E\otimes \Omega^1_{X/C}\to E\otimes\Omega^2_{X/\CC}$ as the totalization of the diagram
\[ 
  \xymatrix{
    E\otimes \Omega^2_{\underline{X}/\CC} \\
    E\otimes \Omega^1_{\underline{X}/\CC} \ar[u]^{\epsast(\nabla)^1} \ar[r]^-{\rho_\nabla\otimes 1} & E\otimes \mathcal{Q}_X\otimes \Omega^1_{\underline{X}/\CC} \\
    E \ar[r]_{\rho_\nabla} \ar[u]^{\epsast(\nabla)} & E\otimes \mathcal{Q}_X\ar[u]_{\delta} \ar[r]_{\rho_\nabla^1} & E\otimes \bigwedge^2\mathcal{Q}_X.
  }
\]
where $\delta$ is the connection on $E\otimes \mathcal{Q}_X$, explicitly described as $\delta(e\otimes q) = s(\nabla(e)\otimes q)$ for $e\in E$ and $q\in \oM_X^\gp$ where $s\colon E\otimes\Omega^1_{\underline{X}/\CC}\otimes\mathcal{Q}_X\isomto E\otimes\mathcal{Q}_X\otimes\Omega^1_{\underline{X}/\CC}$ is the shuffle map. Therefore $\nabla^1\circ \nabla = 0$ if and only if $\epsast(\nabla)^1 \circ \epsast(\nabla) = 0$, $\rho_\nabla^1\circ\rho_\nabla =0 $, and the square in the diagram commutes. 
\end{proof}

\begin{remarks} \label{rmks:epsast}
\begin{enumerate}[1.] 
  \item \label{rmk:Higgs-residue}
    Suppose that $\oM_X$ is constant with value $\NN^r$. The above shows that in the presence of a splitting $\eps$ on $X$, the data of a log integrable connection on $X$ is equivalent to the data of an integrable connection on $\underline{X}$ endowed with $r$ commuting horizontal endomorphisms (the ``residues''). 
  \item \label{rmk:functoriality}
    Let $f\colon Y\to X$ be a map between hollow idealized smooth log schemes over $\CC$ and let $\eps_X$ and $\eps_Y$ be splittings with which $f$ is compatible (Remark~\ref{rmk:Xsplit-functorial}). Then, we have an isomorphism of functors $\epsast_Y(f^*(-)) \simeq f^*(\epsast_X(-))$ from $\MIC(X/\CC)$ to $\MIC(\underline{Y}/\CC)$.
  \item \label{rmk:}
    The following variant is quite useful in practice. Let $(\underline{X}, D)$ be an snc pair and let $X$ be the associated log scheme. Let $D_1, \ldots, D_r$ be the components of $D$. For $I\subseteq \{1, \ldots, r\}$, let $Z=\bigcap_{i\in I} D_i$ be the closure of a stratum of $D$ and let $\partial Z = Z\cap \bigcup_{i\notin I} D_i$. Then $(Z, \partial Z)$ is an snc pair, and we denote by $Z'$ the corresponding log scheme. We have a morphism of log schemes $Z\to Z'$ which is the identity on the underlying schemes. This map \'etale locally admits a ``splitting,'' i.e.\ a splitting $\eps$ of the map $\cM_{Z'}\to \cM_Z$. In turn, the splitting $\eps$ induces a functor $\epsast\colon \MIC(Z/\CC)\to\MIC(Z'/\CC)$ enjoying properties similar to those listed in Proposition~\ref{prop:Higgs-description} below. See \cite[\S 3.3]{LangerNearby} for related calculations.
  \item \label{rmk:algebroids}
    The dual map $(\epsast)^\vee \colon T_{\underline{X}/\CC}\to T_{X/\CC}$ defines a morphism of Lie algebroids, i.e.\ it is a morphism of $\CC$-Lie algebras on $\underline{X}$ (see \cite[\S 2]{LangerLie}) such that $u\circ (\epsast)^\vee$ is the identity where $u\colon T_{X/\CC}\to T_{\underline{X}/\CC}$ is the map induced by the map of log schemes $X\to \underline{X}$. In particular, we obtain a functor on the level of modules over those algebroids which agrees with $\epsast$. 
  \item \label{rmk:D-mod}
    The functor $\epsast\colon \MIC(X/\CC)\to \MIC(\underline{X}/\CC)$ extends to a functor
    \[ 
      \epsast \colon \cat{Mod}_{\rm coh}(\mathcal{D}_X) \ra \cat{Mod}_{\rm coh}(\mathcal{D}_{\underline{X}})
    \]
    between the categories of coherent (log) $D$-modules \cite{KoppensteinerTalpo,Koppensteiner}.
\end{enumerate}
\end{remarks}

Summarizing the above discussion, we have proved:

\begin{prop} \label{prop:Higgs-description}
  Let $X$ be a hollow idealized smooth log scheme over $\CC$ and let $\eps$ be a splitting on $X$. Then, the following assertions hold.
  \begin{enumerate}[(a)]
    \item The sheaf $\oM_X$ is locally constant, $\underline{X}$ is smooth over $\CC$, and $\eps$ induces a map 
    \[
      \epsast\colon\Omega^1_{X/\CC}\ra \Omega^1_{\underline{X}/\CC}
    \]
    splitting the exact sequence
    \[ 
      \xymatrix{0\ar[r] & \Omega^1_{\underline{X}/\CC}\ar[r] & \Omega^1_{X/\CC} \ar[r] & \mathcal{Q}_X\ar[r] & 0,} \quad \mathcal{Q}_X = \ov{\cM}^{\rm gp}_X\otimes_{\ZZ} \cO_X .
    \]
    \item In turn, the map $\epsast$ induces a monoidal functor
    \[ 
      \epsast \colon \MIC(X/\CC) \ra \MIC(\underline{X}/\CC)
    \]
    and an equivalence of categories
    \[ 
      (E, \nabla)\mapsto (\epsast(E, \nabla), \rho_\nabla) \quad\colon \quad \MIC(X/\CC) \isomto {\rm HIG}_{\mathcal{Q}_X}(\MIC(\underline{X}/\CC)).
    \]
    \item The equivalence in (b) is compatible with cohomology in the following way. For an object $(E,\nabla)$ of $\MIC(X/\CC)$, we have a functorial isomorphism between the de Rham complex $E\otimes \Omega^\bullet_{X/\CC}$ of $E$ and the total complex of the double complex
    \[ 
      \xymatrix{
        & \cdots & \cdots \\
        \cdots \ar[r] & E \otimes \bigwedge^i \mathcal{Q}_X \otimes \Omega^{j+1}_{\underline{X}/\CC} \ar[r] \ar[u] & E \otimes \bigwedge^{i+1} \mathcal{Q}_X \otimes \Omega^{j+1}_{\underline{X}/\CC} \ar[r] \ar[u] & \cdots \\
        \cdots \ar[r] & E \otimes \bigwedge^i \mathcal{Q}_X \otimes \Omega^{j}_{\underline{X}/\CC} \ar[r] \ar[u] & E \otimes \bigwedge^{i+1} \mathcal{Q}_X \otimes \Omega^j_{\underline{X}/\CC} \ar[r] \ar[u] & \cdots \\
        & \cdots \ar[u] & \cdots \ar[u]         
      }
    \]
    whose $i$-th column is the de Rham complex of $E\otimes \bigwedge^i \mathcal{Q}_X$ and whose $j$-th row is the Higgs complex of $E$ tensored with $\Omega^j_{\underline{X}/\CC}$.
  \end{enumerate}
\end{prop}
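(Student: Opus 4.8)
The plan is to assemble the facts established in the preceding discussion into the three assertions; the only genuinely new work is the book-keeping for part~(c).

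\emph{Part (a).} Since $X$ is hollow, Lemma~\ref{lem:splitting-basics}(c) gives that $\oM_X$ is locally constant, and Lemma~\ref{lem:hollow-smooth-basics}(a) gives smoothness of $\underline{X}$. The map $\epsast$ is the one produced in Construction~\ref{constr:epsast}, and the fact that it splits the short exact sequence of Lemma~\ref{lem:hollow-smooth-basics}(c) can be read off directly: by construction $q\circ\epsast$ corresponds to the log derivation $(d,d^{\rm log}_\eps)$ followed by the projection to $\mathcal{Q}_X$, which is the universal log derivation relative to $\underline{X}$ precomposed with $\eps$, hence the identity on $\oM^\gp_X\otimes_\ZZ\cO_X$. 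Alternatively one checks this on the local model $X=\mathbf{A}_{P,P\setminus P^\times}$, where everything reduces to the splitting of $\CC[P^\times]\otimes P^\times\to\CC[P^\times]\otimes P^\gp\to\CC[P^\times]\otimes\ov{P}^\gp$ induced by a splitting of $P^\gp$.

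\emph{Part (b).} The decomposition $\Omega^1_{X/\CC}\simeq\mathcal{Q}_X\oplus\Omega^1_{\underline{X}/\CC}$ induced by $\epsast$ turns a log connection $\nabla$ into the pair $(\epsast(\nabla),\rho_\nabla)$ discussed in the text, with $\epsast(\nabla)$ a connection on $\underline{X}$ and $\rho_\nabla\colon E\to E\otimes\mathcal{Q}_X$ an $\cO_X$-linear map; conversely such a pair reassembles to a log connection via the splitting, and this sets up an equivalence at the level of the underlying data. A horizontal map of log connections is, after applying $1\otimes\epsast$ and $1\otimes q$, the same as a map of the associated pairs, so this correspondence is fully faithful. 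By Lemma~\ref{lem:int-Higgs-field}, $\nabla$ is integrable if and only if $\epsast(\nabla)$ is integrable, $\rho_\nabla$ is a Higgs field, and $\rho_\nabla$ is horizontal for the canonical connection on $\mathcal{Q}_X$ and the tensor connection on $E\otimes\mathcal{Q}_X$; the last two conditions say exactly that $(E,\epsast(\nabla))$ together with $\rho_\nabla$ is a $\mathcal{Q}_X$-Higgs object of $\MIC(\underline{X}/\CC)$ in the sense of Definition~\ref{def:Higgs-object}(b). This gives essential surjectivity, hence the equivalence. Monoidality follows because $\epsast$ is an $\cO_X$-module map compatible with the tensor/wedge structure, and the tensor connection on $E\otimes F$ decomposes so that $\rho_{\nabla_{E\otimes F}}=\rho_{\nabla_E}\otimes 1+1\otimes\rho_{\nabla_F}$.

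\emph{Part (c).} Taking exterior powers of $\Omega^1_{X/\CC}\simeq\mathcal{Q}_X\oplus\Omega^1_{\underline{X}/\CC}$ yields $\Omega^n_{X/\CC}\simeq\bigoplus_{i+j=n}\bigwedge^i\mathcal{Q}_X\otimes\Omega^j_{\underline{X}/\CC}$, so $E\otimes\Omega^\bullet_{X/\CC}$ acquires a bigrading. Writing out the log de Rham differential in this bigrading, exactly as in the totalization appearing in the proof of Lemma~\ref{lem:int-Higgs-field} but now in all degrees, one finds a $(0,1)$-component equal to the de Rham differential of $E\otimes\bigwedge^i\mathcal{Q}_X$ (for the canonical connection on $\mathcal{Q}_X$ and the tensor connection) and a $(1,0)$-component equal to $\rho_\nabla^i\otimes 1$, i.e.\ the Higgs differential of $E$ tensored with $\Omega^j_{\underline{X}/\CC}$, up to the signs built into Definition~\ref{def:Higgs-object}(c). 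Integrability of $\nabla$ — equivalently the three conditions of Lemma~\ref{lem:int-Higgs-field} — is precisely the statement that these two families of maps square to zero and anticommute, hence assemble into a double complex whose total complex is $E\otimes\Omega^\bullet_{X/\CC}$.

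The main obstacle is the sign book-keeping in part~(c): one must match the Koszul signs of the differential on $\Omega^\bullet_{X/\CC}$ with the signs $(-1)^i$ in the Higgs complex of Definition~\ref{def:Higgs-object}(c) and with the signs in the total complex of a double complex, and check that the anticommutativity of the two differentials is \emph{equivalent} to (not an extra hypothesis beyond) integrability. This is the same computation as in the proof of Lemma~\ref{lem:int-Higgs-field}, carried out in general bidegree, and it is purely mechanical once organized correctly.
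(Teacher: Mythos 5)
Your proposal is correct and takes essentially the same route as the paper, which gives no separate proof at all: the proposition is introduced with ``Summarizing the above discussion, we have proved,'' the discussion being precisely Lemma~\ref{lem:splitting-basics}(c), Lemma~\ref{lem:hollow-smooth-basics}, Construction~\ref{constr:epsast}, and Lemma~\ref{lem:int-Higgs-field} that you assemble. One small slip in part (a): the composite ``$q\circ\epsast$'' does not typecheck (both $q$ and $\epsast$ have source $\Omega^1_{X/\CC}$); the splitting is verified by checking that $\epsast$ restricted along $\Omega^1_{\underline{X}/\CC}\hookrightarrow\Omega^1_{X/\CC}$ is the identity, which is immediate because the log derivation $(d,d^{\rm log}_\eps)$ restricts on $\cO_X$ to the universal derivation $d$ of $\underline{X}$ --- your alternative check on the local model $\mathbf{A}_{P,P\setminus P^\times}$ also suffices.
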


\begin{cor} \label{cor:hollow-tannakian}
  Let $X$ be a hollow idealized smooth log scheme over $\CC$. Then, every object of $\MIC(X/\CC)$ is locally free as an $\cO_X$-module, and the category $\MIC(X/\CC)$ is a rigid tensor category \cite[\S 2]{Deligne_CategoriesTannakiennes}. If $X$ is connected, then for every point $x\in X(\CC)$ and every splitting $\eps$ on $x$, the functor 
  \[
    \omega=\epsast\circ x^*\colon \MIC(X/\CC)\ra \MIC(\underline{x}/\CC) = \cat{Vect}_\CC
  \]
  is an exact faithful $\CC$-linear tensor functor, making $(\MIC(X/\CC), \omega)$ a Tannakian category.
\end{cor}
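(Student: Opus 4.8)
The plan is to reduce all three assertions to the description of $\MIC(X/\CC)$ provided by Proposition~\ref{prop:Higgs-description}, which trades (log) connections on a hollow $X$ for ordinary connections on the smooth scheme $\underline{X}$. I would begin with local freeness. As $X$ is hollow, $\oM_X$ is locally constant (Lemma~\ref{lem:splitting-basics}(c)), so $\cM_X$ admits a splitting $\eps$ strict \'etale locally (Lemma~\ref{lem:splitting-basics}(b)); in such a chart Proposition~\ref{prop:Higgs-description} identifies the underlying $\cO_X=\cO_{\underline{X}}$-module of any object $(E,\nabla)$ of $\MIC(X/\CC)$ with a coherent sheaf carrying the integrable connection $\epsast(\nabla)$ over the smooth $\CC$-scheme $\underline{X}$. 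A coherent module with an integrable connection over a smooth scheme over a field is locally free of finite rank (the classical argument, as already recalled in \S\ref{s:intro} for connections without poles); since local freeness is local on $X$, every object of $\MIC(X/\CC)$ is locally free of finite rank, which is the first assertion.

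Granting this, rigidity is formal. For $E=(E,\nabla)$, the internal Hom $E^\vee=\underline{\Hom}(E,\mathbf{1})$, with $\mathbf{1}=(\cO_X,d)$, together with the usual evaluation $E^\vee\otimes E\to\mathbf{1}$ and coevaluation $\mathbf{1}\to E\otimes E^\vee$, exhibits $E$ as dualizable: both structure maps are horizontal by the Leibniz rule, and the triangle identities can be checked locally on $X$, where $E$ is free of finite rank. Thus the $\CC$-linear abelian symmetric monoidal category $\MIC(X/\CC)$ is rigid.

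For the last assertion, suppose $X$ is connected, fix $x\in X(\CC)$, and regard $x$ also as the log point $(\Spec\CC,\,x^*\cM_X)$ over $X$, with $\underline{x}=\Spec(\CC)$. This log point is hollow and idealized smooth, so a splitting $\eps$ on it exists, and by Proposition~\ref{prop:Higgs-description}(b) the functor $\epsast\colon\MIC(x/\CC)\to\MIC(\underline{x}/\CC)=\cat{Vect}_\CC$ is the forgetful functor ${\rm HIG}_{\mathcal{Q}_x}(\cat{Vect}_\CC)\to\cat{Vect}_\CC$, hence exact, monoidal and $\CC$-linear. The strict pull-back $x^*\colon\MIC(X/\CC)\to\MIC(x/\CC)$ is monoidal and $\CC$-linear, and it is exact on $\MIC(X/\CC)$ because, by the first assertion, a short exact sequence there consists of locally free $\cO_X$-modules, hence is locally split and stays exact under any pull-back. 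Therefore $\omega=\epsast\circ x^*$ is a $\CC$-linear exact tensor functor with $\omega(\mathbf{1})=\CC$ and $\dim_\CC\omega(E)=\op{rank}_{\cO_X}E<\infty$. It is faithful: if $\omega(E)=E_x=0$, then $E$, locally free of rank $0$ on the connected scheme $\underline{X}$, vanishes, and an exact functor between abelian categories that kills no nonzero object is faithful. Finally $\op{End}(\mathbf{1})$ injects into $\op{End}(\omega(\mathbf{1}))=\CC$ and contains $\CC$, so $\op{End}(\mathbf{1})=\CC$; by Deligne's recognition theorem \cite[\S 2]{Deligne_CategoriesTannakiennes}, $(\MIC(X/\CC),\omega)$ is Tannakian.

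The main obstacle is the circular-looking dependence between the first and last assertions. The functors $x^*$ and $\epsast$ are not exact in general --- coping with the non-exactness of $\epsast(\pi^*(-))$ is one of the leitmotifs of the paper --- so the exactness and faithfulness of $\omega$ cannot be seen directly; they must be bootstrapped from the local freeness of objects of $\MIC(X/\CC)$, which in turn relies entirely on Proposition~\ref{prop:Higgs-description} reducing matters to the classical case of connections on smooth schemes. Once local freeness is established, everything else is routine linear algebra with connections together with the standard Tannakian formalism.
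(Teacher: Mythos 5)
Your proof is correct and follows the same route as the paper: the paper's argument is exactly that a local splitting (via Proposition~\ref{prop:Higgs-description}) endows any object of $\MIC(X/\CC)$ locally with the structure of an integrable connection on the smooth scheme $\underline{X}$, whence local freeness, and then dismisses the rigidity and Tannakian assertions as formal consequences. You have merely spelled out the "the rest follows" step (duality, exactness and faithfulness of $\omega$ from local freeness) in the standard way.
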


\begin{proof}
Since $X$ locally admits a splitting, for every object $(E, \nabla)$ of $\MIC(X/\CC)$, the $\cO_X$-module $E$ locally admits the structure of an object of $\MIC(\underline{X}/\CC)$ and hence is locally free. The rest follows.
\end{proof}

The following straightforward lemma records the dependence of $\epsast(E)$ on the splitting $\eps$.

\begin{lemma} \label{lem:dependence-on-eps}
  Let $X$ be a hollow idealized smooth log scheme over $\CC$ and let $\eps_i$ ($i=0,1$) be two splittings on $X$. We treat $\eps_0/\eps_1$ as a map $\oM_X\to \cO_X^\times$. The composition $d\log(\eps_0/\eps_1)\colon \oM_X\to\cO_X^\times\to \Omega^1_{\underline{X}/\CC}$ is additive and hence uniquely extends to an $\cO_X$-linear map $\delta(\eps_0, \eps_1)\colon \mathcal{Q}_X\to \Omega^1_{\underline{X}/\CC}$. Let $(E, \nabla)$ be an object of $\MIC(X/\CC)$. Then, the $\cO_X$-linear map $\epsast_0(\nabla) - \epsast_1(\nabla)$ equals the composition
  \[ 
    \xymatrix@C=1.5cm{
      E \ar[r]^-{\rho_\nabla} & E\otimes \mathcal{Q}_X \ar[r]^-{1\otimes \delta(\eps_0,\eps_1)} & E\otimes \Omega^1_{\underline{X}/\CC}.
    }
  \]
\end{lemma}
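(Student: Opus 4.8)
The plan is to prove the identity first at the level of the sheaves of differentials, i.e.\ to show that the $\cO_X$-linear difference $\epsast_0 - \epsast_1 \colon \Omega^1_{X/\CC}\to\Omega^1_{\underline{X}/\CC}$ equals $\delta(\eps_0,\eps_1)\circ q$, where $q\colon\Omega^1_{X/\CC}\to\mathcal{Q}_X$ is the projection; the statement for $\nabla$ then follows formally by post-composing with $\nabla$. First I would recall (from Construction~\ref{constr:epsast} and Proposition~\ref{prop:Higgs-description}(a)) that each $\epsast_i$ is a section of the inclusion $\Omega^1_{\underline{X}/\CC}\hookrightarrow\Omega^1_{X/\CC}$, i.e.\ restricts to the identity on $\Omega^1_{\underline{X}/\CC}$. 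Consequently the difference $\epsast_0-\epsast_1$ annihilates the subsheaf $\Omega^1_{\underline{X}/\CC}$, hence factors uniquely through the surjection $q$ as $g\circ q$ for a unique $\cO_X$-linear map $g\colon\mathcal{Q}_X\to\Omega^1_{\underline{X}/\CC}$. It then remains only to identify $g$ with $\delta(\eps_0,\eps_1)$.

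To do this I would evaluate on the elements $q(d\log m)$. The sections $\overline{m}\otimes 1$, for $m$ a local section of $\cM_X$, generate $\mathcal{Q}_X = \oM_X^\gp\otimes_\ZZ\cO_X$ over $\cO_X$, and by Lemma~\ref{lem:hollow-smooth-basics}(b) one has $q(d\log m) = \overline{m}\otimes 1$; moreover $\epsast_i(d\log m) = d\log(\eps_i'(m))$ by Construction~\ref{constr:epsast}, where $\eps_i'\colon\cM_X\to\cO_X^\times$ is the retraction determined by $\eps_i$. Hence $g(\overline{m}\otimes 1) = d\log(\eps_0'(m)) - d\log(\eps_1'(m)) = d\log(\eps_0'(m)/\eps_1'(m))$. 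Now $\eps_0'(m)/\eps_1'(m)$ depends only on the image $\overline{m}$ of $m$ in $\oM_X$ (replacing $m$ by $m+u$ with $u$ a local section of $\cO_X^\times$ rescales each $\eps_i'(m)$ by the same unit), and is by definition the value $(\eps_0/\eps_1)(\overline{m})$ of the map $\eps_0/\eps_1\colon\oM_X\to\cO_X^\times$; since $\overline{m}\mapsto d\log((\eps_0/\eps_1)(\overline{m}))$ is additive, its unique $\cO_X$-linear extension through $\oM_X^\gp\otimes_\ZZ\cO_X$ is precisely $\delta(\eps_0,\eps_1)$, so $g = \delta(\eps_0,\eps_1)$.

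Finally, since $\epsast_i(\nabla) = (1_E\otimes\epsast_i)\circ\nabla$ and $\rho_\nabla = (1_E\otimes q)\circ\nabla$ by definition, I obtain $\epsast_0(\nabla)-\epsast_1(\nabla) = (1_E\otimes(\epsast_0-\epsast_1))\circ\nabla = (1_E\otimes(g\circ q))\circ\nabla = (1_E\otimes\delta(\eps_0,\eps_1))\circ\rho_\nabla$, which is the asserted factorization. I do not expect any genuine difficulty: the one point requiring care is the bookkeeping relating the sections $\eps_i$ to the retractions $\eps_i'$ and fixing the normalization of $\eps_0/\eps_1$ so that the two sides match with no stray sign. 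The hollowness hypothesis enters only through the cited structural results --- it is what guarantees that $\underline{X}$ is smooth and reduced, so that the exact sequence $0\to\Omega^1_{\underline{X}/\CC}\to\Omega^1_{X/\CC}\xrightarrow{q}\mathcal{Q}_X\to 0$ is available and is split by each $\epsast_i$.
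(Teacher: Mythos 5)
Your argument is structurally the right one, and since the paper states this lemma without proof (it is introduced as ``the following straightforward lemma''), it is surely the intended argument: reduce to the $\cO_X$-linear identity $\epsast_0-\epsast_1=\delta(\eps_0,\eps_1)\circ q$ on $\Omega^1_{X/\CC}$, observe that both $\epsast_i$ restrict to the identity on $\Omega^1_{\underline{X}/\CC}$ so that the difference factors uniquely through $q$, and evaluate on the generators $d\log m$. All of those steps are fine, as is the formal passage from $\Omega^1$ to $\nabla$ at the end.

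The one step you assert rather than compute --- ``$\eps_0'(m)/\eps_1'(m)$ is by definition $(\eps_0/\eps_1)(\ov m)$'' --- is exactly the step you flag as delicate, and it comes out with the opposite sign. The retraction corresponding to a section $\eps_i$ is forced by Construction~\ref{constr:epsast} to satisfy $\eps_i'|_{\cO_X^\times}=\mathrm{id}$ (otherwise $(d,d^{\log}_{\eps_i})$ is not a log derivation), hence $\eps_i'(m)=m-\eps_i(\ov m)$ in $\cM_X^{\gp}$, and therefore
\[
  \eps_0'(m)-\eps_1'(m)=\eps_1(\ov m)-\eps_0(\ov m)=(\eps_1/\eps_0)(\ov m),
\]
so your computation actually gives $g=\delta(\eps_1,\eps_0)=-\delta(\eps_0,\eps_1)$, i.e.\ $\epsast_0(\nabla)-\epsast_1(\nabla)=-(1\otimes\delta(\eps_0,\eps_1))\circ\rho_\nabla$. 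A rank-one check confirms this: on $X=\Spec(\NN\to\CC[x,x^{-1}])$ (the generator of $\NN$ mapping to $0$), with $e=(1,1)\in\cM_X=\NN\oplus\cO_X^\times$, $\eps_0(1)=(1,1)$, $\eps_1(1)=(1,x)$, and $\nabla(1)=a\,d\log e$, one finds $\epsast_0(\nabla)(1)-\epsast_1(\nabla)(1)=a\,dx/x$, whereas $(1\otimes\delta(\eps_0,\eps_1))\rho_\nabla(1)=a\,d\log(x^{-1})=-a\,dx/x$. This discrepancy is shared with the statement as printed, so it is arguably a sign slip in the lemma itself rather than in your argument, and it is harmless where the lemma is used (Lemma~\ref{lem:regular-indep} only needs $\nabla_1-\nabla_0=\pm\sum\rho_i\otimes d\log f_i$ with $f_i$ units). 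Still, ``by definition'' is not a proof at the crux: write $\eps_i'$ out explicitly and let the sign fall where it falls.
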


Finally, we state a result for a general idealized smooth (non-hollow) log scheme $X$. Intuitively, it says that the ``map'' $\eps_{\rm univ}\circ \pi\colon \underline{X}^\#\to X$ is ``unramified.''

\begin{lemma} \label{lem:xsharp-x-unram}
  Let $X$ be an idealized smooth log scheme over $\CC$ and let $\pi\colon X^\#\to X$ be the morphism constructed in \S\ref{ss:splittings}, endowed with the universal splitting $\eps_{\rm univ}$. Then, the composition
  \[
    \xymatrix{\pi^* \Omega^1_{X/\CC} \ar[r]^{\pi^*} & \Omega^1_{X^\#/\CC} \ar[r]^-{\epsast_{\rm univ}} & \Omega^1_{\underline{X}^\#/\CC}}
  \]
  is an isomorphism.
\end{lemma}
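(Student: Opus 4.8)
The plan is to reduce the statement, strict \'etale locally, to a direct computation on each component of $X^\#$. Since $\pi\colon X^\#\to X$, the universal splitting $\eps_{\rm univ}$, the three sheaves of differentials in question, and both maps in the composition all commute with strict \'etale base change (for the differentials, because a strict \'etale map induces an isomorphism on differentials), Definition~\ref{def:idealized-smooth} lets me assume $X=\mathbf{A}_{P,K}$ for an fs monoid $P$ and an ideal $K\subseteq P$. Then $\Omega^1_{X/\CC}\cong\cO_X\otimes_\ZZ P^\gp$, generated by the forms $d\log p$ pulled back from the chart (the idealized structure imposes no further relations). By Example~\ref{ex:APsplit} together with the compatibility of $(-)^\#$ with the strict closed immersion $\mathbf{A}_{P,K}\hookrightarrow\mathbf{A}_P$, one has $X^\#=\coprod_F X_F\times\mathbf{A}_{(P/F)^\gp}$ with the coproduct over the faces $F$ of $P$ satisfying $F\cap K=\emptyset$: if some $k\in K$ lies in $F$, then $\alpha(k)$ is at once invertible and zero on $X_F\times_{\mathbf{A}_P}\mathbf{A}_{P,K}$, so this fibre product is empty, whereas if $F\cap K=\emptyset$ then $K$ maps to $0$ in $\cO(X_F)=\CC[F^\gp]$ and the fibre product is all of $X_F$. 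As a morphism of coherent sheaves is an isomorphism if and only if it is one over each part of an open-and-closed decomposition, I fix such a face $F$ and work over $X^\#_F:=X_F\times\mathbf{A}_{(P/F)^\gp}$.

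Next I would set up coordinates on this component. Choose a splitting $\sigma$ of the exact sequence $0\to F^\gp\to P^\gp\to(P/F)^\gp\to 0$, which exists because $(P/F)^\gp$ is free. It identifies $\underline X^\#_F\cong\mathbf{A}_{F^\gp}\times\mathbf{A}_{(P/F)^\gp}\cong\mathbf{A}_{P^\gp}$, hence $\Omega^1_{\underline X^\#_F/\CC}\cong\cO_{X^\#_F}\otimes_\ZZ P^\gp$ via $1\otimes q\leftrightarrow d\log t^q$; and by the previous paragraph $\pi^*\Omega^1_{X/\CC}|_{X^\#_F}\cong\cO_{X^\#_F}\otimes_\ZZ P^\gp$ via $1\otimes p\leftrightarrow\pi^*(d\log p)$.

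It then remains to compute the composition under these identifications. It is $\cO_{X^\#_F}$-linear, so it suffices to evaluate it on the classes $1\otimes p$ with $p$ in the chart, and by Construction~\ref{constr:epsast} one has $\epsast_{\rm univ}(\pi^*(d\log p))=d\log(\eps'_{\rm univ}(\pi^\flat p))$, where $\pi^\flat p\in\cM_{X^\#_F}$ is the pulled-back chart section and $\eps'_{\rm univ}\colon\cM^\gp_{X^\#_F}\to\cO^\times_{X^\#_F}$ is the retraction attached to $\eps_{\rm univ}$. Extending the computation of Example~\ref{ex:AP-univ-splitting} from the log point by adjoining the torus factor, $\eps_{\rm univ}$ differs from the coordinate splitting determined by $\sigma$ by the tautological character of $\mathbf{A}_{(P/F)^\gp}$, which under $\sigma$ reads as $t^{\sigma(-)}$; carrying this through gives $\eps'_{\rm univ}(\pi^\flat p)=t^{\phi(p)}$, where $\phi=\mathrm{id}_{P^\gp}-2\sigma\rho$ with $\rho\colon P^\gp\to(P/F)^\gp$ the projection. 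This $\phi$ is an involution --- equal to $+\mathrm{id}$ on $F^\gp$ and to $-\mathrm{id}$ on $\sigma((P/F)^\gp)$ --- hence an automorphism of $P^\gp$. (A quick sanity check: for $p\in F$ the chart section is already a unit and $\epsast_{\rm univ}(\pi^*(d\log p))=d\log t^p=1\otimes p$; and for $F=\{0\}$ one gets $\epsast_{\rm univ}(\pi^*(d\log p))=d\log t^{-p}=-1\otimes p$, matching Example~\ref{ex:AP-univ-splitting}.) Therefore the composition equals $\mathrm{id}_{\cO_{X^\#_F}}\otimes\phi$, an isomorphism, and the lemma follows.

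The one step that is not formal bookkeeping is pinning down the universal splitting $\eps_{\rm univ}$ on a general component $X_F\times\mathbf{A}_{(P/F)^\gp}$; Example~\ref{ex:AP-univ-splitting} records it only on the log point, and extending it requires keeping careful track of the torus factor and of the auxiliary splitting $\sigma$. Once the formula $\eps'_{\rm univ}(\pi^\flat p)=t^{\phi(p)}$ is in hand, everything else is immediate.
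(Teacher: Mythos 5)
Your proof is correct and follows essentially the same route as the paper's: reduce by strict \'etale localization to the standard toric local model, identify the universal splitting explicitly on each component of $X^\#$, and check that $d\log\circ\,\eps'_{\rm univ}$ carries the chart basis of $\pi^*\Omega^1_{X/\CC}$ to a lattice basis of $\Omega^1_{\underline{X}^\#/\CC}$. The paper lightens the bookkeeping by first passing to $X^\flat$ (using $p^*\Omega^1_{X/\CC}\simeq\Omega^1_{X^\flat/\CC}$) and then invoking Lemma~\ref{lem:splitting-basics}(d) to put the hollow model in the form $\Spec(P\to\CC)\times\mathbf{G}_m^n$ with $P$ sharp, so the computation collapses to $\eps_{\rm univ}(p)=(p,p)$ with no auxiliary section $\sigma$ or involution $\phi=\mathrm{id}-2\sigma\rho$; your face-by-face version of the same calculation, including the formula $\eps'_{\rm univ}(\pi^\flat p)=t^{\phi(p)}$, checks out.
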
 

\begin{proof}
Since the log stratification $p\colon X^\flat\to X$ satisfies $p^*\Omega^1_{X/\CC}\simeq \Omega^1_{X^\flat/\CC}$, we may assume that $X$ is hollow. Working locally, by Lemma~\ref{lem:splitting-basics}(d) we may assume that $X=\Spec(P\to \CC)\times \mathbf{G}_m^n$ for some sharp fs monoid $P$, where $P\to \CC$ maps $P\setminus 0$ to $0$. In this case, $X^\# = \Spec(P\to \CC[P^\gp])\times \mathbf{G}_m^n$ where $P\to \CC[P^\gp]$ maps $P\setminus 0$ to $0$, and $\varepsilon_{\rm univ}(p) = (p,p)$ (see Example~\ref{ex:AP-univ-splitting}). The rest follows by a direct calculation using the formula
\[
  \epsast_{\rm univ}\pi^*(d\log(p)) = d\log(p). \qedhere
\]
\end{proof}

\subsection{Ogus' Riemann--Hilbert correspondence}

Ogus' correspondence \cite{OgusRH} describes the category $\MIC(X/\CC)$ of integrable connections on an idealized smooth log complex analytic space $X$ in terms of certain constructible sheaves on the Kato--Nakayama space $X_{\rm log}$. 

At its kernel, it is a generalization of the following basic fact. A locally free log connection on the open unit disc in $\mathbf{A}_\NN^{\rm an}$ can be put in the form $(\cO_X^n, d+Ud\log z)$ where $U\in M_{n\times n}(\CC)$ is a constant matrix. The monodromy of the associated local system on the punctured disc is given by $T = \exp(-2\pi i U)$ \cite[Chap.\ II, Lemme~1.17.1]{DeligneLNM163}. In order to recover the log connection from its monodromy, we need to describe the set of ``logarithms'' of the matrix $T$, which is in bijection with the set of $\CC$-gradings $\CC^n=\bigoplus_{\lambda\in\CC} V_\lambda$ stable under $U$ and such that $U$ has unique eigenvalue $\exp(-2\pi i\lambda)$ on $V_\lambda$ for every $\lambda\in\CC$, determined by the condition that $T$ has unique eigenvalue $\lambda$ on $V_\lambda$.

\begin{defin}[{\cite[Definition~3.2.4]{OgusRH}}] \label{def:ogus-Lcoh}
  Let $X$ be an idealized smooth log complex analytic space. We set $\ov{K}_X = \alpha^{-1}(0)/\cO_X^\times \subseteq \oM_X$, and define
  \[
    \CC_X^{\rm log} = \CC[-\tau^* \oM_X]/(-\ov{K}_X), \qquad
    \bLambda_X = \CC \otimes \tau^* \oM_X^\gp
  \]
  where $\tau\colon X_{\rm log}\to X$ is the Kato--Nakayama space of $X$. The inclusion $-\tau^*\oM_X\hookrightarrow \bLambda$ makes $\CC_X^{\rm log}$ into a $\bLambda_X$-graded sheaf of rings on $X_{\rm log}$. We also set $I_x = \Hom(\oM_{X,x}^\gp, \ZZ(1)) = \pi_1(\tau^{-1}(x))$ for $x\in X$.

  We denote by $L(X)$ the category (denoted $L^\bLambda_{\rm coh}(\CC_X^{\rm log})$ in \cite{OgusRH}) of $\bLambda_X$-graded $\CC_X^{\rm log}$-modules $V$ satisfying the following conditions:
  \begin{enumerate}
    \item $V$ is log constructible \cite[Definition~3.2.3]{OgusRH},
    \item for every $x\in X_{\rm log}$, the stalk $V_x$ is finitely generated over $\CC_{X,x}^{\rm log} = \CC[-\oM_{X,\tau(x)}]/(-\ov{K}_{X,\tau(x)})$,
    \item for every log path \cite[p.\ 704]{OgusRH} from $x$ to $y$ in $X_{\rm log}$, the cospecialization map 
    \[ 
      \gamma_{x,y}^* \colon V_x \otimes_{\CC_{X,x}^{\rm log}} \CC_{X,y}^{\rm log} \ra V_y
    \] 
    is an isomorphism,
    \item for every $x\in X_{\rm log}$, every $\gamma\in I_{\tau(x)}$ and every $\lambda\in \bLambda_{X,x}$, the number $\exp \langle \lambda, \gamma \rangle$ is the unique eigenvalue of $\gamma\colon V_{x,\lambda}\to V_{x,\lambda}$.
  \end{enumerate} 
\end{defin} 

\begin{thm}[{\cite[Theorem 3.4.2]{OgusRH}}] \label{thm:ogus-RH}
  Let $X$ be an idealized smooth log complex analytic space.\footnote{In \cite{OgusRH} this theorem is stated for analytifications of idealized smooth log schemes over $\CC$, but the proof does not make use of that assumption.} There is an equivalence of tensor categories
  \[ 
    \mathcal{V}\colon \MIC(X/\CC) \isomto L(X),
  \]
  functorial with respect to maps $Y\to X$ of idealized smooth log complex analytic spaces. Furthermore, there is a functorial (in both $E$ and $X$) quasi-isomorphism
  \[ 
    E\otimes\Omega^\bullet_{X/\CC} \ra R\tau_*(\mathcal{V}(E))_0
  \]
  where the subscript $0$ denotes the zeroth graded piece.
\end{thm}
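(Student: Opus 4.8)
I sketch a proof; the result is \cite[Theorem~3.4.2]{OgusRH} (see also \cite{KatoNakayama,IllusieKatoNakayama} for the quasi-unipotent case), and the ingredients assembled above make its structure transparent. The plan is to construct $\mathcal{V}$ locally on $X$, compute it explicitly on the standard models $\mathbf{A}_{P,K}$, and then glue.

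First I would reduce to the hollow, split case: using the log stratification $X^\flat\to X$ and shrinking, one may assume $X$ is hollow and (Lemma~\ref{lem:splitting-basics}) fix a splitting $\eps$ of $\cM_X$. Proposition~\ref{prop:Higgs-description} then gives $\MIC(X/\CC)\isomto {\rm HIG}_{\mathcal{Q}_X}(\MIC(\underline{X}/\CC))$ with $\underline{X}$ smooth and $\mathcal{Q}_X=\oM^{\rm gp}_X\otimes_{\ZZ}\cO_X$, while on the Betti side $\tau\colon X_{\rm log}\to\underline{X}_{\rm an}$ is a torsor under the real torus bundle $\underline{\Hom}(\oM_X,\mathbf{S}^1)$, so $\eps$ furnishes a section and a K\"unneth description of constructible sheaves on $X_{\rm log}$ as sheaves on $\underline{X}_{\rm an}$ with a compatible action of the groups $I_x=\Hom(\oM^{\rm gp}_{X,x},\ZZ(1))$. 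On such a model I would build $\mathcal{V}(E,\nabla)$ by decomposing $E=\bigoplus_\lambda E_\lambda$ into the generalized eigenspaces of the commuting residues encoded by $\rho_\nabla$, with $\lambda$ running over $\CC\otimes\oM^{\rm gp}_X$ (the decomposition is $\epsast(\nabla)$-stable), applying the classical Riemann--Hilbert correspondence \cite[II~5.9]{DeligneLNM163} to each $E_\lambda$, and pulling back along $\tau$ to obtain the $\bLambda_X$-grading; the $\CC_X^{\rm log}$-module structure is then forced by how this grading collapses under cospecialization along the log strata, which is exactly the relation imposed by $-\ov{K}_X$. Checking conditions (1)--(4) of Definition~\ref{def:ogus-Lcoh} is a direct computation on $\mathbf{A}_{P,K}$: log constructibility and fiberwise finite generation are immediate, the cospecialization maps are isomorphisms because $\nabla$ extends across strata, and the normalization in (4) is the one-variable identity $T=\exp(-2\pi i U)$ recalled before Definition~\ref{def:ogus-Lcoh}. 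Conversely, on a model an object of $L(X)$ recovers the residues (from the grading and the $I_x$-action) and the underlying local system (hence $\epsast(\nabla)$, by the classical correspondence), which gives local essential surjectivity.

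Next I would glue. The construction is canonical --- independent of the auxiliary splitting up to the comparison of Lemma~\ref{lem:dependence-on-eps}, and compatible with passage to log strata via the cospecialization data --- so the local functors patch to a functor $\mathcal{V}\colon\MIC(X/\CC)\to L(X)$, functorial in $X$. Full faithfulness I would deduce from the cohomology statement, writing $\Hom$ on each side as $H^0$ of the internal-Hom de Rham complex, respectively of $R\tau_*\underline{\Hom}(\mathcal{V}E,\mathcal{V}F)_0$, and using that $\mathcal{V}$ is monoidal; essential surjectivity is then the gluing of the local statement, the local solutions agreeing on overlaps because the recipe is canonical. For the quasi-isomorphism $E\otimes\Omega^\bullet_{X/\CC}\to R\tau_*(\mathcal{V}(E))_0$ I would use Proposition~\ref{prop:Higgs-description}(c): the left-hand side is the total complex of a double complex with de Rham columns on $\underline{X}$ and Higgs rows for $\mathcal{Q}_X$, while the Leray spectral sequence for $\tau$ computes $R\tau_*(\mathcal{V}(E))$ through the cohomology of the real torus fibers, a Koszul complex on $\oM^{\rm gp}_X$ whose $0$-th graded piece is precisely the Higgs complex for $\mathcal{Q}_X$; the classical de Rham--Betti comparison in the $\underline{X}$-directions then identifies the two double complexes, and descent along $X^\flat\to X$ handles the general case.

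The step I expect to be the main obstacle is, as usual in the idealized setting, that $E$ need not be locally free and $\epsast(\pi^*(-))$ is not exact: the eigenspace decomposition and all of the gluing must be carried out with coherent, possibly torsion, $\cO_X$-modules, and one must keep the $\CC_X^{\rm log}$-module structure coherent across the jumps of $\op{rk}\oM^{\rm gp}_X$ along the log stratification --- which is precisely what conditions (1)--(3) of Definition~\ref{def:ogus-Lcoh} are designed to control.
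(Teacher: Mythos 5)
This statement is not proved in the paper: it is Ogus' theorem, imported verbatim from \cite[Theorem~3.4.2]{OgusRH} (with a footnote noting that Ogus' proof applies to arbitrary idealized smooth log analytic spaces, not just analytifications). The only place the paper goes inside Ogus' argument is Lemma~\ref{lem:local-RH}, which extracts the local description $L(U_\delta)\simeq L(-P,-K)$. So there is no internal proof to compare your sketch against; I can only assess the sketch on its own terms.

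The sketch has a genuine structural gap: the opening reduction to the hollow, split case via the log stratification does not work. The content of the theorem is concentrated precisely in the non-hollow local models $\mathbf{A}_{P,K}$, and restricting a coherent log connection $E$ to the log strata is neither full nor faithful and destroys the infinitesimal structure of $E$ transverse to the strata --- already on $\mathbf{A}_\NN^{\rm an}$, the pair (restriction to $\mathbf{G}_m$, restriction to the origin) does not determine $E$, and the de Rham complex of $X$ is not computed by descent along $X^\flat\to X$. What binds the strata together on the $L(X)$ side is exactly the $\CC_X^{\rm log}=\CC[-\tau^*\oM_X]/(-\ov{K}_X)$-module structure together with conditions (2)--(3) of Definition~\ref{def:ogus-Lcoh} (finite generation over the monoid algebra, cospecialization an isomorphism after base change of that algebra); your recipe of decomposing into generalized residue eigenspaces stratum by stratum, applying the classical correspondence, and pulling back along $\tau$ produces at best the restriction of $\mathcal{V}(E)$ to each stratum of $X_{\rm log}$, and the assertion that the module structure ``is then forced by how this grading collapses under cospecialization'' is circular --- constructing that structure from $E$, and conversely recovering a coherent sheaf on all of $\mathbf{A}_{P,K}$ from a graded $\CC[-P]/(-K)$-module (the analogue of choosing graded generators in a fundamental domain, as in Lemma~\ref{lem:canonical-equivariant}), is where the actual work of Ogus' proof lies. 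Your hollow-case analysis (Higgs description, K\"unneth on the torus fibers, Koszul complex matching the Higgs complex in the cohomology comparison) is correct as far as it goes and matches Ogus' Propositions~1.4.3 and 3.2.5, but it is the easy part; a correct proof must instead establish the equivalence directly on the non-hollow models $\mathbf{A}_{P,K}$ and glue those, which your outline does not do.
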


\begin{remark}[Functoriality of $\mathcal{V}$]
Functoriality of the equivalence $\mathcal{V}$ with respect to $X$ means the following. Let $f\colon Y\to X$ be a map of idealized smooth log complex analytic spaces. The pullback functor
\[ 
  f^*\colon L(X)\ra L(Y)
\]
is induced by the module pullback functor with respect to the map of ringed spaces 
\[
  (Y_{\rm log}, \CC_Y^{\rm log})\to (X_{\rm log}, \CC_X^{\rm log}),
\]
with $\Lambda_Y$-grading induced by the $\Lambda_X$ grading via the map $f^* \Lambda_X\to \Lambda_Y$. Then, functoriality means the existence of a natural isomorphism between the two compositions in the square below.
\begin{equation} \label{eqn:ogus-RH-functoriality} 
  \xymatrix@C=2.5cm{
    \MIC(X/\CC) \ar[r]_-{\sim}^-{\mathcal{V}_Y} \ar[d]_{f^*} & L(X) \ar[d]^{f^*} \\
    \MIC(Y/\CC) \ar[r]^-\sim_-{\mathcal{V}_X} & L(Y).  
  }
\end{equation}
\end{remark}

\begin{example} \label{ex:L-of-AP}
  Let $P$ be a sharp fs monoid and let $K\subseteq P$ be an ideal. Let $L(P,K)$ be the category of finitely generated $P^\gp\otimes\CC$-graded $\CC[P]/(K)$-modules $V = \bigoplus_{\lambda\in P^\gp\otimes\CC}V_\lambda$ endowed with a homogeneous action of $\pi_1(P) = \Hom(P^\gp, \ZZ(1))$ such that for every $\gamma\in \pi_1(P)$ and every $\lambda\in P^\gp\otimes\CC$, the number $\exp\langle\lambda,\gamma\rangle$ is the unique eigenvalue of $\gamma\colon V_\lambda\to V_\lambda$. Let $X = \mathbf{A}_{P,K}^{\rm an}$. Then by \cite[Proposition~3.2.5]{OgusRH} there is a tensor equivalence
  \[ 
    L(X_{\rm an}) \simeq L(-P, -K).
  \]
\end{example}

\begin{lemma} \label{lem:local-RH}
  Let $P$ be a sharp fs monoid and let $K\subseteq P$ be an ideal. Let $F\subseteq P$ be a face disjoint from $K$ and let $P' = P_F$, $K' = K + P'$. Let $X = \mathbf{A}_{P,K}^{\rm an} \subseteq \Hom(P, \CC)$ and $X' = \mathbf{A}_{P',K'}^{\rm an} \subseteq X$. For a generating set $S$ of $P$ and a real number $\delta>0$, consider the following open subsets:
  \begin{align*} 
    U_\delta &= \{ x\in X \,:\, \text{$|x(p)|<\delta$ for $p\in S$} \}, \\
    U'_\delta &= U_\delta\cap X' 
  \end{align*}
  Then, there are tensor equivalences $L(U_\delta) \simeq L(-P, -K)$ and $L(U'_\delta)\simeq L(-P', K')$ fitting inside a commutative square
  \[ 
    \xymatrix{
      L(U_\delta) \ar[r] \ar[d]^{\displaystyle\wr} & L(U'_\delta)  \ar[d]^{\displaystyle\wr} \\
      L(-P, -K) \ar[r] & L(-P', -K').
    }
  \]
\end{lemma}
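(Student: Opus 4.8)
The plan is to reduce the statement to Ogus' combinatorial description of $L$ (Example~\ref{ex:L-of-AP}) together with the fact that $L(-)$ is insensitive to shrinking a log space to a \emph{log-contractible} neighbourhood of its deepest stratum. As a preliminary observation: since $P$ is sharp fs, the face $F$ is sharp fs, $F^\gp$ is free and splits off $P^\gp$, and hence the localization decomposes as $P' = P_F \simeq F^\gp\oplus(P/F)$ with $P/F$ sharp fs; correspondingly $\mathbf{A}_{P',K'}^{\rm an}\simeq \mathbf{G}_m^r\times\mathbf{A}_{P/F,\ov K}^{\rm an}$, where $r = \op{rank}F^\gp$, the factor $\mathbf{G}_m^r = \mathbf{A}_{F^\gp}^{\rm an}$ carries the trivial log structure, and $\ov K\subseteq P/F$ is the image of $K$ (a proper ideal, since $K\cap F = \emptyset$ and $F = P\cap F^\gp$ force $\ov K\subseteq (P/F)\setminus\{0\}$). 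By (the proof of) Ogus' \cite[Proposition~3.2.5]{OgusRH} — which, like Theorem~\ref{thm:ogus-RH}, does not really use sharpness — one gets tensor equivalences $L(\mathbf{A}_{P,K}^{\rm an})\simeq L(-P,-K)$ and $L(\mathbf{A}_{P',K'}^{\rm an})\simeq L(-P',-K')$ (alternatively, for the latter, apply Example~\ref{ex:L-of-AP} to the sharp factor $P/F$ and Künneth to the torus factor). By functoriality of Ogus' correspondence (Theorem~\ref{thm:ogus-RH}), these are compatible with the open immersion $\mathbf{A}_{P',K'}^{\rm an}\hookrightarrow\mathbf{A}_{P,K}^{\rm an}$ — the analytification of the localization $\mathbf{A}_{P_F}\to\mathbf{A}_P$ — on one side and with base change along $\CC[P]/(K)\to\CC[P']/(K')$ on the other. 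It therefore remains to prove that restriction gives tensor equivalences $L(\mathbf{A}_{P,K}^{\rm an})\isomto L(U_\delta)$ and $L(\mathbf{A}_{P',K'}^{\rm an})\isomto L(U'_\delta)$; the square of the lemma then follows by transitivity of restriction.

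For $L(\mathbf{A}_{P,K}^{\rm an})\isomto L(U_\delta)$: choose $\phi\colon P\to\NN$ strictly positive on $P\setminus\{0\}$ (possible as $P$ is sharp fs) and consider the radial contraction $H(x,t)(p) = t^{\phi(p)}x(p)$, $t\in[0,1]$. It is a deformation retraction of $\mathbf{A}_{P,K}^{\rm an}$ onto the closed point $0$; it preserves $U_\delta$, as $t^{\phi(p)}\le 1$ keeps the inequalities $|x(p)|<\delta$ intact; and it is \emph{log-compatible}, in that $H(-,t)$ preserves the log stratification for $t>0$ while $H(-,0)$ collapses everything onto the deepest stratum $\{0\}$. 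Since objects of $L$ are log constructible with invertible cospecialization maps (Definition~\ref{def:ogus-Lcoh}(1),(3)), such a contraction makes restriction to $0$ — equivalently, to $U_\delta$ — an equivalence; this is essentially Ogus' argument for \cite[Proposition~3.2.5]{OgusRH}, which is local near $0$ and applies to $U_\delta$ verbatim. For $L(\mathbf{A}_{P',K'}^{\rm an})\isomto L(U'_\delta)$ one argues similarly but contracts only the $P/F$-directions: using $P'\simeq F^\gp\oplus(P/F)$ and $\psi\colon P/F\to\NN$ strictly positive, the map scaling the $P/F$-coordinates by $t^\psi$ and fixing the $F^\gp$-coordinates is a log-compatible deformation retraction of $\mathbf{A}_{P',K'}^{\rm an}$ onto its deepest stratum $O_F^{\rm an}\simeq\mathbf{G}_m^r$ (with the constant hollow log structure of monoid $P/F$), it preserves $U'_\delta$, and it retracts $U'_\delta$ onto $U'_\delta\cap O_F^{\rm an}$. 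Finally $U'_\delta\cap O_F^{\rm an}$ is a bounded polyannulus in $\mathbf{G}_m^r$ — in logarithmic coordinates, the product of a (nonempty, as $F$ is sharp) convex polyhedral region with a real torus — hence an ordinary, and therefore, the log structure being constant, log, deformation retract of $O_F^{\rm an} = \mathbf{G}_m^r$; chaining the three retractions, $L(\mathbf{A}_{P',K'}^{\rm an})\to L(O_F^{\rm an})\leftarrow L(U'_\delta\cap O_F^{\rm an})\leftarrow L(U'_\delta)$ are all equivalences given by restriction, so $L(\mathbf{A}_{P',K'}^{\rm an})\to L(U'_\delta)$ is one too. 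Assembling: all of the equivalences in play are restriction functors or the equivalences of the first paragraph, all are monoidal, and they commute by transitivity of restriction and the functoriality already used — so the square commutes and consists of tensor equivalences.

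The crux is the homotopy-invariance input used twice above — that a log-compatible deformation retraction onto the deepest stratum induces an equivalence on $L$ — which must be made precise inside Ogus' constructibility formalism: one needs the contractions to be compatible with the log structure in Ogus' technical sense, so that restriction to the deepest stratum is both conservative and essentially surjective, and one must take some care that $U'_\delta$, which is not literally one of Ogus' standard coordinate neighbourhoods of the stratum for the non-sharp monoid $P'$, is still covered (here via the explicit retraction onto $U'_\delta\cap O_F^{\rm an}$, whose $L$-category is in turn pinned down by the constant-log-structure case). If isolating this invariance cleanly from the proof of \cite[Proposition~3.2.5]{OgusRH} proves awkward, the fallback is to verify the four conditions of Definition~\ref{def:ogus-Lcoh} directly on $U_\delta$ and $U'_\delta$, by tracking stalks and monodromy along log paths — elementary but laborious.
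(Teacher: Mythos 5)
Your overall architecture matches the paper's: reduce to Ogus' combinatorial description of $L(\mathbf{A}_{P,K}^{\rm an})$ and then show that passing to the open subsets $U_\delta$, $U'_\delta$ does not change the category. The gap sits exactly at the step you yourself flag as the crux. The principle that a ``log-compatible deformation retraction onto the deepest stratum induces an equivalence on $L$'' is neither proved nor quite the right statement, and the homotopies you propose do not deliver it. First, the radial contraction $H(x,t)(p)=t^{\phi(p)}x(p)$ collapses the entire stratification at $t=0$, so it leaves the world in which objects of $L(X)$ live (constructible sheaves on $X_{\rm log}$ with respect to the log stratification); and for fixed $t\in(0,1]$ the map $H(-,t)$ is a torus automorphism of $X$ that does \emph{not} carry $X$ into $U_\delta$ (the strata are non-compact), so it does not furnish a homotopy inverse to $U_\delta\hookrightarrow X$ either --- the scaling parameter must depend on the point. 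Second, and more seriously, the equivalence $L(\mathbf{A}_{P,K}^{\rm an})\simeq L(-P,-K)$ is not a consequence of contractibility of the ambient analytic space: Ogus proves it by describing an object of $L$ through its stalks on each stratum of the Kato--Nakayama space together with cospecialization maps, and this requires knowing that the preimage of each stratum $X_F$ in the universal cover of $X^{\rm log}$ is contractible. So what must be checked for $U_\delta$ and $U'_\delta$ is a stratum-by-stratum statement on the Kato--Nakayama side, not a global contraction of $X^{\rm an}$; restriction to the closed point $0$ being an equivalence is the conclusion of Ogus' argument, not a substitute for verifying its hypotheses on $U_\delta$, so the phrase ``restriction to $0$ --- equivalently, to $U_\delta$'' is circular.

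The paper closes this gap by isolating from the proof of \cite[Proposition~3.2.5]{OgusRH} the precise condition under which the argument applies to an open $U\subseteq X^{\rm an}$ --- namely that for every face $F$ disjoint from $K$ the preimage of $U\cap X_F$ in the universal cover of $X^{\rm log}$ is contractible --- and then verifies this for $U_\delta$ by convexity: writing $X_F=\Hom(F^\gp,\mathbf{R}_{>0})\times\Hom(F^\gp,\mathbf{S}^1)$, the subset $U_\delta\cap X_F$ is a nonempty convex region times the same compact torus, so the inclusion into $X_F$ is a homotopy equivalence compatible with the torus factor. Your fallback (verify Definition~\ref{def:ogus-Lcoh} directly on $U_\delta$) is in the right spirit but heavier than necessary; your ``logarithmic coordinates'' computation for $U'_\delta\cap O_F^{\rm an}$ already contains the germ of the convexity argument, and running it for every face rather than only the deepest stratum, then quoting Ogus' universal-cover argument verbatim, turns your sketch into the paper's proof.
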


\begin{proof}
We first show that for every face $F$ of $P$ disjoint from $K$, the inclusion of $U_\delta\cap X_F\hookrightarrow X_F$ is a homotopy equivalence. Here $X_F = \mathbf{A}_{F^\gp}$ is the stratum corresponding to $F$. To show this we may ignore the ideal $K$. We also note that $S\cap F$ is a generating set for $F$. Identifying $X$ with $\Hom(P, \CC)$, we have 
\begin{align*}
  X_F &= \{x\colon P\to \CC\,:\, x^{-1}(\CC^\times) = F \} \\
    &= \Hom(F^\gp, \CC^\times) 
    = \Hom(F^\gp, \mathbf{R}_{>0}) \times \Hom(F^\gp, \mathbf{S}^1),\\[.8em]
  X_F\cap U_\delta &= \{x\colon F^\gp\to \CC^\times \,:\, \text{$|x(p)|<\delta$ for $p\in S\cap F$} \} \\
    &= \{x\colon F^\gp\to \mathbf{R}_{>0}  \,:\, \text{$x(p)<\delta$ for $p\in S\cap F$} \}
      \times \Hom(F^\gp, \mathbf{S}^1).
\end{align*}
It remains to observe that the set cut out by the equations $x(p)<\delta$ in the convex cone $\Hom(F^\gp, \mathbf{R}_{>0})$ is convex and contains a neighborhood of zero, and hence is contractible, so that its inclusion into $\Hom(F^\gp, \mathbf{R}_{>0})$ is a homotopy equivalence.

The following description of log constructible sheaves on $X^{\rm log}$ where $X=\mathbf{A}_{P,K}$ appears in \cite[\S 3.2]{OgusRH}. Such a sheaf corresponds to the data of a functor
\[ 
  \mathcal{F}\colon \{\text{faces of $P$}\} \ra \pi_1(P)\text{-sets}
\]
where $\mathcal{F}(F)$ is the stalk of the corresponding sheaf at a prescribed point of $X_F^{\rm log}$, and where the maps $\mathcal{F}(F')\to \mathcal{F}(F)$ for $F'\subseteq F$ correspond to the cospecialization maps. In these terms, the sheaf $\CC_X^{\rm log}$ corresponds to the family of algebras $\CC[-P/F]/(-K)$, the sheaf of gradings $\bLambda_X$ corresponds to the family of groups $(P/F)^\gp\otimes\CC$, and an object $V$ of $L(X)$ corresponds to the family of finitely generated $(P/F)^\gp\otimes\CC$-graded $\CC[-P/F]/(-K)$-modules $V_F$ endowed with an action of $\pi_1(P)$ satisfying the suitable condition on eigenvalues, and such that the cospecialization map for the inclusion $0\subseteq F$ induces an isomorphism $V_F = V_0\otimes_{\CC[-P]/(-K)}\CC[-P/F]/(-K)$. It is therefore determined by the object $V_0\in L(-P,-K)$ (see the discussion following \cite[Definition~3.2.4]{OgusRH}), and so $L(X)$ is equivalent to the category $L(-P,-K)$. We have just explained the proof of \cite[Proposition~3.2.5]{OgusRH} mentioned in Example~\ref{ex:L-of-AP} above.

The point of going into Ogus' proof is that his argument for this description, employing the universal cover of $X^{\rm log}$, applies equally well to any open subset $U\subseteq X^{\rm an}$ with the property that for every face $F$ of $P$ disjoint from $F$, the preimage of $U\cap X_F$ in the universal cover of $X^{\rm log}$ is contractible.  In particular, by the result of the first paragraph, it applies to the sets $U_\delta$ and $U'_\delta$, and we obtain our assertion.
\end{proof}

\subsection{Canonical extensions}
\label{ss:can-ext}

In this section, we fix a good embedding $j\colon X\hookrightarrow \ov{X}$ of log complex analytic spaces (Definition~\ref{def:good-embedding}). For $x\in \ov{X}$, we have a canonical isomorphism
\[ 
  \oM_{\ov{X},x} \simeq \mathcal{F}_x \oplus (j_*\oM_{X})_x
\]
where $\mathcal{F}_x\simeq \NN^s$ for some $s\geq 0$ depending on $x$. For an object $E$ of $\MIC(\ov{X}/\CC)$, the generators $e_1, \ldots, e_s$ (well  defined up to permutation) of $\mathcal{F}_x$ induce linear endomorphisms (``residues'')
\[ 
  \rho_i \colon E_x \ra E_x, \qquad E_x = E\otimes \kappa(x).
\]
We denote by $R_x\subseteq\CC$ the union of the set of negatives of the eigenvalues of $\rho_1, \ldots, \rho_s$ and call it the set of \emph{exponents at infinity} of $E$ at $x$. 

\begin{defin} \label{def:tau-adapted}
  Let $\tau\colon \CC/\ZZ\to\CC$ be a section of the projection $\CC\to\CC/\ZZ$ and let $j\colon X\to \ov{X}$ be a good embedding. We say that an object $E\in \MIC(\ov{X}/\CC)$ is \emph{$\tau$-adapted} if:
  \begin{enumerate}
    \item for every $x\in \ov{X}$, the set $R_x$ of exponents at infinity of $E$ at $x$ is contained in ${\rm im}(\tau)$,
    \item the map $E\to j_* j^* E$ is injective.
  \end{enumerate} 
  We denote by $\MIC^\tau(\ov X, X/\CC)$ the full subcategory of $\MIC(\ov X/\CC)$ consisting of $\tau$-adapted objects.
\end{defin}

\begin{thm} \label{thm:canonical-ext} 
  Let $\tau\colon \CC/\ZZ\to\CC$ be a section of the projection $\CC\to\CC/\ZZ$. 
  \begin{enumerate}[(a)]
    \item The restriction functor induces an equivalence
    \[
      \MIC^\tau(\ov X, X/\CC)\isomto \MIC(X/\CC).
    \]
    \item If $\tau(0)=0$, then for every object $E$ of $\MIC^\tau(\ov X, X/\CC)$ the map
    \[
      j^* \colon H^*_{\rm dR}(\ov{X}, E)\ra H^*_{\rm dR}(X, j^* E)
    \]
    is an isomorphism.
    \item An object $E$ of $\MIC^\tau(\ov X, X/\CC)$ is locally free (as an $\cO_X$-module) if and only if its restriction to $X$ is locally free.
  \end{enumerate}
\end{thm}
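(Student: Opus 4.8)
The plan is to reduce everything to an explicit local computation on the model situation $\mathbf{A}_{P,K}\times\mathbf{G}_m^r\hookrightarrow \mathbf{A}_{P,K}\times\mathbf{A}^r$ afforded by the definition of a good embedding, and then glue. The key point is that, since the ``extra monoid at infinity'' is the \emph{free} monoid $\NN^r$, the situation along $E=\ov X\setminus X$ looks exactly like Deligne's classical setup for an snc pair, with the residues $\rho_1,\dots,\rho_s$ playing the role of the residues along the boundary components; the only new feature is the presence of the log structure pulled back from $\mathbf{A}_{P,K}$, which does not interact with the directions being compactified. Concretely, I would first handle the purely local statement: given a log integrable connection $E$ on $X=\mathbf{A}_{P,K}^{\rm an}\times\mathbf{G}_m^r$ (equivalently, using Example~\ref{ex:L-of-AP} and Theorem~\ref{thm:ogus-RH}, an object of $L(-P,-K)$ together with the monodromy data around the $r$ punctures), one builds its $\tau$-adapted extension by choosing the logarithm of each puncture-monodromy operator with eigenvalue-real-part in the window prescribed by $\tau$, exactly as in \cite[II~Prop.~5.4]{DeligneLNM163}. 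Uniqueness of such a logarithm, given $\tau$, is the standard linear-algebra fact (no two distinct eigenvalues of a chosen logarithm can differ by a nonzero integer), and this is what pins down the extension uniquely, giving full faithfulness and essential surjectivity of the restriction functor locally.

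For part~(a) in general, I would argue that the construction of the $\tau$-adapted extension is \emph{canonical} — it is characterized by the eigenvalue condition on the residues together with the injectivity of $E\to j_*j^*E$ — hence independent of the local chart, so the local extensions glue to a global object of $\MIC^\tau(\ov X,X/\CC)$; this shows essential surjectivity. Full faithfulness amounts to showing that a horizontal map between restrictions extends (uniquely) to a horizontal map between the $\tau$-adapted extensions: uniqueness follows from injectivity of $E\to j_*j^*E$, and existence is again checked locally, where it reduces to the statement that a map intertwining the monodromies intertwines the chosen logarithms — true because both logarithms lie in the same $\tau$-window and a monodromy-equivariant map is then automatically equivariant for the logarithms (one can see this by restricting to generalized eigenspaces, which the map must respect since the eigenvalues of the logarithms are determined by those of the monodromies via $\tau$).

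For part~(b), with $\tau(0)=0$, the de Rham comparison $H^*_{\rm dR}(\ov X,E)\simeq H^*_{\rm dR}(X,j^*E)$ is the log analogue of Deligne's comparison \cite[II~3.14]{DeligneLNM163}: locally one computes the relative log de Rham complex in the $\mathbf{A}^r$-directions and shows, via a Koszul/Poincaré-lemma argument on each disc factor, that the inclusion of $E\otimes\Omega^\bullet_{\ov X/\CC}$ into $Rj_*(j^*E\otimes\Omega^\bullet_{X/\CC})$ is a quasi-isomorphism; the condition $\tau(0)=0$ guarantees that the residue operators have no eigenvalue equal to a \emph{positive} integer, which is precisely what makes the relevant operators $\rho_i+k$ ($k\ge 1$) invertible and kills the higher cohomology of the local de Rham complex along $E$. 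Part~(c) is formal once (a) is known: local freeness is a local question on $\ov X$, and away from $E$ it is the hypothesis; along $E$, the $\tau$-adapted extension is built — in the local model — by the same recipe as Deligne's canonical extension, which produces a locally free sheaf precisely when the input connection on $X$ is locally free (the extension is, Zariski-locally on $\ov X$, of the form $(\cO^n,d+\sum U_i\,d\log z_i)$ plus the data pulled back from $\mathbf{A}_{P,K}$).

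The main obstacle I anticipate is \textbf{part~(a)'s gluing and full-faithfulness in the presence of a genuinely non-locally-free log structure from $\mathbf{A}_{P,K}$}: one must be careful that the ``residue'' endomorphisms $\rho_i$ are well-defined on the possibly-non-locally-free sheaf $E$ (they are, because they come from the free directions $\NN^r$, which are disjoint from the monoid $P$), and that the extension procedure is compatible with the Ogus-side description so that the canonicity needed for gluing is genuinely chart-independent. I would route this through Lemma~\ref{lem:local-RH}, which gives the needed compatibility of the local $L(-)$-descriptions under the boundary inclusions, and through the characterization of the $\tau$-adapted extension as the unique subobject of $j_*j^*E$ whose residues lie in $\mathrm{im}(\tau)$ — a characterization that manifestly does not depend on any choices and therefore glues.
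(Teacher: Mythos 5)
Your strategy for parts (a) and (c) is essentially the paper's: localize to the model $U_\delta\subseteq\mathbf{A}_{P\times\NN^r,K}$, transport the problem through Ogus' equivalence and Lemma~\ref{lem:local-RH} into the categories $L(-(P\times\NN^r),-K)$ and $L(-(P\times\ZZ^r),-K)$, and construct the extension there. What you describe as "choosing the logarithm of each puncture-monodromy in the $\tau$-window" is exactly the grading selection of the paper's Lemma~\ref{lem:canonical-equivariant}: the quasi-inverse sends $V'$ to $\bigoplus_{\lambda\in(P^\gp\otimes\CC)\times\Sigma^r}V'_\lambda$ with $\Sigma=\{z: z-\tau(z)\in\NN\}$, and the only real work is checking finite generation of this sum over $\CC[P\times\NN^r]$ (done by multiplying generators by suitable monomials) and, for (c), that free generators stay free generators. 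Your worry about non-locally-free $E$ is correctly resolved by working on the $L$-side, which handles arbitrary finitely generated graded modules; and your gluing/canonicity argument is subsumed by the observation that $\MIC^\tau(\ov X,X/\CC)$ is cut out by pointwise conditions, so the equivalence can be checked locally. Where you genuinely diverge is part (b): you propose a direct Deligne-style Koszul/Poincar\'e-lemma computation using invertibility of $\rho_i+k$ for $k\geq 1$. Since everything here is analytic, that route has to confront $Rj_*$ versus $j_*$ for the Stein open immersion $j$ (the classical fix being meromorphic sections as in Deligne II.6.6, which the paper only deploys later, in Theorem~\ref{thm:3410variant}); the paper instead stays on the Kato--Nakayama side, where the comparison reduces to the identity $V_0=V'_0$ of degree-zero graded pieces (the argument pattern of Theorem~\ref{thm:349variant}), using that $\tau(0)=0$ forces the integral exponents at infinity to vanish. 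Both routes work, but the topological one avoids the analytic direct-image issue; if you keep the Koszul route you should make the $Rj_*$ step explicit.
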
 

Before proving the result, we show an auxiliary statement about monoids.

\begin{lemma} \label{lem:canonical-equivariant}
  Let $P$ be a sharp fs monoid and let $Q = P\times \NN^r$ and $Q' = P\times \ZZ^r$. Let $K\subseteq P$ be an ideal, and denote also by $K$ the ideal it generates in $Q$ and $Q'$. Denote by $L^\tau(Q, K)$ the full subcategory of $L(Q, K)$ (see Example~\ref{ex:L-of-AP}) consisting of objects $V$ such that
  \begin{enumerate}
    \item $V$ is generated as a $\CC[Q]$-module by $\bigoplus_{\lambda\in (P^\gp\otimes \CC)\times {\rm im}(\tau)^r} V_\lambda$,
    \item the map $V\to V' = V\otimes_{\CC[Q]} \CC[Q']$ is injective.
  \end{enumerate}
  Then
  \begin{enumerate}[(a)]
    \item the functor $V\mapsto V' = V\otimes_{\CC[Q]} \CC[Q']$ induces an equivalence of categories
    \[ 
      L^\tau(Q, K)\isomto L(Q', K),
    \] 
    \item an object $V$ of $L^\tau(Q)$ is free as a $\CC[Q]/(K)$-module if and only if $V'$ is free as a $\CC[Q']/(K)$-module.
  \end{enumerate}
\end{lemma}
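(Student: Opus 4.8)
The plan is to prove (a) by constructing an explicit quasi-inverse to the base-change functor $V\mapsto V'=V\otimes_{\CC[Q]}\CC[Q']$, and to deduce (b) from the shape of this quasi-inverse; the whole argument is a combinatorial incarnation of Deligne's canonical extension. Identify $\CC[Q]=\CC[P][t_1,\dots,t_r]$ and $\CC[Q']=\CC[P][t_1^{\pm1},\dots,t_r^{\pm1}]$ (with $t_i$ the generator of the $i$-th $\NN$, resp.\ $\ZZ$), so that base change is inversion of the $t_i$, and regard every object as graded by $Q^\gp\otimes\CC=(P^\gp\otimes\CC)\times\CC^r$, with $t_i$ homogeneous of degree $e_i$. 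The section $\tau$ distinguishes, in each coset of $\CC/\ZZ$, one representative, hence singles out the ``fundamental domain'' $\Lambda_0=(P^\gp\otimes\CC)\times\op{im}(\tau)^r$ and its ``positive cone'' $\Lambda_{\ge0}=\Lambda_0+\NN^r$ inside the grading group, with the property that every $\mu\in\Lambda_{\ge0}$ is uniquely $\mu^\flat+a$ with $\mu^\flat\in\Lambda_0$, $a\in\NN^r$. For $W\in L(Q',K)$ — where the $t_i$ act invertibly, so $t^a\colon W_\lambda\isomto W_{\lambda+a}$ for all $a\in\ZZ^r$ — I would set
\[
  \Phi(W)\;:=\;\bigoplus_{\lambda\in\Lambda_{\ge0}}W_\lambda\ \subseteq\ W,
\]
a graded $\CC[Q]$-submodule. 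The verifications that $\Phi(W)\in L^\tau(Q,K)$ are routine: $\Phi(W)$ is generated over $\CC[Q]$ by its $\Lambda_0$-graded part (using the isomorphisms $t^{e_i}$), that part is finitely generated over $\CC[P]/(K)$ (push homogeneous generators of $W$ into $\Lambda_0$-degrees by suitable $t^a$), one has $\Phi(W)[t^{-1}]=W$, and — since $\pi_1(Q)=\pi_1(Q')$ and $\exp\langle\lambda,\gamma\rangle$ is unchanged when $\lambda$ is translated by $\ZZ^r$ (the extra pairing lands in $\ZZ(1)$) — the $\pi_1$-action and the eigenvalue condition of Example~\ref{ex:L-of-AP} restrict to $\Phi(W)$. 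By construction $\Phi(W)\otimes_{\CC[Q]}\CC[Q']=\Phi(W)[t^{-1}]=W$, so $\Phi$ is a one-sided inverse to base change, and in particular base change is essentially surjective.

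The crux — and the step I expect to be the main obstacle — is the reverse identity $\Phi(V')=V$ for $V\in L^\tau(Q,K)$, together with full faithfulness of base change. Both reduce to one structural fact: for $V\in L^\tau(Q,K)$ one has $V_\lambda=0$ unless $\lambda\in\Lambda_{\ge0}$, and $t^{e_i}\colon V_\mu\to V_{\mu+e_i}$ is an isomorphism for every $\mu\in\Lambda_{\ge0}$. I plan to extract this by playing the two defining conditions of $L^\tau$ against each other: condition~(1) (generation in $\Lambda_0$-degrees) forces — by bookkeeping the $\CC^r$-grading, using that $\op{im}(\tau)$ meets each $\ZZ$-coset exactly once — both the vanishing outside $\Lambda_{\ge0}$ and the surjectivity of these $t^{e_i}$; then condition~(2) (injectivity of $V\to V'$) upgrades surjectivity to bijectivity, because for $\mu\in\Lambda_0$ the piece $V'_\mu=\varinjlim_a V_{\mu+a}$ is a colimit of the now-known surjections $V_\mu\to V_{\mu+e_1}\to\cdots$, hence a quotient of $V_\mu$ into which $V_\mu$ embeds. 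Granting this, $V=\bigoplus_{\mu\in\Lambda_{\ge0}}V_\mu$ and $V'_\mu\cong V_\mu$ for $\mu\in\Lambda_{\ge0}$, whence $\Phi(V')=V$ naturally; moreover, for $V,W\in L^\tau(Q,K)$ a morphism $g\colon V'\to W'$ in $L(Q',K)$ sends the $\Lambda_0$-graded part of $V$ into $W'_\lambda=W_\lambda$ ($\lambda\in\Lambda_0$), hence (being $\CC[Q]$-linear and $V$ being generated by that part) sends $V$ into $W$, giving fullness; faithfulness is immediate from $V\hookrightarrow V'$. This proves (a).

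For (b), the same structural fact upgrades to canonical graded isomorphisms $V\cong M\otimes_{\CC[P]/(K)}\CC[Q]/(K)$ and $V'\cong M\otimes_{\CC[P]/(K)}\CC[Q']/(K)$, where $M:=\bigoplus_{\lambda\in\Lambda_0}V_\lambda$ is the finitely generated $\CC[P]/(K)$-module $V/(t_1,\dots,t_r)V$. As $\CC[Q]/(K)$ and $\CC[Q']/(K)$ are free, hence faithfully flat, over $\CC[P]/(K)$, ``$M$ free over $\CC[P]/(K)$'' implies both ``$V$ free over $\CC[Q]/(K)$'' and ``$V'$ free over $\CC[Q']/(K)$'' by base change. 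Conversely: if $V$ is free over $\CC[Q]/(K)$ then $M=V/(t)V$ is free over $\CC[P]/(K)=\bigl(\CC[Q]/(K)\bigr)/(t)$ outright; and if $V'$ is free over $\CC[Q']/(K)$ then $V'$ is free — a fortiori projective — over $\CC[P]/(K)$, so the $\CC[P]/(K)$-module direct summand $M$ of $V'=\bigoplus_{a\in\ZZ^r}t^a M$ is finitely generated projective, hence free, since $\CC[P]/(K)$ is a Noetherian connected graded ring (here $P$ sharp fs is used: its cone is pointed, so a positivity functional grades $\CC[P]/(K)$ with degree-zero part $\CC$) and graded Nakayama applies. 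In both cases $M$ is free over $\CC[P]/(K)$, which then forces the remaining freeness statement. Apart from the grading bookkeeping, the only non-formal input beyond Deligne's idea is this last ``connected graded $\Rightarrow$ finitely generated projective is free'' fact, which is standard.
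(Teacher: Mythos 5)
Your proof is correct, and for part (a) it is essentially the paper's argument: your quasi-inverse $\Phi(W)=\bigoplus_{\lambda\in\Lambda_0+\NN^r}W_\lambda$ is exactly the paper's $V=\bigoplus_{\lambda\in(P^\gp\otimes\CC)\times\Sigma^r}V'_\lambda$ with $\Sigma=\{z:z-\tau(z)\in\NN\}={\rm im}(\tau)+\NN$; the main difference is that you spell out the ``structural fact'' (vanishing of $V_\lambda$ outside $\Lambda_0+\NN^r$ and bijectivity of the $t_i$ there, deduced from conditions (1) and (2)) which the paper leaves implicit when it asserts that the construction is a quasi-inverse. For part (b) your route is genuinely different: the paper simply transports a homogeneous free basis of $V'$ to one of $V$ via the unit monomials $x^{\tau(\lambda')-\lambda'}$, tacitly assuming that a free object admits a homogeneous basis, whereas you reduce both freeness statements to freeness of $M=V/(t)V$ over $\CC[P]/(K)$ by faithfully flat base change and then invoke graded Nakayama over the connected $\NN$-grading of $\CC[P]/(K)$ coming from sharpness of $P$. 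Your version is longer but closes the gap about homogeneous bases; the paper's is shorter but relies on the reader granting that point (which your argument in effect proves).
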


\begin{proof}
Let $\Sigma = \{z\in \CC\,:\, z-\tau(z) \in \NN\}$. For an object $V'$ of $L(Q', K)$, we set
\[ 
  V = \bigoplus_{\lambda \in (P^\gp\otimes\CC)\times \Sigma^r} V'_\lambda.
\]
This is a $Q^\gp\otimes\CC$-graded $\CC[Q]$-module with an action of $\pi_1(Q) = \pi_1(Q')$ satisfying the required condition on eigenvalues. We check that it is finitely generated by homogeneous elements lying in degrees in 
\[
  (P^\gp\otimes\CC)\times {\rm im}(\tau)^r.
\]
Let $v'_1, \ldots, v'_r$ be homogeneous generators of $V'$ lying in degrees $\lambda'_i = (\eta_i, \lambda'_{i,1}, \ldots, \lambda'_{i,r})$. Let 
\[ 
  v_i = x_1^{\tau(\lambda'_{i,1})-\lambda'_{i,1}} \cdots x_r^{\tau(\lambda'_{i,r})-\lambda'_{i,r}}v'_i \in V'_{\lambda_i}, 
  \quad
  \lambda_i = (\eta_i, \tau(\lambda'_{i,1}), \ldots, \tau(\lambda'_{i,r})) \in (P^\gp\otimes\CC)\times {\rm im}(\tau)^r,
\]
these elements generate $V$. Finally, we have $V' = V\otimes_{\CC[Q]}\CC[Q']$, so $V\to V\otimes_{\CC[Q]}\CC[Q']$ is injective. The construction $V'\mapsto V$ is functorial in $V'$ and produces a quasi-inverse to the given functor, showing (a). 

To show (b), suppose that $V'$ is freely generated by the homogeneous elements $v'_i\in V'_{\lambda'_i}$. Then $V$ is freely generated by the elements $v_i$. 
\end{proof}

\begin{proof}[{Proof of Theorem~\ref{thm:canonical-ext}}]
(a) The subcategory $\MIC^\tau(\ov{X},X/\CC)$ of $\MIC(\ov{X}/\CC)$ is defined by a pointwise condition and hence is of local nature on $\ov{X}$. Localizing near a point $x\in \ov{X}$ and using the local form of a good embedding, we may assume that $\ov{X}$ is an open subset of the form $U_\delta$ of $\mathbf{A}_{P\times\NN^r, K}$ appearing in Lemma~\ref{lem:local-RH}, so that $X = U'_\delta$. Using that lemma combined with Theorem~\ref{thm:ogus-RH} and \cite[Remark~2.1.3]{OgusRH}, we obtain a commutative square of categories and functors
\[ 
  \xymatrix{
    \MIC^\tau(\ov{X},X/\CC) \ar[r]^-{j^*} \ar[d]^{\displaystyle\wr} & \MIC(X/\CC) \ar[d]^{\displaystyle\wr} \\
    L^\tau(P\times \NN^r, K) \ar[r] & L(P\times \ZZ^r, K)
  }
\]
where $L^\tau(P\times \NN^r, K)$ is as in Lemma~\ref{lem:canonical-equivariant}. The bottom arrow is an equivalence by (a) of that lemma, and the assertion follows.

(b) Using the argument in (a), this follows from Lemma~\ref{lem:canonical-equivariant}(b).
\end{proof}

\section{Regular connections and the Riemann--Hilbert correspondence}
\label{s:logRH}

\subsection{Regular connections}
\label{ss:regular}

Let $X$ be an idealized smooth log scheme over $\CC$, let $\pi\colon X^\#\to X$ be the morphism constructed in Proposition~\ref{prop:splitting-functors}, and let $\eps_{\rm univ}$ be the universal splitting of the log structure $\cM_{X^\#} = \pi^*\cM_X$. The log scheme $X^\#$ is idealized smooth and hollow, and so its underlying scheme is smooth (Lemma~\ref{lem:hollow-smooth-basics}(a)). As in \S\ref{ss:log-conn-loc-const}, we have a functor
\[ 
  \epsast_{\rm univ} \colon \MIC(X^\#/\CC) \ra \MIC(\underline{X}^\#/\CC).
\]

\begin{defin} \label{def:regular}
  With notation as above, we say that an object $E$ of $\MIC(X/\CC)$ is \emph{regular} if the object $\epsast_{\rm univ}(\pi^* E)$ of $\MIC(\underline{X}^\#/\CC)$ is regular in the classical sense \cite[Chap.\ II, D\'efinition~4.2]{DeligneLNM163}. We denote by $\MIC_{\rm reg}(X/\CC)$ the full subcategory of $\MIC(X/\CC)$ consisting of the regular objects.
\end{defin}

We note right away that if $X$ has trivial log structure, then $X=X^\#=\underline{X}$ and an object $E$ of $\MIC(X/\CC) = \MIC(\underline{X}/\CC)$ is regular in the above sense if and only if it is regular in the classical sense.

The use of the universal splitting $\epsast_{\rm univ}$ on $X^\#$ is nicely canonical. However, any splitting will do:

\begin{lemma} \label{lem:regular-indep}
  Let $X$ be a hollow idealized smooth log scheme over $\CC$ and let $\eps_i$ ($i=0,1$) be two splittings on $X$. Let $E$ be an object of $\MIC(X/\CC)$. Then $\epsast_0(E)$ is regular if and only if $\epsast_1(E)$ is regular.
\end{lemma}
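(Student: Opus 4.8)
The plan is to reduce to the classical fact that regularity of a connection on a smooth scheme over $\CC$ is insensitive to adding a nilpotent $\cO_X$-linear perturbation to the connection map, i.e.\ if $(E,\nabla)$ is regular and $\theta\colon E\to E\otimes\Omega^1_{\underline{X}/\CC}$ is $\cO_X$-linear (a ``Higgs-type'' term) then $(E,\nabla+\theta)$ need not be regular in general, but it is if $\theta$ factors through a horizontal subquotient — and in our situation $\theta$ is precisely of the form that we can control. Concretely, by Lemma~\ref{lem:dependence-on-eps} the difference $\epsast_0(\nabla)-\epsast_1(\nabla)$ is the composition $E\xrightarrow{\rho_\nabla} E\otimes\mathcal{Q}_X\xrightarrow{1\otimes\delta(\eps_0,\eps_1)} E\otimes\Omega^1_{\underline{X}/\CC}$, where $\rho_\nabla$ is horizontal for $\epsast_i(\nabla)$ (Lemma~\ref{lem:int-Higgs-field}(iii)) and $\delta(\eps_0,\eps_1)$ is $\cO_X$-linear. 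So the two connections $\epsast_0(\nabla)$ and $\epsast_1(\nabla)$ on $E$ differ by an $\cO_X$-linear term built from a nilpotent horizontal operator.

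The key step is therefore the following linear-algebra lemma, which I would state and prove first: \emph{if $(E,\nabla)$ is a regular connection on a smooth $\CC$-scheme $U$ and $N\colon E\to E$ is a nilpotent horizontal endomorphism, then $(E, \nabla+\phi\circ(N\otimes 1)\circ\nabla_{\!Q})$ is regular}, where $\nabla_Q$ is a horizontal map to $E\otimes\mathcal{Q}$ and $\phi$ is $\cO$-linear — more simply, I would reduce via the filtration by kernels of powers of the nilpotent operator $\rho:=\rho_\nabla$ (which are $\epsast_i(\nabla)$-stable subobjects, by horizontality) to the case where $\rho=0$, where the two connections literally coincide. That is: set $E^{(k)}=\ker(\rho^k\colon E\to E\otimes\mathcal{Q}_X^{\otimes k})$; since $\rho$ is horizontal for both $\epsast_0(\nabla)$ and $\epsast_1(\nabla)$ and $\mathcal{Q}_X$ carries the trivial connection, each $E^{(k)}$ is a subobject of $(E,\epsast_0(\nabla))$ and of $(E,\epsast_1(\nabla))$, the two connections induce the \emph{same} map on each graded piece $E^{(k)}/E^{(k-1)}$ (because on the graded piece $\rho$ is induced by a map that kills the relevant classes, making the correction term vanish), and regularity is stable under extensions and subquotients in $\MIC(\underline{X}^\#/\CC)$ (\cite[Chap.\ II, Prop.~4.6]{DeligneLNM163}).

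Assembling: regularity of $\epsast_0(E)$ is equivalent to regularity of all graded pieces $E^{(k)}/E^{(k-1)}$ with their induced connection, which is the same data as the graded pieces of $\epsast_1(E)$, hence equivalent to regularity of $\epsast_1(E)$. The one point requiring care is that $\MIC(\underline{X}^\#/\CC)$ objects are locally free here (Corollary~\ref{cor:hollow-tannakian}), so the kernels $E^{(k)}$ are subbundles and the argument with subquotients is legitimate; also one should note the statement is local on $X$, so one may freely pass to a connected component where $\underline{X}^\#$ is a smooth connected $\CC$-scheme. The main obstacle is verifying cleanly that the correction term $\delta(\eps_0,\eps_1)\circ\rho_\nabla$ acts trivially on each graded piece $E^{(k)}/E^{(k-1)}$ — this is where one must unwind that $\rho_\nabla$ restricted to $E^{(k)}$ lands in $E^{(k-1)}\otimes\mathcal{Q}_X$, so that the composite correction lands in $E^{(k-1)}\otimes\Omega^1$, i.e.\ is zero on the quotient — but this is a direct consequence of the definition of $E^{(k)}$ and the fact that $\rho_\nabla$ is a Higgs field (Lemma~\ref{lem:int-Higgs-field}(ii)), so it is bookkeeping rather than a genuine difficulty.
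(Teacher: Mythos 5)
Your reduction has a genuine gap at its foundation: the Higgs field $\rho_\nabla$ is \emph{not} nilpotent in general, so the filtration $E^{(k)}=\ker(\rho^k)$ need not exhaust $E$ and the reduction to ``$\rho=0$, where the two connections coincide'' fails. Condition (ii) of Lemma~\ref{lem:int-Higgs-field} only says that the components of $\rho_\nabla$ commute (cf.\ Remark~\ref{rmks:epsast}.\ref{rmk:Higgs-residue}: a log connection on a hollow $X$ with $\oM_X\simeq\NN^r$ is a connection on $\underline{X}$ with $r$ \emph{arbitrary} commuting horizontal endomorphisms). Already for $X=\mathbf{A}^1\times\Spec(\NN\to\CC)$ and $E=\cO$ with residue a nonzero scalar $\lambda$, one has $E^{(k)}=0$ for all $k$, so your filtration sees nothing. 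What survives of your idea is the (simultaneous) generalized eigenspace decomposition of $E$ under the horizontal operators $\rho_i$ (legitimate because the characteristic polynomials have locally constant coefficients); but on the eigenspace for $\lambda=(\lambda_1,\dots,\lambda_r)$ the two connections differ on the graded pieces by the nonzero scalar form $\sum_i\lambda_i\,d\log f_i$ with $f_i=\eps_0(p_i)/\eps_1(p_i)\in\Gamma(\cO_X^\times)$, i.e.\ by a twist by the rank-one connection $(\cO,d+\sum\lambda_i\,d\log f_i)$. You would then still have to prove that this twist preserves regularity — which needs exactly the input you were trying to avoid, namely that $d\log$ of a global unit has logarithmic poles on a compactification.

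The paper sidesteps the eigenvalue issue entirely: it takes the canonical extension $\ov{E}$ of $(E,\nabla_0)$ to an snc compactification $\ov{Y}$ of $\underline{X}$, notes that the horizontal residues $\rho_i$ extend to $\ov{E}$ by functoriality of the canonical extension, and that $d\log(f_i)\in\Omega^1_{\ov{Y}/\CC}(\log D)$ since the $f_i$ are units on $\underline{X}$; hence $\nabla_1=\nabla_0-\sum\rho_i\otimes d\log(f_i)$ preserves $\ov{E}$ as a log connection and $(E,\nabla_1)$ is regular by log-extendability. If you want to keep a filtration/dévissage argument, you must first split off the eigenvalues and separately establish regularity of the rank-one twists; as written, your proof only covers the case of nilpotent residues.
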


\begin{proof}
We may assume that $\oM_Y$ is constant with value $P$ and that $\underline{Y}$ admits a good compactification $\underline{Y}\hookrightarrow \ov Y$ (which in this case means the usual snc compactification). Write $\nabla_i = \epsast_i(\nabla)$ for the connection on $\epsast_i(E)$, and suppose that $\epsast_0(E)=(E,\nabla_0)$ is regular. Let $\ov{E}$ be the canonical extension of $(E, \nabla_0)$ to a connection on $\ov Y$ with logarithmic poles along $D = \ov Y\setminus \underline{Y}$. We claim that $\nabla_1$ extends to a logarithmic connection on $\ov{E}$ as well, showing that $\epsast_1(E)=(E, \nabla_1)$ is regular. 

By Lemma~\ref{lem:dependence-on-eps}, we have $\nabla_1 = \nabla_0 - (1\otimes\delta(\eps_0, \eps_1))\circ \rho_\nabla$. Pick a basis $p_1, \ldots, p_r$ of $P^\gp$, and write $f_i = \eps_0(p_i)/\eps_1(p_i)\in \Gamma(Y, \cO_Y^\times)$. Then $d\log(f_i)$ is a section of $\Omega^1_{\ov Y/\CC}(\log D)$. We explicate $\nabla_1$ as 
\[ 
  \nabla_1 = \nabla_0 - \sum_i \rho_i \otimes d\log(f_i)
\]
where $\rho_i\colon E\to E$ is the residue map induced by $p_i$. By functoriality of the canonical extension, since the maps $\rho_i\colon E\to E$ are horizontal with respect to $\nabla_0$, they extend uniquely to morphisms $\ov\rho_i\colon \ov{E}\to\ov{E}$. Therefore $\nabla_1$ maps $\ov E$ into $\ov E\otimes \Omega^1_{\ov Y/\CC}(\log D)$.
\end{proof}

\begin{lemma} \label{lem:regular-basic}
  Let $X$ be an idealized smooth log scheme over $\CC$ and let $E$ be an object of $\MIC(X/\CC)$. The following conditions are equivalent.
  \begin{enumerate}[(a)]
    \item $E$ is regular,
    \item for every idealized smooth and hollow $Y$ with a strict map $f\colon Y\to X$, and every splitting $\eps$ of $f^*\cM_X$, the object $\epsast(f^* E)$ of $\MIC(\underline{Y}/\CC)$ is (classically) regular,
    \item there exists a log dominant (Definition~\ref{def:log-dominant}) morphism $f\colon Y\to X$ with $Y$ idealized smooth and hollow and a splitting $\eps$ on $Y$ such that $\epsast(f^* E)$ is (classically) regular.
  \end{enumerate}
\end{lemma}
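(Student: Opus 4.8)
The plan is to prove $(a)\Rightarrow(b)\Rightarrow(c)\Rightarrow(a)$; the last implication is the substantial one. The first two come essentially for free by taking $Y=X^\#$, $f=\pi$ and $\eps=\eps_{\rm univ}$: recall that $X^\#$ is idealized smooth and hollow and that $\pi$ is strict, and note that $\pi$ is log dominant, being log injective (by strictness) and factoring through $X^\flat$ as the torus torsor $X^\#\to X^\flat$ of Proposition~\ref{prop:splitting-functors}, which is surjective. For $(a)\Rightarrow(b)$: given a strict $f\colon Y\to X$ with $Y$ hollow and a splitting $\eps$ of $\cM_Y=f^*\cM_X$, the splitting $\eps$ is a point of $X^\#(Y)$, i.e.\ it defines a lift $\widetilde f\colon Y\to X^\#$ over $X$ with $\widetilde f^*\eps_{\rm univ}=\eps$; functoriality of $\epsast$ under strict morphisms compatible with the chosen splittings (see Remarks~\ref{rmk:Xsplit-functorial} and~\ref{rmks:epsast}) gives $\epsast(f^*E)\simeq\underline{\widetilde f}^{\,*}\bigl(\epsast_{\rm univ}(\pi^*E)\bigr)$, and since regularity is stable under inverse image \cite[Chap.~II, \S4]{DeligneLNM163}, the hypothesis in (a) makes $\epsast(f^*E)$ regular.

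For $(c)\Rightarrow(a)$, fix $f\colon Y\to X$ and $\eps$ as in (c), let $g\colon Y^\#\to X^\#$ be the induced morphism, and set $F=\epsast_{\rm univ}(\pi^*E)\in\MIC(\underline X^\#/\CC)$. Pulling the regular connection $\epsast(f^*E)$ back along the strict torus torsor $\pi_Y\colon Y^\#\to Y$ and using stability of regularity under inverse image together with functoriality of $\epsast$, we find that $(\pi_Y^*\eps)^\circledast(\pi_Y^*f^*E)$ is regular on $\underline Y^\#$; by the splitting-independence of regularity for hollow idealized smooth log schemes (Lemma~\ref{lem:regular-indep}, applied to $Y^\#$) the object $\epsast_{\rm univ}(\pi_Y^*f^*E)$ is then regular as well. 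Since $g$ is compatible with the universal splittings (Remark~\ref{rmk:Xsplit-functorial}) and $g^*\pi^*E=\pi_Y^*f^*E$, functoriality of $\epsast$ identifies $\epsast_{\rm univ}(\pi_Y^*f^*E)$ with $\underline g^{\,*}F$. Thus $\underline g^{\,*}F$ is a regular connection on the smooth $\CC$-scheme $\underline Y^\#$ (Corollary~\ref{cor:Xsplit-smooth}), and $\underline g\colon\underline Y^\#\to\underline X^\#$ is a dominant morphism of smooth $\CC$-schemes (the remark following Definition~\ref{def:log-dominant}). Hence it suffices to prove the following descent statement: \emph{if $\phi\colon S'\to S$ is a dominant morphism of smooth $\CC$-schemes and $\phi^*F$ is regular, then $F$ is regular.}

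This descent statement is the main obstacle. I would prove it using that a connection without poles on a smooth scheme can be irregular only along the boundary of a compactification, combined with the fact that a proper dominant morphism is surjective. After reducing to $S$ and $S'$ integral (replace $S'$ by an irreducible component dominating $S$), suppose $F$ is not regular. Pick a smooth compactification $S\hookrightarrow\bar S$ with normal crossings boundary; by Deligne's characterizations of regularity \cite[Chap.~II, \S4]{DeligneLNM163} there is a boundary component $D_0$ along whose generic point $F$ has an irregular singularity. Compactify $S'$ and blow up centers in the boundary so that the rational map extending $\phi$ becomes a morphism $\bar\phi\colon\bar S'\to\bar S$; as $\bar\phi$ is proper and dominant it is surjective, so $\bar\phi^{-1}(D_0)$ is a divisor having a component $D_0'$ that dominates $D_0$ and is contained in the boundary of $S'$ (after a further boundary blow-up we may take $D_0'$ to be a component of a simple normal crossings boundary divisor). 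Locally at the generic point of $D_0'$ the map $\bar\phi$ carries a disc transverse to $D_0'$ onto a finite ramified cover of a disc transverse to $D_0$; since regularity of a connection on a punctured disc descends along finite covers, the regularity of $\phi^*F$ along $D_0'$ forces $F$ to be regular along $D_0$, contradicting the choice of $D_0$. The only genuinely delicate points are the standard resolution manipulations turning $\bar\phi$ into a morphism with $D_0'$ part of an snc boundary, and the input from Deligne that regularity can be detected at the generic points of the boundary divisors.
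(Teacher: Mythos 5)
Your proof follows the paper's argument almost step for step: (a)$\Rightarrow$(b) via the universal property of $X^\#$ and functoriality of $\epsast$, (b)$\Rightarrow$(c) trivially, and (c)$\Rightarrow$(a) via the square $Y^\#\to X^\#$ over $Y\to X$, switching splittings on $Y^\#$ by Lemma~\ref{lem:regular-indep}, and reducing to descent of classical regularity along the dominant morphism $\underline{Y}^\#\to\underline{X}^\#$ of smooth schemes. The one place you diverge is that final descent step: the paper simply invokes Deligne \cite[Chap.~II, Proposition~4.6(iii)]{DeligneLNM163}, which is exactly the statement you isolate (``$\phi$ dominant and $\phi^*F$ regular implies $F$ regular''), whereas you re-prove it by compactifying, resolving, and comparing singularities at generic points of boundary divisors. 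Your sketch of that re-proof is essentially Deligne's own argument and is sound in outline, though two points are stated loosely: the ``disc transverse to $D_0'$'' does not map to a \emph{finite} cover of a disc transverse to $D_0$ in general, since the residue field extension $\kappa(D_0)\subseteq\kappa(D_0')$ may have positive transcendence degree (one only has a ramified extension of the discrete valuations, which still suffices because slopes are multiplied by the ramification index); and the resolution manipulations making $D_0'$ part of an snc boundary are waved at rather than carried out. Neither is a real gap, but since the needed statement is already in Deligne, citing it (as the paper does) is the cleaner route.
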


\begin{proof}
Every $f\colon Y\to X$ as in (b) uniquely factors through a map $\tilde f\colon Y\to X^\#$ such that the splitting $\eps$ is the preimage of the universal splitting $\eps_{\rm univ}$. Then
\[ 
  \epsast(f^* E) = \epsast(\tilde f^* \pi^* E) = \tilde f^* \epsast_{\rm univ}(\pi^* E)
\]
which shows that (a) implies (b). Moreover, (b) implies (c). 

It remains to show that (c) implies (a). We have a commutative square (see Remark~\ref{rmk:Xsplit-functorial})
\[
  \xymatrix{
    Y^\# \ar[d]_{\pi_Y} \ar[r]^g & X^\# \ar[d]^{\pi} \\
    Y\ar[r]_f & X
  }
\]
where the top map $g$ is a dominant as a map of schemes. Denote by $\eps_0$ and $\eps_1$ the splittings on $Y^\#$ induced by $\eps$ and $\eps_{\rm univ}$, respectively. By Lemma~\ref{lem:regular-indep}, we have 
\[ 
  \text{$\epsast_0(g^* \pi^* E)$ is regular}
  \quad\Leftrightarrow\quad
  \text{$\epsast_1(g^* \pi^* E)$ is regular} .
\]  
But $\epsast_0(g^* \pi^* E) \simeq \epsast_0(\pi_Y^* f^* E) = \pi_Y^* (\epsast(f^* E))$, which is regular since $\epsast(f^* E)$ is. Therefore 
\[ 
  \epsast_1(g^* \pi^* E) \simeq g^* (\epsast_{\rm univ}(\pi^* E))
\]
is regular, and hence so is $\epsast_{\rm univ}(\pi^* E)$ by \cite[Chap.\ II, Proposition~4.6(iii)]{DeligneLNM163}.
\end{proof}

\begin{cor} \label{cor:regular-basic}
  Let $X$ be an idealized smooth log scheme over $\CC$. Then the following hold.
  \begin{enumerate}[(a)]
    \item For every morphism $f\colon Y\to X$ such that $Y$ is idealized smooth, the pull-back functor 
    \[
      f^*\colon \MIC(X/\CC)\to\MIC(Y/\CC)
    \]
    maps $\MIC_{\rm reg}(X/\CC)$ into $\MIC_{\rm reg}(Y/\CC)$.
    \item If $f\colon Y\to X$ is a log dominant (Definition~\ref{def:log-dominant}) morphism of idealized smooth log schemes over $\CC$ and $E$ is an object of $\MIC(X/\CC)$ such that $f^* E$ is regular, then $E$ is regular as well.
  \end{enumerate}
\end{cor}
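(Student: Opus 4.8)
The plan is to reduce both statements to Deligne's classical results on regularity via the functoriality of the construction $X\mapsto X^\#$ and of the functor $\epsast_{\rm univ}$. Write $\pi\colon X^\#\to X$ and $\pi_Y\colon Y^\#\to Y$ for the canonical morphisms of \S\ref{ss:splittings}. A morphism $f\colon Y\to X$ induces, by Remark~\ref{rmk:Xsplit-functorial}, a morphism $g\colon Y^\#\to X^\#$ compatible with the universal splittings and fitting in a commutative square
\[
\xymatrix{
  Y^\# \ar[r]^-{g} \ar[d]_-{\pi_Y} & X^\# \ar[d]^-{\pi} \\
  Y \ar[r]^-{f} & X.
}
\]
Both $X^\#$ and $Y^\#$ are hollow idealized smooth log schemes over $\CC$ with smooth underlying schemes (Corollary~\ref{cor:Xsplit-smooth}), so the functoriality of $\epsast$ recorded in Remark~\ref{rmks:epsast}, together with the identity $\pi_Y^* f^* E = g^* \pi^* E$ in $\MIC(Y^\#/\CC)$ (the pullback on $\MIC$ agreeing with the module pullback), yields a natural isomorphism
\[
  \epsast_{\rm univ}(\pi_Y^* f^* E)\;\simeq\;\underline{g}^*\bigl(\epsast_{\rm univ}(\pi^* E)\bigr)
\]
in $\MIC(\underline{Y}^\#/\CC)$, where $\underline{g}\colon\underline{Y}^\#\to\underline{X}^\#$ is the underlying morphism of smooth $\CC$-schemes.

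Granting this, part (a) is immediate: if $E$ is regular then $\epsast_{\rm univ}(\pi^* E)$ is regular in Deligne's sense, and since classical regularity is preserved under pullback along a morphism of smooth $\CC$-schemes \cite[Chap.~II, Proposition~4.6]{DeligneLNM163}, the displayed isomorphism shows $\epsast_{\rm univ}(\pi_Y^* f^* E)$ is classically regular, i.e.\ $f^* E$ is regular. For part (b), assume $f$ is log dominant; then by the remark following Definition~\ref{def:log-dominant} the map $\underline{g}$ is a dominant morphism of smooth $\CC$-schemes. If $f^* E$ is regular, then $\epsast_{\rm univ}(\pi_Y^* f^* E)$ is classically regular, hence so is $\underline{g}^*(\epsast_{\rm univ}(\pi^* E))$; descent of regularity along dominant morphisms of smooth $\CC$-schemes (\cite[Chap.~II, Proposition~4.6(iii)]{DeligneLNM163}, applied exactly as at the end of the proof of Lemma~\ref{lem:regular-basic}) then forces $\epsast_{\rm univ}(\pi^* E)$ to be classically regular, i.e.\ $E$ is regular.

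There is no real obstacle here: the single input that must be in place is the compatibility of $g\colon Y^\#\to X^\#$ with the universal splittings and the ensuing identity $\epsast_{\rm univ}\circ g^*\simeq\underline{g}^*\circ\epsast_{\rm univ}$, which has already been established in \S\S\ref{ss:splittings}--\ref{ss:log-conn-loc-const}; in effect the corollary transports Deligne's Proposition~4.6 through the functor $E\mapsto\epsast_{\rm univ}(\pi^* E)$. Part~(a) can alternatively be derived straight from Lemma~\ref{lem:regular-basic} once one observes that $\pi_Y\colon Y^\#\to Y$ is itself log dominant (it is strict, hence log injective, and $Y^\#\to Y^\flat$ is a surjective torus torsor), so that criterion~(c) of that lemma applies with the test morphism $\pi_Y$.
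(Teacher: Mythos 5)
Your proof is correct, and in substance it is the paper's: the paper handles (a) and (b) by citing characterizations (b) and (c) of Lemma~\ref{lem:regular-basic}, and the argument you spell out --- the square $Y^\#\to X^\#$ compatible with universal splittings (Remark~\ref{rmk:Xsplit-functorial}), the resulting isomorphism $\epsast_{\rm univ}(\pi_Y^*f^*E)\simeq \underline{g}^*\bigl(\epsast_{\rm univ}(\pi^*E)\bigr)$ (Remark~\ref{rmks:epsast}, item~\ref{rmk:functoriality}), and Deligne's \cite[Chap.~II, Proposition~4.6]{DeligneLNM163} --- is exactly the machinery by which that lemma is proved, so you have essentially inlined the lemma rather than quoted it. For (a) your direct route is in fact slightly more careful than the bare citation: characterization (b) of Lemma~\ref{lem:regular-basic} quantifies only over \emph{strict} test maps to $X$, so for a non-strict $f$ one must in any case pass through $g\colon Y^\#\to X^\#$ (or produce strict hollow test objects over $X$ from those over $Y$), which is what your argument does. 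One caveat: your closing claim that (a) ``alternatively'' follows from criterion (c) of Lemma~\ref{lem:regular-basic} with the test morphism $\pi_Y$ is circular --- applying (c) to $Y$ with $\pi_Y$ and $\eps_{\rm univ}$ merely restates the definition of regularity of $f^*E$, so it cannot replace the functoriality argument; since it is offered only as an aside, the main proof is unaffected.
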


\begin{proof}
The first assertion follows from characterization (b) in Lemma~\ref{lem:regular-basic}, and the second one from characterization (c).
\end{proof}

\begin{prop} \label{prop:regularity-properties}
  Let $X$ be an idealized smooth log scheme over $\CC$. Then, the category $\MIC_{\rm reg}(X/\CC)$ is an abelian subcategory of $\MIC(X/\CC)$ stable under direct sums, tensor products, exterior and symmetric powers, duals, internal Hom, subobjects, quotients, and extensions.
\end{prop}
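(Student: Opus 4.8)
The plan is to reduce the statement to Deligne's classical results by transporting them along the morphism $\pi\colon X^\#\to X$. Let $\MIC_{\rm reg}(X^\#/\CC)\subseteq\MIC(X^\#/\CC)$ denote the full subcategory of objects $F$ for which $\epsast_{\rm univ}(F)$ is regular in the classical sense, so that $\MIC_{\rm reg}(X/\CC)=(\pi^*)^{-1}\bigl(\MIC_{\rm reg}(X^\#/\CC)\bigr)$ by Definition~\ref{def:regular}. Since $X^\#$ is hollow, Proposition~\ref{prop:Higgs-description} realizes $\epsast_{\rm univ}\colon\MIC(X^\#/\CC)\to\MIC(\underline{X}^\#/\CC)$ as the monoidal equivalence onto ${\rm HIG}_{\mathcal{Q}_{X^\#}}\bigl(\MIC(\underline{X}^\#/\CC)\bigr)$ followed by the functor forgetting the Higgs field; as $\mathcal{Q}_{X^\#}$ is dualizable, this last functor is the forgetful functor from $\op{Sym}(\mathcal{Q}_{X^\#}^\vee)$-modules, hence exact, faithful, and conservative. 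Combined with the stability properties of classical regularity \cite[Chap.\ II, Prop.~4.6]{DeligneLNM163}, this will show immediately that $\MIC_{\rm reg}(X^\#/\CC)$ is a rigid abelian tensor subcategory of $\MIC(X^\#/\CC)$ stable under every operation in the statement; the remaining work is to pull these properties back along $\pi^*$.

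Because $\pi^*$ is right exact and symmetric monoidal, stability of $\MIC_{\rm reg}(X/\CC)$ under direct sums, tensor products, and exterior and symmetric powers will be immediate, and likewise stability under quotients, since a surjection $E\twoheadrightarrow E''$ induces a surjection $\pi^*E\twoheadrightarrow\pi^*E''$. For an extension $0\to E'\to E\to E''\to 0$, right exactness gives a short exact sequence $0\to G\to\pi^*E\to\pi^*E''\to 0$ with $G={\rm im}(\pi^*E'\to\pi^*E)$ a quotient of $\pi^*E'$, so $\pi^*E$ is an extension of two objects of $\MIC_{\rm reg}(X^\#/\CC)$ and hence regular.

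The delicate operations are subobjects, internal Hom, and duals, which involve left-exact constructions with which $\pi^*$ fails to commute (already $\pi^*E'\to\pi^*E$ need not be injective). I will concentrate on subobjects, from which — together with the above — it follows that $\MIC_{\rm reg}(X/\CC)$ is an abelian subcategory. Given $E'\hookrightarrow E$ with $E$ regular, let $G$ and $K$ be the image and the kernel of $\pi^*E'\to\pi^*E$; then $G$ is a subobject of the regular object $\pi^*E$, hence regular, and $\pi^*E'$ is an extension of $G$ by $K$, so it is enough to prove $K\in\MIC_{\rm reg}(X^\#/\CC)$. Working strict-\'etale locally, I may take $X=\mathbf{A}_{P,K_0}$; by Example~\ref{ex:APsplit}, $\underline{X}^\#$ is the disjoint union of the components $\underline{X_F^\#}$ over the faces $F$ of $P$, and over each top-dimensional stratum $X_F$ the morphism $\pi$ restricts to an open immersion composed with a flat projection, so $\pi^*E'\to\pi^*E$ is injective there and $K$ is supported on the components $X_F^\#$ lying over strata with $\dim\underline{X}_F<\dim\underline{X}$. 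Over such a component $\pi$ factors through the faithfully flat projection $X_F^\#\to X_F$, so $K|_{X_F^\#}$ is the pullback along it of $K_F:=\ker(i_F^*E'\to i_F^*E)\in\MIC(X_F/\CC)$, and hence $K$ is regular if and only if each $K_F$ is. I then induct on $\dim\underline{X}$: when $\dim\underline{X}=0$ the scheme $X$ is a disjoint union of standard log points $\Spec(P\to\CC)$, on which $\epsast_{\rm univ}(\pi^*(-))$ produces connections on algebraic tori with \emph{constant} residue matrices, so every object of $\MIC(X/\CC)$ is regular; in the inductive step, $K_F$ is a quotient of $\op{Tor}_1^{\cO_X}(\cO_{X_F},E/E')$, which carries a natural log connection and so defines an object of $\MIC(X_F/\CC)$, and since $\dim\underline{X}_F<\dim\underline{X}$ the inductive hypothesis (stability under quotients) reduces the claim to the regularity of this $\op{Tor}$-sheaf. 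Stability under internal Hom and duals will follow by the same scheme — reduction to the generic stratum, where one is in the classical case, and to strata of strictly smaller dimension — using in addition that $\MIC(X^\#/\CC)$ is a rigid tensor category (Corollary~\ref{cor:hollow-tannakian}).

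I expect the main obstacle to be precisely this last point: showing that for a regular object $M$ of $\MIC(X/\CC)$ and a log stratum $i_F\colon X_F\hookrightarrow X$, the $\cO_X$-module sheaves $\op{Tor}_j^{\cO_X}(\cO_{X_F},M)$ are regular objects of $\MIC(X_F/\CC)$. This is the concrete shape of the non-exactness of $\epsast_{\rm univ}(\pi^*(-))$ noted in the introduction. I plan to establish it either via the formalism of (holonomic) log $D$-modules, in which these $\op{Tor}$-sheaves have good functorial behavior, or by an explicit d\'evissage of $M$ along the orbit stratification of $X$, where the relevant $\op{Tor}$-sheaves can be written down directly in terms of monoid algebras.
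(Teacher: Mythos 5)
Your treatment of direct sums, tensor operations, quotients, and extensions coincides with the paper's and is correct: these follow formally from the fact that $\epsast_{\rm univ}\circ\pi^*$ is monoidal and right exact together with Deligne's stability results. The problem is the subobject case, which is where all the difficulty of the proposition lives, and there your argument has a genuine hole. Your reductions up to the identification $K_F=\ker(i_F^*E'\to i_F^*E)$ as a quotient of $\op{Tor}_1^{\cO_X}(\cO_{X_F},E/E')$ are fine, but the induction on $\dim\underline{X}$ does not close: the claim you need --- that $\op{Tor}_1^{\cO_X}(\cO_{X_F},M)$ is regular on $X_F$ for $M$ regular on $X$ --- is a statement about a module over $\cO_X$ itself, not over a lower-dimensional log scheme, so the inductive hypothesis gives you nothing (and ``stability under quotients'' on $X_F$, which you invoke, was already proved unconditionally and is not what is at stake). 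You explicitly flag this Tor-regularity as the main obstacle and offer two possible strategies (log $D$-module functoriality, or explicit d\'evissage over monoid algebras) without carrying out either; note also that the ideal of a log stratum in $\CC[P]$ is a monomial ideal that is in general not a complete intersection, so the Koszul-resolution route is not straightforward. The same gap propagates to internal Hom and duals, which you dispose of with ``the same scheme'' but which in fact require a separate left-exactness argument.

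For comparison, the paper circumvents $\op{Tor}$ entirely. To control $E'|_Z$ for a closed log stratum $Z=V(\mathcal{I})$, it uses the Artin--Rees lemma to produce a $k$ with a surjection ${\rm im}(E'\to E|_{Z^{(k)}})\twoheadrightarrow E'|_Z$, so that $E'|_Z$ is a quotient of a subobject of the regular $E|_{Z^{(k)}}$ over a base with \emph{constant} log structure; there a Snake Lemma d\'evissage reduces the subobject question to the single object $E'=\mathcal{I}E$, which is a quotient of $\mathcal{I}\otimes E$, and finally the regularity of the monomial ideal $\mathcal{I}$ itself is checked by an explicit reduction to $\mathbf{A}_{\ov{P},\ov{K}}$ over a point (where every connection is regular, essentially your base case). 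For internal Hom the paper uses a Mittag--Leffler statement (its Lemma~\ref{lem:mittag-leffler}) to inject $\underline{\Hom}(E,F)|_Z$ into $\underline{\Hom}(E|_{Z^{(k)}},F|_{Z^{(k)}})|_Z$ before devissaging. If you want to salvage your route, you would need to actually prove the Tor-regularity claim; otherwise I would recommend replacing the $\op{Tor}_1$ step with the Artin--Rees/thickening argument.
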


\begin{proof}
We will use the corresponding assertions about $\MIC_{\rm reg}(\underline{X}^\#/\CC)$, see \cite[Chap.\ II, Proposition~4.6]{DeligneLNM163}.

\medskip
\noindent {\sc Tensor operations.} Let $E$ and $F$ be regular. Since the functor $\epsast_{\rm univ} \pi^*$ is monoidal, i.e.\ 
\[ 
  \epsast_{\rm univ} (\pi^*(E\otimes F)) \simeq \epsast_{\rm univ} (\pi^* E) \otimes \epsast_{\rm univ} (\pi^* F),
\] 
the assertion follows from the corresponding assertion about $\MIC_{\rm reg}(\underline{X}^\#/\CC)$. Since exterior and symmetric powers are natural direct summands of tensor powers, we obtain the regularity of $\bigwedge^p (E)$ and ${\rm Sym}^p(E)$ as well.

\medskip
\noindent {\sc Quotients.} Since the functor $\epsast_{\rm univ} \pi^*$ is right exact (being the composition of the right exact $\pi^*$ and the exact functor $\epsast_{\rm univ}$), the assertion follows from the corresponding assertion about $\MIC_{\rm reg}(\underline{X}^\#/\CC)$. 

\medskip
\noindent {\sc Extensions.} Let $0\to E'\to E\to E''\to 0$ be a short exact sequence in $\MIC(X/\CC)$, with $E'$ and $E''$ regular. Applying the functor $\epsast_{\rm univ} \pi^*$, we obtain a right exact sequence in $\MIC_{\rm reg}(\underline{X}^\#/\CC)$
\[ 
  \xymatrix{ \epsast_{\rm univ} (\pi^*E') \ar[r] & \epsast_{\rm univ} (\pi^*E) \ar[r] & \epsast_{\rm univ} (\pi^*E'')\ar[r] & 0.}
\]
Therefore $\epsast_{\rm univ} (\pi^*E)$ is an extension of the regular $\epsast_{\rm univ} (\pi^*E'')$ by a quotient of the regular $\epsast_{\rm univ} (\pi^*E')$, and hence is regular.

\medskip
\noindent {\sc Subobjects.} Let $E'\to E$ be a monomorphism in $\MIC(X/\CC)$ where $E$ is regular. We want to show that $E'$ is regular as well. The subtlety is that $\pi\colon X^\#\to X$ is not flat, and so $\pi^*(E')\to \pi^*(E)$ might not be injective. 

Suppose first that $X$ is hollow. In this case, $X^\#\to X^\flat=X$ is smooth, and the assertion follows from the classical case since $\epsast_{\rm univ}(\pi^* E') \to \epsast_{\rm univ}(\pi^* E')$ is injective. 

The question is strict \'etale local, so we may assume $X$ is affine admits a strict \'etale map $X\to \mathbf{A}_{P,K}$. In fact, by Lemma~\ref{lem:local-embedding} we may reduce to the case $K=\emptyset$. For a face $F$ of $P$, let $U_F$ be the preimage of $\mathbf{A}_{P_F}$ in $X$ and let $X_F$ be the preimage of $\mathbf{A}_{F^\gp}$ embedded as in \eqref{eqn:P-Fgp-map}, so that
\[ 
  X^\flat = \mathbf{A}_P^\flat\times_{\mathbf{A}_P} X \simeq \coprod_F X_F.
\]
We need to show that the pull-back of $E'$ to every $X_F$ is regular. Fix a face $F\subseteq P$ and let $Z = X_F$ be the corresponding stratum. Since $X_F\subseteq U_F$, we may replace $X$ with $U_F$ and hence assume $Z$ is closed in $X$, cut out by an ideal $\mathcal{I}\subseteq \cO_X$. For $n\geq 0$, we let $Z^{(n)}$ be the closed subscheme cut out by the ideal $\mathcal{I}^{n+1}$. 

As in the proof of Proposition~\ref{prop:analytic-sections}, by the Artin--Rees lemma there exists a $k\geq 0$ and a surjection
\[ 
  {\rm im}(E'\to E|_{Z^{(k)}}) \ra E'|_Z.
\]
We may then replace $X$ with $Z^{(k)}$ and $E'$ with ${\rm im}(E'\to E|_{Z^{(k)}})$. We have thus reduced to the case $X$ with constant log structure.

We may reduce to the case $E' = \mathcal{I}E$ using the Snake Lemma applied to the diagram
\[ 
  \xymatrix{
    0\ar[r] & \mathcal{I} E' \ar[d]\ar[r] & E' \ar@{^{(}->}[d] \ar[r] & E'/\mathcal{I}E' \ar[d] \ar[r] & 0 \\
    0\ar[r] & \mathcal{I}E \ar[r] & E \ar[r] & E/\mathcal{I}E \ar[r] & 0. 
  }
\] 

However, $\mathcal{I}E$ itself is a quotient of $\mathcal{I}\otimes E$. Since tensor products of regular objects are regular, we are finally reduced to showing that $\mathcal{I}$ is regular as an object of $\MIC(X/\CC)$. 

To show that $\mathcal{I}$ is regular, we may assume that $X$ admits a strict \'etale map $X\to \mathbf{A}_{P,K}$ with $\mathcal{I}$ generated by $\sqrt{K} = P \setminus P^\times$. Thus $\mathcal{I}$ is the pullback of $\sqrt{K}\cdot \cO_{\mathbf{A}_{P,K}}$, and it suffices to treat $X=\mathbf{A}_{P,K}$ and $\mathcal{I}=\sqrt{K}\cdot \cO_{\mathbf{A}_{P,K}}$. Since $\ov{P}^\gp$ is free, we may write $P=\ov{P}\times P^\times$. Then $\mathcal{I}$ is the pullback of the ideal generated by $\ov{P}\setminus\{0\}$ in $\mathbf{A}_{\ov{P},\ov{K}}$ where $\ov{K}$ is the image of $K$ in $\ov{P}$. This reduces us further to the case $X=\Spec(\ov{P}\to \CC[\ov{P}]/(\ov K))$, but then $\underline{X}$ is a point, and we conclude by Lemma~\ref{lem:regular-basic}(c).

\medskip
\noindent {\sc Internal Hom.} As in the proof above, we reduce to showing that if $E$ and $F$ are regular objects of $\MIC(X/\CC)$ and $Z\subseteq X$ is a closed log stratum, then $\underline{\Hom}(E, F)|_Z$ is regular. If $X=Z$ (i.e.\ $X$ is hollow), this is clear because $\epsast_{\rm univ}(\pi^*\underline{\Hom}(E, F)) \simeq \underline{\Hom}(\epsast_{\rm univ}(\pi^* E), \epsast_{\rm univ}(\pi^* F))$ by flatness of $\pi$. In general, by the Mittag--Leffler property (Lemma~\ref{lem:mittag-leffler} below with $e=0$) there exists a $k\geq 0$ such that the natural map
\[ 
  \underline{\Hom}(E, F)|_Z \ra \underline{\Hom}(E|_{Z^{(k)}}, F|_{Z^{(k)}})|_Z
\]
is injective. Therefore we may replace $X$ with $Z^{(k)}$. In this case, the short exact sequences 
\[
  \xymatrix{0\ar[r] & \mathcal{I}^{j+1} F\ar[r] & \mathcal{I}^j F\ar[r] & \mathcal{I}^j F/\mathcal{I}^{j+1}F\ar[r] & 0}
\]
induce exact sequences
\[ 
  \xymatrix{\Hom(E, \mathcal{I}^{j+1} F)\ar[r] & \Hom(E, \mathcal{I}^j F)\ar[r] & \Hom(E, \mathcal{I}^j F/\mathcal{I}^{j+1}F)}
\] 
and by induction on $j$ we reduce to the case $\mathcal{I}F=0$. In this case, $\underline{\Hom}(E, F) = \underline{\Hom}(E/\mathcal{I}E, F) = \underline{\Hom}(E|_Z, F|_Z)$, and we reduced to the case $X=Z$ already handled before. 
\end{proof}

The above proof used the following lemma from commutative algebra.

\begin{lemma} \label{lem:mittag-leffler}
  Let $A$ be a Noetherian ring, let $I\subseteq A$ be an ideal, and let $E$ and $F$ be finitely generated $A$-modules. For $n\geq 0$, set $A_n = A/I^{n+1}$, and for an $A$-module $M$ we set $M_n = M\otimes_A A_n$. Then, for every $e\geq 0$ there exists a $k_e\geq e$ such that for every $k\geq k_e$ the morphism
  \[ 
    \Hom(E, F)_e \ra \Hom(E_k, F_k)_e
  \]
  (induced by the restriction map $\pi\colon \Hom(E, F)\to \Hom(E_k, F_k)$) is injective.
\end{lemma}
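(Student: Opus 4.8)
The plan is to replace $E$ by a finite presentation. Since $A$ is Noetherian and $E$ is finitely generated, there is an exact sequence $A^p \xrightarrow{\alpha} A^q \to E \to 0$. Applying $\Hom_A(-,F)$ and, after base change along $A\to A_k$, $\Hom_{A_k}(-,F_k)$ — using left-exactness of $\Hom$, right-exactness of $-\otimes_A A_k$, and the fact that an $A$-linear map into an $A_k$-module is automatically $A_k$-linear — one obtains identifications
\[
  H := \Hom(E,F) = \ker\bigl(\alpha^\ast\colon F^q \to F^p\bigr), \qquad
  M^{(k)} := \Hom(E_k,F_k) = \ker\bigl(\alpha^\ast\colon F^q/I^{k+1}F^q \to F^p/I^{k+1}F^p\bigr),
\]
under which the restriction map $\psi_k\colon H \to M^{(k)}$ is induced by the reduction $F^q \to F^q/I^{k+1}F^q$. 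As $\psi_k$ is $A$-linear it carries $I^{e+1}H$ into $I^{e+1}M^{(k)}$ and so induces a map $H/I^{e+1}H \to M^{(k)}/I^{e+1}M^{(k)}$; what must be shown is that $\psi_k^{-1}(I^{e+1}M^{(k)}) \subseteq I^{e+1}H$ for $k$ large enough in terms of $e$.

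The engine is the Artin--Rees lemma, which I would invoke twice on fixed data: for $H \subseteq F^q$, giving a constant $c_1$ with $I^m F^q \cap H \subseteq I^{m-c_1}H$ for $m \geq c_1$; and for $\alpha^\ast(F^q) \subseteq F^p$, giving a constant $c_2$ with $I^m F^p \cap \alpha^\ast(F^q) \subseteq I^{m-c_2}\alpha^\ast(F^q)$ for $m \geq c_2$. I would then take $k_e = e + c_1 + c_2$. Given $\phi \in \psi_k^{-1}(I^{e+1}M^{(k)})$ with $k \ge k_e$, write $\psi_k(\phi) = \sum_i a_i\mu_i$ with $a_i \in I^{e+1}$ and $\mu_i \in M^{(k)}$, and lift each $\mu_i$ to $y_i \in F^q$; then $\alpha^\ast(y_i) \in I^{k+1}F^p$, and comparing reductions modulo $I^{k+1}F^q$ gives $\phi = \sum_i a_iy_i + z$ with $z \in I^{k+1}F^q$. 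The second Artin--Rees bound lets me write $\alpha^\ast(y_i) = \alpha^\ast(t_i)$ with $t_i \in I^{k+1-c_2}F^q$, so $y_i - t_i \in \ker\alpha^\ast = H$; then $\sum_i a_iy_i = \sum_i a_i(y_i - t_i) + \sum_i a_it_i$ exhibits $\phi$ as the sum of $\sum_i a_i(y_i-t_i) \in I^{e+1}H$ and an element $\phi'' \in H \cap I^{m_0}F^q$ with $m_0 = \min(k+1,\, e+k+2-c_2)$. The choice of $k_e$ is made so that $m_0 \ge e+1+c_1$, and then the first Artin--Rees bound yields $\phi'' \in I^{e+1}H$, whence $\phi \in I^{e+1}H$.

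The part requiring care is conceptual rather than computational: one must not replace $I^{e+1}M^{(k)}$ by the a priori larger submodule of $M^{(k)}$ consisting of maps whose image lies in $I^{e+1}(F/I^{k+1}F)$, since the preimage of the latter under $\psi_k$ is $\Hom(E, I^{e+1}F)$, which is strictly larger than $I^{e+1}H$ in general and, crucially, independent of $k$. It is precisely the genuine $I^{e+1}$-divisibility of $\psi_k(\phi)$ \emph{inside the module} $M^{(k)}$, unwound through the presentation and then absorbed using that $k$ is large, that forces $\phi$ back into $I^{e+1}H$. The remaining work — verifying that $k_e = e+c_1+c_2$ simultaneously satisfies every inequality used along the way ($k \ge e$ so that $I^{k+1}F \subseteq I^{e+1}F$, $k+1 \ge c_2$ to apply the second Artin--Rees bound, and $m_0 \ge e+1+c_1$ to apply the first) — is routine bookkeeping.
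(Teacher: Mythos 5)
Your proof is correct and follows essentially the same route as the paper: both fix a finite presentation of $E$, realize $\Hom(E,F)$ and $\Hom(E_k,F_k)$ as kernels in the reduced two-term complex, and control the inverse system by Artin--Rees. The only difference is that the paper outsources the Artin--Rees bookkeeping to the Mittag--Leffler statement \stacks{0EGU}, which hands it a retraction $\psi_k$ and hence the slightly stronger conclusion that $\Hom(E,F)_e\to\Hom(E_k,F_k)_e$ is a \emph{split} injection, whereas you carry out the two Artin--Rees estimates by hand and obtain bare injectivity, which is all the lemma asserts.
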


\begin{proof}
Fix a presentation $A^m\to A^n \to E\to 0$, so that $\Hom(E, F)$ is the cohomology module of 
\[
  0\ra F^n\ra F^m
\]
with $F^n\to F^m$ induced by the transpose of $A^m\to A^n$ (see \stacks{09BB}), and analogously $\Hom(E_k, F_k)$ is the cohomology module of $0\to F_k^n\to F_k^m$. By the Mittag--Leffler property \stacks{0EGU}, there exists a $k_e\geq e$ such that for $k\geq k_e$ there exists a $\psi_k$ making the triangle below commute
\[ 
  \xymatrix{
    & \Hom(E_k, F_k)\ar[d]^{\psi_k} \\
    \Hom(E, F)_k \ar[r] \ar[ur]^\pi & \Hom(E, F)_e.  
  }
\]
Tensoring the above diagram with $A_e$ shows that $\Hom(E, F)_e\to \Hom(E_k, F_k)_e$ is a split injection.
\end{proof}

\begin{remark} \label{rmk:birational-question}
  Let $X$ be an idealized smooth log scheme over $\CC$, let $E$ be an object of $\MIC(X/\CC)$, and let $U\subseteq X$ be an open subset containing the associated points of $E$. Suppose that $E|_U$ is regular. Must $E$ be regular as well?
\end{remark}

The following provides a useful criterion for checking regularity using formal curve germs. For a space of the form $T=\Spec(P\xto{\eps} \CCt)$ where $P$ is a sharp fs monoid and $\eps(P\setminus \{0\}) = 0$, we define 
\[
  \Omega^1_{T/\CC} = \CCt d\log(t)\oplus (P^\gp\otimes_\ZZ \CCt).
\]
We define $\MIC(T/\CC)$ in the obvious way, and ``projection onto $\CCt d\log(t)$'' defines a functor \[
  \epsast\colon \MIC(T/\CC)\ra \MIC(\CCt/\CC)
\]
to the category of finite dimensional vector spaces over $\CCt$ endowed with an action of $t\frac{d}{dt}$ satisfying the Leibniz rule. As in \cite[Chap.\ II, D\'efinition 1.11]{DeligneLNM163}, we say that an object $E$ of $\MIC(\CCt/\CC)$ is \emph{regular} if it admits a $\CCs$-lattice stable under the action of $t\frac{d}{dt}$. For an idealized smooth log scheme $X$ over $\CC$, a map $\gamma\colon T\to X$ induces a functor $\gamma^*\colon \MIC(X/\CC)\to \MIC(T/\CC)$. 

For $T=\Spec(P\xto{\eps} \CCt)$ as above and $X$ an idealized smooth log scheme over $\CC$, let us say that a map $\gamma\colon T\to X$ is \emph{algebraic} if it factors as $T\to C\to X$ where $C\hookrightarrow \ov{C}$ is a good embedding of idealized smooth log schemes of dimension one over $\CC$ with $C = \ov{C} \setminus \{y\}$, where $t$ is a local parameter of $\cO_{\ov{C},y}$ (identifying the fraction field of $\widehat{\cO}_{\ov{C},y}$ with $\CCt$), and where $P\oplus \NN\to\cO_{\ov{C},y}$ sending $(p,n)$ to $0$ if $p\neq 0$ and $(0,n)$ to $t^n$ is a local chart at $\ov{C}$, and where $T\to C$ is induced by these data.

\begin{prop} \label{prop:curve-germ-criterion}
  Let $X$ be an idealized smooth log scheme over $\CC$. An object $E$ of $\MIC(X/\CC)$ is regular if and only if for every sharp fs monoid $P$ and every strict morphism
  \[ 
    f\colon \Spec (P\xrightarrow{\eps} \CCt ) \ra X,
  \]
  the object $\epsast (f^* E)$ of $\MIC(\CCt/\CC)$ is regular. Moreover, it suffices to check this condition on algebraic maps $f$.
\end{prop}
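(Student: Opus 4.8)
The plan is to transfer the question to Deligne's classical curve/formal-germ criterion for the connection $E^\#:=\epsast_{\rm univ}(\pi^*E)$ on the smooth $\CC$-scheme $\underline{X}^\#$ (Corollary~\ref{cor:Xsplit-smooth}), which by Definition~\ref{def:regular} is the meaning of regularity of $E$. The bridge is a dictionary between strict morphisms $f\colon T\to X$ with $T=\Spec(P\xto{\eps}\CCt)$ and formal curve germs in $\underline{X}^\#$.

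First I would set up this dictionary. Such a $T$ is hollow and reduced, and its chart $P\to\cM_T$ provides a tautological splitting $\eps_T$ of $\cM_T\simeq P\oplus\cO_T^\times$; a direct check (Construction~\ref{constr:epsast}) shows that the functor $\epsast$ on $T$ appearing in the statement is the one attached to $\eps_T$. By the universal property of $X^\#$ (Proposition~\ref{prop:splitting-functors}), $f$ factors as $T\xto{\tilde f}X^\#\xto{\pi}X$ with $\tilde f$ compatible with $\eps_{\rm univ}$ and $\eps_T$ (Remark~\ref{rmk:Xsplit-functorial}), and functoriality of $\epsast$ with respect to such maps (Remark~\ref{rmks:epsast}) gives $\epsast(f^*E)\simeq\tilde f^*E^\#$; in other words $\epsast(f^*E)$ is the pull-back of the classical connection $E^\#$ along $\tilde f\colon\Spec\CCt\to\underline{X}^\#$, equipped with its $t\frac{d}{dt}$-action. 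Every morphism $\Spec\CCt\to\underline{X}^\#$ arises this way, with $P$ the constant stalk of $\oM_{X^\#}$ (locally constant by hollowness).

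Next I would prove the implication ``$E$ regular $\Rightarrow$ $\epsast(f^*E)$ regular for every strict $f$'', which a fortiori covers the algebraic ones. If $E$ is regular then $E^\#$ is regular, so \'etale-locally on $\underline{X}^\#$ it extends to a connection with logarithmic poles on an snc compactification $\overline{Y}$ (Deligne~\cite[Chap.\ II]{DeligneLNM163}). Given $f$, the composite $\Spec\CCt\xto{\tilde f}\underline{X}^\#\hookrightarrow\overline{Y}$ extends, by the valuative criterion of properness, to $\Spec\CCs\to\overline{Y}$; its closed point avoids the boundary divisor because $\tilde f$ lands in $\underline{X}^\#$, so pulling back the logarithmic extension produces a $t\frac{d}{dt}$-stable $\CCs$-lattice in $\epsast(f^*E)$, whence regularity.

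The hard part will be the converse: assuming $\epsast(f^*E)$ is regular for every \emph{algebraic} $f$, deduce that $E^\#$ is regular. By Deligne's curve criterion~\cite[Chap.\ II, Th\'eor\`eme~4.1]{DeligneLNM163} it suffices to show $g^*E^\#$ is regular for every morphism $g\colon C'\to\underline{X}^\#$ from a smooth affine curve, equivalently (Deligne~\cite[Chap.\ II]{DeligneLNM163}) that the formal germ of $g^*E^\#$ is regular at every point $y_0$ of a smooth compactification $\overline{C'}$; this is automatic for $y_0\in C'$, so fix $y_0\in\overline{C'}\setminus C'$. Endowing $C'$ with the log structure $g^*\cM_{X^\#}$ gives a hollow, locally constant log curve $\mathcal{C}$ (with constant sheaf $\oM_{\mathcal{C}}=Q$) which is idealized smooth, maps strictly to $X$ through $X^\#$, and for which the functor $\epsast$ attached to the pulled-back splitting sends the pull-back of $\pi^*E$ to $g^*E^\#$. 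Near $y_0$ I would extend this log structure over $y_0$ by forming the ``product'' of the hollow $Q$-structure with the divisor log structure of the point $\{y_0\}$; this yields a good embedding $C\hookrightarrow\overline{C}$ of one-dimensional idealized smooth log schemes together with a strict map $C\to X$ and a chart $Q\oplus\NN\to\cO_{\overline{C},y_0}$ of the shape demanded in the definition of an algebraic map, and the resulting algebraic map $f\colon T\to C\to X$ has the property that $\epsast(f^*E)$ is precisely the formal germ of $g^*E^\#$ at $y_0$, hence regular by hypothesis. Therefore $g^*E^\#$ is regular, so $E^\#$ is regular and $E$ is regular. The genuinely delicate point is this last construction — realizing an arbitrary boundary germ of $g^*E^\#$ as $\epsast(f^*E)$ for an \emph{algebraic} $f$ by compactifying the hollow log curve $\mathcal{C}$ at $y_0$, and verifying the strictness, idealized-smoothness and compatibility conditions that make $f$ algebraic — everything else being routine bookkeeping on top of the classical theory.
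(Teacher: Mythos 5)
Your proposal is correct and follows essentially the same route as the paper: unwind Definition~\ref{def:regular} to reduce everything to the classical connection $\epsast_{\rm univ}(\pi^*E)$ on the smooth scheme $\underline{X}^\#$, invoke Deligne's criterion \cite[Chap.~II, Th\'eor\`eme~4.1]{DeligneLNM163} there, and use the universal property of $X^\#$ to identify germs $\Spec\CCt\to\underline{X}^\#$ with strict maps $\Spec(P\to\CCt)\to X$ equipped with a splitting, so that $\tilde f^*\epsast_{\rm univ}(\pi^*E)\simeq\epsast(f^*E)$. Two minor remarks: in your forward direction the closed point of the extension $\Spec\CCs\to\overline{Y}$ need \emph{not} avoid the boundary (otherwise the criterion would be vacuous), though the pulled-back logarithmic extension is a $t\frac{d}{dt}$-stable lattice either way so the step survives; and for the ``algebraic'' reduction the paper simply passes to a finite extension of $\CC(\!(s)\!)$ trivializing the monodromy of $\oM_X$ and quotes Deligne's statement that algebraic germs suffice, rather than carrying out your (plausible, but more laborious) compactified-log-curve construction.
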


\begin{proof}
Let $E\in \MIC(X/\CC)$. By definition, $E$ is regular if and only if the connection $\epsast_{\rm univ} (\pi^* E)$ on $\underline{X}^\#$ is regular. By \cite[Chap.\ II, Th\'eor\`eme~4.1]{DeligneLNM163}, this holds if and only if for every morphism
\[ 
  g\colon \Spec \CC(\!(s)\!) \ra \underline{X}^\#,
\]
the pull-back $g^*(\epsast_{\rm univ} (\pi^* E))$ is regular, and it suffices to consider such germs which are ``algebraic'' in our sense. By the definition of $X^\#$, such a map corresponds to a morphism $f\colon \Spec \CC(\!(s)\!)\to \underline X$ and a splitting $\eps$ of $f^* \cM_X$. Moreover, we have
\[ 
  g^*(\epsast_{\rm univ} (\pi^* E)) \simeq \epsast(f^* E).
\]
Let $h\colon \Spec \CCt\to \Spec \CC(\!(s)\!)$ be a finite extension such that $h^* f^* \oM_X$ is constant, so that $h^* f^* \oM_X$ admits a global chart. Then $\epsast(f^* E)$ is regular if and only if $h^*(\epsast(f^* E))$ is.
\end{proof}

\begin{prop} \label{prop:curve-germ-has-center}
  Let $X$ be an idealized smooth log scheme over $\CC$ and let 
   \[ 
    f\colon \Spec (P\xrightarrow{\eps} \CCt ) \ra X,
  \]
  be a strict morphism, where $P$ is a sharp fs monoid. If the underlying map of schemes extends to $\Spec \CCs$, then for every $E\in \MIC(X/\CC)$ the object $\epsast (f^* E)$ of $\MIC(\CCt/\CC)$ is regular.
\end{prop}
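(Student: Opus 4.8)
The plan is to reduce to a local model with a chart, trivialize $\Omega^1_{X/\CC}$ by logarithmic forms, and then manufacture an explicit $\CCs$-lattice out of the extension hypothesis. Let $\ov\gamma\colon\Spec\CCs\to\underline X$ be the assumed extension of $\underline f$ and let $x_0\in\underline X$ be the image of the closed point. Every open neighborhood of $x_0$ is stable under generization, hence contains $\ov\gamma(\Spec\CCs)$. The formation of $\epsast(f^*E)$ is unchanged under a strict \'etale base change: a lift $f'$ of $f$ to a strict \'etale neighborhood $X'$ of $x_0$ satisfies $f'^*(E|_{X'})=f^*E$. Such a lift exists: since $\CCs$ is complete local with algebraically closed residue field, $\ov\gamma$ lifts to $X'$, and restricting this lift to $\Spec\CCt$ lifts $\underline f$, after which $f$ itself lifts because the cover is strict. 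We may therefore assume $X$ carries a chart $X\to\mathbf{A}_{Q,K}$ (Definition~\ref{def:idealized-smooth}) for some fs monoid $Q$ and ideal $K\subseteq Q$. Then $\Omega^1_{X/\CC}\cong\cO_X\otimes_\ZZ Q^\gp$ is free with frame $\omega_1,\dots,\omega_n$, $\omega_j=d\log(q_j)$ for a $\ZZ$-basis $q_1,\dots,q_n$ of $Q^\gp$ --- here one uses that $dz_q=z_q\,d\log(z_q)$ vanishes on $\mathbf{A}_{Q,K}$ for $q\in K$, so the ideal does not affect the logarithmic differentials.

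Next I would shrink $X$ to an affine open $U=\Spec A$ containing $x_0$, write $E|_U$ for the module attached to a finitely generated $A$-module $M$ with generators $m_1,\dots,m_r$, and expand $\nabla m_l=\sum_{j=1}^n(\partial_j m_l)\otimes\omega_j$ with $\partial_j m_l\in M$, say $\partial_j m_l=\sum_p b^{(j)}_{pl}m_p$ for some $b^{(j)}_{pl}\in A$. Put $V=f^*E=M\otimes_A\CCt\in\MIC(\CCt/\CC)$ and let $D$ be the operator with $\epsast(f^*\nabla)(v)=D(v)\,d\log t$. The first claim is that $\epsast(f^*\omega_j)=g_j\,d\log t$ for some $g_j\in\CCs$. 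Since $f$ is strict, $\cM_T\cong P\times\CCt^\times$ (with $P=\oM_T$), so $f^\flat(q_j)\in\cM_T^\gp\cong P^\gp\times\CCt^\times$ has the form $(\ov q_j,\beta_j)$ with $\beta_j\in\CCt^\times$; hence $f^*(d\log q_j)=\ov q_j\otimes 1+d\log\beta_j$, and applying $\epsast$, which annihilates the summand $P^\gp\otimes\CCt$, leaves $d\log\beta_j$. Writing $\beta_j=t^{e_j}w_j$ with $e_j\in\ZZ$, $w_j\in\CCs^\times$ gives $d\log\beta_j=(e_j+t\,w_j'/w_j)\,d\log t$ with $e_j+t\,w_j'/w_j\in\CCs$. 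Note no hypothesis on $f$ has been used yet.

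The point where the hypothesis enters: because $\ov\gamma$ extends $\underline f$, the map $\underline f^*\colon A\to\CCt$ factors through $\ov\gamma^*\colon A\to\CCs$, so $\underline f^*(b^{(j)}_{pl})\in\CCs$. Then $\Lambda=\sum_l\CCs\,(m_l\otimes 1)\subseteq V$ is a finitely generated torsion-free $\CCs$-module whose elements span $V$ over $\CCt$, hence a $\CCs$-lattice in $V$. For each $l$,
\[
  D(m_l\otimes 1)=\sum_j g_j\,(\partial_j m_l\otimes 1)=\sum_{j,p} g_j\,\underline f^*(b^{(j)}_{pl})\,(m_p\otimes 1)\in\Lambda ,
\]
and for $a\in\CCs$ the Leibniz rule gives $D\bigl(a\,(m_l\otimes 1)\bigr)=\bigl(t\tfrac{da}{dt}\bigr)(m_l\otimes 1)+a\,D(m_l\otimes 1)\in\Lambda$; hence $D(\Lambda)\subseteq\Lambda$. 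Thus $\epsast(f^*E)=(V,D)$ admits a $D$-stable $\CCs$-lattice, i.e.\ is regular.

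The only step carrying real content is this last observation: the extension hypothesis is exactly what forces the entries $\underline f^*(b^{(j)}_{pl})$ of the connection matrix to lie in $\CCs$, which makes the tautological lattice $\Lambda$ stable; everything else is bookkeeping around the functor $\epsast$, resting on the two facts that $\epsast$ kills the $\oM_T^\gp$-part of $\Omega^1_{T/\CC}$ and that $d\log(\CCt^\times)\subseteq\CCs\,d\log t$. The mild technical nuisance is the passage to a strict \'etale chart while retaining the extendability of $\underline f$, handled above by lifting $\ov\gamma$ rather than $\underline f$ directly.
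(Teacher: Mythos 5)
Your proof is correct, and the mechanism is the same one driving the paper's argument: after passing to a strict \'etale neighbourhood with a chart (which both you and the paper justify the same way, via lifting over the henselian base $\Spec\CCs$), the hypothesis that $\underline{f}$ extends forces the connection coefficients in a logarithmic frame to land in $\CCs$. The packaging differs: the paper never writes down a lattice, but instead extends $f^*E$ to a log connection on $(\Delta,\cM'_\Delta)$ with $\cM'_\Delta=j_*\cM_{\Delta^*}$, uses the chart $P\oplus\NN\to\CCs$ of this pushforward log structure to build an auxiliary splitting functor $\ov{\eps}^\circledast$, and concludes regularity from extendability across the puncture; you instead compute $\epsast(f^*\omega_j)\in\CCs\,d\log t$ directly and exhibit the tautological lattice $\Lambda=\sum\CCs\,(m_l\otimes 1)$, which is exactly the lattice the paper's extension produces implicitly. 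Your version has the merit of isolating where the hypothesis enters (the entries $\underline{f}^*(b^{(j)}_{pl})$ lie in $\CCs$) and of working transparently for non-locally-free $E$; the paper's version is more structural and reuses the $\epsast$ formalism already set up. One cosmetic caveat: your frame $d\log(q_1),\dots,d\log(q_n)$ presumes $Q^\gp$ is free; in characteristic zero $\cO_X\otimes_\ZZ Q^\gp$ only sees $Q^\gp$ modulo torsion, so one should take a basis of $Q^\gp/\mathrm{tors}$ (or choose the chart with $Q^\gp$ free), which changes nothing in the argument.
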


\begin{proof}
Let $\Delta = \Spec \CCs$ and $\Delta^* = \Spec \CCt$, and let $j\colon \Delta^*\hookrightarrow \Delta$ be the inclusion. Let $\bar f \colon \Delta\to \underline{X}$ be an extension of $\underline{f}$. Passing to an \'etale neighborhood of $x = \bar f(0)$ we may assume that $X$ admits a global chart $Q\to \cM_X$. 

Let $\cM_{\Delta^*} = f^* \cM_X$, with the given chart $P\to \CCt$, and let $\cM_\Delta = \bar f^* \cM_X$, with the natural chart $Q\to \CCs$. Finally, let $\cM'_\Delta = j_* \cM_{\Delta^*}$ be the push-forward log structure. Since $\cM_{\Delta^*} = j^* \cM_{\Delta}$, we have the unit map
\[ 
  \cM_\Delta \ra \cM'_\Delta,
\]
and hence maps of log schemes
\[ 
  \bar f\colon (\Delta, \cM'_\Delta) \ra (\Delta, \cM_\Delta) \ra X,
\]
where the second map is strict and the first is an isomorphism on $\Delta^*$. Therefore the object $f^* E$ of $\MIC(\Delta^*, \cM_\Delta)$ extends to an object $\bar f^*(E)$ of $\MIC(\Delta, \cM'_\Delta)$.

The push-forward log structure $\cM'_\Delta = j_* \cM_{\Delta^*}$ admits a chart $\bar\eps\colon P\oplus \NN\to \CCs$ sending $(0, n) \mapsto t^n$ and everything else to $0$. This induces a splitting of the short exact sequence
\[ 
  \xymatrix{ 0\ar[r] & \Omega^1_{\Delta^*/\CC}(\log 0) \ar[r] & \Omega^1_{(\Delta, \cM'_\Delta)} \ar[r] & \cO_{\Delta}\otimes_\ZZ P^\gp\ar[r] & 0,}
\]
and just as before a functor
\[ 
  \ov{\eps}^\circledast \colon \MIC(\Delta, \cM'_\Delta) \ra \MIC(\Delta, j_* \cO_{\Delta^*}^*). 
\]
Moreover, we have
\[ 
  j^*(\ov{\eps}^\circledast(\bar f^* E)) \simeq \epsast (f^* E),
\]
and we conclude that $\epsast (f^* E)$ extends to a log connection on the formal disc. It is therefore regular.
\end{proof}

Propositions~\ref{prop:curve-germ-criterion} and \ref{prop:curve-germ-has-center} together imply:

\begin{cor} \label{cor:regular-on-proper}
  Let $X$ be an idealized smooth log scheme over $\CC$ whose underlying scheme $\underline{X}$ is proper over $\CC$. Then $\MIC_{\rm reg}(X/\CC) = \MIC(X/\CC)$.
\end{cor}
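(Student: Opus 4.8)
The plan is to reduce to the two curve-germ results already established, namely Proposition~\ref{prop:curve-germ-criterion} and Proposition~\ref{prop:curve-germ-has-center}, with the valuative criterion of properness serving as the bridge between them. The key observation to keep in mind is that one cannot argue directly via $\underline{X}^\#$: even when $\underline{X}$ is proper, the scheme $\underline{X}^\#$ (a disjoint union of torus torsors over the log strata) is typically non-proper, so the classical statement that every connection on a smooth proper $\CC$-scheme is regular does not apply to $\epsast_{\rm univ}(\pi^* E)$ on the nose. Properness of $\underline{X}$ is nonetheless enough, because it constrains all formal curve germs mapping to $X$.

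Concretely, fix an object $E$ of $\MIC(X/\CC)$. By Proposition~\ref{prop:curve-germ-criterion}, to prove that $E$ is regular it suffices to show that for every sharp fs monoid $P$ and every strict morphism $f\colon \Spec(P\xto{\eps}\CCt)\to X$, the object $\epsast(f^*E)$ of $\MIC(\CCt/\CC)$ is regular. I would then pass to the underlying morphism of schemes $\underline{f}\colon\Spec\CCt\to\underline{X}$. Since $\CCs$ is a discrete valuation ring with fraction field $\CCt$ and $\underline{X}$ is proper --- hence separated and of finite type --- over $\CC$, the valuative criterion of properness furnishes a (unique) extension $\Spec\CCs\to\underline{X}$ of $\underline{f}$. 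Proposition~\ref{prop:curve-germ-has-center} then applies and yields that $\epsast(f^*E)$ is regular. As $f$ and $E$ were arbitrary, this gives $\MIC_{\rm reg}(X/\CC)=\MIC(X/\CC)$.

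I do not expect a genuine obstacle here: once the curve-germ criterion and the center-existence proposition are in hand, this is essentially a one-line deduction. The only point worth a remark is that Proposition~\ref{prop:curve-germ-criterion} even allows one to restrict attention to \emph{algebraic} germs, but that refinement is not needed --- the valuative criterion disposes of every strict germ simultaneously, whether or not it is algebraic.
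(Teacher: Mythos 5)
Your argument is exactly the paper's: the corollary is stated there as an immediate consequence of Propositions~\ref{prop:curve-germ-criterion} and \ref{prop:curve-germ-has-center}, with the valuative criterion of properness supplying the extension of each germ's underlying map to $\Spec\CCs$, just as you describe. The proposal is correct and matches the intended proof.
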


Finally, we turn to some basic cohomology comparison results in the case $X$ has locally constant log structure. 

\begin{lemma} \label{lem:basic-coh-comparison}
  Let $X$ be a hollow idealized smooth log scheme over $\CC$ and let $E$ be an object of $\MIC_{\rm reg}(X/\CC)$. Then, $H^*_{\rm dR}(X, E) \simeq H^*_{\rm dR}(X_{\rm an}, E_{\rm an})$.
\end{lemma}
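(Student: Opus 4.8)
The plan is to reduce the statement to Deligne's classical Comparison Theorem \cite[Chap.~II, 6.2]{DeligneLNM163} via the Higgs description of Proposition~\ref{prop:Higgs-description}. First I would reduce to the case where $X$ admits a global splitting. The assertion is that the canonical map $\rho\colon H^*_{\rm dR}(X,E)\to H^*_{\rm dR}(X_{\rm an},E_{\rm an})$ is an isomorphism, and this can be checked strict \'etale locally on $X$: both sides are hypercohomology of a complex of sheaves ($E\otimes\Omega^\bullet_{X/\CC}$, resp.\ its analytification), coherent hypercohomology may be computed in the \'etale topology, analytification turns a strict \'etale morphism into a local homeomorphism and commutes with fibre products, so for any strict \'etale cover $\{U_i\to X\}$ the \v{C}ech/descent spectral sequences on the two sides are built from the same combinatorial data and are compatible via $\rho$. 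Choosing the $U_i$ strict \'etale over schemes of the form $\mathbf A_{P,\,P\setminus P^\times}$ (possible by idealized smoothness and hollowness, as in the proof of Lemma~\ref{lem:hollow-smooth-basics}), every term $U_{i_0}\times_X\cdots\times_X U_{i_p}$ of the nerve is again strict \'etale over such an $\mathbf A_{P,K}$; hence it is hollow and idealized smooth, has constant $\oM$, admits a global splitting, and the pullback of $E$ to it is regular by Corollary~\ref{cor:regular-basic}(a) (refining, one may also take it affine). So it suffices to treat $X$ hollow idealized smooth with $\oM_X$ constant, equal to a sharp fs monoid $P$ with $r:=\op{rk}P^\gp<\infty$, carrying a global splitting $\eps$.

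In this situation $\bigwedge^i\mathcal Q_X\cong\cO_X^{\binom ri}$, which vanishes for $i>r$, and Proposition~\ref{prop:Higgs-description}(c) identifies $E\otimes\Omega^\bullet_{X/\CC}$ with the total complex of a bounded double complex whose $i$-th column is the de Rham complex on $\underline X$ of $G_i:=\epsast(E)\otimes\bigwedge^i\mathcal Q_X$, which as an object of $\MIC(\underline X/\CC)$ is simply $\epsast(E)^{\oplus\binom ri}$. The same holds for $X_{\rm an}$, and since $\epsast$, the splitting of $\Omega^1$, and $\mathcal Q_X$ all commute with analytification, the identification is compatible with analytification. Passing to hypercohomology and filtering by columns gives a bounded spectral sequence $E_1^{i,q}=H^q_{\rm dR}(\underline X,G_i)\Rightarrow H^{i+q}_{\rm dR}(X,E)$, compatible with its analytic analogue via $\rho$. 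Now by Lemma~\ref{lem:regular-basic}(b), applied with $f=\op{id}_X$ and the splitting $\eps$, the connection $\epsast(E)$ on the smooth $\CC$-scheme $\underline X$ is regular in Deligne's sense --- this is the only place where regularity of $E$ is used --- so each $G_i$ is regular, and Deligne's Comparison Theorem shows $H^q_{\rm dR}(\underline X,G_i)\to H^q_{\rm dR}(\underline X_{\rm an},(G_i)_{\rm an})$ is an isomorphism for all $i,q$. Comparing the two spectral sequences, $\rho$ is an isomorphism.

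The only genuinely delicate point is the strict \'etale descent in the first step: one has to verify that the algebraic \v{C}ech/descent spectral sequence for the coherent complex $E\otimes\Omega^\bullet_{X/\CC}$ is matched by the analytic one --- which works because analytification sends strict \'etale maps to local homeomorphisms and preserves fibre products, so the algebraic nerve analytifies to the analytic one --- and that the terms of the nerve retain hollowness, idealized smoothness, a global splitting, and regularity of the restricted connection. Granting that, the rest is bookkeeping with Proposition~\ref{prop:Higgs-description}(c) and a direct appeal to Deligne's theorem.
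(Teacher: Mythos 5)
Your proof is correct and follows essentially the same route as the paper: reduce to the case of constant $\oM_X$ with a global splitting, then use the double complex of Proposition~\ref{prop:Higgs-description}(c) to get a map of spectral sequences whose $E_1$-terms are de Rham cohomologies of the regular connections $\epsast(E)\otimes\bigwedge^j\mathcal{Q}_X$ on the smooth scheme $\underline{X}$, and conclude by Deligne's classical Comparison Theorem. The only difference is that you spell out the strict \'etale descent step that the paper leaves implicit, which is a welcome addition.
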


\begin{proof}
We may assume that $\oM_X$ is constant and that $X$ admits a splitting $\eps$. By Proposition~\ref{prop:Higgs-description}, we have a map of spectral sequences
\[ 
  \xymatrix{
    H^i_{\rm dR}(\underline{X}, \epsast E\otimes \bigwedge^j \mathcal{Q}_X) \ar[d]_{\alpha_{ij}} \ar@{}[r]|-{\displaystyle\Rightarrow} & H^{i+j}_{\rm dR}(X, E) \ar[d]^\alpha \\
    H^i_{\rm dR}(\underline{X}_{\rm an}, \epsast E_{\rm an}\otimes \bigwedge^j \mathcal{Q}_X) \ar@{}[r]|-{\displaystyle\Rightarrow} & H^{i+j}_{\rm dR}(X_{\rm an}, E_{\rm an}).
  }
\]
Since $\bigwedge^j \mathcal{Q}_X$ is constant, each $\epsast E\otimes \bigwedge^j \mathcal{Q}_X$ is an object of $\MIC_{\rm reg}(\underline{X}/\CC)$. Therefore by the classical comparison theorem \cite[Chap.\ II, Th\'eor\`eme 6.2]{DeligneLNM163}, the maps $\alpha_{ij}$ are isomorphisms, and hence so is $\alpha$.
\end{proof}

\begin{cor} \label{cor:almost-basic-comparison}
  For $X$ with constant log structure and $E$ regular, we have $H^*_{\rm dR}(X, E) \simeq H^*_{\rm dR}(X_{\rm an}, E_{\rm an})$.
\end{cor}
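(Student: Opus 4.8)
The plan is to reduce to the hollow case (Lemma~\ref{lem:basic-coh-comparison}) by a dévissage along the nilradical. Let $\mathcal{I} \subseteq \cO_X$ be the nilradical. Since $\oM_X$ is constant there is a single log stratum, so by Proposition~\ref{prop:splitting-functors}(b) the log stratification $X^\flat \to X$ is a universally bijective closed immersion; as $X^\flat$ is reduced (locally it is the scheme $\mathbf{A}_{P,\,P\setminus P^\times} \simeq \Spec(Q\to\CC[M])$ of Lemma~\ref{lem:splitting-basics}(d)), it follows that $X^\flat = X_{\mathrm{red}}$, so that $\mathcal{I} = \ker(\cO_X \to \cO_{X^\flat})$ and $\mathcal{I}^N = 0$ for some $N$. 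Equipped with the pulled back log structure, $X^\flat$ is hollow and idealized smooth, and $i\colon X^\flat \hookrightarrow X$ is a strict closed immersion with $i^*\Omega^1_{X/\CC} \isomto \Omega^1_{X^\flat/\CC}$ (checked on the local model), so that $\MIC(X^\flat/\CC)$ is the full subcategory of $\MIC(X/\CC)$ consisting of objects annihilated by $\mathcal{I}$.

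First I would produce a finite filtration of $E$ by subobjects in $\MIC(X/\CC)$. Locally $\mathcal{I}$ is generated by the monomials $\alpha(q)$ with $q$ a nonunit local section of $\cM_X$, and the log Leibniz rule gives $d\alpha(q) = \alpha(q)\,d\log(q)$; hence $d(\mathcal{I}^j) \subseteq \mathcal{I}^j \otimes \Omega^1_{X/\CC}$ for all $j$, and therefore $\nabla(\mathcal{I}^j E) \subseteq (\mathcal{I}^j E)\otimes \Omega^1_{X/\CC}$. Thus $E = \mathcal{I}^0 E \supseteq \mathcal{I}^1 E \supseteq \cdots \supseteq \mathcal{I}^N E = 0$ is a filtration by subobjects, and each graded piece $\op{gr}^j E = \mathcal{I}^j E/\mathcal{I}^{j+1}E$, being annihilated by $\mathcal{I}$, is an object of $\MIC(X^\flat/\CC)$. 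By Proposition~\ref{prop:regularity-properties}, $\mathcal{I}^j E$ and its quotient $\op{gr}^j E$ are regular in $\MIC(X/\CC)$; since $\op{gr}^j E = i^*(\op{gr}^j E)$, Corollary~\ref{cor:regular-basic}(a) (applied to the morphism $i$, using that $X^\flat$ is idealized smooth) shows that $\op{gr}^j E$ is regular in $\MIC(X^\flat/\CC)$.

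Next I would compare cohomology graded piece by graded piece. Since $\op{gr}^j E$ is annihilated by $\mathcal{I}$ and $i^*\Omega^\bullet_{X/\CC} \simeq \Omega^\bullet_{X^\flat/\CC}$, the de Rham complex of $\op{gr}^j E$ on $X$ is the (exact) pushforward along $i$ of the de Rham complex of $\op{gr}^j E$ on $X^\flat$; the same holds after analytification, because $(X^\flat)_{\mathrm{an}} = (X_{\mathrm{an}})_{\mathrm{red}}$ carries the restricted log structure and $i_{\mathrm{an}}$ is again a closed immersion. Hence the analytification map $H^*_{\mathrm{dR}}(X, \op{gr}^j E) \to H^*_{\mathrm{dR}}(X_{\mathrm{an}}, (\op{gr}^j E)_{\mathrm{an}})$ is identified with the corresponding map for $\op{gr}^j E$ on $X^\flat$, which is an isomorphism by Lemma~\ref{lem:basic-coh-comparison}. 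Finally I would descend the filtration: the short exact sequences $0 \to \mathcal{I}^{j+1}E \to \mathcal{I}^j E \to \op{gr}^j E \to 0$ in $\MIC(X/\CC)$ give rise to long exact sequences in $H^*_{\mathrm{dR}}(X, -)$ and in $H^*_{\mathrm{dR}}(X_{\mathrm{an}}, (-)_{\mathrm{an}})$, compatible via analytification (which is exact and commutes with the de Rham functor); starting from $\mathcal{I}^N E = 0$ and inducting downward, the five lemma yields the isomorphism for each $\mathcal{I}^j E$, in particular for $E = \mathcal{I}^0 E$.

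The only points requiring care — and I expect none to be a genuine obstacle — are the $\nabla$-stability of the filtration (which rests on the monomial description of $\mathcal{I}$ and the identity $d\alpha(q)=\alpha(q)\,d\log(q)$) and the verification that the graded comparison isomorphisms are compatible with the connecting homomorphisms; both are routine.
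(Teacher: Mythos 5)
Your proof is correct and follows the same route as the paper: the paper's proof is precisely a d\'evissage along the powers of the nilradical $\mathcal{I}$ of $X_{\rm red}=X^\flat$, using the five lemma on the resulting long exact sequences and applying Lemma~\ref{lem:basic-coh-comparison} to the graded pieces. Your write-up merely makes explicit the points the paper leaves implicit ($\nabla$-stability of $\mathcal{I}^jE$ via $d\alpha(q)=\alpha(q)\,d\log(q)$, regularity of the graded pieces, and $i^*\Omega^1_{X/\CC}\simeq\Omega^1_{X^\flat/\CC}$), and all of these checks are sound.
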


\begin{proof}
Use the five lemma and the extensions $0\to \mathcal{I}E\to E\to E/\mathcal{I}E\to 0$ where $\mathcal{I}$ is the ideal of $X_{\rm red}$ in $X$ and apply Lemma~\ref{lem:basic-coh-comparison} inductively.
\end{proof}

\begin{remark} \label{rmk:D-mod-reg-holo}
The strategy of using $X^\#$ and the functor $\epsast_{\rm univ}(\pi^*(-))$ to study $\MIC(X/\CC)$ applies similarly to categories of coherent log $D$-modules on $X$ \cite{KoppensteinerTalpo,Koppensteiner} (see Remark~\ref{rmks:epsast}.\ref{rmk:D-mod}). For example, if $Z\subseteq X$ is a closed subset, then its log dimension $\op{logdim}(Z)$ introduced in \cite{KoppensteinerTalpo} equals the dimension of $\pi^{-1}(Z)\subseteq X^\#$. Moreover, using Lemma~\ref{lem:xsharp-x-unram} one can show that an object $M$ of $\cat{Mod}_{\rm coh}(\mathcal{D}_X)$ is holonomic if and only if the object $\epsast_{\rm univ}(\pi^*(M))$ of $\cat{Mod}_{\rm coh}(\mathcal{D}_{\underline{X}^\#})$ is holonomic in the classical sense. It seems natural to call a coherent $\mathcal{D}_X$-module $M$ \emph{regular holonomic} if $\epsast_{\rm univ}(\pi^*(M))$ is regular holonomic. That said, we did not investigate this notion beyond the case of $\cO_X$-coherent $\mathcal{D}_X$-modules i.e.\ objects of $\MIC(X/\CC)$.
\end{remark}

\subsection{The Existence Theorem}
\label{ss:existence}

\begin{thm}[Existence theorem] \label{thm:existence}
  Let $X$ be an idealized smooth log scheme over $\CC$. Then the composite functor
  \[ 
    \MIC_{\rm reg}(X/\CC) \hookrightarrow \MIC(X/\CC) \xrightarrow{E\mapsto E_{\rm an}} \MIC(X_{\rm an}/\CC)
  \]
  is an equivalence of categories.
\end{thm}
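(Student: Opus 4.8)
The plan is to establish separately that the analytification functor is fully faithful and that it is essentially surjective, following Deligne's strategy but substituting the canonical extension theorem (Theorem~\ref{thm:canonical-ext}) for his residue calculus, the curve criterion (Propositions~\ref{prop:curve-germ-criterion} and~\ref{prop:curve-germ-has-center}) for the Fuchsian characterization of regularity, and the algebraicity criterion of Proposition~\ref{prop:analytic-sections} to deal with the non-flatness of $\pi$.

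For full faithfulness, one uses $\Hom(E,E')=H^0_{\rm dR}(X,\underline{\rm Hom}(E,E'))$, the fact that $\underline{\rm Hom}$ commutes with analytification, and that $\underline{\rm Hom}(E,E')$ is again regular (Proposition~\ref{prop:regularity-properties}); so it suffices to prove that for every regular $G$ the map $H^0_{\rm dR}(X,G)\to H^0_{\rm dR}(X_{\rm an},G_{\rm an})$ is bijective. Injectivity is immediate from the injectivity of $\Gamma(X,-)\to\Gamma(X_{\rm an},(-)_{\rm an})$ on coherent sheaves. For surjectivity I would take a horizontal analytic section $s$ of $G_{\rm an}$ and show it is algebraic merely as a section of the coherent sheaf $G$ (horizontality is then automatic, since $\nabla$ is algebraic). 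Applying Proposition~\ref{prop:analytic-sections} with $\{Z_i\}$ the log stratification of $X$ reduces this to showing that $s|_{Z_i^{(n)}}$ is algebraic for each infinitesimal thickening of a log stratum; étale-locally $Z_i^{(n)}$ has the form $\mathbf{A}_{P',K'}$ with $\sqrt{K'}=P'\setminus(P')^\times$, hence has constant log structure, $G|_{Z_i^{(n)}}$ is regular (Corollary~\ref{cor:regular-basic}(a)), and Corollary~\ref{cor:almost-basic-comparison} — together with the fact that algebraicity of a section of a coherent sheaf may be checked Zariski-locally — shows that $s|_{Z_i^{(n)}}$ lies in the image of $H^0_{\rm dR}(Z_i^{(n)},G|_{Z_i^{(n)}})$.

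For essential surjectivity, Theorem~\ref{thm:good-comp-exists} furnishes a strict \'etale cover $\{X_i\to X\}$ together with good compactifications $j_i\colon X_i\hookrightarrow\ov{X}_i$. Fix $F\in\MIC(X_{\rm an}/\CC)$ and a section $\tau$ of $\CC\to\CC/\ZZ$ with $\tau(0)=0$ and image in $\{-1<\mathrm{Re}(z)\le 0\}$. On each $(X_i)_{\rm an}$, Theorem~\ref{thm:canonical-ext}(a) produces the $\tau$-adapted extension $\ov{F}_i$ of $F|_{(X_i)_{\rm an}}$ over $(\ov{X}_i)_{\rm an}$; since $\ov{X}_i$ is proper, GAGA (for coherent modules, hence for connections on $\ov{X}_i$) yields $\ov{E}_i\in\MIC(\ov{X}_i/\CC)$ with $(\ov{E}_i)_{\rm an}\cong\ov{F}_i$, and $E_i:=j_i^*\ov{E}_i$ satisfies $(E_i)_{\rm an}\cong F|_{(X_i)_{\rm an}}$. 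Each $E_i$ is regular: for a strict morphism $g\colon\Spec(P\xrightarrow{\eps}\CCt)\to X_i$ with $P$ sharp fs, the underlying map of $j_i\circ g$ extends over $\Spec\CCs$ by the valuative criterion of properness for $\underline{\ov{X}}_i$, so $\epsast(g^*E_i)=\epsast((j_i\circ g)^*\ov{E}_i)$ is regular by Proposition~\ref{prop:curve-germ-has-center}, whence $E_i$ is regular by Proposition~\ref{prop:curve-germ-criterion}. Finally I would glue: the isomorphisms $(E_i)_{\rm an}\cong F|_{(X_i)_{\rm an}}$ give horizontal analytic isomorphisms between the restrictions of $E_i$ and $E_j$ to $X_i\times_X X_j$ satisfying the cocycle condition; by the full faithfulness proved above (applied over the idealized smooth $X_i\times_X X_j$) these are algebraic, and strict \'etale descent for coherent sheaves with connection produces $E\in\MIC(X/\CC)$ with $E_{\rm an}\cong F$, which is regular by Corollary~\ref{cor:regular-basic}(b) since $\coprod X_i\to X$ is log dominant.

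Given that the canonical extension theorem is already in hand, the main obstacle is the full faithfulness step: because $\pi\colon X^\#\to X$ and $\coprod Z_i\to X$ are not flat and objects of $\MIC(X/\CC)$ need not be locally free, one cannot reduce $\Hom$-computations to a single compactification as in Deligne's argument, and it is exactly the passage through all infinitesimal thickenings of the log strata — via Proposition~\ref{prop:analytic-sections} and its Artin--Rees input — that circumvents this. A secondary difficulty is that good compactifications are available only strict \'etale locally, which is what forces the descent argument in the essential surjectivity step.
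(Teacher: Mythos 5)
Your proposal is correct and follows essentially the same route as the paper: full faithfulness via $\underline{\Hom}$, Proposition~\ref{prop:analytic-sections} applied to the thickened log strata, and Corollary~\ref{cor:almost-basic-comparison}; essential surjectivity via good compactifications, the $\tau$-adapted canonical extension, GAGA, and regularity on proper log schemes. The only difference is presentational — you unroll Corollary~\ref{cor:regular-on-proper} into its curve-criterion ingredients and spell out the strict étale descent/gluing that the paper compresses into the phrase ``the assertion is strict étale local on $X$.''
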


\begin{proof}
We first show that the functor is fully faithful. For objects $E$ and $F$ of $\MIC_{\rm reg}(X/\CC)$ consider the object $H = \underline{\Hom}(E, F)$. It is again regular by Proposition~\ref{prop:regularity-properties}, and we have $\Hom(E, F) = H^0_{\rm dR}(X, H)$ and $\Hom(E_{\rm an}, F_{\rm an}) = H^0_{\rm dR}(X_{\rm an}, H_{\rm an})$. So we need to show that for a regular object $H$ we have $H^0_{\rm dR}(X, H)\simeq H^0_{\rm dR}(X_{\rm an}, H_{\rm an})$. To this end, Proposition~\ref{prop:analytic-sections} applied to the log stratification of $X$ allows us to reduce to the case when the log structure on $X$ is locally constant. In this case we apply Corollary~\ref{cor:almost-basic-comparison} to conclude.

It remains to show essential surjectivity. By Corollary~\ref{cor:regular-basic}(b), the assertion is strict \'etale local on $X$. Therefore by Theorem~\ref{thm:good-comp-exists} we may assume that there exists a good compactification $X\hookrightarrow \ov{X}$. Consider the commutative square of categories and functors
\[ 
  \xymatrix{
    \MIC_{\rm reg}(\ov{X}/\CC) \ar[r] \ar[d] & \MIC(\ov{X}_{\rm an}/\CC) \ar[d] \\
    \MIC_{\rm reg}(X/\CC) \ar[r] & \MIC(X_{\rm an}/\CC)
  }
\]
The top functor is an equivalence by Lemma~\ref{cor:regular-on-proper} and GAGA. The right functor is essentially surjective by Proposition~\ref{thm:canonical-ext}. So the bottom functor is essentially surjective.
\end{proof}

As a byproduct of the proof, we obtain another familiar-looking characterization of regular connections.

\begin{cor}[Regularity $=$ log extendability] \label{cor:log-extendable}
  Let $X$ be an idealized smooth log scheme over $\CC$ and let $E$ be an object of $\MIC(X/\CC)$. The following are equivalent:
  \begin{enumerate}[(a)]
    \item $E$ is regular,
    \item for every strict \'etale $f\colon U\to X$ and every good compactification $j\colon U\hookrightarrow \ov{U}$, there exists an object $\ov{E}$ of $\MIC(\ov{U}/\CC)$ such that $j^*\ov{E} \simeq f^* E$,
    \item there exists a strict \'etale cover $\{f_i\colon U_i\to X\}_{i\in I}$, good compactifications $j_i\colon U_i \to \ov{U}_i$, and objects $\ov{E}_i$ of $\MIC(\ov{U}_i/\CC)$ such that $j^*_i\ov{E}_i \simeq f_i^* E$ for every $i\in I$.
  \end{enumerate}
\end{cor}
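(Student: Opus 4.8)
The plan is to prove the cycle of implications $(a)\Rightarrow(b)\Rightarrow(c)\Rightarrow(a)$. Only the first implication is substantive; it is essentially a repackaging of the essential-surjectivity argument already carried out in the proof of Theorem~\ref{thm:existence}, and the remaining two are formal consequences of the properties of regularity established so far.

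For $(a)\Rightarrow(b)$, I would argue as follows. Assume $E$ is regular, let $f\colon U\to X$ be strict \'etale, and let $j\colon U\hookrightarrow\ov U$ be a good compactification. By Corollary~\ref{cor:regular-basic}(a) the pullback $f^*E$ is a regular object of $\MIC(U/\CC)$. Analytifying and applying Theorem~\ref{thm:canonical-ext}(a) to the good embedding $j_{\mathrm{an}}\colon U_{\mathrm{an}}\hookrightarrow\ov U_{\mathrm{an}}$, I obtain an object $G$ of $\MIC(\ov U_{\mathrm{an}}/\CC)$ with $j_{\mathrm{an}}^*G\simeq (f^*E)_{\mathrm{an}}$. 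Since $\ov U$ is proper, Corollary~\ref{cor:regular-on-proper} gives $\MIC_{\mathrm{reg}}(\ov U/\CC)=\MIC(\ov U/\CC)$, so applying the Existence Theorem~\ref{thm:existence} to $\ov U$ (i.e.\ using GAGA for log schemes) yields an object $\ov E$ of $\MIC(\ov U/\CC)$ with $\ov E_{\mathrm{an}}\simeq G$. Now $j^*\ov E$ is again regular by Corollary~\ref{cor:regular-basic}(a), as $j$ is an open immersion of idealized smooth log schemes, and $(j^*\ov E)_{\mathrm{an}}\simeq j_{\mathrm{an}}^*G\simeq (f^*E)_{\mathrm{an}}$. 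Thus $j^*\ov E$ and $f^*E$ both lie in $\MIC_{\mathrm{reg}}(U/\CC)$ and have isomorphic analytifications, so by full faithfulness in the Existence Theorem~\ref{thm:existence} applied to $U$ they are isomorphic, which is precisely $(b)$.

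The implication $(b)\Rightarrow(c)$ is immediate: by Theorem~\ref{thm:good-comp-exists} there is a strict \'etale cover $\{f_i\colon U_i\to X\}_{i\in I}$ with good compactifications $j_i\colon U_i\hookrightarrow\ov U_i$, and $(b)$ supplies the required objects $\ov E_i$. For $(c)\Rightarrow(a)$, given the data of $(c)$, each $\ov U_i$ is proper, so $\ov E_i$ is regular by Corollary~\ref{cor:regular-on-proper}, and hence $f_i^*E\simeq j_i^*\ov E_i$ is regular by Corollary~\ref{cor:regular-basic}(a). Since regularity is a local question on $X$, I may assume $X$ is quasi-compact and the cover finite; then $f=\coprod_i f_i\colon \coprod_i U_i\to X$ is a quasi-compact strict \'etale surjection, hence log dominant (Definition~\ref{def:log-dominant})---it is log injective because it is strict, and its restriction to each log stratum of $X$ is \'etale surjective, hence dominant. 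As $f^*E\simeq \coprod_i f_i^*E$ is regular, Corollary~\ref{cor:regular-basic}(b) shows that $E$ is regular.

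The main obstacle is entirely concentrated in $(a)\Rightarrow(b)$, and even there the real work has already been done elsewhere: the one genuinely analytic ingredient is Theorem~\ref{thm:canonical-ext} on the existence of $\tau$-adapted (``canonical'') extensions across a good embedding, and the descent of such an analytic extension to an algebraic object on the proper compactification $\ov U$ is GAGA, packaged via Corollary~\ref{cor:regular-on-proper} and the Existence Theorem on $\ov U$. The only point requiring care is that $j^*\ov E\simeq f^*E$ must be deduced from an isomorphism of analytifications --- one cannot match the two algebraic objects directly, but full faithfulness of analytification on $\MIC_{\mathrm{reg}}(U/\CC)$ does the job.
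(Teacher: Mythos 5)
Your argument is correct and is essentially the paper's own: the corollary is stated there as a byproduct of the proof of the Existence Theorem, whose essential-surjectivity step is exactly your chain (Theorem~\ref{thm:canonical-ext} for the analytic extension, GAGA on the proper $\ov U$ via Corollary~\ref{cor:regular-on-proper}, and full faithfulness to identify $j^*\ov E$ with $f^*E$), while your $(c)\Rightarrow(a)$ via Corollaries~\ref{cor:regular-on-proper} and \ref{cor:regular-basic} is the intended converse direction.
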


\begin{remark}
Is there a characterization of regular logarithmic connections in terms of growth conditions akin to \cite[Chap.\ II, Th\'eor\`eme~4.1(iii)]{DeligneLNM163}?
\end{remark}

\subsection{The Comparison Theorem}
\label{ss:comparison}

\begin{thm}[Comparison theorem] \label{thm:comparison}
  Let $X$ be an idealized smooth log scheme over $\CC$ and let $E$ be an object of $\MIC_{\rm reg}(X/\CC)$. Then $H^*_{\rm dR}(X, E)\simeq H^*_{\rm dR}(X_{\rm an}, E_{\rm an})$.
\end{thm}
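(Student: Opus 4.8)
The comparison map comes from a morphism of complexes of sheaves on $X$: writing $u\colon X_{\rm an}\to X$ for the canonical map, there is a natural map
\[
  c_E\colon E\otimes\Omega^\bullet_{X/\CC}\ra Ru_*\big(E_{\rm an}\otimes\Omega^\bullet_{X_{\rm an}/\CC}\big),
\]
and since $R\Gamma(X,Ru_*(-))=R\Gamma(X_{\rm an},-)$ it suffices to show that $c_E$ is a quasi-isomorphism. Being a quasi-isomorphism is strict \'etale local on $X$, and the formation of $c_E$ is compatible with strict \'etale base change (\'etale pullback commutes with analytification and with $Ru_*$); so by Theorem~\ref{thm:good-comp-exists} we may assume that $X$ admits a good compactification $j\colon X\hookrightarrow\ov{X}$. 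As $j$ is an affine morphism (locally the complement of $r$ hypersurfaces), $Rj_*=j_*$ on complexes with quasi-coherent terms and $j^*Rj_*\simeq\mathrm{id}$, so it is enough to prove that $Rj_*(c_E)$ is a quasi-isomorphism on $\ov{X}$.

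Fix a section $\tau\colon\CC/\ZZ\to\CC$ with $\tau(0)=0$. By Theorem~\ref{thm:canonical-ext}(a) applied to the analytic good embedding $j_{\rm an}$, the object $E_{\rm an}$ extends to a $\tau$-adapted $\ov{F}\in\MIC(\ov{X}_{\rm an}/\CC)$; since $\underline{\ov{X}}$ is proper, GAGA produces $\ov{E}\in\MIC(\ov{X}/\CC)$ with $\ov{E}_{\rm an}\simeq\ov{F}$. By Corollary~\ref{cor:regular-on-proper} the object $\ov{E}$ is regular, hence so is $j^*\ov{E}$ (Corollary~\ref{cor:regular-basic}); and $(j^*\ov{E})_{\rm an}\simeq j^*_{\rm an}\ov{F}\simeq E_{\rm an}$, so $j^*\ov{E}\simeq E$ by the full faithfulness part of the Existence Theorem~\ref{thm:existence}. (This is the only place where the regularity of $E$ is used.) Since $\ov{E}$ is $\tau$-adapted it injects into $j_*(j^*\ov{E})$, and consequently the boundary equations act injectively on $\ov{E}$.

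The technical heart is an algebraic analogue of Theorem~\ref{thm:canonical-ext}(b): the natural map $\ov{E}\otimes\Omega^\bullet_{\ov{X}/\CC}\to j_*\big(E\otimes\Omega^\bullet_{X/\CC}\big)$ is a quasi-isomorphism of complexes of sheaves on $\ov{X}$. This is local on $\ov{X}$; using the local form of a good embedding we reduce to $\ov{X}=\mathbf{A}_{P,K}\times\mathbf{A}^r$, $X=\mathbf{A}_{P,K}\times\mathbf{G}_m^r$, and, inverting the boundary coordinates $t_1,\dots,t_r$ one at a time, to $r=1$. The cone of the inclusion is then the totalization of $\big[(\ov{E}[t^{-1}]/\ov{E})\otimes\Omega^\bullet_{\mathbf{A}_{P,K}/\CC}\xrightarrow{\,t\frac{d}{dt}\,}(\ov{E}[t^{-1}]/\ov{E})\otimes\Omega^\bullet_{\mathbf{A}_{P,K}/\CC}\big]$, where the operator $t\frac{d}{dt}$ (acting through $\nabla$) preserves the pole‑order filtration and acts on the $n$-th graded piece $\simeq\ov{E}/t\ov{E}$ as $\rho-n$, with $\rho$ the residue of $\ov{E}$ along $t=0$. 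As $\ov{E}$ is $\tau$-adapted, the eigenvalues of $\rho$ at every point lie in $-\mathrm{im}(\tau)$, which meets $\ZZ_{\geq1}$ trivially since $\tau(0)=0$; hence $\rho-n$ is surjective, and a surjective endomorphism of a coherent sheaf is an isomorphism, so $\rho-n$ is invertible for every $n\geq1$. Because the boundary coordinate acts injectively on $\ov{E}$, it follows that $t\frac{d}{dt}$ is bijective on $\ov{E}[t^{-1}]/\ov{E}$, and — commuting with the de Rham differential of $\mathbf{A}_{P,K}$ by integrability — it is an isomorphism of complexes, so the cone is acyclic. I expect this Koszul‑type computation, with the non‑locally‑free $\ov{E}$ and the monomial ideal $K$ to handle, to be the step requiring the most care.

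To conclude we assemble the pieces. The same local computation carried out analytically (equivalently, the sheafified form of Theorem~\ref{thm:canonical-ext}(b), whose proof is local) gives $Rj_{\rm an,*}\big(E_{\rm an}\otimes\Omega^\bullet_{X_{\rm an}/\CC}\big)\simeq\ov{F}\otimes\Omega^\bullet_{\ov{X}_{\rm an}/\CC}$; combined with $Rj_*Ru_*=Ru_{\ov{X},*}Rj_{\rm an,*}$ (where $u_{\ov{X}}\colon\ov{X}_{\rm an}\to\ov{X}$) this yields $Rj_*Ru_*\big(E_{\rm an}\otimes\Omega^\bullet_{X_{\rm an}/\CC}\big)\simeq Ru_{\ov{X},*}\big(\ov{F}\otimes\Omega^\bullet_{\ov{X}_{\rm an}/\CC}\big)$. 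Since $\underline{\ov{X}}$ is proper, GAGA makes the comparison map $\ov{E}\otimes\Omega^\bullet_{\ov{X}/\CC}\to Ru_{\ov{X},*}\big(\ov{F}\otimes\Omega^\bullet_{\ov{X}_{\rm an}/\CC}\big)$ a quasi-isomorphism. All these identifications are natural and, because $j\circ u=u_{\ov{X}}\circ j_{\rm an}$, fit into a commutative square identifying $Rj_*(c_E)$ with the composite of the two GAGA/analytic quasi-isomorphisms and the inverse of the algebraic one from the third paragraph; hence $Rj_*(c_E)$, and therefore $c_E$, is a quasi-isomorphism.
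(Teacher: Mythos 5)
Your argument is correct in outline, but it takes a genuinely different route from the paper's. Both proofs begin the same way (reduce by strict \'etale descent to the case of a good compactification $j\colon X\hookrightarrow\ov{X}$ and extend $E$ to $\ov{E}$), but then diverge. The paper follows Deligne's Chap.~II, \S 6.6 scheme: since $j$ is affine and $j_{\rm an}$ is Stein, both sides become hypercohomology on $\ov{X}_{\rm an}$ of, respectively, the complex of \emph{meromorphic} sections $j_*j_m^*(\ov{E}\otimes\Omega^\bullet)$ and of all sections $j_*j^*(\ov{E}\otimes\Omega^\bullet)$, and the theorem is reduced to the meromorphic-versus-all comparison (Theorem~\ref{thm:3410variant}, a variant of Ogus). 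You instead compare each side to the de Rham cohomology of the canonical extension on $\ov{X}$ and apply GAGA on the proper $\ov{X}$; the new ingredient you supply is the \emph{algebraic} analogue of Theorem~\ref{thm:canonical-ext}(b), proved by the classical residue/Koszul computation on $\ov{E}[t^{-1}]/\ov{E}$. That computation is sound: $\tau$-adaptedness with $\tau(0)=0$ gives injectivity of the boundary coordinates on $\ov{E}$ and invertibility of $\rho-n$ on the coherent sheaf $\ov{E}/t\ov{E}$ for $n\geq 1$ (fiberwise eigenvalue condition, Nakayama, and surjective endomorphisms of coherent sheaves being isomorphisms), and it handles the non-locally-free case cleanly. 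Your approach buys a more hands-on, Deligne-style algebraic boundary computation and avoids the meromorphic-sections functor entirely; the paper's approach avoids any new local computation on the algebraic side at the cost of importing Deligne's II.6.6 argument and Theorem~\ref{thm:3410variant}.

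One point to tighten: the parenthetical claim that ``the same local computation carried out analytically'' gives $Rj_{{\rm an},*}(E_{\rm an}\otimes\Omega^\bullet)\simeq\ov{F}\otimes\Omega^\bullet_{\ov{X}_{\rm an}}$ is not quite right as stated. Analytically, $j_{{\rm an},*}j_{\rm an}^*\ov{F}$ contains sections with essential singularities along the boundary, so the Laurent-series filtration argument only reaches the meromorphic subcomplex; the passage from meromorphic to all analytic sections is exactly where the regularity/moderate-growth input lives (this is the content of Theorems~\ref{thm:349variant} and~\ref{thm:3410variant}). Your fallback citation of Theorem~\ref{thm:canonical-ext}(b) (whose proof, via Ogus' Riemann--Hilbert correspondence, is indeed local and sheafifies) is the correct justification for the analytic side, so the proof stands; just do not present the two justifications as equivalent.
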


The proof will rely on slight variants of two results of Ogus, adapted to our notion of a good compactification.

\begin{thm}[{Variant of \cite[Theorem 3.4.9]{OgusRH}}] \label{thm:349variant}
  Let $j\colon X\hookrightarrow \ov{X}$ be a good embedding of idealized smooth log complex analytic spaces. Let $E$ be an object of $\MIC(\ov{X}/\CC)$, and suppose that the following two conditions are satisfied
  \begin{enumerate}
    \item For every $x\in X$, the set $R_x$ of exponents at infinity (\S\ref{ss:can-ext}) of $E$ at $x$ satisfies $R_x\cap \mathcal{F}^{\rm gp}_x \subseteq \mathcal{F}_x$. 
    \item $E\to j_* j^* E$ is injective.
  \end{enumerate}
  Then $H^*_{\rm dR}(\ov{X}, E)\simeq H^*_{\rm dR}(X, j^* E)$.
\end{thm}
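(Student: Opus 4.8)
The assertion is of local nature on $\ov X$. Since $j$ is an open immersion of log schemes, $\Omega^1_{\ov X/\CC}$ restricts to $\Omega^1_{X/\CC}$ on $X$, and the comparison map $j^\ast$ is obtained by applying $R\Gamma(\ov X,-)$ to the adjunction morphism of complexes of sheaves on $\ov X$
\[
  E\otimes\Omega^\bullet_{\ov X/\CC}\ra Rj_\ast\bigl(j^\ast E\otimes\Omega^\bullet_{X/\CC}\bigr),
\]
where $j$ now denotes the open immersion on underlying topological spaces. It suffices to prove this is a quasi-isomorphism, which may be checked on stalks; at a point of $X$ it is trivially an isomorphism, so fix $x\in\ov X\setminus X$. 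Using the local form of a good embedding (Definition~\ref{def:good-embedding}) we may assume that $x$ is the origin and that a neighbourhood basis of $x$ in $(\ov X,X)$ is given by the pairs $(U_\delta,U'_\delta)$ of Lemma~\ref{lem:local-RH}, with $Q=P\times\NN^r$, $Q'=P\times\ZZ^r$, $P$ sharp fs, $K\subseteq P$ an ideal, $\delta\to 0$, and $\mathcal F_x\simeq\NN^r$.

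Next I would feed this into Ogus' correspondence. By Theorem~\ref{thm:ogus-RH}, $E\otimes\Omega^\bullet_{\ov X/\CC}\simeq R\tau_\ast\mathcal V(E)_0$ with $\tau\colon\ov X_{\rm log}\to\ov X$ the Kato--Nakayama projection, and by functoriality $j^\ast E\otimes\Omega^\bullet_{X/\CC}\simeq R\tau_\ast\bigl(j_{\rm log}^\ast\mathcal V(E)\bigr)_0$ on $X$, where $j_{\rm log}\colon X_{\rm log}\hookrightarrow\ov X_{\rm log}$ is the induced open immersion. Passing to stalks at $x$ and writing $(U_\delta)_{\rm log}=\tau^{-1}(U_\delta)$, $(U'_\delta)_{\rm log}=\tau^{-1}(U'_\delta)$, the morphism becomes, in the colimit over $\delta\to 0$, the restriction map $R\Gamma\bigl((U_\delta)_{\rm log},\mathcal V(E)_0\bigr)\to R\Gamma\bigl((U'_\delta)_{\rm log},\mathcal V(E)_0\bigr)$, which we must show is a quasi-isomorphism. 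By the proof of Lemma~\ref{lem:local-RH} the sheaf $\mathcal V(E)$ is determined by the object $V\in L(-Q,-K)$ to which it corresponds --- i.e.\ (Example~\ref{ex:L-of-AP}) a finitely generated $Q^\gp\otimes\CC$-graded $\CC[-Q]/(-K)$-module with a homogeneous $\pi_1(Q)$-action satisfying the eigenvalue condition --- and $j^\ast E$ corresponds to $V'=V\otimes_{\CC[-Q]}\CC[-Q']$; hypothesis (1), $R_x\cap\mathcal F_x^\gp\subseteq\mathcal F_x$, translates into a restriction on the graded degrees and $\pi_1$-eigenvalues of $V$ in the $\mathcal F_x^\gp$-directions, and hypothesis (2) into injectivity of $V\to V'$. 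We are thereby reduced to a combinatorial assertion comparing the explicit Koszul-type complexes that compute the two displayed $R\Gamma$'s for $V$ and for $V'$.

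This combinatorial assertion is the crux, and I would prove it by a d\'evissage modelled on the elementary computation of $H^\ast_{\rm dR}$ for the log connection $(\cO,d+\lambda\,d\log x)$ on $(\mathbf A^1,\{0\})$ versus its restriction to $\mathbf G_m$, and on Deligne's \cite[Chap.~II, 3.14]{DeligneLNM163}. Passing from $V$ to $V'$ inverts the $r$ monomials spanning $\mathcal F_x$; the kernel and cokernel of $V\to V'$ are assembled from subquotients supported in the graded degrees lying in $\mathcal F_x^\gp$, and on each such subquotient the de Rham differential acts through the residue endomorphisms $\rho_i$ shifted by integers, which are invertible precisely because hypothesis (1) excludes from those $\rho_i$ the corresponding positive-integer eigenvalues (the classical condition that residues avoid $\ZZ_{>0}$). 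Hence each subquotient has vanishing cohomology; hypothesis (2) makes $V\to V'$ injective so the d\'evissage begins; and the comparison map is a quasi-isomorphism. When one moreover imposes $\tau(0)=0$ and $\tau$-adaptedness this recovers Theorem~\ref{thm:canonical-ext}(b), of which the present statement is a strengthening --- a structurally cleaner, though not shorter, variant would instead compare $E$ directly with the canonical $\tau$-adapted extension $\ov{E}^{\,\rm can}$ of $j^\ast E$ from Theorem~\ref{thm:canonical-ext}(a), both being sub-objects of $j_\ast j^\ast E$ by hypothesis (2) and Definition~\ref{def:tau-adapted}, and reduce to the vanishing of $H^\ast_{\rm dR}(\ov X,-)$ on the finite-length objects $E/(E\cap\ov{E}^{\,\rm can})$ and $\ov{E}^{\,\rm can}/(E\cap\ov{E}^{\,\rm can})$.

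I expect the main obstacle to be exactly this last step: carrying out the vanishing computation in the local model while simultaneously tracking the $\bLambda_X$-grading, the $\pi_1$-action, and the ideal $K$ --- the latter making the modules involved neither free nor reduced in general --- and organising the d\'evissage so that every interpolating torsion object genuinely has trivial log de Rham cohomology.
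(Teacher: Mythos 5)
Your reduction (localize at a boundary point, pass through Ogus' correspondence via Theorem~\ref{thm:ogus-RH} and Lemma~\ref{lem:local-RH}, and land on the pair $V$, $V'=V\otimes_{\CC[Q]}\CC[Q']$ of graded modules) is sound and coincides with the paper's first step. But the step you defer as ``the crux'' is precisely where your proposed mechanism fails. After this reduction, the two objects to be compared are not Koszul complexes built on all of $V$ and $V'$: they are the group-cohomology (Koszul) complexes of $\pi_1(Q)\cong\ZZ(1)^N$ with coefficients in the \emph{degree-zero} graded pieces $V_0$ and $V'_0$ (this is exactly the form in which the paper states the needed assertion, a variant of \cite[Corollary~1.4.6]{OgusRH}). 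By the eigenvalue axiom in Definition~\ref{def:ogus-Lcoh}, every $\gamma\in\pi_1$ acts on $V'_0$, hence on any subquotient of $V'_0/V_0$, with unique eigenvalue $\exp\langle 0,\gamma\rangle=1$, i.e.\ unipotently; so the differentials of the relevant Koszul complexes are nilpotent there, never invertible, and vanishing of cohomology forces the subquotient itself to vanish. Thus a d\'evissage through ``residues shifted by positive integers are invertible'' cannot produce the required acyclicity in this formulation (and the claim that $\ker$ and $\op{coker}$ of $V\to V'$ are supported in degrees in $\mathcal F_x^{\gp}$ is false in general). What is actually true, and what the paper proves, is the stronger statement $V_0=V'_0$: injectivity is hypothesis (2), and surjectivity is an explicit monomial argument --- writing $V'_0=\varinjlim_n V_{nf}$ for a generator $f$ of the residual face and using that hypothesis (1) translates into $\bigoplus_{\lambda\in F^\gp}V_\lambda$ being generated in degrees lying in $-F$. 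Your proposal never supplies this surjectivity argument, and nothing in the announced strategy substitutes for it.

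The residue-shift d\'evissage you describe is the classical lattice-versus-meromorphic-extension comparison (Deligne, Chap.~II, 3.13--3.14); transplanted here it compares $E\otimes\Omega^\bullet_{\ov X/\CC}$ with $j_*j_m^*(j^*E\otimes\Omega^\bullet_{X/\CC})$, and to conclude you would still need the meromorphic-versus-$Rj_*$ comparison --- which in this paper is Theorem~\ref{thm:3410variant} and is \emph{deduced from} the theorem you are proving, so as organized here that route is circular unless you prove an independent variant of \cite[Theorem~3.4.10]{OgusRH}. The alternative you sketch via the canonical extension has analogous problems: $E/(E\cap\ov{E}^{\,\mathrm{can}})$ is not of finite length (it is supported on the whole boundary), Theorem~\ref{thm:canonical-ext}(b) only covers the $\tau$-adapted extension, and the vanishing of $H^*_{\rm dR}(\ov X,-)$ on those quotients is essentially the original problem again. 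In short: correct setup, but the key combinatorial step is missing, and the mechanism proposed for it does not apply to the complexes that your own reduction produces.
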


\begin{proof}
As in \cite{OgusRH}, we may reduce to an analogous statement about monoids. Let $P$ be a sharp fs monoid and let $P\to P'$ be a localization with respect to a face $F\subseteq P$. Let $V$ be an object of $L(P)$ (Example~\ref{ex:L-of-AP}) and let $V' = V\otimes_{\CC[P]} \CC[P']$. Suppose that (1) $V\to V'$ is injective (i.e.\ $V$ is $F$-torsion free) and that (2) the $F^\gp$-graded $\CC[F]$-submodule $\tilde V = \bigoplus_{\lambda\in F^\gp} V_\lambda$ is generated by elements lying in gradings in $-F\subseteq F^\gp$. The statement we need (itself a variant of \cite[Corollary~1.4.6]{OgusRH}) is that the map on group cohomology
\[
  H^*(\pi_1(P), V_0) \ra H^*(\pi_1(P), V'_0)
\]
is an isomorphism. As in the proof of \cite[Corollary~1.4.6]{OgusRH}, it suffices to observe that in fact $V_0=V'_0$ (variant of \cite[Corollary~1.1.3]{OgusRH}). By assumption (1), the map $V_0\to V'_0$ is injective, so we need to prove surjectivity. Let $f$ be a generator of $F$ as a face, so that for every $x\in F$ there exists an $y\in F$ and an integer $n\geq 0$ such that $x+y=nf$. Then $V' = V[1/f]$ and writing the localization as a colimit we have
\[ 
  V'_0 = \varinjlim_{n}\, V_{nf}.
\]
Let $v'\in V'_0$, and represent it by an element $v'\in V_{nf}$ for some $n\geq 0$. By assumption (2) there exist homogeneous generators  $v_1, \ldots, v_s$ of $\tilde{V}$ with $v_i\in V_{-f_i}$ for some $f_i\in F$. Thus, we can write $v' = \sum g_i v_i$ with $g_i\in \CC[F]_{nf+f_i}$. We may write $g_i = \alpha_i f^n f_i$ where $\alpha_i\in\CC$, and set $v = \sum \alpha_i f_i v_i \in V_0$. Then $v' = f^n v$, and so $v'$ and $v$ have the same image in $V'_0$.
\end{proof}

\begin{thm}[{Variant of \cite[Theorem 3.4.10]{OgusRH}}] \label{thm:3410variant}
  Let $j\colon X\hookrightarrow \ov{X}$ be a good embedding of idealized smooth log schemes over $\CC$ and let $E$ be an object of $\MIC(\ov{X}_{\rm an}/\CC)$. Denote by $j_* j_m^*$ the functor of meromorphic sections as in \cite[Chap.\ II, 3.12]{DeligneLNM163}. Then the inclusion
  \[ 
    j_* j_m^* (E\otimes\Omega^\bullet_{\ov{X}/\CC}) \ra j_* j^* (E\otimes\Omega^\bullet_{\ov{X}/\CC})
  \] 
  is a quasi-isomorphism.
\end{thm}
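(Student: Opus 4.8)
\textit{Sketch.} The plan is to follow the pattern of the proof of Theorem~\ref{thm:349variant}: the assertion is local on $\ov{X}$, and I would reduce it, via Ogus' local description, to a statement about graded modules over monoid algebras, which then follows from the same monoid computation used for Theorem~\ref{thm:349variant} together with Theorem~\ref{thm:canonical-ext}. First I would make three harmless reductions. Let $D=\ov{X}\setminus X$ and let $T\subseteq E$ be the subsheaf of sections supported on $D$; it is a sub-object of $E$ in $\MIC(\ov{X}/\CC)$ (the torsion subsheaf along $D$ is preserved by the log connection, as one checks on the operators $\nabla_{t_i\partial_{t_i}}$), and it satisfies $j^*T=0$ and $T(*D)=0$, so replacing $E$ by $E/T$ changes neither $j_*j_m^*(E\otimes\Omega^\bullet_{\ov{X}/\CC})$ nor $j_*j^*(E\otimes\Omega^\bullet_{\ov{X}/\CC})$; thus we may assume $E$ is torsion-free along $D$, so that the inclusion of the statement is a monomorphism of complexes of sheaves. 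Next, the meromorphic extension $E(*D)$, hence the complex $j_*j_m^*(E\otimes\Omega^\bullet_{\ov{X}/\CC})$, depends only on $j^*E$: if $E_1,E_2\subseteq j_*j^*E$ are two coherent extensions of $j^*E$, then $E_1+E_2$ is one too and each $(E_1+E_2)/E_i$ is supported on $D$, hence killed by a power of the ideal of $D$, so $(E_1+E_2)(*D)=E_i(*D)$. Consequently we may replace $E$ by the $\tau$-adapted extension $E_0$ of $j^*E$ furnished by Theorem~\ref{thm:canonical-ext}, for a section $\tau$ with $\tau(0)=0$ and $\op{im}(\tau)\subseteq\{z:-1<{\rm Re}(z)\leq 0\}$; this $E_0$ satisfies the hypotheses of Theorem~\ref{thm:349variant}.

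Now, as in the proof of Theorem~\ref{thm:349variant}, I would use the local form of a good embedding (Definition~\ref{def:good-embedding}), Ogus' equivalence (Theorem~\ref{thm:ogus-RH}) and the local analysis of Lemma~\ref{lem:local-RH} to translate the situation into monoid language. For a sharp fs monoid $P$, an ideal $K\subseteq P$, a face $F\subseteq P$ disjoint from $K$ and $P'=P_F$, let $V\in L(P,K)$ correspond to $E_0$ and put $V'=V\otimes_{\CC[P]}\CC[P']$. Then the stalk cohomology of $j_*j^*(E_0\otimes\Omega^\bullet_{\ov{X}/\CC})$ along the deepest stratum is $H^*(\pi_1(P),V'_0)$, exactly as in the proof of Theorem~\ref{thm:349variant} (the computation along the other strata being analogous, replacing $P$ by its localizations disjoint from $K$); while the stalk cohomology of $j_*j_m^*(E_0\otimes\Omega^\bullet_{\ov{X}/\CC})$ is $H^*(\pi_1(P),\widehat{V}_0)$, where $\widehat{V}\subseteq V'$ is the $\CC[P]$-submodule of elements whose support is bounded below in the $\mathcal{F}^{\gp}$-directions --- the ``finite pole order'' submodule, reflecting that $\mathcal{O}_{\ov{X},x}(*D)$ consists of convergent power series in the $P$-directions times Laurent polynomials in the residual-face coordinates, whereas $(j_*\mathcal{O}_X)_x$ also admits essential singularities there. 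Since $V_0\subseteq\widehat{V}_0\subseteq V'_0$, it is enough to show $V_0=V'_0$; but this is precisely what the proof of Theorem~\ref{thm:349variant} establishes under the hypotheses met by $E_0$. Hence $\widehat{V}_0=V'_0$, the inclusion induces an isomorphism on stalk cohomology at every point, and the theorem follows.

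I expect the main obstacle to be the identification of the stalk cohomology of the meromorphic de Rham complex with $H^*(\pi_1(P),\widehat{V}_0)$: this is the one genuinely analytic input, a variant of \cite[Corollary~1.4.6]{OgusRH} extended to allow the base monoid algebra $\CC[P]/(K)$, and its essential content is that the ``essentially singular'' tails (elements with unbounded-below support in the residual-face directions) are acyclic for each operator $\nabla_{t_i\partial_{t_i}}$ --- which for the disc is the computation of \cite[Chap.~II, 3.14]{DeligneLNM163} and is unaffected by the log structure transverse to $D$. A subsidiary point to check is that passing to stalks and then to cohomology commutes with the filtered colimits over shrinking Stein neighbourhoods implicit above, for which one uses that the terms of $E_0\otimes\Omega^\bullet_{\ov{X}/\CC}$ are coherent and that $E_0(*D)\otimes\Omega^p_{\ov{X}/\CC}$ is a filtered colimit of coherent sheaves, all acyclic on Stein opens.
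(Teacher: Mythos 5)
Your reductions are fine (the torsion subsheaf along $D=\ov{X}\setminus X$ is indeed a subobject in $\MIC(\ov{X}_{\rm an}/\CC)$, both sides depend only on $j^*E$, and the $\tau$-adapted extension $E_0$ of Theorem~\ref{thm:canonical-ext} does satisfy the hypotheses of Theorem~\ref{thm:349variant}), but the central step is not actually proved, and as you have set it up it is circular. Since $V'=V\otimes_{\CC[P]}\CC[P_F]=V[1/f]$ for $f$ a generator of the face $F$, \emph{every} element of $V'$ has support bounded below in the $\mathcal{F}^{\gp}$-directions, so your $\widehat{V}$ is just $V'$ and the ``squeeze'' $V_0\subseteq\widehat{V}_0\subseteq V'_0$ carries no information; the assertion that the stalk cohomology of $j_*j_m^*(E_0\otimes\Omega^\bullet_{\ov{X}/\CC})$ equals $H^*(\pi_1(P),\widehat{V}_0)=H^*(\pi_1(P),V'_0)$ is then literally equivalent to the theorem (given the known computation of the $j_*j^*$ side), and you defer it to an unstated ``variant of \cite[Corollary~1.4.6]{OgusRH}''. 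Note also that Corollary~1.4.6 of Ogus is a purely algebraic statement about graded modules; it is not where the ``essential singularities are acyclic'' content lives.

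The gap can be closed with machinery already at hand, and this is what the paper's one-line proof (``repeat Ogus's proof of Theorem~3.4.10, substituting Theorem~\ref{thm:349variant}'') amounts to: after your first reduction, write the meromorphic complex as a filtered colimit of \emph{coherent} de Rham complexes,
\[
  j_* j_m^*\bigl(E_0\otimes\Omega^\bullet_{\ov{X}/\CC}\bigr)\;=\;\varinjlim_{n} \,E_0(nD)\otimes\Omega^\bullet_{\ov{X}/\CC},
\]
observe that each twist $E_0(nD)$ still satisfies the hypotheses of Theorem~\ref{thm:349variant} (twisting shifts the integer exponents at infinity by $n\geq 0$, so they stay in $\mathcal{F}_x$, and injectivity into $j_*j^*$ is preserved), and apply the stalkwise form of that theorem --- i.e.\ $V(n)_0\isomto V'_0$ via Theorem~\ref{thm:ogus-RH} and Lemma~\ref{lem:local-RH} --- to each $E_0(nD)\otimes\Omega^\bullet_{\ov{X}/\CC}\to j_*j^*(E_0\otimes\Omega^\bullet_{\ov{X}/\CC})$. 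Since sheaf cohomology of stalks commutes with filtered colimits, the colimit map is a quasi-isomorphism. No new analytic input beyond Theorem~\ref{thm:ogus-RH} is needed; the ``acyclicity of essentially singular tails'' is already packaged in Ogus's comparison applied to the coherent twists. Your sketch stops one step short of this by trying to compute the cohomology of the (non-coherent) meromorphic module in one go rather than as a colimit of coherent ones.
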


\begin{proof}
Repeat the original proof, replacing the use of \cite[Theorem 3.4.9]{OgusRH} with Theorem~\ref{thm:349variant}.
\end{proof}

\begin{proof}[{Proof of Theorem~\ref{thm:comparison}}]
By cohomological descent, the statement is strict \'etale local on $X$, so we may assume that $X$ admits a good compactification $j\colon X\to \ov{X}$. By Corollary~\ref{cor:log-extendable} of the Existence Theorem, $E$ extends to an object $\ov{E}$ of $\MIC_{\rm reg}(\ov{X}/\CC) = \MIC(X/\CC)$. Since $j$ is affine and $j_{\rm an}$ is Stein, arguing as in \cite[Chap.\ II, \S 6.6]{DeligneLNM163} we obtain
\[
  H^*_{\rm dR}(X, E) \simeq H^*(\ov{X}_{\rm an}, j_* j_m^* (E\otimes\Omega^\bullet_{\ov{X}/\CC}))
  \quad\text{and}\quad
  H^*_{\rm dR}(X_{\rm an}, E_{\rm an}) \simeq H^*(\ov{X}_{\rm an}, j_* j^* (E\otimes\Omega^\bullet_{\ov{X}/\CC})).
\]
It remains to invoke Theorem~\ref{thm:3410variant} to conclude.
\end{proof}

\subsection{The Regularity Theorem}
\label{ss:regularity}

We recall the Katz--Oda construction of the Gauss--Manin connection in the relevant context (see e.g.\ \cite{IllusieKatoNakayama}). Let $f\colon Y\to X$ be a proper smooth  morphism of idealized smooth log schemes over $\CC$. We have the short exact sequence
\[ 
  \xymatrix{0\ar[r] & f^* \Omega^1_{X/\CC} \ar[r] & \Omega^1_{Y/\CC} \ar[r] & \Omega^1_{Y/X} \ar[r] & 0}
\]
from which we obtain a decreasing filtration on $\Omega^q_{Y/\CC} = \bigwedge^q \Omega^1_{Y/\CC}$
\[ 
  K^r \Omega^q_{Y/\CC} = {\rm im}\left( f^*\Omega^{r}_{X/\CC}\otimes \Omega^{q-r}_{Y/\CC}\to \Omega^q_{Y/\CC}\right)
\]
satisfying
\[
  {\rm gr}^r \Omega^q_{Y/\CC} = K^r\Omega^q_{Y/\CC}/K^{r+1}\Omega^q_{Y/\CC} \simeq f^*\Omega^{r}_{X/\CC}\otimes \Omega^{q-r}_{Y/X} .
\]
Let $E$ be an object of $\MIC(Y/\CC)$. Then for every $r\geq 0$, $E\otimes K_r\Omega^\bullet_{Y/\CC}$ is a subcomplex of $E\otimes\Omega^\bullet_{Y/\CC}$ and 
\[
  \op{gr}^r(E\otimes\Omega^\bullet_{Y/\CC}) \simeq f^*\Omega^r_{X/\CC}\otimes (E\otimes \Omega^{\bullet-r}_{Y/X}).
\]
In particular we have a short exact sequence of complexes
\[ 
  \xymatrix{0\ar[r] & f^*\Omega^1_{X/\CC}\otimes (E\otimes \Omega^{\bullet-1}_{Y/X}) \ar[r] & {\displaystyle \frac{E\otimes \Omega^\bullet_{Y/\CC}}{E\otimes K_2\Omega^\bullet_{Y/\CC}}} \ar[r] & E\otimes \Omega^\bullet_{Y/X} \ar[r] & 0.}
\]
Applying $Rf_*$ and the projection formula, we obtain as the connecting homomorphism 
\[ 
  \nabla_{\rm GM}\colon R^n f_* (E\otimes \Omega^\bullet_{Y/X}) \ra R^n f_* (E\otimes \Omega^\bullet_{Y/X}) \otimes \Omega^1_{X/\CC}.
\]
As in the classical case, one shows that this connection is integrable, which makes $R^n f_* (E\otimes \Omega^\bullet_{Y/X})$ into an object of $\MIC(X/\CC)$.

The analogous construction applies to smooth proper morphisms of idealized smooth log complex analytic spaces, in particular to $f_{\rm an}$. This way, we obtain an object $R^n f_{\rm an,*} (E_{\rm an}\otimes \Omega^\bullet_{Y_{\rm an}/X_{\rm an}})$ of $\MIC(X_{\rm an}/\CC)$. We have a comparison morphism
\begin{equation} \label{eqn:alg-an-GM} 
  (R^n f_* (E\otimes \Omega^\bullet_{Y/X}))_{\rm an} 
  \ra
  R^n f_{\rm an,*} (E_{\rm an}\otimes \Omega^\bullet_{Y_{\rm an}/X_{\rm an}})
\end{equation}
which is an isomorphism because $f$ is proper.

\begin{remark} \label{rmk:gray}
The main result of the PhD thesis of Aaron Gray \cite{Gray} (unpublished) asserts that under certain assumptions, the formation of higher direct images is compatible with Ogus' Riemann--Hilbert correspondence $\mathcal{V}$ (Theorem~\ref{thm:ogus-RH}). More precisely, for a proper and smooth morphism of smooth log complex analytic spaces $f\colon Y\to X$ such that the map $f^*\oM_Y^\gp\to\oM_X^\gp$ is injective and its cokernel is torsion-free, there is a natural isomorphism between the two compositions in the square 
\begin{equation} \label{eqn:gray}
  \xymatrix@C=2.5cm{
    \MIC(Y/\CC) \ar[r]_-{\sim}^-{\mathcal{V}_Y} \ar[d]_{R^n f_*((-)\otimes\Omega^\bullet_{Y/X})} & L(Y) \ar[d]^{R^n f_*^\Lambda} \\
    \MIC(X/\CC) \ar[r]^-\sim_-{\mathcal{V}_X} & L(X).  
  }
\end{equation}
where the functor $R^n f^\Lambda_*$ is constructed as follows. Recall (Definition~\ref{def:ogus-Lcoh}) that an object $V$ of $L(Y)$ is a $\Lambda_Y$-graded $\CC_Y^{\rm log}$-module satisfying certain conditions. Via $\CC_X^{\rm log}\to f^{\rm log}_* \CC_Y^{\rm log}$, the sheaf $R^n f^{\rm log}_* V$ is a $f_* \Lambda_Y$-graded $\CC_X^{\rm log}$-module. Using the map $\iota\colon \Lambda_X\to f^{\rm log}_* \Lambda_Y$, we define the $\Lambda_X$-graded $\CC_X^{\rm log}$-module $R^n f^\Lambda_* V$ by $(R^n f^\Lambda_* V)_\lambda = (R^n f^{\rm log}_* V)_{\iota(\lambda)}$ for a local section $\lambda$ of $\Lambda_X$. Then $R^n f^\Lambda_* V$ is an object of $L(X)$. 

Recently, Gray's results have been extended to log $D$-modules by Koppensteiner \cite[Proposition~6.4]{Koppensteiner}.
\end{remark}

The Regularity Theorem, stated below, asserts the regularity of $R^n f_* (E\otimes \Omega^\bullet_{Y/X})$ assuming that $E$ is regular. It is conditional on the following logarithmic variant of semistable reduction. We hope it is within reach of the current methods \cite{MotzkinTemkin,AdiprasitoLiuTemkin,Wlodarczyk}.

\begin{conj}[Log semistable reduction over a hollow curve] \label{conj:log-ss-reduction}
  Let $X$ be a hollow idealized smooth log scheme over $\CC$ of dimension $\leq 1$ and let $Y\to X$ be a smooth, proper, and exact morphism. Then, possibly after passing to a Kummer \'etale cover of $X$, there exists a commutative square
  \[ 
    \xymatrix{
      Y \ar@{^{(}->}[r] \ar[d] & \ov Y \ar[d] \\
      X \ar@{^{(}->}[r] \ar[r] & \ov X
    }
  \]
  where $X\hookrightarrow \ov X$ and $Y\hookrightarrow \ov Y$ are good compactifications (Definition~\ref{def:good-embedding}) and where $\ov Y\to \ov X$ is smooth.
\end{conj}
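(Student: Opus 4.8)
Since Conjecture~\ref{conj:log-ss-reduction} is stated as an open problem, what follows is a proof \emph{strategy}: it isolates the steps that are essentially routine and locates the single step that constitutes the difficulty. The plan is to reduce the assertion to a relative semistable reduction problem over a one-dimensional base and then appeal to recent toroidalization technology.

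First I would reduce to a local model. The conclusion is insensitive to Kummer étale (in particular strict étale) localization, so by Lemma~\ref{lem:splitting-basics}(c)--(d) we may assume that $\cM_X$ is the hollow log structure attached to a constant sharp fs monoid $Q$ and that $\underline X$ is a smooth affine scheme of dimension $\le 1$, i.e.\ a point or a smooth affine curve. If $\underline X$ is a point, then $\underline Y$ is already proper, $X$ and $Y$ are good compactifications of themselves (with $r=0$), and $\ov Y\to\ov X$ is just $Y\to X$, so the statement is trivial; hence we may assume $\underline X$ is a smooth affine curve. Let $\ov{\underline X}$ be its smooth projective model, and let $\ov X$ be $\ov{\underline X}$ equipped with the log structure which restricts to $\cM_X$ over $X$ and which, near each of the finitely many points $p\in\ov{\underline X}\setminus\underline X$, has the form $\mathbf{A}_{Q,\,Q\setminus 0}\times\mathbf{A}^1$ with the second factor cutting out $p$. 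Then $X\hookrightarrow\ov X$ is a good compactification (Definition~\ref{def:good-embedding}).

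Next I would extend $f$ to a proper morphism of fs log schemes $\ov Y'\to\ov X$ restricting to $f$ over $X$: by Theorem~\ref{thm:good-comp-exists} (after a further harmless strict étale localization) choose a good compactification $Y\hookrightarrow\ov Y_0$; the morphism $f$ then extends to a rational map $\ov Y_0\dashrightarrow\ov X$, whose indeterminacy one resolves by a projective toroidal modification supported over the boundary, obtaining a proper $\ov Y'\to\ov X$ which is log smooth over $X$ and for which $Y\hookrightarrow\ov Y'$ is still a good embedding. The heart of the matter is then to replace $\ov Y'\to\ov X$ by a \emph{smooth} morphism without disturbing the open part. The family $\ov Y'\to\ov X$ has at worst toroidal degenerations over $\ov X\setminus X$ and over the locus where $\cM_{\ov X}$ carries the extra monoid $Q$, and one would invoke a relative form of semistable reduction over the one-dimensional base $\ov X$: after a Kummer étale cover $\ov X''\to\ov X$ (which over $X$ is a Kummer étale cover and is ramified along $\ov X\setminus X$) and a further projective modification of the total space, one expects to obtain $\ov Y\to\ov X''$ which is smooth and exact, is isomorphic to $\ov Y'\times_{\ov X}\ov X''$ over $X''=X\times_{\ov X}\ov X''$, and for which $Y\times_X X''\hookrightarrow\ov Y$ is a good compactification. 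This is the input one hopes to extract from the toroidalization and semistable reduction results of Włodarczyk \cite{Wlodarczyk}, Motzkin--Temkin \cite{MotzkinTemkin}, and Adiprasito--Liu--Temkin \cite{AdiprasitoLiuTemkin}.

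The difficulty — and the reason the statement is left as a conjecture — lies entirely in this last step. Available semistable reduction and toroidalization theorems are typically formulated for morphisms whose base is a smooth curve with a (possibly empty) divisor, or as absolute toroidalization statements, whereas here the base $\ov X$ carries a nontrivial log structure \emph{everywhere} (the fibrewise monoid $Q$): the required modification and Kummer cover must be compatible with this idealized/hollow structure, must preserve exactness of $f$, and must land in the restricted class of \emph{good} compactifications, i.e.\ those in which the monoids ``at infinity'' are free. A subsidiary point is to ensure that the constructions made after the strict étale cover glue (or to argue the conclusion is insensitive to this), but since the statement already permits passing to a Kummer étale cover of $X$, this should be benign compared with the relative semistable reduction itself.
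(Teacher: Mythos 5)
There is nothing in the paper to compare against here: the statement is Conjecture~\ref{conj:log-ss-reduction}, which the author explicitly leaves open (it is only \emph{used}, as a hypothesis, in Theorem~\ref{thm:regularity}, with the remark that one hopes it is within reach of \cite{Wlodarczyk,MotzkinTemkin,AdiprasitoLiuTemkin}). Your proposal, as you yourself say, is a strategy and not a proof, so it cannot close this gap. The preliminary reductions are reasonable and essentially cost-free: strict \'etale localization is allowed since a strict \'etale cover is in particular Kummer \'etale and the conclusion is only required after such a cover; the case of a log point is indeed trivial; and compactifying the hollow curve $X$ to $\ov X$ with local model $\mathbf{A}_{Q,\,Q\setminus 0}\times\mathbf{A}^1$ at the added points does produce a good compactification of the base. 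But everything after that is exactly the open content of the conjecture, not a reduction of it.

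Concretely, two steps are unsupported. First, the extension of $f$ to a proper, log smooth $\ov Y'\to\ov X$ restricting to $f$ over $X$, with $Y\hookrightarrow\ov Y'$ still a good embedding, is asserted via ``resolve the indeterminacy of $\ov Y_0\dashrightarrow\ov X$ by a projective toroidal modification supported over the boundary''; in the idealized setting ($K\neq\emptyset$ on $Y$, hence on $\ov Y_0$) no cited flattening/toroidalization result applies as stated, and one must also check that the modification does not destroy the freeness of the boundary monoids required by Definition~\ref{def:good-embedding}. Second, and decisively, the passage from $\ov Y'\to\ov X$ to a \emph{smooth} morphism after a Kummer cover and further modification is precisely a relative semistable reduction theorem over a base carrying a nontrivial hollow log structure everywhere, with the extra constraints that exactness be preserved, that the open part $Y\times_X X''$ be untouched, and that the output be a \emph{good} compactification (free monoids at infinity). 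None of \cite{Wlodarczyk}, \cite{MotzkinTemkin}, \cite{AdiprasitoLiuTemkin} is stated in this generality, and ``one expects to obtain'' is the whole difficulty. So the proposal correctly locates the hard core but does not prove the statement; the conjecture remains open.
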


\begin{thm}[Regularity theorem] \label{thm:regularity}
  Let $f\colon Y\to X$ be a proper, smooth and exact morphism of idealized smooth log schemes over $\CC$ and let $E$ be an object of $\MIC_{\rm reg}(Y/\CC)$. Assume that Conjecture~\ref{conj:log-ss-reduction} holds. Then $R^n f_*(E\otimes\Omega^\bullet_{Y/X})$ are objects of $\MIC_{\rm reg}(X/\CC)$ for all $n\geq 0$.
\end{thm}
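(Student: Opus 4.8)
The plan is to adapt Deligne's strategy \cite[Chap.\ II, \S7]{DeligneLNM163}: cut by curves to reduce to a one‑dimensional base, then over a hollow curve use Conjecture~\ref{conj:log-ss-reduction} to produce a good \emph{proper} model of $f$, extend $E$ across it via Corollary~\ref{cor:log-extendable}, and conclude with Corollary~\ref{cor:regular-on-proper}, which says that every connection on a log scheme with proper underlying scheme is regular. Concretely, set $G := R^n f_*(E\otimes\Omega^\bullet_{Y/X})$ and apply the curve‑germ criterion (Proposition~\ref{prop:curve-germ-criterion}): it suffices to show that for every sharp fs monoid $P$ and every \emph{algebraic} strict morphism $g\colon T = \Spec(P\xrightarrow{\eps}\CCt)\to X$, the object $\epsast(g^* G)$ of $\MIC(\CCt/\CC)$ is regular. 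By definition of ``algebraic'', $g$ factors as $T\xto{h} C\xto{\iota} X$ with $C\hookrightarrow\ov C$ a good embedding of one‑dimensional idealized smooth log schemes, $C = \ov C\setminus\{y\}$, and $t$ a local parameter at $y$; shrinking $C$ about $y$, the good‑embedding chart forces $C\simeq\Spec(P\to\CC)\times\mathbf{G}_m$, so $C$ is hollow, $\iota$ is strict, and $C$ has a proper good compactification. Pulling $f$ back along $\iota$ gives a proper, smooth, exact morphism $f_C\colon Y_C = Y\times_X C\to C$, and $E_C := (Y_C\to Y)^*E$ is again regular by Corollary~\ref{cor:regular-basic}(a).

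Granting the compatibility of the Gauss--Manin construction with base change along $\iota$ (discussed below), we have $\iota^* G\simeq R^n f_{C,*}(E_C\otimes\Omega^\bullet_{Y_C/C})$, hence $\epsast(g^* G)\simeq h^*\bigl(\epsast_C(\iota^* G)\bigr)$ by functoriality of $\epsast$ (Remark~\ref{rmks:epsast}.\ref{rmk:functoriality}), where $\epsast_C$ is formed from a splitting of the hollow $C$. Since the pull‑back of a classically regular connection on the smooth curve $\underline C$ to the germ $\Spec\CCt$ is again regular \cite[Chap.\ II, Th\'eor\`eme~4.1]{DeligneLNM163} (and $\iota^* G$ regular on the hollow $C$ means, via Definition~\ref{def:regular} and Lemma~\ref{lem:regular-indep}, that $\epsast_C(\iota^* G)$ is classically regular on $\underline C$), everything reduces to showing that $\iota^* G$ is regular on the hollow curve $C$.

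To prove this last statement I would invoke the Conjecture. Over the hollow idealized smooth $C$ of dimension $\le 1$, the morphism $Y_C\to C$ is smooth, proper and exact, so Conjecture~\ref{conj:log-ss-reduction} provides, after a Kummer \'etale cover $C'\to C$ (flat on underlying schemes and log dominant), good compactifications $C'\hookrightarrow\ov{C'}$ and $Y_{C'}\hookrightarrow\ov{Y'}$ with $\ov{Y'}\to\ov{C'}$ smooth, hence proper as a morphism of proper $\CC$‑schemes. The regular object $E_{C'}$ on $Y_{C'}$ extends to an object $\ov E$ of $\MIC(\ov{Y'}/\CC)$ by Corollary~\ref{cor:log-extendable}, and the Gauss--Manin connection of $\ov{Y'}\to\ov{C'}$ on $\ov E$ is an object of $\MIC(\ov{C'}/\CC)$, therefore regular by Corollary~\ref{cor:regular-on-proper} since $\underline{\ov{C'}}$ is proper. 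Restricting along the open (flat) immersion $C'\hookrightarrow\ov{C'}$, using $\ov E|_{Y_{C'}}\simeq E_{C'}$, shows that $R^n f_{C',*}(E_{C'}\otimes\Omega^\bullet_{Y_{C'}/C'})$ is regular on $C'$; by ordinary (flat) base change along the Kummer cover this equals $(\iota^* G)|_{C'}$, so $\iota^* G$ is regular on $C$ by Corollary~\ref{cor:regular-basic}(b). Combined with the preceding paragraph, this proves $\epsast(g^* G)$ regular for all relevant $g$, and Proposition~\ref{prop:curve-germ-criterion} concludes.

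The main obstacle is the asserted compatibility of the Gauss--Manin construction with base change along $\iota\colon C\to X$: since $\iota$ need not be flat on underlying schemes, this is not a formal flat‑base‑change statement and must be argued with derived pull‑backs, using that $f$ is proper and exact (so that $\Omega^\bullet_{Y/X}$ commutes with base change and $Rf_*$ commutes with $L\iota^*$), together with the observation that one only needs the conclusion \emph{after} the further pull‑back to the germ $\Spec\CCt$ --- precisely the delicate point in Deligne's original argument as well. A secondary nuisance is that objects of $\MIC_{\rm reg}$ need not be locally free, so the phenomena of Proposition~\ref{prop:regularity-properties} reappear when comparing $\iota^* G$ with the relative de Rham cohomology of $Y_C\to C$; they are mitigated because in the end everything is restricted to a curve germ. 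Verifying that the Kummer \'etale cover of Conjecture~\ref{conj:log-ss-reduction} is flat on underlying schemes and log dominant, and that the various canonical extensions are compatible under restriction, is routine.
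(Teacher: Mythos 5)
Your overall architecture (cut by curves, then use the Conjecture over a hollow curve, extend via Corollary~\ref{cor:log-extendable}, and conclude by properness) matches the paper's Steps 4--5, but there is a genuine gap exactly where you flag ``the main obstacle'': the identification $\iota^* G\simeq R^n f_{C,*}(E_C\otimes\Omega^\bullet_{Y_C/C})$. This is not a technicality that derived pull-backs will absorb. The paper's Remark~\ref{rmk:base-change} gives an explicit proper smooth morphism of smooth log schemes ($Y\to\mathbf{A}_{\NN^2}$ the blow-up of the origin, $E=\cO_Y(D)$) for which the base change map for $R^0f_*$ along a strict closed immersion fails to be an isomorphism; and even granting exactness, the only route to unconditional base change sketched in the paper goes through Gray's unpublished results under extra hypotheses on $f^*\oM_X^{\gp}\to\oM_Y^{\gp}$. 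Derived base change $L\iota^* Rf_*\simeq Rf_{C,*}L(-)$ would in any case only give a spectral sequence relating $\iota^*R^nf_*$ (the object carrying the Gauss--Manin connection that you must show is regular) to the relative de Rham cohomology of $Y_C/C$, not the isomorphism you need. So as written, the reduction to a curve is unjustified.

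The paper closes this gap by a sequence of reductions you omit, and their order is the whole point. First (Step 1) one restricts to a log stratum $Z$ via its thickenings $Z^{(k)}$: Mittag--Leffler/Artin--Rees shows $V^n|_Z$ is a \emph{subquotient} of $R^n(f|_{Z^{(k)}})_*(\cdots)$, and Proposition~\ref{prop:regularity-properties} (stability of regularity under subobjects and quotients --- itself nontrivial because $\pi^*$ is not exact) lets one replace $X$ by $Z^{(k)}$. Second (Step 2) a d\'evissage along powers of the nilradical reduces to $X$ hollow, hence \emph{reduced}. Only then (Step 3) does generic local freeness of the $R^jf_*(E\otimes\Omega^i_{Y/X})$ give base change over a dense open $U\subseteq X$, and Corollary~\ref{cor:regular-basic}(b) (log dominance of $U\hookrightarrow X$) permits shrinking to $U$; after that the curve-cutting of Step 4 is legitimate because the curves land generically in $U$ where base change actually holds. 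Your proposal jumps straight to the curve criterion on a general (possibly non-reduced, stratified) $X$, where no such dense open is available and where the restriction of $G$ to a stratum is genuinely not the Gauss--Manin module of the restricted family. You would need to reinstate the paper's Steps 1--3 (or an equivalent) before the curve argument; your final paragraph over the hollow curve is then essentially the paper's Step 5 and is fine.
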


\begin{proof}
\emph{Step 1 --- reduction to $X$ with constant log structure.}
Set $V^n = R^n f_*(E\otimes \Omega^\bullet_{Y/X})$. We need to show that for every log stratum $Z\subseteq X$ (a connected component of $X^\flat$), the restrictions $V^n|_Z$ are regular. Since $Z$ is locally closed in $X$, we may replace $X$ with an affine open subset in which $Z\cap X$ is closed. Let $\mathcal{I}$ be the ideal of the closed immersion $Z\hookrightarrow X$, and let $Z^{(k)} = V(\mathcal{I}^{k+1})$ be the $k$-th order thickening of $Z$ and let
\[ 
  V^{n,k} = R^n (f|_{Z^{(k)}})_*(E|_{f^{-1}(Z^{(k)})}\otimes \Omega^\bullet_{Y/X}).
\] 
As in \cite[\S 4.1]{EGA_III_1}, the inverse system of coherent $\cO_X$-modules $V^{n,k}$ satisfies the Mittag--Leffler property, and if we set $R_k = \ker(V^n\to V^{n,k})$, then the submodules $\{R_k\}$ define the $\mathcal{I}$-adic topology on $V^n$. In particular, for $k$ large enough we have $\mathcal{I}V^n\supseteq R_k$, and hence a factorization
\[ 
  \xymatrix{
    V^n \ar@{->>}[r] \ar@{->>}[dr] & V^n/R_k \ar@{->>}[d] \ar@{^{(}->}[r] & V^{n,k} \\
    & V^n/\mathcal{I}V^n \ar@{=}[r] & V^n|_Z.
  }
\]
Thus $V^n|_Z$ is a subquotient of $V^{n,k}$ and by Proposition~\ref{prop:regularity-properties} it suffices to show that $V^{n,k}$ is regular. We may therefore replace $X$ with $Z^{(k)}$. 

\emph{Step 2 --- reduction to $X$ hollow.}
Let $X$ have locally constant log structure and let $Z = X_{\rm red}$ be the largest hollow subscheme of $X$, cut out by an ideal $\mathcal{I}$. Suppose that the assertion holds for $X=Z$. We show by induction on $m\geq 0$ that if $E$ is annihilated by $\mathcal{I}^{m+1}$, then the higher direct images $R^n f_*(E\otimes \Omega^\bullet_{Y/X})$ are regular for all $n\geq 0$. The base case $n=0$ corresponds to $E$ supported on $Z$, which follows from the case $X=Z$. For the induction step, use the long exact sequence obtained by applying $R^n f_*((-)\otimes\Omega^\bullet_{Y/X})$ to the short exact sequence in $\MIC_{\rm reg}(Y/\CC)$
\[ 
  \xymatrix{0\ar[r] & \mathcal{I}E\ar[r] & E\ar[r] & E/\mathcal{I}E\ar[r] & 0.}
\]

\emph{Step 3 --- ensuring base change.}
Suppose that $X$ is hollow. Since $X$ is reduced, there is a dense open $U\subseteq X$ on which the sheaves $R^j f_* (E\otimes \Omega_{Y/X}^i)$ are locally free and (hence) with formation commuting with base change for all $i,j\geq 0$. From the spectral sequence 
\[
  E_1^{ij}=R^j f_* (E\otimes \Omega_{Y/X}^i) 
  \quad\Rightarrow\quad
  R^{i+j} f_*(E\otimes\Omega^\bullet_{Y/X}),
\]
we deduce that the formation of $R^n f_*(E\otimes\Omega^\bullet_{Y/X})$ commutes with base change along strict morphisms $X'\to X$ with $X'$ idealized smooth over $\CC$ such that ${\rm im}(X'\to X)\subseteq U$. Recall that by Corollary~\ref{cor:regular-basic}(b) to prove the assertion we may replace $X$ with a dense open subset. We may therefore assume that the base change property holds with $U=X$.

\emph{Step 4 --- reduction to the case $X$ being a curve.}
Suppose that $X$ is hollow and that the formation of the $R^n f_*(E\otimes\Omega^\bullet_{Y/X})$ commutes with base change as above. By the criterion of Proposition~\ref{prop:curve-germ-criterion} it suffices to show that if $\gamma\colon C\to X$ is a strict map where $C$ smooth curve, then $\gamma^*(R^n f_*(E\otimes \Omega^\bullet_{Y/X}))$ is regular for all $n\geq 0$. By the base change property, we have
\[ 
  \gamma^*(R^n f_*(E\otimes \Omega^\bullet_{Y/X})) \simeq R^n (f_C)_* (E_{Y_C}\otimes \Omega^\bullet_{Y_C/C}).
\]
Therefore, if the assertion holds for all such $C\to X$, then it holds for $X$.

\emph{Step 5 --- semistable reduction.}
Suppose that $X$ is a hollow curve. By Conjecture~\ref{conj:log-ss-reduction}, after passing to a Kummer \'etale cover the map $Y\to X$ extends to a smooth morphism of good compactifications $\ov f\colon \ov Y\to \ov X$. By Corollary~\ref{cor:log-extendable}, $E$ extends to an object $\ov{E}$ of $\MIC_{\rm reg}(\ov Y/\CC)$. Since $Y=\ov Y\times_{\ov X} X$, we have $R^n f_*(E\otimes \Omega^\bullet_{Y/X}) \simeq (R^n \ov{f}_*(\ov E\otimes \Omega^\bullet_{\ov Y/\ov X})|_X$. Therefore $R^n f_*(E\otimes \Omega^\bullet_{Y/X})$ extends to the compactification $\ov X$ and is therefore regular (Corollary~\ref{cor:log-extendable} again).
\end{proof}

\begin{remark} \label{rmk:base-change}
In the situation of the classical Regularity Theorem, it is also true that the formation of $R^n f_* (E\otimes\Omega^\bullet_{Y/X})$ commutes with base change \cite[Chap.\ II, Proposition~6.14]{DeligneLNM163}. This is not true in general for proper smooth morphisms of smooth log schemes over $\CC$, as the following example shows.

Let $X=\mathbf{A}_{\NN^2}$ and let $f\colon Y\to X$ be the blowup of the origin $P$. Let $D\subseteq Y$ be the exceptional divisor, and set $E = \cO_Y(D)$. It inherits a logarithmic connection, as its dual $\mathcal{I}_D$ is a subobject of $\cO_Y$ in $\MIC(Y/\CC)$. As $f$ is \'etale, we have $\Omega^\bullet_{Y/X}=\cO_Y$ and $R^0 f_*(E\otimes\Omega^1_{Y/X}) = f_* \cO_Y(D) = \cO_X$. However, we have $H^0(f^{-1}(P), E) = H^0(\mathbf{P}^1, \cO_{\mathbf{P}^1}(-1)) = 0$, which violates base change along the strict inclusion $P\hookrightarrow X$.

However, assuming that the map $f^*\oM_X^\gp\to \oM_Y^\gp$ is injective and its cokernel is torsion-free, we can use Gray's results \cite{Gray} (Remark~\ref{rmk:gray}) and proper base change for maps of topological spaces to show the base change property for $R^n f_* (E\otimes\Omega^\bullet_{Y/X})$ along a strict map $g\colon X'\to X$ as follows. Since \eqref{eqn:alg-an-GM} is an isomorphism, it is enough to show the base change property in the analytic context, so changing notation from now on $Y\to X$ is a smooth proper morphism of idealized smooth complex analytic spaces satisfying the assumption on $f^*\oM_X^\gp\to \oM_Y^\gp$ and $g\colon X'\to X$ is a strict map of idealized smooth log complex analytic spaces. We set $Y' = Y\times_X X'$. As $g$ is strict, the four squares marked $\square$ in the diagram below are cartesian squares of topological spaces.
\[ 
  \xymatrix@R=.5cm{
    Y'_{\rm log} \ar[rrr]^{f'_{\rm log}} \ar[ddd]_{\tau_{Y'}} \ar[dr]|{g'_{\rm log}} & \ar@{}[rd]|{\displaystyle\square} & & X'_{\rm log} \ar[dl]|{g_{\rm log}} \ar[ddd]^{\tau_{X'}} \\
    \ar@{}[rd]|{\displaystyle\square} & Y_{\rm log} \ar[r]_{f_{\rm log}} \ar[d]^{\tau_Y} & X_{\rm log} \ar[d]_{\tau_X} \ar@{}[rd]|{\displaystyle\square} & \\
    & Y \ar[r]^f \ar@{}[rd]|{\displaystyle\square} & X & \\
    Y' \ar[ur]|{g'} \ar[rrr]_{f'} & & & X' \ar[ul]|g
  }
\]
Moreover, we have $\CC_{Y'}^{\rm log} = g_{\rm log}^* \CC_Y^{\rm log}$ and $\CC_{X'}^{\rm log} = g_{\rm log}^* \CC_X^{\rm log}$ (as $X$ and $X'$ are hollow). Therefore the functors $g^*\colon L(Y)\to L(Y')$ and $g^*\colon L(X)\to L(X')$ are just sheaf-theoretic pullback. By proper base change applied to $\tau_X f_{\rm log}$, for an object $V$ of $L(Y)$ we have
\begin{equation} \label{eqn:pbc}
  g^*_{\rm log}(Rf^\Lambda_* V) \isomto Rf'^\Lambda_* (g'^*_{\rm log} V).  
\end{equation}
Therefore for an object $E$ of $\MIC(Y/\CC)$, the base change map factors into a chain of isomorphisms
\[
  \xymatrix@C=2cm{
    g^* R^n f_*(E\otimes\Omega^\bullet_{Y/X}) \ar[d] \ar[r]_{\displaystyle\sim}^{\eqref{eqn:gray}} &  g^* \mathcal{V}_X^{-1} R^n f^\Lambda_* \mathcal{V}_Y(E) \ar[r]_-{\displaystyle\sim}^-{\eqref{eqn:ogus-RH-functoriality}} & \mathcal{V}_{X'}^{-1} g_{\rm log}^* R^n f^\Lambda_* \mathcal{V}_Y(E) \ar[d]_{\displaystyle\wr}^{\eqref{eqn:pbc}} \\
    R^n f'_*(g'^* E\otimes \Omega^\bullet_{Y'/X'}) & \mathcal{V}_{X'}^{-1} Rf'^\Lambda_* \mathcal{V}_{Y'} g'^* E \ar[l]^-{\displaystyle\sim}_-{\eqref{eqn:gray}} & \mathcal{V}_{X'}^{-1} Rf'^\Lambda_* g'^*_{\rm log} \ar[l]^-{\displaystyle\sim}_-{\eqref{eqn:ogus-RH-functoriality}}
  }
\]
\end{remark}

\section{Connections and local systems}
\label{ss:locsys}

\begin{defin} \label{def:basic}
  An idealized smooth log scheme over $\CC$ or log complex analytic space $X$ is \emph{of nc (normal crossings) type} if the stalks of $\oM_X$ are free monoids. 
\end{defin}

Let $X$ be log complex analytic space of nc type. For an object $V$ of $L(X)$ and $x\in X$, we denote by $R_x\subseteq \oM_{X,x}\otimes\CC$ the set of exponents of $V$ at $x$ \cite[Definition~2.1.1]{OgusRH}. Picking an isomorphism $\oM_{X,x}\simeq \NN^{r_x}$, we may view $R_x$ as a subset of $\CC^{r_x}$, well defined up to coordinate permutation.

\begin{defin} \label{def:basic-tau-adapted}
  Let $X$ be a log complex analytic space of nc type and let $\tau\colon \CC/\ZZ\to \CC$ be a section of the projection $\CC\to\CC/\ZZ$. We say that an object $V$ of $L(X)$ is \emph{$\tau$-adapted} if 
  \begin{enumerate}
    \item for every $x\in X$, the set $R_x$ of exponents of $V$ at $x$ is contained in ${\rm im}(\tau)^{r_x}$, and
    \item $V$ is locally free as a module over $\CC_X^{\rm log}$.
  \end{enumerate}
  We denote by $L^\tau(X)$ the full subcategory of $L(X)$ consisting of $\tau$-adapted objects.
\end{defin}

On the space $X_{\rm log}$, we have a surjective map of sheaves of rings
\[ 
  \CC_X^{\rm log} = \CC[-\tau^*\oM_X]/(-K_X) \ra \CC
\]
sending the nonzero elements of $\oM_X$ to zero. This defines a functor 
\[
  V\mapsto\underline{V} = V\otimes_{\CC_X^{\rm log}} \CC
\]
from the category $L(X)$ to the category of log-constructible sheaves of $\CC$-vector spaces on $X_{\rm log}$, sending locally free objects to locally constant sheaves. 
 
\begin{thm} \label{thm:basic-local-systems}
  Let $X$ be a log complex analytic space of nc type and let $\tau\colon \CC/\ZZ\to \CC$ be a section of the projection $\CC\to\CC/\ZZ$. 
  \begin{enumerate}[(a)]
    \item The functor $V\mapsto \underline{V}$ induces an equivalence of categories
    \[
      L^\tau(X)\isomto \cat{LocSys}_\CC(X_{\rm log}).
    \]
    \item If $\tau(0)=0$, then for every object $V$ of $L^\tau(X)$ the map  $V_0\hookrightarrow V\to \underline{V}$ induces an isomorphism
    \[
      H^*(X_{\rm log}, V_0)\isomto H^*(X_{\rm log}, \underline{V}).
    \]
  \end{enumerate}
\end{thm}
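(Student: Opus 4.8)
The plan is to deduce both parts from a combinatorial statement about graded modules over monoid algebras, following the pattern of the proofs of Theorems~\ref{thm:canonical-ext} and~\ref{thm:349variant}. Both sides of (a) are stacks on $X$ (equivalently on $X_{\rm log}$), the functor $V\mapsto\underline V$ is local, and the conditions defining $L^\tau(X)$ are pointwise; likewise the map $V_0\to\underline V$ in (b) and the cohomology comparison can be checked after applying $R\tau_*$ and passing to stalks over $X$. Using the nc-type hypothesis together with Ogus' local analysis recalled in the proof of Lemma~\ref{lem:local-RH}, I would reduce to the case where $X$ is one of the model neighbourhoods $U_\delta\subseteq\mathbf{A}_{P,K}^{\rm an}$ of that lemma with $P$ a sharp fs monoid, which by nc-type satisfies $P\simeq\NN^r$. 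As in the proof of Lemma~\ref{lem:local-RH}, the preimages of the strata in the universal cover of $(U_\delta)_{\rm log}$ are contractible, so $(U_\delta)_{\rm log}$ is a $K(\pi_1(P),1)$ with $\pi_1(P)=\Hom(P^\gp,\ZZ(1))$; hence $\cat{LocSys}_\CC((U_\delta)_{\rm log})\simeq\cat{Rep}_\CC(\pi_1(P))$ (finite-dimensional complex representations) and $H^*((U_\delta)_{\rm log},\mathcal L)\simeq H^*(\pi_1(P),\mathcal L_{x_0})$ for locally constant $\mathcal L$ and a basepoint $x_0$ over the deepest stratum. Under Example~\ref{ex:L-of-AP}, $L(U_\delta)\simeq L(-P,-K)$, and --- since $P^\gp$ is torsion-free --- $L^\tau(U_\delta)$ corresponds to the full subcategory $L^\tau(-P,-K)\subseteq L(-P,-K)$ of objects $V$ that are free over $\CC[-P]/(-K)$ with all exponents in $\mathrm{im}(\tau)^r$; such a $V$ has a homogeneous basis of degrees $\lambda_1,\dots,\lambda_n\in\mathrm{im}(\tau)^r$, and $\underline V=V\otimes_{\CC[-P]/(-K)}\CC=\bigoplus_i\CC\bar v_i$ is finite-dimensional, with the residual $\pi_1(P)$-action making it an object of $\cat{Rep}_\CC(\pi_1(P))$.

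For (a) it then remains to show $V\mapsto\underline V$ is an equivalence $L^\tau(-P,-K)\isomto\cat{Rep}_\CC(\pi_1(P))$. A quasi-inverse sends a representation $W$ to the free $\CC[-P]/(-K)$-module on $W$, graded by placing the simultaneous generalized eigenspace $W_\mu$ ($\mu\in(\CC/\ZZ)^r$) in degree $\tau(\mu)\in\CC^r=P^\gp\otimes\CC$, with $\pi_1(P)$ acting $\CC[-P]$-semilinearly (via the characters $p\mapsto\exp(-\langle p,-\rangle)$) so as to extend the action on $W$ and satisfy the eigenvalue condition of Example~\ref{ex:L-of-AP}; a direct verification puts this in $L^\tau(-P,-K)$. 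That the two functors are mutually inverse hinges on the one arithmetic point in the argument: since $\tau$ is a \emph{section} of $\CC\to\CC/\ZZ$, any two elements of $\mathrm{im}(\tau)$ that differ by an integer coincide. Consequently, if $v_i,v'_j$ are homogeneous basis elements of $\tau$-adapted $V,V'$ of degrees $\lambda_i,\lambda'_j\in\mathrm{im}(\tau)^r$, then $V'_{\lambda_i}$ is spanned by those $v'_j$ with $\lambda'_j=\lambda_i$ (any further contribution $e^{-p}v'_j$ would force $p=\lambda'_j-\lambda_i\in P\subseteq\ZZ^r$ with entries in $\mathrm{im}(\tau)$, hence $\lambda'_j=\lambda_i$): a morphism in $L^\tau(-P,-K)$ is therefore determined by its reduction, while conversely every $\pi_1(P)$-equivariant linear map respects generalized eigenspaces and so lifts. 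This proves~(a).

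For (b), assume $\tau(0)=0$. The same bookkeeping shows that $(\CC[-P]/(-K))_{-\lambda_i}\neq 0$ forces $\lambda_i\in\NN^r$, so each entry of $\lambda_i$ lies in $\mathrm{im}(\tau)\cap\NN=\{0\}$ and $\lambda_i=0$; hence $V_0=\bigoplus_{i:\lambda_i=0}\CC v_i$, and the map $V_0\to\underline V$ is identified with the inclusion of the degree-$0$ summand $\underline V_0\hookrightarrow\underline V$, a $\pi_1(P)$-submodule. Its cokernel is $\underline V_{\neq 0}=\bigoplus_{\lambda\neq 0}\underline V_\lambda$, and since $\pi_1(P)$ preserves this grading it suffices to see $H^*(\pi_1(P),\underline V_\lambda)=0$ for each $\lambda\in\mathrm{im}(\tau)^r\setminus\{0\}$: some entry $\lambda_k$ is a nonzero element of $\mathrm{im}(\tau)$, hence not an integer (using $\tau(0)=0$), so the $k$-th standard generator $\gamma_k\in\pi_1(P)$ acts on $\underline V_\lambda$ with unique eigenvalue $\exp\langle\lambda,\gamma_k\rangle\neq 1$, making $1-\gamma_k$ invertible; then $H^*(\ZZ^r,\underline V_\lambda)=0$ by the K\"unneth formula for group cohomology. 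Globalising via $R\tau_*$ and the Leray spectral sequence, together with the identification of $R\Gamma((U_\delta)_{\rm log},V_0)$ with the group cohomology of its stalk at the deepest point (Ogus \cite[\S1.4]{OgusRH}; cf.\ the proof of Theorem~\ref{thm:349variant}), yields the isomorphism in~(b).

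The step I expect to be the main obstacle is the local reduction: making precise, especially in the idealized case $K\neq\emptyset$, that $\cat{LocSys}_\CC((U_\delta)_{\rm log})$ and its cohomology are governed by $\pi_1(P)=\Hom(P^\gp,\ZZ(1))$, and --- for~(b) --- that the cohomology over the model neighbourhoods of the non-locally-constant sheaf $V_0$ still reduces to the group cohomology of its stalk. Once this dictionary with Ogus' framework is in place, the remaining arguments are the short combinatorial verifications indicated above.
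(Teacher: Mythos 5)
Your proposal is correct, and its mathematical core coincides with the paper's: the quasi-inverse is the same functor $W\mapsto W\otimes_\CC\CC_X^{\rm log}$, graded by placing the generalized eigenspace for the character $\chi$ in degree $\tau(\chi)$, with the same arithmetic point (two elements of ${\rm im}(\tau)$ differing by an integer coincide) doing the work in both (a) and (b), and with (b) resting on the vanishing of $H^*(\ZZ(1)^{r},-)$ on the nontrivial generalized eigenspaces. The difference is in the reduction to this combinatorics. For (a) the paper does not pass through the model neighbourhoods $U_\delta$ or the identification $\cat{LocSys}_\CC((U_\delta)_{\rm log})\simeq\cat{Rep}_\CC(\pi_1(P))$ at all: it defines the grading on $V=W\otimes_\CC\CC_X^{\rm log}$ stalk by stalk at points of $X_{\rm log}$, observes that it is independent of the chosen isomorphism $\oM_{X,\tau(x)}\simeq\NN^{r_x}$ (hence globalizes automatically), and checks $\underline{V}=W$ and $V=\underline{V}\otimes\CC_X^{\rm log}$ directly. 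For (b) the paper applies proper base change to $R\tau_{X,*}$ and works on the fibres $\tau^{-1}(x)$ (compact tori, where $V_0$ restricts to an honest local system and everything is group cohomology of $\ZZ(1)^{r_x}$ via \cite[Proposition~1.4.3]{OgusRH}), rather than over the neighbourhoods $(U_\delta)_{\rm log}$. In other words, the step you single out as the main obstacle --- establishing that $(U_\delta)_{\rm log}$ is a $K(\pi_1(P),1)$ also in the idealized case and that $R\Gamma((U_\delta)_{\rm log},V_0)$ is computed by the stalk at the deepest point --- is genuine extra work in your route (it does follow from Ogus' analysis as in the proof of Lemma~\ref{lem:local-RH}, but must be carried out), whereas the paper's stalkwise/fibrewise formulation sidesteps it entirely. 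If you keep your route, you should also record that the local equivalences are compatible with restriction so that they glue, and that the grading is independent of the choice of basis of $\oM_{X,x}$; both are immediate in the paper's formulation.
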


\begin{proof}
We describe the inverse functor. Let $W$ be a complex local system on $X^{\rm log}$. We set $V = W\otimes_\CC \CC_X^{\rm log}$. This is a locally free $\CC_X^{\rm log}$-module satisfying conditions (1), (2), and (3) of Definition~\ref{def:ogus-Lcoh}. In order to make it into an object of $L^\tau(X)$, we need to endow it with a $\bLambda_X$-grading satisfying condition (4) of Definition~\ref{def:ogus-Lcoh} and (2) of Definition~\ref{def:basic-tau-adapted}. Let $x\in X^{\rm log}$, and pick an isomorphism $\oM_{X,\tau(x)} \simeq \NN^{r_x}$. The stalk $W_x$ is a representation of 
\[
  \pi_1(\tau^{-1}(\tau(x)), x) = \Hom(\oM^\gp_{X,\tau(x)}, \ZZ(1)) = \ZZ(1)^{r_x}. 
\]
Let $W_x = \bigoplus W_{x,\chi}$ be the decomposition of $W_x$ into generalized eigenspaces, the direct sum taken over 
\[
  \chi\in \Hom(\pi_1(\tau^{-1}(\tau(x)), x), \CC^\times) = \Hom(\ZZ(1)^{r_x}, \CC^\times) = (\CC/\ZZ)^{r_x}. 
\]
Thus $V = \bigoplus_\chi V_\chi$ where $V_\chi = W_\chi\otimes_\CC \CC_X^{\rm log}$. We have $\bLambda_{X,x} = \CC^{r_x}$, and we endow $V_\chi$ with the $\CC^{r_x}$-grading where $w\otimes 1$ has degree $\tau(\chi)$ where $\tau\colon (\CC/\ZZ)^{r_x}\to \CC^{r_x}$ is the map induced by $\tau$. It is clear that this grading is independent of the choice of generators of $\oM_{X,\tau{x}}$, that it satisfies the two required conditions, and that the $\bLambda_{X,x}$-gradings on $V_x$ for all $x\in X^{\rm log}$ give rise to a $\bLambda_{X}$-grading on $V$. Trivially, $\underline{V}=W$. Moreover, the object associated to $W=\underline{V}$ is $V$, as $V$ is freely generated by $\bigoplus_{\lambda\in{\rm im}(\tau)^{r_x}} V_\lambda=W$ (cf.\ the proof of Lemma~\ref{lem:canonical-equivariant}).

For the second assertion, it is enough to check that the map $R\tau_{X,*}V_0\to R\tau_{X,*}\underline{V}$ is a quasi-isomorphism. By proper base change, it is enough to show that for every $x\in X$ we have 
\[ 
  H^*(\tau^{-1}(x), V_0) \isomto H^*(\tau^{-1}(x), \underline{V}).
\]
Pick an isomorphism $\cM_{X,x}\simeq \NN^{r_x}$ and write $\underline{V} = \bigoplus_\chi \underline{V}_\chi$ as in the previous paragraph. Then, for a base point $x'\in \tau^{-1}(x)$ 
\[
  H^*(\tau^{-1}(x), \underline{V}_\chi) = H^*(\ZZ(1)^{r_x}, V_{\tau(\chi),x'}) = 0
  \quad \text{for $\chi \neq 0$}
\]
(see the last assertion of \cite[Proposition~1.4.3]{OgusRH}). Therefore
\[ 
  H^*(\tau^{-1}(x), \underline{V}) = H^*(\tau^{-1}(x), \underline{V}_0) = H^*(\tau^{-1}(x), V_{\tau(0)}) = H^*(\tau^{-1}(x), V_0). \qedhere 
\]
\end{proof}

\begin{cor} \label{cor:basic-connections}
  Let $X$ be a log scheme of nc type over $\CC$ and let $\tau\colon \CC/\ZZ\to \CC$ be a section of the projection $\CC\to\CC/\ZZ$. Denote by $\MIC^\tau_{\rm reg}(X/\CC)$ the full subcategory of $\MIC_{\rm reg}(X/\CC)$ consisting of objects which are locally free (as $\cO_X$-modules) and whose exponents \cite[Definition~2.1.1]{OgusRH} belong to the image of $\tau$. 
  \begin{enumerate}[(a)]
    \item Then there is an equivalence of categories
    \[ 
      E\mapsto \underline{\mathcal{V}}(E)\quad \colon\quad \MIC^\tau_{\rm reg}(X/\CC) \isomto \cat{LocSys}_\CC(X_{\rm log}).
    \]
    \item If $\tau(0)=0$, then for every object $E$ of $\MIC^\tau_{\rm reg}(X/\CC)$ we have 
    \[
      H^*_{\rm dR}(X, E)\simeq H^*(X_{\rm log}, \underline{\mathcal{V}}(E)).
    \]
  \end{enumerate}
\end{cor}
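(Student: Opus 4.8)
The plan is to chain together the three main equivalences already at our disposal. Since $X$ is of nc type, so is its analytification $X_{\rm an}$, and $X_{\rm log}$ means $(X_{\rm an})_{\rm log}$. The Existence Theorem~\ref{thm:existence} identifies $\MIC_{\rm reg}(X/\CC)$ with $\MIC(X_{\rm an}/\CC)$ via $E\mapsto E_{\rm an}$; Ogus' correspondence (Theorem~\ref{thm:ogus-RH}) then identifies $\MIC(X_{\rm an}/\CC)$ with $L(X_{\rm an})$ via $\mathcal{V}$; and Theorem~\ref{thm:basic-local-systems}(a) identifies $L^\tau(X_{\rm an})$ with $\cat{LocSys}_\CC(X_{\rm log})$ via $V\mapsto\underline{V}$. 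The functor $\underline{\mathcal{V}}(-)$ of the statement is by definition $E\mapsto\underline{\mathcal{V}(E_{\rm an})}$, so to prove (a) it suffices to check that the composite of the first two equivalences carries $\MIC^\tau_{\rm reg}(X/\CC)$ onto $L^\tau(X_{\rm an})$.

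This amounts to matching the two defining membership conditions on either side. For local freeness: a coherent $\cO_X$-module $E$ is locally free if and only if $E_{\rm an}$ is (faithfully flat descent along the local ring maps $\cO_{X,x}\to\cO_{X_{\rm an},x}$), and $E_{\rm an}$ is locally free over $\cO_{X_{\rm an}}$ if and only if $\mathcal{V}(E_{\rm an})$ is locally free over $\CC^{\rm log}_{X_{\rm an}}$; since $X$ is of nc type, the latter equivalence can be seen by reducing, via the local models $\mathbf{A}_{\NN^r,K}$ and Lemma~\ref{lem:local-RH}, to the statement about graded $\CC[\NN^r]/(K)$-modules parallel to Lemma~\ref{lem:canonical-equivariant}(b) and Theorem~\ref{thm:canonical-ext}(c). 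For the exponents (in the sense of \cite[Definition~2.1.1]{OgusRH}): they are computed from the residue endomorphisms on stalks, hence are insensitive to analytification, and they are precisely the data recorded by Ogus' functor, so the condition $R_x\subseteq{\rm im}(\tau)^{r_x}$ transports verbatim through $\mathcal{V}$. Consequently $E\in\MIC^\tau_{\rm reg}(X/\CC)$ if and only if $\mathcal{V}(E_{\rm an})\in L^\tau(X_{\rm an})$; composing the three equivalences gives (a). (In the essential-surjectivity direction one starts with $W\in\cat{LocSys}_\CC(X_{\rm log})$, lifts it to $V\in L^\tau(X_{\rm an})$ by Theorem~\ref{thm:basic-local-systems}(a), to $E'\in\MIC(X_{\rm an}/\CC)$ by Theorem~\ref{thm:ogus-RH}, and to $E\in\MIC_{\rm reg}(X/\CC)$ by Theorem~\ref{thm:existence}; the same two facts show $E\in\MIC^\tau_{\rm reg}(X/\CC)$.)

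For (b), assume $\tau(0)=0$ and let $E\in\MIC^\tau_{\rm reg}(X/\CC)$; write $V=\mathcal{V}(E_{\rm an})\in L^\tau(X_{\rm an})$, so $\underline{\mathcal{V}}(E)=\underline{V}$. The Comparison Theorem~\ref{thm:comparison} gives the first isomorphism below; the cohomological half of Theorem~\ref{thm:ogus-RH} (the quasi-isomorphism $E_{\rm an}\otimes\Omega^\bullet_{X_{\rm an}/\CC}\to R\tau_*(V)_0$) gives the second; and Theorem~\ref{thm:basic-local-systems}(b), applicable since $\tau(0)=0$ and $V\in L^\tau(X_{\rm an})$, gives the third:
\[
  H^*_{\rm dR}(X,E)\;\simeq\;H^*_{\rm dR}(X_{\rm an},E_{\rm an})\;\simeq\;H^*(X_{\rm log},V_0)\;\simeq\;H^*(X_{\rm log},\underline{V})\;=\;H^*(X_{\rm log},\underline{\mathcal{V}}(E)).
\]

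The only step that is not a purely formal composition is the second paragraph, namely checking that the two conditions cutting out $\MIC^\tau_{\rm reg}(X/\CC)$ inside $\MIC_{\rm reg}(X/\CC)$ correspond, under analytification followed by $\mathcal{V}$, to those cutting out $L^\tau(X_{\rm an})$ inside $L(X_{\rm an})$. The exponent condition is essentially built into Ogus' construction; the equivalence ``$\cO_X$-locally free $\Leftrightarrow$ $\CC^{\rm log}_{X_{\rm an}}$-locally free'' is where a short argument is needed, and I expect to dispatch it by the local reduction to monoid algebras indicated above, where it becomes a freeness statement for graded modules over $\CC[\NN^r]/(K)$ entirely analogous to Lemma~\ref{lem:canonical-equivariant}(b).
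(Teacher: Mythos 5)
Your proposal is correct and is exactly the intended derivation: the paper states this as an unproved corollary obtained by composing the Existence Theorem~\ref{thm:existence}, Ogus' equivalence (Theorem~\ref{thm:ogus-RH}), and Theorem~\ref{thm:basic-local-systems}, with the Comparison Theorem~\ref{thm:comparison} supplying the algebraic-to-analytic cohomology step in part (b). The one point you rightly isolate --- that $\cO_X$-local freeness of $E$ matches $\CC^{\rm log}_{X_{\rm an}}$-local freeness of $\mathcal{V}(E_{\rm an})$ --- is handled by the same reduction to graded modules over monoid algebras that the paper already uses for Theorem~\ref{thm:canonical-ext}(c) via Lemma~\ref{lem:canonical-equivariant}(b), so your sketch is consistent with the paper's own methods.
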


We warn the reader that neither of the functors described in Theorem~\ref{thm:basic-local-systems} and Corollary~\ref{cor:basic-connections} is compatible with tensor product (actually, the subcategories $L^\tau(X)$ and $\MIC^\tau_{\rm reg}(X/\CC)$ are not closed under tensor product).

\bibliographystyle{plain} 
\bibliography{bib}

\end{document}